\theoremstyle{plain}
\newtheorem{lemma}{Lemma}[section]
\newtheorem{proposition}[lemma]{Proposition}
\newtheorem{corollary}[lemma]{Corollary}
\newtheorem{theorem}[lemma]{Theorem}
\theoremstyle{remark}
\newtheorem{definition}[lemma]{Definition}
\newcommand{\cev}[1]{\accentset{\leftharpoonup}{#1}}
\newcommand{\vecc}[1]{\accentset{\rightharpoonup}{#1}}
\newcommand{\bE}{\mathbb{E}}
\newcommand{\bN}{\mathbb{N}}
\newcommand{\bP}{\mathbb{P}}
\newcommand{\bR}{\mathbb{R}}
\newcommand{\cE}{\mathcal{E}}
\newcommand{\cF}{\mathcal{F}}
\newcommand{\cI}{\mathcal{I}}
\newcommand{\cJ}{\mathcal{J}}
\newcommand{\cN}{\mathcal{N}}
\newcommand{\sD}{\mathscr{D}}
\newcommand{\ff}{\mathbf{f}}
\newcommand{\fF}{\mathbf{F}}
\newcommand{\fN}{\mathbf{N}}
\newcommand{\ed}{\mbox{$ \ \stackrel{d}{=}$ }}
\newcommand{\besq}{{\tt BESQ}}
\newcommand{\skewer}{\ensuremath{\normalfont\textsc{skewer}}}
\newcommand{\skewerbar}{\ensuremath{\overline{\normalfont\textsc{skewer}}}}
\newcommand{\sskewer}{\ensuremath{\normalfont\textsc{sSkewer}}}
\newcommand{\clade}{\ensuremath{\normalfont\textsc{clade}}}
\def\Concat{ \mathop{ \raisebox{-2pt}{\Huge$\star$} } }
\def\concat{\star}
\newcommand{\odotip}{}
\newcommand{\pdip}{{\tt PDIP}}
\begin{document}

\begin{frontmatter}

\title{Two-sided immigration, emigration and symmetry properties\\ of self-similar interval partition evolutions}
\runtitle{Interval partition evolutions with two-sided immigration}

\begin{aug}
		\author{\fnms{Quan} \snm{Shi}\thanksref{a}\ead[label=e1]{quanshi.math@gmail.com}}
\and
\author{\fnms{Matthias} \snm{Winkel}\thanksref{b}\ead[label=e2]{winkel@stats.ox.ac.uk}}
\address[a]{Mathematical Institute, University of Mannheim, Mannheim D-68131, Germany. \printead{e1}}
\address[b]{Department of Statistics, University of Oxford, 24--29 St Giles', Oxford OX1 3LB, UK.  \printead{e2}}
\end{aug}

\begin{abstract}
Forman et al.\ (2020+) constructed $(\alpha,\theta)$-interval partition evolutions for 
$\alpha\in(0,1)$ and $\theta\ge 0$, in which the total sums of interval lengths (``total mass'') evolve as squared Bessel processes of dimension $2\theta$, where $\theta\ge 0$ acts as an immigration parameter. These evolutions have pseudo-stationary distributions related to regenerative Poisson--Dirichlet interval partitions. In this paper we study symmetry properties of $(\alpha,\theta)$-interval partition evolutions. Furthermore, we introduce a three-parameter family ${\rm SSIP}^{(\alpha)}(\theta_1,\theta_2)$ of self-similar interval partition evolutions that have separate left and right immigration parameters $\theta_1\ge 0$ and $\theta_2\ge 0$. They also have squared Bessel total mass processes of dimension $2\theta$, where $\theta=\theta_1+\theta_2-\alpha\ge-\alpha$ covers emigration as well as immigration.
Under the constraint $\max\{\theta_1,\theta_2\}\ge\alpha$, 
we prove that an ${\rm SSIP}^{(\alpha)}(\theta_1,\theta_2)$-evolution is pseudo-stationary for a new distribution on 
interval partitions, whose ranked sequence of lengths has Poisson--Dirichlet distribution with parameters $\alpha$ and 
$\theta$, but we are unable to cover all parameters without developing a limit theory for composition-valued Markov chains, which we do in a sequel paper.
\end{abstract}


\begin{keyword}[class=MSC2020]
\kwd{60J80} 
\kwd{60J25} 
\kwd{60G18} 
\end{keyword}

\begin{keyword}
\kwd{Poisson--Dirichlet distribution}
\kwd{interval partition}
\kwd{branching with immigration, emigration}
\end{keyword}

\end{frontmatter}


\section{Introduction.}

In this paper, we construct a three-parameter family of interval partition evolutions that generalises the two-parameter family recently introduced by Forman et al. \cite{IPPAT}. The two-parameter model was shown to induce evolutions, via suitable time-change and normalisation, that have Poisson--Dirichlet interval partitions ${\tt PDIP}^{(\alpha)}(\theta)$ as their stationary laws, $\alpha\in(0,1)$, $\theta\ge 0$. A related construction \cite{FVAT} yields measure-valued diffusions stationary with the two-parameter Pitman--Yor distribution \cite{IshwJame03,PitmYorPDAT,Teh06}. When projected onto ranked interval lengths (or atom sizes) \cite{Paper1-3}, these diffusions yield Petrov's \cite{Petrov09} Poisson--Dirichlet diffusions in the cases when $\theta\ge 0$. See also \cite{Ethier14,FengSun10,FengSun19,RuggWalk09,Ruggiero14}. Members of the two-parameter family were used in \cite{Paper4} to construct the Aldous diffusion that has the Brownian continuum random tree as its stationary distribution. See also \cite{Ald-web,LohrMytnWint18,NussWint20,Schweinsberg02}. The three-parameter family is relevant since it captures for each $\alpha\in(0,1)$ the full Poisson--Dirichlet parameter range $\theta>-\alpha$. This extended range is crucial for potential generalisations of the Aldous diffusion to continuum random trees that include multifurcating ones such as stable trees \cite{CFW,CuriKort2014,DuplMillShef14,DuquLeGall02,HM04,HPW,LeGaMier2011}.

For $M\ge 0$, an \emph{interval partition}  $\beta=\{U_i,i\in I\}$ of $[0,M]$ is a (countable) collection of disjoint open intervals $U_i=(a_i,b_i)\subseteq (0,M)$, 
such that the (compact) set of partition points 
$G(\beta):= [0,M]\setminus\bigcup_{i\in I}U_i$ has zero Lebesgue measure. 
We refer 
to the intervals $U\in\beta$ as \em blocks \em and to their lengths ${\rm Leb}(U)$ as their \em masses\em. We similarly refer to $\|\beta\|$ as the 
\em total mass \em of $\beta$, that is
$\|\beta\|:=\sum_{U\in\beta}{\rm Leb}(U)$. 
We denote by $\cI_{H}$ the set of all 
interval partitions of $[0,M]$ for all $M\ge 0$. 
This space is equipped with the metric $d_H$, which is obtained by applying the Hausdorff metric to the sets of partition points: 
for every $\gamma,\gamma'\in \cI_{H}$, 
\begin{equation}\label{eq:dH}
d_{H} (\gamma, \gamma')
:= \inf \left\{r\ge 0\colon G(\gamma)\subseteq \bigcup_{x\in G(\gamma')} (x-r,x+r),~
G(\gamma')\subseteq \bigcup_{x\in G(\gamma)} (x-r,x+r) \right\}.
\end{equation}
The metric space $(\cI_H, d_H)$ is not complete, but it generates a Polish topology \cite{Paper1-0}. For $\beta\!\in\!\cI_H$, we write ${\rm rev}(\beta):=\{(\|\beta\|\!-\!b,\|\beta\|\!-\!a)\colon (a,b)\!\in\!\beta\}\in\cI_H$ for the left-right reversal of $\beta$. In \cite{GnedPitm05,PitmWink09}, a two-parameter 
family of interval partitions was introduced that places blocks of Poisson--Dirichlet masses ${\tt PD}^{(\alpha)}(\theta)$ into a regenerative 
random order. We denote their left-right reversals by ${\tt PDIP}^{(\alpha)}(\theta)$, $\alpha\in(0,1)$, $\theta\ge 0$. For $\theta=\alpha$, this is the distribution of the excursion intervals of a Bessel bridge \cite{PitmYorPDAT} with dimension parameter $2\alpha$. For $c>0$, we define a \emph{scaling
map} by\vspace{-0.1cm}
\[
c\odotip \beta:= \{(ca,cb)\colon (a,b)\in \beta\},\qquad\beta\in \cI_H.\vspace{-0.1cm}
\] 
Let $(\beta_a)_{a\in \mathcal{A}}$ be a family of interval partitions indexed by totally ordered set $(\mathcal{A}, \preceq)$. 
Let $S_{\beta}(a-):= \sum_{b\prec a} \|\beta_b\|$. 
We define the natural \emph{concatenation} of their blocks by\vspace{-0.1cm}
\[
\Concat_{a\in \mathcal{A}} \beta_a 
:= \big\{ 
\big(x+ S_{\beta}(a-), y + S_{\beta}(a-)\big) \colon (x,y)\in \beta_{a},\,  a\in \mathcal{A} 
\big\}.\vspace{-0.1cm}
\]
When $\mathcal{A} = \{1,2\}$, we denote this by $\beta_1 \concat \beta_2$. 


Let us recall from \cite{IPPAT} the transition kernels of the two-parameter family. The kernels have the branching property (with immigration) 
under which each initial block of mass $b>0$ contributes independently to time $y$ with probability $1-e^{-b/2y}$. Specifically, for $r=1/2y>0$ 
and $b>0$, we consider independent   $G\sim{\tt Gamma}(\alpha,r)$, $\overline{\beta}\sim{\tt PDIP}^{(\alpha)}(\alpha)$, and a $(0,\infty)$-valued 
random variable $L_{b,r}^{(\alpha)}$ with Laplace transform\vspace{-0.1cm}
\begin{equation}
  \mathbb{E}\left[e^{-\lambda L_{b,r}^{(\alpha)}}\right] = \left(\frac{r+\lambda}{r}\right)^{\alpha}\frac{e^{br^2/(r+\lambda)}-1}{e^{br}-1}.
  \label{LMBintro}\vspace{-0.1cm}
\end{equation}
Then we define the distribution $\mu_{b,r}^{(\alpha)}$ of a random interval partition as \vspace{-0.1cm}
\begin{equation}
 \mu_{b,r}^{(\alpha)}=e^{-br}\delta_\emptyset+(1\!-\!e^{-br})\mathbb{P}\left(\{(0,L_{b,r}^{(\alpha)})\}\concat G\overline{\beta}\in\cdot\,\right).\vspace{-0.1cm}
\end{equation}

\begin{definition}[Transition kernel $\kappa^{\alpha,\theta}_y$]\label{def:kernel:sp}
 Fix $\alpha\in (0,1)$, $\theta\ge 0$ and let $\beta\in\cI_H$ and $y>0$. Then $\kappa^{\alpha,\theta}_y(\beta,\,\cdot\,)$ is defined to be the distribution of
$G^y\overline{\beta}_0\concat\Concat_{U\in\beta}\beta_U^y$ for independent $G^y\sim{\tt Gamma}(\theta,1/2y)$, $\overline{\beta}_0\sim{\tt PDIP}^{(\alpha)}(\theta)$, and $\beta_U^y\sim \mu_{{\rm Leb}(U),1/2y}^{(\alpha)}$, $U\in\beta$, where ``Leb'' denotes Lebesgue measure.\smallskip
\end{definition}

It was shown in \cite{IPPAT} that these kernels form the transition semigroup of a self-similar path-continuous Hunt process in $(\cI_H,d_H)$, which we call ${\rm SSIP}^{(\alpha)}(\theta)$-evolution. In Section 
\ref{sec:constr}, we recall a construction from spectrally positive stable L\'evy processes with jumps marked by squared Bessel excursions of dimension 
$-2\alpha$. This construction reveals that blocks evolve as independent ${\tt BESQ}(-2\alpha)$-processes, with further blocks 
created between existing blocks at a dense set of times, each evolving as a ${\tt BESQ}(-2\alpha)$-excursion. The 
contribution $G^y\overline{\beta}_0$ in the semigroup can be interpreted as ``immigration'' at rate $\theta\ge 0$, on the left-hand side. 
We note that the semigroup is left-right reversible for $\theta=\alpha$, which suggests that the right-most copy of $G\overline{\beta}$ may similarly be interpreted as immigration at rate $\alpha$. But with positive probability, no initial block contributes to time $y>0$, and modifying the two-parameter model  at the level of semigroups to include left-hand immigration at rate $\theta_1\ge 0$ and right-hand immigration at rate 
$\theta_2\ge 0$ is a challenge. We propose two approaches.

The starting point for the first approach is the following consequence of the symmetry properties of the semigroup of 
${\rm SSIP}^{(\alpha)}(\alpha)$-evolutions.

\begin{proposition}\label{prop:sym} Let $(\beta^y,\,y\ge 0)$ be an ${\rm SSIP}^{(\alpha)}(\alpha)$-evolution starting from $\beta\in\cI_H$. Then
  $({\rm rev}(\beta^y),\,y\ge 0)$ is an ${\rm SSIP}^{(\alpha)}(\alpha)$-evolution starting from ${\rm rev}(\beta)$. 
\end{proposition}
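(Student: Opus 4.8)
The plan is to reduce the statement to the transition kernel of Definition~\ref{def:kernel:sp}. First observe that ${\rm rev}\colon\cI_H\to\cI_H$ is a continuous involution: it preserves total mass and reflects the set $G(\cdot)$ of partition points, and since $d_H$-convergence forces convergence of the total masses (the Hausdorff limit of the $G$-sets has the correct maximum), $\gamma_n\to\gamma$ implies ${\rm rev}(\gamma_n)\to{\rm rev}(\gamma)$. Hence $({\rm rev}(\beta^y),\,y\ge0)$ is again a path-continuous Markov process, with one-step kernel $\beta\mapsto({\rm rev})_*\kappa^{\alpha,\alpha}_y({\rm rev}(\beta),\,\cdot\,)$, and it suffices to prove
\[
 ({\rm rev})_*\,\kappa^{\alpha,\alpha}_y(\beta,\,\cdot\,)\ =\ \kappa^{\alpha,\alpha}_y\big({\rm rev}(\beta),\,\cdot\,\big),\qquad\beta\in\cI_H,\ y>0.
\]
Chaining this through the Chapman--Kolmogorov equations, using ${\rm rev}\circ{\rm rev}={\rm id}$, yields equality of all finite-dimensional distributions of the two processes; combined with path-continuity this identifies the laws on path space.

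\textbf{Step 2: unfolding $\kappa^{\alpha,\alpha}_y(\beta,\cdot)$.} Fix $\beta$, $y>0$, and $r=1/2y$. Since $\|\beta\|<\infty$ and each $\beta^y_U$ is empty with probability $e^{-{\rm Leb}(U)r}$, a.s.\ only finitely many blocks $U^{(1)},\dots,U^{(N)}$ of $\beta$ (listed left to right) have $\beta^y_U\neq\emptyset$. Substituting the definition of $\mu^{(\alpha)}_{b,r}$ into Definition~\ref{def:kernel:sp}, a random partition with law $\kappa^{\alpha,\alpha}_y(\beta,\,\cdot\,)$ equals in law
\[
 C_0\concat\{(0,L_1)\}\concat C_1\concat\cdots\concat\{(0,L_N)\}\concat C_N,
\]
where, conditionally on $N$ and the masses $({\rm Leb}(U^{(i)}))_{i\le N}$, the $C_0,\dots,C_N$ are i.i.d.\ with the law of $G\odotip\overline\beta$ for $G\sim{\tt Gamma}(\alpha,r)$, $\overline\beta\sim\pdip^{(\alpha)}(\alpha)$, the $L_i$ are independent with $L_i\sim L_{{\rm Leb}(U^{(i)}),r}^{(\alpha)}$, and the two families are independent of each other. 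The structural point I want to exploit — and this is exactly where $\theta=\alpha$ is used — is that \emph{every} $\pdip^{(\alpha)}(\alpha)$-piece appearing here has this \emph{same} law $G\odotip\overline\beta$ with scaling parameter $r$ and no dependence on block mass: this holds both for the ``regenerative tails'' $G\overline\beta$ concealed inside each non-empty $\mu^{(\alpha)}_{\cdot,r}$-clade and for the left-immigration term $G^y\overline\beta_0$, because for $\theta=\alpha$ one has ${\tt Gamma}(\theta,1/2y)={\tt Gamma}(\alpha,r)$ and $\pdip^{(\alpha)}(\theta)=\pdip^{(\alpha)}(\alpha)$; only the interposed single blocks $\{(0,L_i)\}$ carry the block-mass dependence.

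\textbf{Step 3: applying ${\rm rev}$ and matching.} Now push ${\rm rev}$ through the concatenation using ${\rm rev}(\gamma_1\concat\gamma_2)={\rm rev}(\gamma_2)\concat{\rm rev}(\gamma_1)$, ${\rm rev}(c\odotip\gamma)=c\odotip{\rm rev}(\gamma)$, ${\rm rev}(\{(0,\ell)\})=\{(0,\ell)\}$, and the reversal-invariance of $\pdip^{(\alpha)}(\alpha)$ — valid because for $\theta=\alpha$ this is the law of the excursion interval partition of a Bessel bridge of dimension $2\alpha$, which is invariant under $t\mapsto 1-t$. One obtains that $({\rm rev})_*\kappa^{\alpha,\alpha}_y(\beta,\,\cdot\,)$ is the law of $C_N'\concat\{(0,L_N)\}\concat\cdots\concat\{(0,L_1)\}\concat C_0'$ for a fresh i.i.d.\ family $C_0',\dots,C_N'$ of $\,{\tt Gamma}(\alpha,r)$-scaled $\pdip^{(\alpha)}(\alpha)$'s, independent of everything else. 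Unfolding $\kappa^{\alpha,\alpha}_y({\rm rev}(\beta),\,\cdot\,)$ in the same way and using that $U\mapsto{\rm rev}(U)$ is a mass-preserving, order-reversing bijection from the blocks of $\beta$ onto those of ${\rm rev}(\beta)$ — so the non-empty blocks of ${\rm rev}(\beta)$, read left to right, are exactly ${\rm rev}(U^{(N)}),\dots,{\rm rev}(U^{(1)})$ — one sees that a $\kappa^{\alpha,\alpha}_y({\rm rev}(\beta),\,\cdot\,)$-sample is a concatenation of precisely this shape, with $\pdip^{(\alpha)}(\alpha)$-pieces of the same common law and interposed single blocks of lengths $L_{{\rm Leb}(U^{(N)}),r}^{(\alpha)},\dots,L_{{\rm Leb}(U^{(1)}),r}^{(\alpha)}$. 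Since the joint law of $N$ together with the ordered sequence of non-empty block masses is unaffected by ${\rm rev}$, the two descriptions coincide, which gives the kernel identity and hence the proposition.

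\textbf{Expected main obstacle.} I do not anticipate a genuine analytic difficulty; the one step requiring care is the bookkeeping in Step~3 — verifying that after applying ${\rm rev}$ the reversed concatenation has \emph{precisely} the alternating ``$\pdip^{(\alpha)}(\alpha)$-piece / single block / $\pdip^{(\alpha)}(\alpha)$-piece / $\dots$'' form of a $\kappa^{\alpha,\alpha}_y$-sample over ${\rm rev}(\beta)$. This works only because of two features read directly off \eqref{LMBintro} and Definition~\ref{def:kernel:sp} when $\theta=\alpha$: the $\pdip^{(\alpha)}(\alpha)$-pieces are exchangeable, in that their law (scaling included) records neither which block they sit over nor whether they come from left-immigration or from a regenerative tail, together with the reversal-symmetry of $\pdip^{(\alpha)}(\alpha)$ itself. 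Neither fact requires the finer scaffolding/clade construction recalled in Section~\ref{sec:constr}.
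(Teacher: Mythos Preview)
Your proof is correct and follows essentially the same route as the paper's: both arguments reduce to verifying that the one-step transition kernel $\kappa^{\alpha,\alpha}_y$ intertwines with ${\rm rev}$, using the reversal-invariance of $\pdip^{(\alpha)}(\alpha)$ (the paper cites this as Lemma~\ref{lem:rev-pdip}) together with the fact that for $\theta=\alpha$ the immigration piece $G^y\overline\beta_0$ has the same law as each regenerative tail $G\overline\beta$, then lift to finite-dimensional distributions via the Markov property and conclude by path-continuity. Your Steps~2--3 spell out explicitly the alternating ``$\pdip$-piece / single block'' structure that the paper leaves implicit in the terse sentence ``By Definition~\ref{def:kernel:sp} and Lemma~\ref{lem:rev-pdip}, the marginal distribution of ${\rm rev}(\beta^y)$ is, as required'', but the substance is the same.
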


We construct a three-parameter family of ${\rm SSIP}^{(\alpha)}_{\dagger}(\theta_1,\theta_2)$-evolutions from ${\rm SSIP}^{(\alpha)}(\theta_1)$-evolutions, 
${\tt BESQ}(-2\alpha)$ processes and left-right-reversals of ${\rm SSIP}^{(\alpha)}(\theta_2)$-evolutions by repeatedly decomposing around a ``middle'' block, as follows.\smallskip

\begin{definition}\label{def:stopSSIP} Fix $\alpha\in(0,1)$ and $\theta_1,\theta_2\ge 0$. Let $\beta\in\cI_H$ and set $T_0:=0$, $\beta^0:=\beta$.  
  Inductively, for any $n\ge 0$, conditionally given $(\beta^y,\,0\le y\le T_n)$, proceed as follows.
  \begin{itemize}\item If $\beta^{T_n}=\emptyset$, set $T_i:=T_n$, $i\ge n+1$, and $\beta^y:=\emptyset$, $y\ge T_n$.\vspace{0.1cm}
    \item If $\beta^{T_n}\neq\emptyset$, denote by $U^{(n)}$ the longest interval in $\beta^{T_n}$, taking the leftmost of these if it is 
      not unique. Let $\beta_1^{(n)}:=\beta^{T_n}\!\cap\![0,\inf U^{(n)}]\!\in\!\cI_H$ be the partition to the left of $U^{(n)}$ and record the
      remainder to the right of $U^{(n)}$ in $\beta_2^{(n)}\!\in\!\cI_H$ such that 
      $\beta^{T_n}=\beta_1^{(n)}\concat\{(0,{\rm Leb}(U^{(n)}))\}\concat\beta_2^{(n)}$. For independent 
      ${\rm SSIP}^{(\alpha)}(\theta_j)$-evolutions $\gamma_j^{(n)}$ starting from $\beta_j^{(n)}$, $j=1,2$, and 
      $\mathbf{f}^{(n)}\sim{\tt BESQ}(-2\alpha)$ starting from ${\rm Leb}(U^{(n)})$ and absorbed at 
      $\zeta(\mathbf{f}^{(n)}):=\inf\{z\ge 0\colon\mathbf{f}^{(n)}(z)=0\}$, let
      \[T_{n+1}:=T_n+\zeta(\mathbf{f}^{(n)}),\qquad
         \beta^{T_n+s}:=\gamma_1^{(n)}(s)\concat\{(0,\mathbf{f}^{(n)}(s))\}\concat{\rm rev}(\gamma_2^{(n)}(s)),\quad 
         0\le s\le\zeta(\mathbf{f}^{(n)}).\]
  \end{itemize}
  If $T_n\uparrow T_\infty<\infty$, set $\beta^y:=\emptyset$, $y\ge T_\infty$. We refer to $(\beta^y,\,y\ge 0)$ as an
  ${\rm SSIP}^{(\alpha)}_{\dagger}(\theta_1,\theta_2)$-evolution, a \em self-similar interval partition evolution\em, to $\theta_1$ and $\theta_2$ as \em left and right immigration parameters\em, and to $(\|\beta^y\|,\,y\ge 0)$ as the \em total mass process.\em
\end{definition}

The subscript $\dagger$ in ${\rm SSIP}^{(\alpha)}_\dagger(\theta_1,\theta_2)$ acknowledges the fact that these processes are absorbed (``killed'') when they reach $\emptyset$. When $\theta>0$, this is not the case for ${\rm SSIP}^{(\alpha)}(\theta)$-evolutions. While almost surely neither process visits $\emptyset$ 
when $\theta\ge 1$, the state $\emptyset$ is an instantaneously reflecting boundary state of ${\rm SSIP}^{(\alpha)}(\theta)$-evolutions when 
$\theta\in(0,1)$. This relates to the well-known boundary behaviour of ${\tt BESQ}(2\theta)$, which we established in \cite[Theorem 1.4(iii)]{IPPAT} as the total mass evolution of ${\rm SSIP}^{(\alpha)}(\theta)$-evolutions. Specifically, recall that for any $m \ge 0$, $\delta \in \bR$, there is a unique strong solution of the equation
\[
Z_t = m +\delta t + 2 \int_0^t \sqrt{|Z_s|} d B_s,
\]
where $(B_t,t\ge 0)$ is a standard Brownian motion. We refer to \cite{GoinYor03} for general properties of such squared Bessel processes; let us here discuss the first hitting time of zero $\tau_0(Z):= \inf \{ t\ge 0\colon Z_t=0 \}$. Then $\tau_0(Z)$ is almost surely finite if and only if $\delta<2$. When $\delta<2$, the law of $\tau_0(Z)$ is described in (\cite[Equation (13)]{GoinYor03}) as the distribution of $m/2G$ with $G\sim \mathtt{Gamma}(1- \delta/2, 1)$. Furthermore, we define the \emph{lifetime of $Z$} by
\begin{equation}\label{eq:besq-zeta}
\zeta(Z):= \infty,	 ~\text{if}~ \delta>0,
\quad \text{and}\quad 
\zeta(Z):=\tau_0(Z),~\text{if}~ \delta\le 0.	
\end{equation}
We write $\besq_m(\delta)$ for the law of $Z:=(Z_{t\wedge\zeta(Z)}, t\ge 0)$, which we will refer to as a \em squared Bessel process starting from $m$ \em with dimension parameter $\delta$. When $\delta\le 0$, this process is absorbed at 0 at the end of its a.s.\ finite lifetime. We 
denote by $\besq_m^\dagger(\delta)$ the law of $(Z_{t\wedge\tau_0(Z)},t\ge 0)$, which differs from $\besq_m(\delta)$ only for $\delta\in(0,2)$.

We will check carefully that ${\rm SSIP}_\dagger^{(\alpha)}(\theta_1,\theta_2)$-evolutions are
well-defined as path-continuous processes in $(\cI_H,d_H)$ and establish the following analogue of \cite[Theorem 1.4(i)--(iii)]{IPPAT}. 

\begin{theorem} \label{thm:hunt} For each $\alpha\in(0,1)$, $\theta_1\ge 0$ and $\theta_2\ge 0$, an 
  ${\rm SSIP}^{(\alpha)}_\dagger(\theta_1,\theta_2)$-evolution $(\beta^y,y\ge 0)$ is a path-continuous Hunt process in $(\cI_H,d_H)$ with
  ${\tt BESQ}^\dagger(2\theta)$ total mass process, where $\theta=\theta_1+\theta_2-\alpha\ge-\alpha$. 
	It is self-similar with index $1$ in the sense that, for each $c>0$, $(c\beta^{y/c},y\ge 0)$ is an ${\rm SSIP}^{(\alpha)}_\dagger(\theta_1,\theta_2)$-evolution starting from $c\beta^0$. 
  Furthermore, if we stop an ${\rm SSIP}^{(\alpha)}(\theta_1)$-evolution at its first hitting time of $\emptyset$, we obtain an ${\rm SSIP}^{(\alpha)}_{\dagger}(\theta_1,\alpha)$-evolution. 
\end{theorem}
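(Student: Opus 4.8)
The plan is to establish the assertions in the order: (i) $(\beta^y,\,y\ge0)$ is well defined as a path-continuous process in $(\cI_H,d_H)$; (ii) its total mass is a ${\tt BESQ}^\dagger(2\theta)$; (iii) it has the strong Markov property, hence is Hunt; (iv) it is self-similar of index $1$; (v) an ${\rm SSIP}^{(\alpha)}(\theta_1)$-evolution stopped on first hitting $\emptyset$ is an ${\rm SSIP}^{(\alpha)}_\dagger(\theta_1,\alpha)$-evolution. The main tools are the branching property of the kernel in Definition~\ref{def:kernel:sp} (equivalently the ``clade'' decomposition of the construction recalled in Section~\ref{sec:constr}), the additivity of squared Bessel processes, the strong Markov and path-continuity properties of ${\rm SSIP}^{(\alpha)}(\theta_j)$-evolutions and of ${\tt BESQ}(-2\alpha)$ \cite{IPPAT}, and, for (v), Proposition~\ref{prop:sym}. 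For (i)--(ii): on each epoch $[T_n,T_{n+1}]$, continuity of $s\mapsto\beta^{T_n+s}$ follows from path-continuity of $\gamma_1^{(n)},\mathbf{f}^{(n)},\gamma_2^{(n)}$ and continuity of $\Concat$ and ${\rm rev}$ in $d_H$ on sets of bounded total mass; continuity across $T_{n+1}$ is automatic since $\mathbf{f}^{(n)}(\zeta(\mathbf{f}^{(n)}))=0$ and $\beta^{T_{n+1}}$ is both the terminal state of epoch $n$ and the initial state of epoch $n+1$. On epoch $n$ the total mass equals $\|\gamma_1^{(n)}(s)\|+\mathbf{f}^{(n)}(s)+\|\gamma_2^{(n)}(s)\|$, a sum of independent squared Bessel processes of dimensions $2\theta_1$, $-2\alpha$, $2\theta_2$, hence is a ${\tt BESQ}(2\theta)$ by additivity; splicing over epochs at the stopping times $T_n$ via the strong Markov property of ${\tt BESQ}(2\theta)$ identifies $(\|\beta^y\|,\,0\le y<T_\infty)$ with a ${\tt BESQ}(2\theta)$ run up to absorption at $0$.

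What remains for (i)--(ii), and what I expect to be the main obstacle, is the behaviour at the accumulation time: on $\{T_\infty<\infty\}$ one must prove $\beta^y\to\emptyset$ in $d_H$ as $y\uparrow T_\infty$. Since $d_H(\gamma,\emptyset)=\|\gamma\|$, this amounts to $\|\beta^y\|\to0$. On $\{T_\infty<\infty\}$ the epoch lengths $\zeta(\mathbf{f}^{(n)})$ are summable; as $\zeta(\mathbf{f}^{(n)})$ is, given $\beta^{T_n}$, distributed as ${\rm Leb}(U^{(n)})/2G_n$ with $G_n\sim{\tt Gamma}(1+\alpha,1)$ independent, summability forces ${\rm Leb}(U^{(n)})\to0$, i.e.\ the largest block of $\beta^{T_n}$ vanishes. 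Coupling this with short-time fluctuation estimates for ${\rm SSIP}^{(\alpha)}(\theta_j)$-evolutions and with the continuity of the ${\tt BESQ}(2\theta)$ mass path should give $\|\beta^y\|\to0$; the delicate point is to control the modulus of continuity of the evolution uniformly over the random states $\beta^{T_n}$. In the special case $\theta_2=\alpha$ this difficulty is avoided using (v): there the stopped process is an ${\rm SSIP}^{(\alpha)}(\theta_1)$-evolution, already known to be path-continuous and to reach $\emptyset$ continuously.

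For (iii)--(iv): self-similarity of index $1$ follows epoch by epoch from that of the ${\rm SSIP}^{(\alpha)}(\theta_j)$-evolutions, the scaling of ${\tt BESQ}(-2\alpha)$, and the identities $c\odotip(\gamma\concat\gamma')=(c\odotip\gamma)\concat(c\odotip\gamma')$ and $c\odotip{\rm rev}(\gamma)={\rm rev}(c\odotip\gamma)$, combined with the time change $y\mapsto y/c$. For the Markov property, each $T_n$ is a stopping time of the natural filtration --- one recovers $T_n$ by tracking the continuously evolving ``middle'' block designated at time $T_{n-1}$ --- and by construction the conditional law of $(\beta^{T_n+t},\,t\ge0)$ given the past is a function of $\beta^{T_n}$ only, so the process regenerates at each $T_n$. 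To upgrade to the strong Markov property at arbitrary times, the strong Markov property of $\gamma_1^{(n)},\mathbf{f}^{(n)},\gamma_2^{(n)}$ shows that from any $y\in[T_n,T_{n+1})$ the continuation coincides with the ${\rm SSIP}^{(\alpha)}_\dagger(\theta_1,\theta_2)$-construction started from $\beta^y$ but decomposed around the current descendant of $U^{(n)}$ rather than around the longest block of $\beta^y$; the Markov property thus reduces to a ``rooting invariance'' statement: the construction of Definition~\ref{def:stopSSIP} yields the same law whichever block of the starting state is chosen as the initial middle block. I would deduce this from another application of the branching property and the clade decomposition (and, for the dependence on the right-hand parameter, Proposition~\ref{prop:sym}). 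Path-continuity together with the strong Markov property and the Feller-type regularity inherited from the ingredients then give the Hunt property.

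For (v): fix $\beta\in\cI_H\setminus\{\emptyset\}$, let $U^{(0)}$ be its longest block, and write $\beta=\beta_1\concat\{(0,m)\}\concat\beta_2$. By the branching property an ${\rm SSIP}^{(\alpha)}(\theta_1)$-evolution from $\beta$ splits into three independent pieces: the part to the left of (the descendant of) $U^{(0)}$ is an ${\rm SSIP}^{(\alpha)}(\theta_1)$-evolution from $\beta_1$; the descendant block evolves as a ${\tt BESQ}_m(-2\alpha)$; and in the clade attached to it the mass strictly to the right of the descendant block forms an ${\rm SSIP}^{(\alpha)}(\alpha)$-evolution from $\emptyset$ --- this is precisely what the factor $G\overline\beta$ with $G\sim{\tt Gamma}(\alpha,r)$ and $\overline\beta\sim{\tt PDIP}^{(\alpha)}(\alpha)$ in the kernel $\mu_{b,r}^{(\alpha)}$ of Definition~\ref{def:kernel:sp} encodes, being the time-$1/2r$ marginal $\kappa^{\alpha,\alpha}_{1/2r}(\emptyset,\,\cdot\,)$ of an ${\rm SSIP}^{(\alpha)}(\alpha)$-evolution from $\emptyset$ --- while the part to the right of the clade is an ${\rm SSIP}^{(\alpha)}(0)$-evolution from $\beta_2$. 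By the branching property the concatenation of the last two is an ${\rm SSIP}^{(\alpha)}(\alpha)$-evolution from $\beta_2$, which by Proposition~\ref{prop:sym} is the left--right reversal of an ${\rm SSIP}^{(\alpha)}(\alpha)$-evolution from ${\rm rev}(\beta_2)$. Hence, up to the death time of the descendant of $U^{(0)}$, the ${\rm SSIP}^{(\alpha)}(\theta_1)$-evolution has exactly the form prescribed in Definition~\ref{def:stopSSIP} with $\theta_2=\alpha$; applying the strong Markov property of the ${\rm SSIP}^{(\alpha)}(\theta_1)$-evolution at that time, iterating, and noting that its first hitting time of $\emptyset$ equals $T_\infty$ (or the first $T_n$ at which the decomposition produces $\emptyset$), we identify the stopped ${\rm SSIP}^{(\alpha)}(\theta_1)$-evolution with an ${\rm SSIP}^{(\alpha)}_\dagger(\theta_1,\alpha)$-evolution.
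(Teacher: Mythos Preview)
Your overall architecture matches the paper's: path-continuity on each epoch and across $T_n$, additivity of ${\tt BESQ}$ for the mass, a triple/rooting-invariance argument for the Markov property, componentwise scaling for self-similarity, and Lemma~\ref{lem:split}/Proposition~\ref{prop:concat} (the clade decomposition you describe) for part~(v). The paper packages the Markov property slightly differently, first proving the $\cJ$-valued triple process is Borel right Markov via the general theory of concatenating Markov processes at lifetimes, and then applying Dynkin's criterion using the rooting-invariance lemma (Lemma~\ref{lem:consist2}); your direct route via rooting invariance is equivalent.

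The substantive gap is the behaviour at $T_\infty$, which you correctly flag as the main obstacle but do not resolve. Your proposed route --- from ${\rm Leb}(U^{(n)})\to0$ to $\|\beta^{T_n}\|\to0$ via ``short-time fluctuation estimates'' and uniform modulus-of-continuity control over the random states $\beta^{T_n}$ --- is not how the paper proceeds, and it is not clear it can be made to work: the states $\beta^{T_n}$ are not a~priori bounded in mass, and the ${\rm SSIP}^{(\alpha)}(\theta_j)$ modulus of continuity is not uniform over initial states without further input. The paper instead proves Lemma~\ref{lem:dl} by a clade-based coupling: one builds, alongside the $\cJ$-valued process, an auxiliary ``emigration'' process $\beta_{\rm em}^y$ collecting exactly the spindle mass discarded from the middle clades at each renaissance, so that $\|\beta_1^y\|+m^y+\|\beta_2^y\|$ equals the total mass of the original (non-renaissance) clades minus $\|\beta_{\rm em}^y\|$; since any spindle straddling level $T_\infty$ must eventually be emigrated (its mass would exceed the vanishing longest block), the right-hand side tends to $0$ along $y\uparrow T_\infty$. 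This emigration identity is the idea you are missing. A parallel coupling idea (a five-component $\cJ_2$-valued process tracking two marked blocks) underlies the rooting-invariance Lemma~\ref{lem:consist2}; ``branching property and the clade decomposition'' is the right toolkit, but the actual argument requires constructing this richer process and showing its two $\cJ$-valued projections coincide.
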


A key part of the proof is to show that ${\rm SSIP}^{(\alpha)}_\dagger(\theta_1,\theta_2)$-evolutions reach $\emptyset$ continuously. We will obtain the Markov property by applying Dynkin's criterion to the triple-valued process whose components are the mass of the ``middle'' block and the interval partitions on either side. This process inherits the Markov property from
${\rm SSIP}^{(\alpha)}(\theta_j)$-evolutions, $j=1,2$, and ${\tt BESQ}(-2\alpha)$ via standard results \cite{Mey75,Bec07} about suitably restarting Markov processes at stopping times.


The second approach to the three-parameter family starts from the representation \cite[(5.26)]{CSP} of ${\tt PD}^{(\alpha)}(\theta_1)$ as an 
$(\alpha,0)$-fragmentation of ${\tt PD}^{(0)}(\theta_1)$, in which every part of a ${\tt PD}^{(0)}(\theta_1)$-sequence is fragmented 
independently into ${\tt PD}^{(\alpha)}(0)$-proportions. See also \cite[Proposition 21]{PitmYorPDAT}. We noted in 
\cite[Proposition 3.6 and its proof]{IPPAT} that this has a refinement to interval partitions where each part in the ${\tt Beta}(\theta_1,1)$-stick-breaking scheme related to ${\tt PD}^{(0)}(\theta_1)$ is fragmented according to ${\tt PDIP}^{(\alpha)}(0)$. This decomposition arises 
naturally in the construction by marked L\'evy processes that we recall in Section \ref{sec:prel:clades} in that the ${\tt Beta}(\theta_1,1)$-proportions 
correspond to masses of those immigrant families that contribute to time $y$. These accumulate in a sequence of small contributions at the left 
end of the more recently immigrating families and each contribute a mass that further splits into ${\tt PDIP}^{(\alpha)}(0)$-proportions. Furthermore,
any ${\tt PDIP}^{(\alpha)}(0)$ consists of a leftmost block that is ${\tt Beta}(1-\alpha,\alpha)$-distributed, with the remainder being split into 
proportions according to ${\tt PDIP}^{(\alpha)}(\alpha)$.

In Section \ref{sec:large}, we make precise a coupling based on Poisson-thinning immigrant families, left-right-reversing scaled 
${\tt PDIP}^{(\alpha)}(\alpha)$, and regrouping blocks. For $\max\{\theta_1,\theta_2\}\ge\alpha$, this yields interval 
partition evolutions that we can use to construct ${\rm SSIP}^{(\alpha)}(\theta_1,\theta_2)$-evolutions which have $\emptyset$ as an entrance boundary when $\theta:=\theta_1+\theta_2-\alpha>0$, indeed as a reflecting boundary when $\theta\in(0,1)$, and they coincide with ${\rm SSIP}_\dagger^{(\alpha)}(\theta_1,\theta_2)$-evolutions 
(if stopped at the first hitting time of $\emptyset$). On the one hand, this yields the desired recurrent extension of ${\rm SSIP}^{(\alpha)}_\dagger(\theta_1,\theta_2)$-evolutions with unstopped ${\tt BESQ}(2\theta)$ total mass processes. 
On the other hand, this approach lends itself
better to calculations than Definition \ref{def:stopSSIP}. Specifically, we extend \cite[Theorem 1.4(iv)]{IPPAT}, which states that starting an 
${\rm SSIP}^{(\alpha)}(\theta_1)$-evolution from an independently scaled multiple of a ${\tt PDIP}^{(\alpha)}(\theta_1)$, the marginal 
distribution at all times is an independently scaled multiple of a ${\tt PDIP}^{(\alpha)}(\theta_1)$. In view of this property, we refer to 
${\tt PDIP}^{(\alpha)}(\theta_1)$ as a \em pseudo-stationary distribution \em of ${\rm SSIP}^{(\alpha)}(\theta_1)$. 

\begin{theorem} \label{thm:pseudo}
  Let $\theta_1\ge \alpha$ and $\theta_2\ge 0$. For independent $B\sim{\tt Beta}(\theta_1-\alpha,\theta_2)$, $B^\prime\sim{\tt Beta}(1-\alpha,\theta_1)$ and $\overline{\beta}_j\sim{\tt PDIP}^{(\alpha)}(\theta_j)$, $j=1,2$, the distribution of 
  \begin{equation}B(1-B^\prime)\overline{\beta}_1\concat\{(0,BB^\prime)\}\concat(1-B){\rm rev}(\overline{\beta}_2)\label{eq:pseudo}
  \end{equation}
  is a pseudo-stationary distribution of the ${\rm SSIP}^{(\alpha)}(\theta_1,\theta_2)$-evolutions. 
\end{theorem}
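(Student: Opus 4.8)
The plan is to exploit the structure of Definition~\ref{def:stopSSIP} together with Theorem~\ref{thm:hunt}, which says that stopping an ${\rm SSIP}^{(\alpha)}(\theta_1)$-evolution at its first hitting time of $\emptyset$ produces an ${\rm SSIP}^{(\alpha)}_\dagger(\theta_1,\alpha)$-evolution, and then to import the known pseudo-stationarity of ${\tt PDIP}^{(\alpha)}(\theta_1)$ for ${\rm SSIP}^{(\alpha)}(\theta_1)$-evolutions (\cite[Theorem~1.4(iv)]{IPPAT}). First I would describe the initial interval partition \eqref{eq:pseudo} in the language of the inductive decomposition: the ``middle'' block has mass $BB^\prime$, the left partition is $B(1-B^\prime)\overline\beta_1$ and the right partition is $(1-B){\rm rev}(\overline\beta_2)$. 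The crucial input is a beta-gamma / stick-breaking identity: if $\overline\beta\sim{\tt PDIP}^{(\alpha)}(\theta_1)$ with $\theta_1\ge\alpha$, then splitting off its leftmost block gives leftmost mass $\sim{\tt Beta}(1-\alpha,\theta_1)$ times the total, with the remainder an independent ${\tt PDIP}^{(\alpha)}(\theta_1+\alpha\cdot\mathbf{1})$... more precisely the relevant fact (used in \cite{IPPAT}) is that a ${\tt PDIP}^{(\alpha)}(\theta_1)$ decomposes as $\{(0,B^\prime)\}\concat(1-B^\prime)\overline\beta_1^\prime$ where $B^\prime\sim{\tt Beta}(1-\alpha,\theta_1)$ and $\overline\beta_1^\prime\sim{\tt PDIP}^{(\alpha)}(\theta_1)$ again when $\theta_1=\alpha$; for general $\theta_1\ge\alpha$ one needs the ${\tt PDIP}$ regeneration/deletion kernel. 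I would then identify, via scaling, the law in \eqref{eq:pseudo} with the law obtained by running the middle-block construction started from a scaled ${\tt PDIP}^{(\alpha)}(\theta_1)$, where the scaling constant and the mass split $B$ between the first longest block's left/right sides is governed by the ${\tt Beta}(\theta_1-\alpha,\theta_2)$ and Poisson-coupling arguments of Section~\ref{sec:large}.

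Next I would run the evolution: by Theorem~\ref{thm:hunt}, starting an ${\rm SSIP}^{(\alpha)}(\theta_1)$-evolution from $c\,\overline\beta_1$ (a scaled ${\tt PDIP}^{(\alpha)}(\theta_1)$) and stopping at $\tau_\emptyset$ gives an ${\rm SSIP}^{(\alpha)}_\dagger(\theta_1,\alpha)$-evolution. By pseudo-stationarity of ${\tt PDIP}^{(\alpha)}(\theta_1)$, its marginal at each fixed time $y$ is a scaled ${\tt PDIP}^{(\alpha)}(\theta_1)$, with the scalar being the total mass, which runs as ${\tt BESQ}^\dagger(2\theta_1)$; and crucially the total mass is independent of the (normalised) interval partition at that time. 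This handles $\theta_2=\alpha$ directly. For general $\theta_2\ge 0$, I would instead use the coupling of Section~\ref{sec:large} between ${\rm SSIP}^{(\alpha)}(\theta_1,\theta_2)$-evolutions and ${\rm SSIP}^{(\alpha)}(\theta_1)$-evolutions (Poisson-thinning immigrant families on the right, left-right reversal of scaled ${\tt PDIP}^{(\alpha)}(\alpha)$, regrouping blocks): under this coupling the ${\rm SSIP}^{(\alpha)}(\theta_1,\theta_2)$-evolution started from \eqref{eq:pseudo} is a deterministic functional of an ${\rm SSIP}^{(\alpha)}(\theta_1)$-evolution started from a scaled ${\tt PDIP}^{(\alpha)}(\theta_1)$, and the pseudo-stationarity of the latter transports, after the regrouping, to pseudo-stationarity of the law~\eqref{eq:pseudo}. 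Concretely, one checks that the image of a scaled ${\tt PDIP}^{(\alpha)}(\theta_1)$ under the coupling map is precisely a scaled version of \eqref{eq:pseudo}: the regrouped rightmost immigrant families assemble into $(1-B){\rm rev}(\overline\beta_2)$ with $\overline\beta_2\sim{\tt PDIP}^{(\alpha)}(\theta_2)$, using the fragmentation identity \cite[(5.26)]{CSP} / \cite[Proposition~3.6]{IPPAT} that produces the extra ${\tt Beta}(\theta_1-\alpha,\theta_2)$ split and the ${\tt PDIP}^{(\alpha)}(\theta_2)$ law from re-splitting ${\tt Beta}(\theta_1,1)$-stick-breaking proportions refined by ${\tt PDIP}^{(\alpha)}(0)$ pieces, together with the leftmost-block beta-split giving $B^\prime\sim{\tt Beta}(1-\alpha,\theta_1)$.

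Having identified the initial law, the final step is to propagate: since the coupling map is time-equivariant (it intertwines the two semigroups, as established in Section~\ref{sec:large}), and since the marginal at time $y$ of the ${\rm SSIP}^{(\alpha)}(\theta_1)$-evolution from a scaled ${\tt PDIP}^{(\alpha)}(\theta_1)$ is again a scaled ${\tt PDIP}^{(\alpha)}(\theta_1)$ with the same (independent) ${\tt BESQ}(2\theta_1)$-type scalar, the image at time $y$ is again a scaled copy of~\eqref{eq:pseudo}, with total mass running as ${\tt BESQ}(2\theta)$, $\theta=\theta_1+\theta_2-\alpha$, consistently with Theorem~\ref{thm:hunt}. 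I expect the main obstacle to be the bookkeeping in the coupling of Section~\ref{sec:large}: verifying that the regrouping of Poisson-thinned immigrant families on the right-hand side assembles \emph{exactly} into $(1-B){\rm rev}(\overline\beta_2)$ with the asserted independent ${\tt Beta}(\theta_1-\alpha,\theta_2)$ and ${\tt Beta}(1-\alpha,\theta_1)$ factors — this requires the beta-gamma algebra and the ${\tt PDIP}$ deletion/insertion kernels to line up precisely, and is where the constraint $\theta_1\ge\alpha$ (needed so that ${\tt Beta}(\theta_1-\alpha,\theta_2)$ is a genuine distribution) is used. The time-evolution part, once the initial-law identification is done, is essentially immediate from the intertwining and \cite[Theorem~1.4(iv)]{IPPAT}.
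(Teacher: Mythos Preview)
Your overall instinct to use the Section~\ref{sec:large} coupling is right, but the way you invoke it has a real gap. You assert a \emph{time-equivariant} coupling, i.e.\ an intertwining of the ${\rm SSIP}^{(\alpha)}(\theta_1,\theta_2)$- and ${\rm SSIP}^{(\alpha)}(\theta_1)$-semigroups, and then say pseudo-stationarity ``transports'' along it. No such intertwining is established in Section~\ref{sec:large}. The two-colour correspondence (Lemma~\ref{lem:mark-correspondence}) is a \emph{marginal} statement at a single fixed level $y$: the red/blue colouring is defined in terms of which clades survive to level $y$, so the correspondence is not a pathwise coupling and cannot be made equivariant in $y$. What is proved pathwise is only the special case $\theta_2=\alpha$ (Proposition~\ref{prop:1-ge}), which identifies ${\rm SSIP}^{(\alpha)}(\theta_1,\alpha)$ with ${\rm SSIP}^{(\alpha)}(\theta_1)$ as processes; for general $\theta_2$ there is no deterministic functional taking one process to the other.

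There is also a secondary confusion: ${\tt PDIP}^{(\alpha)}(\theta_1)$ with $\theta_1>0$ has \emph{no} leftmost block, so the ``leftmost-block decomposition $\{(0,B^\prime)\}\concat(1-B^\prime)\overline\beta_1^\prime$'' you quote does not exist. The block $\{(0,BB^\prime)\}$ in \eqref{eq:pseudo} is not the leftmost block of a ${\tt PDIP}^{(\alpha)}(\theta_1)$; in the paper's argument it arises as the leftmost block of the \emph{rightmost surviving red clade} in the two-colour model, and the ${\tt Beta}(\theta_1-\alpha,1)$ factor (which combines with a ${\tt Gamma}(\theta_2,\cdot)$ from the ${\rm RSSIP}^{(\alpha)}(\theta_2)$ side to give ${\tt Beta}(\theta_1-\alpha,\theta_2)$) comes from a geometric sum of i.i.d.\ ${\tt Beta}(\theta_1,1)$ variables, not from a ${\tt PDIP}$ deletion kernel.

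The paper's route is therefore somewhat different in mechanics: first compute the time-$y$ marginal of an ${\rm SSIP}^{(\alpha)}(\theta_1,\theta_2)$-evolution \emph{from $\emptyset$} directly (Proposition~\ref{prop:largetheta-empty}), using the two-colour correspondence only at that fixed $y$; this is where the ${\tt Beta}(\theta_1-\alpha,1)$ and ${\tt Beta}(\theta_1,1-\alpha)$ factors appear explicitly. Then upgrade to the Gamma-scaled initial state (Lemma~\ref{prop:largetheta-gamma}) and finally to arbitrary independent scaling (Proposition~\ref{prop:pseudo}) by the standard self-similarity/gamma-mixing arguments of \cite[Proposition~3.15, Theorem~1.4(iv)]{IPPAT}, using the Markov property of ${\rm SSIP}^{(\alpha)}(\theta_1,\theta_2)$ itself (Proposition~\ref{prop:Markov-ge}) rather than any intertwining with ${\rm SSIP}^{(\alpha)}(\theta_1)$. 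Your final paragraph is essentially this last step, but it only works once the marginal-from-$\emptyset$ calculation has been done directly; it does not follow from pseudo-stationarity of ${\tt PDIP}^{(\alpha)}(\theta_1)$ via a coupling.
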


The two approaches and associated three-parameter models allow us to study further related processes and properties that generalise 
straightforwardly from corresponding results for the two-parameter family of \cite{IPPAT}, but they also leave several questions open, which 
merit further exploration. 
\begin{itemize} \item The self-similarity of the construction allows us to de-Poissonize in the sense that we can time-change an 
  ${\rm SSIP}^{(\alpha)}_\dagger(\theta_1,\theta_2)$- or equivalently an ${\rm SSIP}^{(\alpha)}(\theta_1,\theta_2)$-evolution 
  $(\beta^y,\,y\ge 0)$ by the time-change $\tau(u):= \inf \left\{ y\ge 0\colon \int_0^y \|\beta^z\|^{-1} d z>u \right\}$, $u\ge 0$, and 
  normalise to unit total mass $\beta^{\tau(u)}/\|\beta^{\tau(u)}\|$, $u\ge 0$. This yields a Hunt process, which, in the context of 
  Theorem \ref{thm:pseudo}, will have the distribution of \eqref{eq:pseudo} as a stationary distribution.\vspace{0.1cm}
\item ${\tt PD}^{(\alpha)}(\theta)$ is well-known to have a non-trivial $\alpha$-diversity. This was strengthened to uniform ``local time'' 
  approximations associated with a single ${\tt PDIP}^{(\alpha)}(\theta)$ in \cite[Proposition 6(iv)]{PitmWink09} and with constructions from all levels
  in marked L\'evy processes in \cite[Theorem 1]{Paper0}. With a bit of work to control diversities when approaching $\emptyset$,
  it can be shown that ${\rm SSIP}^{(\alpha)}_\dagger(\theta_1,\theta_2)$- and ${\rm SSIP}^{(\alpha)}(\theta_1,\theta_2)$-evolutions have 
  continuously evolving diversity processes $y\mapsto\sD_{\beta^y}$, where notation means
  $\sD_{\beta} (t):= \Gamma(1-\alpha) \lim_{h\downarrow 0} h^{\alpha}\# \{(a,b)\in \beta :~|b-a|>h , b\le t\}$, $t\ge 0$, (if this limit exists).\vspace{0.1cm}
\item The second approach is subject to the restriction $\theta_1\ge\alpha$, or $\max\{\theta_1,\theta_2\}\ge\alpha$ under the 
  exploitation of symmetry properties. Recall $\theta:=\theta_1+\theta_2-\alpha$. If we distinguish according to the absorbing, reflecting and transient boundary behaviours of the 
  ${\tt BESQ}(2\theta)$ total mass process, when $\theta\le 0$, $\theta\in(0,1)$ and $\theta\ge 1$, respectively, this restriction excludes all
  absorbing cases and some cases in the reflecting regime, but the entire transient regime is already covered. We will address the remaining cases 
  in a sequel paper \cite{QSMW2}, where we take a third approach to the three-parameter family. This involves the development of a limit 
  theory for composition-valued Markov chains, which is of independent interest, and which in the two-parameter special case resolves a 
  conjecture of \cite{RogeWink20}. See also \cite{RivRiz} for a different approach to convergence results in the corresponding de-Poissonized 
  setting, enhancing \cite{Petrov09}. \vspace{0.1cm}
\item We will show in \cite{Paper1-3} that de-Poissonized ${\rm SSIP}^{(\alpha)}(\theta_1)$-, as well as 
  ${\rm SSIP}^{(\alpha)}(\theta_1,\theta_2)$- and ${\rm SSIP}^{(\alpha)}(\theta_1,\theta_2)$-evolutions, are such that the associated process 
  of projections onto ranked block sizes are Petrov's ${\tt PD}^{(\alpha)}(\theta)$-diffusions, where we note that 
  ${\rm SSIP}^{(\alpha)}(\theta_1)$-evolutions cover only $\theta\ge 0$, while ${\rm SSIP}^{(\alpha)}_\dagger(\theta_1,\theta_2)$-evolutions here cover Petrov's full parameter range $\theta>-\alpha$, as $\theta:=\theta_1+\theta_2-\alpha\ge-\alpha$ when $\alpha\in(0,1)$, as well as the boundary case $\theta=-\alpha$ of processes that degenerate by absorption in 
  $\{(0,1)\}\in\cI_H$ or $(1,0,0,\ldots)$, respectively.\vspace{0.1cm}
\item The pseudo-stationary distribution of Theorem \ref{thm:pseudo} does not align with the standard examples of Beta-Gamma algebra in which parameters satisfy certain addition rules, under which products of independent Beta variables give rise to further Beta or Dirichlet variables. Here, the
  first parameter $\theta_1-\alpha$ of $B$ represents a component that gets further split according to a Beta variable whose parameters sum to
  $\theta_1+1-\alpha$, which is 1 too high -- and it seems that there is no apparent size-biased selection that would naturally have such effects, see e.g. \cite{CSP}. 
  We will revisit this intriguing decomposition of the pseudo-stationary distribution around a ``middle'' block
  in the sequel paper \cite{QSMW2}, where we identify several similar representations as decompositions around different ``middle'' blocks.  
\end{itemize}

\subsection{Organisation of the paper.}
The structure of this paper is as follows. We first recall in Section~\ref{sec:pre} the topology \cite{Paper1-0} and main examples 
\cite{GnedPitm05,PitmWink09} of interval partitions, and 
we slightly develop results from \cite{Paper1-1,Paper1-2}, here discussing symmetry properties of ${\rm SSIP}^{(\alpha)}(\alpha)$-evolutions including a short proof of 
Proposition \ref{prop:sym}.  
In Section~\ref{sec:ip}, we introduce triple-valued $\mathrm{SSIP}_\dagger^{(\alpha)}(\theta_1,\theta_2)$-evolutions, study their properties, and prove Theorem~\ref{thm:hunt}. 
In Section~\ref{sec:prel:clades}, we recall from \cite{Paper1-1,Paper1-2, IPPAT} the construction of ${\rm SSIP}^{(\alpha)}(\theta)$-evolutions 
from marked stable L\'evy processes and Poisson random measures. In Section~\ref{sec:large}, we make precise the construction of 
$\mathrm{SSIP}^{(\alpha)}(\theta_1,\theta_2)$-evolutions when $\theta_1\ge\alpha$ and prove Theorem \ref{thm:pseudo}. 
The appendix contains proofs of two key technical lemmas needed in Section \ref{sec:ip}.

\section{Preliminaries on the transition description of the two--parameter family ${\rm SSIP}^{(\alpha)}(\theta)$, $\alpha\in(0,1)$, $\theta\ge 0$.}\label{sec:pre}

Throughout this paper, we fix a parameter $\alpha\in (0,1)$. In this section, we recall and develop some properties of 
${\rm SSIP}^{(\alpha)}(\theta)$-evolutions. Specifically, Section \ref{sec:top} briefly revisits the topology on $\cI_H$ of \cite{Paper1-0}. In Section \ref{sec:PDIP}, we discuss the two-parameter family ${\tt PDIP}^{(\alpha)}(\theta)$ of interval partitions \cite{GnedPitm05,PitmWink09} that arise as stationary distributions and in transition kernels. In Section \ref{sec:sym}, we discuss symmetry properties and include a short proof of Proposition \ref{prop:sym}.

\subsection{The topology generated by the metric space $(\cI_H,d_H)$.}\label{sec:top}
Recall from the introduction that $\cI_{H}$ is the space of interval partitions endowed with the metric $d_H$ of \eqref{eq:dH}. 
We next endow $\cI_H$ with another metric $d_H^\prime$ introduced in \cite{Paper1-0}. 
Let $[n] := \{1,2,\ldots,n\}$.
For $\beta,\gamma\in \cI_H$, a \emph{correspondence} from $\beta$ to $\gamma$ is a finite sequence of ordered pairs of intervals $(U_1,V_1),\ldots,(U_n,V_n) \in \beta\times\gamma$, $n\geq 0$, where the sequences $(U_j)_{j\in [n]}$ and $(V_j)_{j\in [n]}$ are each strictly increasing in the left-to-right ordering of the interval partitions.
The Hausdorff \emph{distortion} of a correspondence $(U_j,V_j)_{j\in [n]}$ from $\beta$ to $\gamma$, denoted by $\mathrm{dis}_H(\beta,\gamma,(U_j,V_j)_{j\in [n]})$, is defined to be the maximum of the following two quantities:
\begin{enumerate}
	\item $\sum_{j\in [n]}|\mathrm{Leb}(U_j)-\mathrm{Leb}(V_j)| + \|\beta\|- \sum_{j\in [n]}\mathrm{Leb}(U_j)$, \label{item:IP_m:mass_1}\vspace{0.2cm}
	\item $\sum_{j\in [n]}|\mathrm{Leb}(U_j)-\mathrm{Leb}(V_j)| + \|\gamma\| - \sum_{j\in [n]}\mathrm{Leb}(V_j)$, \label{item:IP_m:mass_2}
\end{enumerate}
For $\beta,\gamma\in\cI_H$ we define
\begin{equation}\label{eq:IPH:metric_def}
  d_{H}^\prime(\beta,\gamma) := \inf_{n\ge 0,\,(U_j,V_j)_{j\in [n]}}\mathrm{dis}_H\big(\beta,\gamma,(U_j,V_j)_{j\in [n]}\big),
\end{equation}
where the infimum is over all correspondences from $\beta$ to $\gamma$. 

\begin{lemma}[Theorems 2.3-2.4 of \cite{Paper1-0}]\label{lem:cI}
	The metric spaces $(\cI_H,d_H)$ and $(\cI_H,d_H^\prime)$ generate the same separable topology. The space $(\cI_H,d_H^\prime)$ is 
	complete, while $(\cI_H,d_H)$ is not complete. In particular, the topology is Polish.
\end{lemma}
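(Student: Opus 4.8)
This is established in \cite[Theorems 2.3--2.4]{Paper1-0}; I indicate the structure of the argument. It has four ingredients: (i) the identity map $(\cI_H,d_H^\prime)\to(\cI_H,d_H)$ is Lipschitz; (ii) conversely, a $d_H$-convergent sequence in $\cI_H$ is $d_H^\prime$-convergent, with a modulus depending only on the limit; (iii) $(\cI_H,d_H^\prime)$ is complete and separable; (iv) $(\cI_H,d_H)$ is not complete. Ingredients (i)--(ii) give that $d_H$ and $d_H^\prime$ induce the same topology, and (iii)--(iv) supply the remaining assertions, with ``Polish'' following from separability together with complete metrizability via $d_H^\prime$. For (i), fix a correspondence $(U_j,V_j)_{j\in[n]}$ from $\beta$ to $\gamma$ with distortion $<r$ and write $a_j<b_j$, $a_j^\prime<b_j^\prime$ for the endpoints of $U_j$, $V_j$. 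Since $a_j$ equals the total $\beta$-mass to the left of $U_j$, namely $\sum_{i<j}\mathrm{Leb}(U_i)$ plus the unmatched $\beta$-mass there, and likewise for $a_j^\prime$, one gets $|a_j-a_j^\prime|<r$ and, symmetrically, $|b_j-b_j^\prime|<r$; moreover any point of $G(\beta)$ that is not some $a_j,b_j$ lies in a ``gap'' between consecutive matched blocks, of length at most the unmatched $\beta$-mass $<r$, hence within $O(r)$ of a matched endpoint of $\gamma$, and symmetrically with $\beta,\gamma$ interchanged. Optimising over correspondences shows the identity map is Lipschitz, so every $d_H$-open set is $d_H^\prime$-open.

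For (ii), fix $\beta\in\cI_H$ and $\varepsilon>0$; choose $\eta>0$ with $\sum\{\mathrm{Leb}(U)\colon U\in\beta,\ \mathrm{Leb}(U)\le\eta\}<\varepsilon/3$ and list the finitely many blocks $V_1\succ\cdots\succ V_k$ of $\beta$ of mass $>\eta$. Their endpoints and mutual gaps are fixed positive numbers, so there is $\delta=\delta(\beta,\varepsilon)\in(0,\eta)$ such that $d_H(\beta,\gamma)<\delta$ forces, for each $i$, a block $V_i^\prime$ of $\gamma$ with endpoints within $\delta$ of those of $V_i=(a_i,b_i)$: here one uses crucially that $G(\gamma)$ is Lebesgue-null, so $\gamma$ has a single block covering each sub-interval $(a_i+\delta,b_i-\delta)$. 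These $V_1^\prime\succ\cdots\succ V_k^\prime$ are correctly ordered with total remaining $\gamma$-mass at most $\varepsilon/3+O(\delta)$, so the correspondence $(V_i,V_i^\prime)_{i\in[k]}$ has distortion $<\varepsilon$ once $\delta$ is small. Hence every $d_H^\prime$-ball about $\beta$ contains a $d_H$-ball; with (i) the two metrics are topologically equivalent. Separability follows because the countable set of finite interval partitions with rational block masses is dense.

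For (iii)--(iv), first note $|\,\|\beta\|-\|\gamma\|\,|\le 2d_H^\prime(\beta,\gamma)$, and more generally that matched block masses along a near-optimal correspondence are close. Given a $d_H^\prime$-Cauchy sequence $(\beta_n)$, extract a subsequence with $d_H^\prime(\beta_{n_k},\beta_{n_{k+1}})<2^{-k}$, use the realising correspondences to match blocks coherently along $(\beta_{n_k})_k$, observe that the mass of each matched-block chain is Cauchy in $\bR$ and hence converges, and arrange the limiting masses in left-to-right order into $\beta_\infty$; since these masses sum to $\lim_n\|\beta_n\|$, the set $G(\beta_\infty)$ is Lebesgue-null, so $\beta_\infty\in\cI_H$, and one checks $d_H^\prime(\beta_n,\beta_\infty)\to0$, proving (iii). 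For (iv), let $\beta_n\in\cI_H$ partition $[0,1]$ using $2^n$ equal blocks of length $2^{-n-1}$ on $[0,1/2]$ and a single block on $[1/2,1]$, so $G(\beta_n)=\{j2^{-n-1}\colon 0\le j\le 2^n\}\cup\{1\}$; these sets are nested with $d_H(\beta_n,\beta_{n+1})\le 2^{-n-1}$ and converge in Hausdorff distance to $[0,1/2]\cup\{1\}$, which has positive Lebesgue measure and hence is $G(\gamma)$ for no $\gamma\in\cI_H$. Thus $(\beta_n)$ is $d_H$-Cauchy with no limit in $\cI_H$, proving (iv), and the topology is Polish as explained above.

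The main obstacle is ingredient (ii): upgrading Hausdorff convergence of the partition-point sets to $d_H^\prime$-convergence, with a modulus uniform over $\gamma$ near $\beta$, genuinely requires the defining property that partition points form a Lebesgue-null set — this is exactly what prevents mass from ``hiding'' in a dense cloud of small blocks near a partition point of the limit. Assembling the limit in (iii) and verifying it lies in $\cI_H$ (rather than only in the abstract completion) likewise needs some care.
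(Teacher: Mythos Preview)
The paper provides no proof of this lemma; it is stated with attribution to \cite[Theorems~2.3--2.4]{Paper1-0} and used as a black box. Your proposal correctly acknowledges this and offers a plausible outline of the argument in the cited reference, so there is nothing in the present paper to compare against.
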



\subsection{Poisson--Dirichlet interval partitions ${\tt PDIP}^{(\alpha)}(\theta)$.}\label{sec:PDIP}

\begin{definition}[$\mathtt{PDIP}^{(\alpha)}( \theta)$]\label{defn:pdip-alphatheta}
	Fix $\alpha\in (0,1)$ and $\theta\ge 0$. Let $(Z_{\alpha,\theta}(t),\,t\ge0)$ denote a subordinator with Laplace exponent
	\begin{equation*}
	\begin{split}
	\Phi_{\alpha,\theta}(q) := \frac{q\Gamma(q+\theta)\Gamma(1-\alpha)}{\Gamma(q+\theta+1-\alpha)} \quad \text{if }\theta>0, 
	~\text{or} \quad \Phi_{\alpha,0}(q) := \frac{\Gamma(q+1)\Gamma(1-\alpha)}{\Gamma(q+1-\alpha)}, \qquad q\ge 0. 
	\end{split}
	\end{equation*}
	Let $(Z_{\alpha,\theta}(t-),\,t\ge 0)$ denote the left-continuous version of this subordinator. 
	We write $\mathtt{PDIP}^{(\alpha)}( \theta)$ to denote the law of a \emph{Poisson-Dirichlet $(\alpha,\theta)$ interval partition}, which is a random interval partition distributed like
	$$\left\{ \left(e^{-Z_{\alpha,\theta}(t)},e^{-Z_{\alpha,\theta}(t-)}\right)\!  \colon  t\ge 0,\ Z_{\alpha,\theta}(t-)\neq Z_{\alpha,\theta}(t)\right\}.$$
\end{definition}
A Poisson--Dirichlet $(\alpha, \theta)$ interval partition is the reversal of a \emph{regenerative $(\alpha,\theta)$ interval partition} studied in \cite{GnedPitm05} and \cite{PitmWink09}.  
It also describes the limiting proportions of customers at tables in the
ordered Chinese restaurant process of \cite{PitmWink09}. The special case ${\tt PDIP}^{(\alpha)}(0)$ can be obtained from an $\alpha$-stable 
subordinator $(\sigma(t),t\ge 0)$ as the interval partition of $[0,1]$ obtained from the complement of the range of $1-\sigma(t)$, $t\ge 0$, 
restricted to $[0,1]$, or equivalently as the left-right-reversal of the interval partition formed by the excursion intervals in [0,1] of a (squared) Bessel process of dimension $2-2\alpha$, including the incomplete excursion stopped at time 1. In \cite{IPPAT} we noted the following alternative representation, which builds the general ${\tt PDIP}^{(\alpha)}(\theta)$ from ${\tt PDIP}^{(\alpha)}(0)$, refining the 
${\tt PD}^{(\alpha)}(\theta)$ analogue of \cite[(5.26)]{CSP} and \cite[Proposition 21]{PitmYorPDAT}.
\begin{lemma}[Proposition 3.6 and its proof in \cite{IPPAT}] Let $B_i\sim{\tt Beta}(\theta,1)$, $\overline{\beta}_i\sim{\tt PDIP}^{(\alpha)}(0)$, $i\ge 1$, be independent. Then 
  \[\Concat_{k=\infty}^1(1-B_k)\left(\prod_{i=1}^{k-1}B_i\right)\overline{\beta}_k\sim{\tt PDIP}^{(\alpha)}(\theta),\]
  where the indexation of the concatenation operator means that the $(k+1)$st term is placed to the left of the $k$th, $k\ge 1$. 
\end{lemma}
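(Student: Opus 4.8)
\emph{Proof strategy.} The plan is to realise both sides of the claimed identity as interval partitions read off from random closed subsets of $[0,1]$, and to match those sets via the subordinator defining ${\tt PDIP}^{(\alpha)}(\theta)$ in Definition~\ref{defn:pdip-alphatheta} together with an algebraic identity for Laplace exponents. The case $\theta=0$ is immediate, since then ${\tt Beta}(0,1)=\delta_0$, so only the $k=1$ term survives and the concatenation reduces to $\overline{\beta}_1\sim{\tt PDIP}^{(\alpha)}(0)={\tt PDIP}^{(\alpha)}(\theta)$; so assume $\theta>0$. For a closed $\mathcal{R}\subseteq[0,1]$ of zero Lebesgue measure with $0,1\in\mathcal{R}$ write $\gamma(\mathcal{R})$ for the interval partition of $[0,1]$ whose set of partition points is $\mathcal{R}$. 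By Definition~\ref{defn:pdip-alphatheta}, ${\tt PDIP}^{(\alpha)}(\theta)=\mathrm{law}\big(\gamma(\mathcal{R}_{\alpha,\theta})\big)$ for $\mathcal{R}_{\alpha,\theta}:=\overline{\{e^{-Z_{\alpha,\theta}(t)}\colon t\ge0\}}$, where $Z_{\alpha,\theta}$ has Laplace exponent $\Phi_{\alpha,\theta}$. Since a subordinator and its constant-speed time-changes have the same closed range, the law of $\mathcal{R}_{\alpha,\theta}$ --- hence of $\gamma(\mathcal{R}_{\alpha,\theta})$ --- depends on $\Phi_{\alpha,\theta}$ only through its proportionality class. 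It therefore suffices to produce a random closed $\widetilde{\mathcal{M}}\subseteq[0,1]$ such that (i) $\gamma(\widetilde{\mathcal{M}})$ has the law of the left-hand side, and (ii) $\widetilde{\mathcal{M}}$ is the closed range of a subordinator whose Laplace exponent is a positive multiple of $\Phi_{\alpha,\theta}$.

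For (i) I would build $\widetilde{\mathcal{M}}$ in two nested layers. Let $\widehat{Z}$ be the compound Poisson subordinator with unit jump rate and ${\tt Exp}(\theta)$-distributed jumps; it has Laplace exponent $\Phi_{0,\theta}(q)=q/(q+\theta)$, its jump factors $B_k:=e^{-\widehat{\Delta}_k}$ are i.i.d.\ ${\tt Beta}(\theta,1)$, and $\gamma(\widehat{\mathcal{R}})$ for $\widehat{\mathcal{R}}:=\overline{\{e^{-\widehat Z(t)}\}}=\{0\}\cup\{B_1\cdots B_k\colon k\ge 0\}$ is the interval partition with $k$th block $(B_1\cdots B_k,B_1\cdots B_{k-1})$ (a left-reversed ${\tt Beta}(\theta,1)$ residual-allocation partition). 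Into the $k$th gap of $\widehat{\mathcal{R}}$ I insert, independently over $k$, a rescaled copy of the closed range $\mathcal{R}^{(k)}:=\overline{\{e^{-Z^{(k)}(u)}\colon u\ge 0\}}$ of a $\Phi_{\alpha,0}$-subordinator:
\[
\widetilde{\mathcal{M}}:=\{0\}\cup\bigcup_{k\ge1}(B_1\cdots B_{k-1})\big(B_k+(1-B_k)\mathcal{R}^{(k)}\big),\qquad B_1\cdots B_0:=1.
\]
A routine verification shows $\widetilde{\mathcal{M}}$ is a.s.\ closed and Lebesgue-null with $0,1\in\widetilde{\mathcal{M}}$, and that with $\overline{\beta}_k:=\gamma(\mathcal{R}^{(k)})\sim{\tt PDIP}^{(\alpha)}(0)$ one has $\gamma(\widetilde{\mathcal{M}})=\Concat_{k=\infty}^1(1-B_k)\big(\prod_{i<k}B_i\big)\overline{\beta}_k$, which is the law on the left-hand side.

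For (ii) the algebraic input is $\Phi_{\alpha,\theta}(q)=\Phi_{0,\theta}(q)\,\Phi_{\alpha,0}(q+\theta)$ for $q\ge0$, a one-line Gamma-function computation. Here $\Phi_{0,\theta}$ is the outer exponent, while $\Phi_{\alpha,0}(q+\theta)$ is the exponent of the $\Phi_{\alpha,0}$-subordinator Esscher-transformed by $e^{-\theta\,\cdot}$ --- equivalently, observed up to the first time it passes above an independent ${\tt Exp}(\theta)$-level --- whose potential measure has Laplace transform $1/\Phi_{\alpha,0}(q+\theta)$. Passing to $-\log$ coordinates, $\mathcal{M}^{\ast}:=-\log\widetilde{\mathcal{M}}$, I would show that $\mathcal{M}^{\ast}$ is the closed range of a subordinator and compute the Laplace transform of its potential measure $U^{\ast}$ by a renewal decomposition over the outer gaps: the outer renewal points $-\log(B_1\cdots B_k)$ form an ${\tt Exp}(\theta)$-renewal process contributing the factor $1/\Phi_{0,\theta}(q)$, and the decoration inside a single gap --- a ${\tt PDIP}^{(\alpha)}(0)$ rescaled conformally into a gap whose $-\log$-length has the ${\tt Exp}(\theta)$ law --- contributes the factor $1/\Phi_{\alpha,0}(q+\theta)$, precisely because that ${\tt Beta}(\theta,1)$ gap-length is the randomisation effecting the $e^{-\theta\,\cdot}$-tilt of the $\Phi_{\alpha,0}$-subordinator. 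The product is $1/\Phi_{\alpha,\theta}(q)$, so $\mathcal{M}^{\ast}\overset{d}{=}-\log\mathcal{R}_{\alpha,\theta}$, whence $\widetilde{\mathcal{M}}\overset{d}{=}\mathcal{R}_{\alpha,\theta}$ and $\gamma(\widetilde{\mathcal{M}})\overset{d}{=}{\tt PDIP}^{(\alpha)}(\theta)$; with (i) this is the claim.

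The main obstacle is the part of (ii) asserting that $\mathcal{M}^{\ast}$ really is a subordinator range with the stated potential measure. The difficulty is that the map $x\mapsto-\log(B+(1-B)x)$ inserting a ${\tt PDIP}^{(\alpha)}(0)$ into a gap is not affine, so a single decoration is not itself a subordinator range and $\widetilde{\mathcal{M}}$ has no naive recursive self-similarity --- for instance, the split point of the rightmost outer gap is not ${\tt Beta}(\theta,1)$-distributed given the configuration to its right. The cleanest way to handle this is to invoke Gnedin--Pitman's theory of regenerative composition structures~\cite{GnedPitm05}, according to which every such structure arises by exactly this ``insert independent pieces into the gaps'' recipe, and the member with exponent $\Phi_{\alpha,\theta}$ is pinned down by the identity above; this simultaneously supplies, via the multiplicative regenerative property of ${\tt PDIP}^{(\alpha)}(0)$, the conditional law of the split points that makes the two layers consistent. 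An alternative that sidesteps the set-level argument is to start from the classical fragmentation of ${\tt PD}^{(0)}(\theta)$ into ${\tt PD}^{(\alpha)}(0)$-proportions yielding ${\tt PD}^{(\alpha)}(\theta)$~\cite{PitmYorPDAT,CSP} and upgrade it to interval partitions by checking that the fragmentation respects the regenerative left-to-right ordering of blocks --- e.g.\ through the ordered Chinese restaurant process of~\cite{PitmWink09}, whose new-table rule decouples into a ``$\theta$-part'' seeding independent ${\tt PDIP}^{(\alpha)}(0)$-clusters and an ``$\alpha$-part'' growing each cluster, or through the marked-L\'evy-process construction of Section~\ref{sec:prel:clades}, in which the ${\tt Beta}(\theta,1)$-proportions are the masses of the immigrant families present at a fixed time, each carrying a ${\tt PDIP}^{(\alpha)}(0)$ by construction.
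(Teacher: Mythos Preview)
The paper does not give a self-contained proof of this lemma; it is quoted from \cite{IPPAT} (their Proposition~3.6 and its proof). From what is recalled in Section~\ref{sec:prel:clades} here, especially \eqref{eq:prm-marginal}--\eqref{eq:ssip-clades}, one sees how the argument in \cite{IPPAT} runs: an ${\rm SSIP}^{(\alpha)}(\theta)$-evolution started from $\emptyset$ has time-$y$ marginal ${\tt Gamma}(\theta,1/2y)\cdot{\tt PDIP}^{(\alpha)}(\theta)$, while decomposing the same marginal according to the clades that survive to level $y$ produces exactly the ${\tt Beta}(\theta,1)$-stick-breaking with independent ${\tt PDIP}^{(\alpha)}(0)$-pieces. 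Equating the two descriptions of one random object gives the lemma; no direct manipulation of ${\tt PDIP}^{(\alpha)}(\theta)$ is needed.

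Your primary route is genuinely different: you try to match the underlying random closed sets via the subordinator description of Definition~\ref{defn:pdip-alphatheta}, driven by the identity $\Phi_{\alpha,\theta}(q)=\Phi_{0,\theta}(q)\,\Phi_{\alpha,0}(q+\theta)$, which is correct and is the natural algebraic shadow of the statement. If completed, this would yield a proof independent of the interval-partition diffusions. The difficulty you flag at step~(ii), however, is a real gap as written. You need $\widetilde{\mathcal M}$ to be multiplicatively regenerative with the right exponent, but your own observation is exactly the obstruction: the insertion $x\mapsto B_k+(1-B_k)x$ does not become a translation under $-\log$, so the ``outer renewal $\times$ Esscher-tilted inner subordinator'' factorisation of the potential measure does not follow from the construction as stated; the inner decoration in $-\log$ coordinates is not, on the nose, the range of a subordinator with exponent $\Phi_{\alpha,0}(\cdot+\theta)$. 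Invoking Gnedin--Pitman \cite{GnedPitm05} is a legitimate way to close this, but then the work is done by their characterisation of regenerative composition structures rather than by your computation, and you should cite the precise statement you use. The marked-L\'evy/clade alternative you mention last is in fact the approach of \cite{IPPAT} and sidesteps the difficulty entirely.
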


We also record here from \cite[Proposition~2.2(iv)]{Paper1-2} a decomposition for easier reference: with independent $B\sim \mathtt{Beta}(\alpha, 1-\alpha)$ and $\bar\beta\sim \mathtt{PDIP}^{(\alpha)}(\alpha)$, we have 
\begin{equation}\label{eq:pdip:0-alpha}
\{(0, 1-B)\}\concat B \bar{\beta} \sim \mathtt{PDIP}^{(\alpha)}(0). 
\end{equation}

\subsection{${\rm SSIP}^{(\alpha)}(\theta)$-evolutions and their left-right reversals.}\label{sec:sym}

Recall from the introduction the definition of ${\rm SSIP}^{(\alpha)}(\theta)$-evolutions via their transition kernels and the terminology
``total mass process'' for the process $(\|\beta^y\|,y\ge 0)$ associated with any interval partition evolution $(\beta^y,y\ge 0)$. In this section 
we define left-right-reversed ${\rm SSIP}^{(\alpha)}(\theta)$-evolutions and also provide an (elementary) proof of 
Proposition \ref{prop:sym}.
Let $\beta\in\cI_H$ and recall that we denote its left-right reversal by 
\[
{\rm rev}(\beta):=\{ (\|\beta\|-b, \|\beta\|-a):~ (a,b)\in\beta\}.
\]


\begin{definition}\label{defn:rssip}
	Let $\alpha\in (0,1)$, $\theta\ge 0$, and $\beta_0\in \cI_H$. Let $(\beta^y,y\ge 0)$ be an $\mathrm{SSIP}^{(\alpha)}(\theta)$-evolution 
	starting from $\mathrm{rev}(\beta_0)$. Then we call the process $({\rm rev}(\beta^y),y\ge 0)$ a \emph{(left-right-)reversed self-similar 
	interval partition evolution with parameters $\alpha$ and $\theta$}, abbreviated as \emph{ $\mathrm{RSSIP}^{(\alpha)}(\theta)$-evolution, starting from $\beta_0$}. 
\end{definition}

\begin{proposition}\label{prop:Hunt+totalmass} ${\rm SSIP}^{(\alpha)}(\theta)$-evolutions and ${\rm RSSIP}^{(\alpha)}(\theta)$-evolutions are path-continuous Hunt processes. Their total mass processes are ${\tt BESQ}(2\theta)$-processes.
\end{proposition}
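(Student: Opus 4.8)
The statement to prove is Proposition~\ref{prop:Hunt+totalmass}: that both ${\rm SSIP}^{(\alpha)}(\theta)$-evolutions and their left-right reversals are path-continuous Hunt processes with ${\tt BESQ}(2\theta)$ total mass. For the forward process, path-continuity, the Hunt property, and the ${\tt BESQ}(2\theta)$ total mass are exactly what was established in \cite[Theorem 1.4]{IPPAT}, so for ${\rm SSIP}^{(\alpha)}(\theta)$ there is nothing to do beyond citing that. The content is therefore entirely about transferring these three properties across the reversal map ${\rm rev}$. So the first thing I would observe is that ${\rm rev}\colon(\cI_H,d_H)\to(\cI_H,d_H)$ is an isometry: the set of partition points $G({\rm rev}(\beta))$ is the reflection of $G(\beta)$ in the point $\|\beta\|/2$, and reflection is an isometry of $\mathbb{R}$, so the Hausdorff distance between point sets is preserved — modulo the fact that the two partitions may have different total masses, which shifts the two reflected point sets relative to each other. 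I would check that ${\rm rev}$ is at least a homeomorphism of $(\cI_H,d_H)$ onto itself (it is clearly an involution), which is what is needed to push forward path-continuity and the Borel structure.

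**Key steps.** (1) Recall from \cite[Theorem 1.4]{IPPAT} that ${\rm SSIP}^{(\alpha)}(\theta)$-evolutions are path-continuous Hunt processes in $(\cI_H,d_H)$ with ${\tt BESQ}(2\theta)$ total mass. (2) Show ${\rm rev}$ is a continuous involution of $(\cI_H,d_H)$ — indeed the only subtlety is continuity of $\beta\mapsto{\rm rev}(\beta)$ at a point, and one checks that if $d_H(\beta_n,\beta)\to0$ then $\|\beta_n\|\to\|\beta\|$ and the reflected point sets converge, giving $d_H({\rm rev}(\beta_n),{\rm rev}(\beta))\to0$; since ${\rm rev}$ is its own inverse it is a homeomorphism. (3) Deduce immediately that if $(\beta^y)$ is path-continuous then so is $({\rm rev}(\beta^y))$, and that $({\rm rev}(\beta^y))$ is a Markov process with transition kernels $\widetilde\kappa_y(\beta,\cdot) = \kappa^{\alpha,\theta}_y({\rm rev}(\beta),{\rm rev}^{-1}(\cdot))$, the pushforward of the ${\rm SSIP}^{(\alpha)}(\theta)$ semigroup by the homeomorphism ${\rm rev}$. (4) Invoke the general fact that the image of a Hunt process under a homeomorphism of the state space is again a Hunt process (right-continuity of paths, quasi-left-continuity, and the strong Markov property are all topological/measurable notions preserved by a homeomorphism; the underlying filtration is carried over unchanged since ${\rm rev}(\beta^y)$ generates the same filtration as $\beta^y$). (5) Finally observe $\|{\rm rev}(\beta)\| = \|\beta\|$, so the total mass process of the reversed evolution equals that of the underlying ${\rm SSIP}^{(\alpha)}(\theta)$-evolution, which is a ${\tt BESQ}(2\theta)$; this gives the total mass claim for ${\rm RSSIP}^{(\alpha)}(\theta)$-evolutions with no further work.

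**Main obstacle.** The only place requiring genuine care is step (2)–(4): verifying that ${\rm rev}$ really is a homeomorphism of $(\cI_H,d_H)$ and not merely a bijection, and then invoking — or spelling out — the principle that Hunt-process structure transports along a homeomorphism. The delicate point in step (2) is that $d_H$ compares sets of partition points of possibly different ``lengths'' $\|\beta\|$, so one must confirm that $d_H(\beta_n,\beta)\to 0$ forces $\|\beta_n\|\to\|\beta\|$ (this holds because $\|\beta\|=\max G(\beta)$ and the Hausdorff distance controls the extreme points), after which reflection about the respective midpoints is easily seen to be $d_H$-continuous. Once ${\rm rev}$ is known to be a homeomorphism, the Hunt property transfers essentially formally: a Hunt process composed with a homeomorphism of the state space is again a Hunt process, and I would either cite a standard reference for this (e.g. in the spirit of the restarting/mapping results of \cite{Mey75,Bec07}) or give a one-line verification that each defining property (Borel right process, quasi-left-continuity, path regularity) is preserved. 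I do not anticipate any obstruction beyond this bookkeeping, since all the analytic substance — existence, path-continuity, the identification of the total mass as ${\tt BESQ}(2\theta)$ — is inherited directly from \cite{IPPAT}.
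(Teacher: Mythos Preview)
Your proposal is correct and follows essentially the same approach as the paper's proof: cite \cite[Theorem 1.4]{IPPAT} for the ${\rm SSIP}^{(\alpha)}(\theta)$ claims, then transfer all three properties to ${\rm RSSIP}^{(\alpha)}(\theta)$ by observing that ${\rm rev}\colon(\cI_H,d_H)\to(\cI_H,d_H)$ is a total-mass-preserving homeomorphism. The paper states this in a single sentence without spelling out the continuity of ${\rm rev}$ or the transport of the Hunt property, whereas you give those details; your elaboration is sound (in particular the observation that $\|\beta\|=\max G(\beta)$ so that $d_H$-convergence forces total-mass convergence), but none of it goes beyond what the paper treats as routine.
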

\begin{proof} The claims for ${\rm RSSIP}^{(\alpha)}(\theta)$-evolutions follow from the corresponding result for ${\rm SSIP}^{(\alpha)}(\theta)$-evolutions, \cite[Theorem 1.4]{IPPAT}, since left-right-reversal ${\rm rev}\colon (\mathcal{I}_H,d_H)\rightarrow (\mathcal{I}_H,d_H)$ is a total-mass-preserving homeomorphism.
\end{proof}

\begin{lemma}[Corollary~10.2 of \cite{GnedPitm05}]\label{lem:rev-pdip}
	Let $\beta\sim \mathtt{PDIP}^{(\alpha)}( \alpha)$. Then $\mathrm{rev}(\beta)\sim \mathtt{PDIP}^{(\alpha)}( \alpha)$.
\end{lemma}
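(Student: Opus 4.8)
The plan is to realise $\mathtt{PDIP}^{(\alpha)}(\alpha)$ through the excursion intervals of a Bessel bridge and to exploit the time-reversal invariance of that bridge. First I would fix a Bessel bridge $(R_u,\,0\le u\le 1)$ of dimension $2\alpha$ from $0$ to $0$ and let $\beta$ be the interval partition of $[0,1]$ whose blocks are the connected components of $\{u\in(0,1)\colon R_u>0\}$. Since $2\alpha\in(0,2)$, the zero set of $R$ is a.s.\ a perfect, Lebesgue-null subset of $[0,1]$, so $\beta\in\cI_H$ with $\|\beta\|=1$, and by the Pitman--Yor description of $\mathtt{PDIP}^{(\alpha)}(\alpha)$ recalled in the introduction (see \cite{PitmYorPDAT}), which matches Definition~\ref{defn:pdip-alphatheta} for $\theta=\alpha$, we have $\beta\sim\mathtt{PDIP}^{(\alpha)}(\alpha)$.

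The key input I would invoke is that the squared Bessel bridge of dimension $2\alpha$ from $0$ to $0$ on $[0,1]$ is invariant in law under the time reversal $u\mapsto 1-u$; equivalently, $\widehat R:=(R_{1-u},\,0\le u\le 1)$ is again a Bessel bridge of dimension $2\alpha$ from $0$ to $0$. Consequently the interval partition $\widehat\beta$ built from the excursion intervals of $\widehat R$ also has law $\mathtt{PDIP}^{(\alpha)}(\alpha)$. On the other hand, $(a,b)$ is a connected component of $\{u\colon R_u>0\}$ if and only if $(1-b,1-a)$ is a connected component of $\{u\colon\widehat R_u>0\}$, so, using $\|\beta\|=1$, we get $\widehat\beta=\mathrm{rev}(\beta)$ pathwise. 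Combining these, $\mathrm{rev}(\beta)$ has the same law as $\beta$, which is the assertion.

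The only points needing care, and the main (minor) obstacle, are bookkeeping: one must confirm that the ``excursion intervals of a Bessel bridge of dimension $2\alpha$'' realisation does match the law $\mathtt{PDIP}^{(\alpha)}(\alpha)$ of Definition~\ref{defn:pdip-alphatheta} (i.e.\ that it is the left-right reversal of the regenerative $(\alpha,\alpha)$ interval partition), and that the zero set has the regularity ensuring $\beta\in\cI_H$; both are classical. An alternative, self-contained route would avoid Bessel bridges altogether by working with the multiplicative regenerative set $\mathcal{R}:=\overline{\{e^{-Z_{\alpha,\alpha}(t)}\colon t\ge 0\}}\subseteq[0,1]$ of Definition~\ref{defn:pdip-alphatheta} and showing that its reflection $\{1-x\colon x\in\mathcal{R}\}$ has the same law; this self-duality of the $(\alpha,\alpha)$ regenerative set is Corollary~10.2 of \cite{GnedPitm05}, established there from the explicit joint law of the ordered block masses together with the insertion/deletion symmetry of the $(\alpha,\alpha)$ regenerative composition structure.
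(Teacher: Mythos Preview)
The paper does not supply its own proof of this lemma; it is stated with attribution to Corollary~10.2 of \cite{GnedPitm05} and used as a black box. Your Bessel-bridge argument is correct and is in fact the most transparent route: the identification of $\mathtt{PDIP}^{(\alpha)}(\alpha)$ with the excursion intervals of a $2\alpha$-dimensional Bessel bridge is stated explicitly in the introduction of the paper (with reference to \cite{PitmYorPDAT}), and the time-reversal invariance of the bridge from $0$ to $0$ is classical, so both ingredients are available. The alternative route you sketch via the self-duality of the $(\alpha,\alpha)$ multiplicative regenerative set is precisely the content of the cited corollary in \cite{GnedPitm05}, which is derived there from the symmetry of the regenerative composition structure rather than from any Bessel-bridge realisation. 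Your approach has the advantage of being essentially a one-line geometric observation once the bridge representation is granted; the Gnedin--Pitman approach is more intrinsic to the combinatorial definition and does not require importing the bridge identification.
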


Let $(\beta^y,y\ge 0)$ be an $\mathrm{SSIP}^{(\alpha)}(\alpha)$-evolution starting from any $\beta_0\in\cI_H$. Recall that Proposition \ref{prop:sym} claims that left-right-reversing $\beta^y$, $y\ge 0$, yields another ${\rm SSIP}^{(\alpha)}(\alpha)$-evolution. 

\begin{proof}[Proof of Proposition \ref{prop:sym}]
        By Definition \ref{def:kernel:sp} and Lemma \ref{lem:rev-pdip}, the marginal distribution of ${\rm rev}(\beta^y)$ is, as required for an 
        ${\rm SSIP}^{(\alpha)}(\alpha)$-evolution starting from ${\rm rev}(\beta_0)$.	As the initial state $\beta_0$ was arbitrary and because of the Markov property and the 
        identity of the marginal distributions, we identify the finite-dimensional distributions. 
        Finally, we note that $({\rm rev}(\beta^y),\,y\ge 0)$ is also path-continuous, and this completes the identification as an ${\rm SSIP}^{(\alpha)}(\alpha)$-evolution. 
\end{proof}
\begin{corollary}\label{cor:sym}
 An $\mathrm{RSSIP}^{(\alpha)}(\alpha)$-evolution is an $\mathrm{SSIP}^{(\alpha)}(\alpha)$-evolution.
\end{corollary}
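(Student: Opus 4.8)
The plan is to obtain the corollary directly from Proposition~\ref{prop:sym} together with the elementary observation that left-right reversal is an involution on $(\cI_H,d_H)$. First I would unwind Definition~\ref{defn:rssip}: fix $\beta_0\in\cI_H$ and let $(\beta^y,\,y\ge 0)$ be an $\mathrm{SSIP}^{(\alpha)}(\alpha)$-evolution started from $\mathrm{rev}(\beta_0)$, so that by definition $(\mathrm{rev}(\beta^y),\,y\ge 0)$ is the $\mathrm{RSSIP}^{(\alpha)}(\alpha)$-evolution started from $\beta_0$.

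Next I would apply Proposition~\ref{prop:sym} to the $\mathrm{SSIP}^{(\alpha)}(\alpha)$-evolution $(\beta^y,\,y\ge 0)$, whose initial state is $\mathrm{rev}(\beta_0)$. The proposition yields that $(\mathrm{rev}(\beta^y),\,y\ge 0)$ is again an $\mathrm{SSIP}^{(\alpha)}(\alpha)$-evolution, now started from $\mathrm{rev}(\mathrm{rev}(\beta_0))$. Since $\mathrm{rev}\colon\cI_H\to\cI_H$ preserves total mass and hence satisfies $\mathrm{rev}(\mathrm{rev}(\beta))=\beta$ for every $\beta\in\cI_H$, this initial state is $\beta_0$. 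Therefore the $\mathrm{RSSIP}^{(\alpha)}(\alpha)$-evolution started from $\beta_0$ agrees, as a process, with an $\mathrm{SSIP}^{(\alpha)}(\alpha)$-evolution started from $\beta_0$; as $\beta_0$ was arbitrary, this is exactly the assertion of the corollary.

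There is essentially no obstacle here: all the substance is contained in Proposition~\ref{prop:sym}, and the only point worth recording is the trivial involutivity of $\mathrm{rev}$, immediate from $\mathrm{rev}(\beta)=\{(\|\beta\|-b,\|\beta\|-a)\colon(a,b)\in\beta\}$ and $\|\mathrm{rev}(\beta)\|=\|\beta\|$. If desired, one could add a remark that this simultaneously re-derives path-continuity and the Hunt property for $\mathrm{RSSIP}^{(\alpha)}(\alpha)$-evolutions as the $\theta=\alpha$ case of Proposition~\ref{prop:Hunt+totalmass}, but this is not required for the statement.
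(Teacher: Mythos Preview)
Your proof is correct and follows exactly the same route as the paper: unwind Definition~\ref{defn:rssip}, apply Proposition~\ref{prop:sym}, and use that $\mathrm{rev}$ is an involution. The paper's version is slightly terser (it does not spell out $\mathrm{rev}(\mathrm{rev}(\beta_0))=\beta_0$), but the argument is identical.
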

\begin{proof}
For any $\beta_0\in \cI_H$, let $\beta:=(\beta^y,y\ge 0)$ be an $\mathrm{SSIP}^{(\alpha)}(\alpha)$-evolution starting from ${\rm rev}(\beta_0)$. 
By definition, $({\rm rev}(\beta^y),y\ge 0)$ is a $\mathrm{RSSIP}^{(\alpha)}(\alpha)$-evolution starting from $\beta_0$. 
At the same time, it follows from Proposition~\ref{prop:sym} that $({\rm rev}(\beta^y),y\ge 0)$ 
	is an $\mathrm{SSIP}^{(\alpha)}(\alpha)$-evolution starting from $\beta_0$. 
\end{proof}

We also recall the following relationship between ${\rm SSIP}^{(\alpha)}(0)$-evolutions and ${\rm SSIP}^{(\alpha)}(\alpha)$-evolutions, which we will apply as it stands and in combination with Definition~\ref{defn:rssip} as a relationship between $\mathrm{RSSIP}^{(\alpha)}(0)$-evolutions and ${\rm RSSIP}^{(\alpha)}(\alpha)$-evolutions. 

\begin{lemma}[Proposition 3.15 of \cite{Paper1-1}]\label{lem:split} For $m>0$ and $\gamma\in\mathcal{I}_H$, consider on the one hand an 
  ${\rm SSIP}^{(\alpha)}(0)$-evolution $(\widetilde{\beta}^y,y\ge 0)$ starting from $\{(0,m)\}\concat\gamma$, and on the one hand independent 
  $\mathbf{f}\sim{\tt BESQ}_m(-2\alpha)$ and an ${\rm SSIP}^{(\alpha)}(\alpha)$-evolution $(\beta^y,y\ge 0)$ starting from $\gamma$. 
  Denote by $Y$ the infimum of jump times of the leftmost block of $(\widetilde{\beta}^y,y\ge 0)$. Then
  $$\Big(\widetilde{\beta}^y,\,y\in[0,Y)\Big)\overset{d}{=}\Big(\{(0,\mathbf{f}(y))\}\concat\beta^y,\,y\in[0,\zeta(\mathbf{f}))\Big).$$
\end{lemma}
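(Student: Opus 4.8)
The plan is to work within the construction of ${\rm SSIP}^{(\alpha)}(\theta)$-evolutions from marked stable L\'evy processes (``clades'') recalled in Section~\ref{sec:prel:clades}, and to decompose the leftmost clade of the ${\rm SSIP}^{(\alpha)}(0)$-evolution into its initial spindle and its descendant structure. In that construction, the ${\rm SSIP}^{(\alpha)}(0)$-evolution $(\widetilde\beta^y,y\ge 0)$ started from $\{(0,m)\}\concat\gamma$ is the \skewer{} of a concatenation of independent clades: a clade $\mathbf{C}$ generated by the initial block $(0,m)$, whose distinguished leftmost spindle $\mathbf{f}$ is a ${\tt BESQ}_m(-2\alpha)$, and clades generated by the blocks of $\gamma$. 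The first thing I would record is that, at every level $y<\zeta(\mathbf{f})$, the spindle $\mathbf{f}$ remains the leftmost block of $\widetilde\beta^y$, since all other spindles of $\mathbf{C}$ and of the clades of $\gamma$ lie strictly to its right, whereas at $y=\zeta(\mathbf{f})$ the spindle mass reaches $0$ continuously and the leftmost block jumps to the next surviving block. This identifies $Y=\zeta(\mathbf{f})$ almost surely and the leftmost block with $\mathbf{f}\sim{\tt BESQ}_m(-2\alpha)$, matching the left-hand factor of the claimed identity together with its time horizon.

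Next I would carry out the skewer decomposition. For $y<\zeta(\mathbf{f})$, the clade $\mathbf{C}$ splits as its initial spindle together with the descendant clades it emits along $\mathbf{f}$, so that $\widetilde\beta^y=\{(0,\mathbf{f}(y))\}\concat R^y$, where $R^y$ is the concatenation of the \skewer{} of the descendant structure of $\mathbf{C}$ with the \skewer{}s of the clades of $\gamma$. The \skewer{}s of the clades of $\gamma$ alone form an ${\rm SSIP}^{(\alpha)}(0)$-evolution started from $\gamma$, so the content of the lemma is that appending the descendant structure of the single spindle $\mathbf{f}$ on the left upgrades the immigration parameter from $0$ to $\alpha$ and does so independently of $\mathbf{f}$. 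Concretely, I would aim to show that $(R^y,\,y<\zeta(\mathbf{f}))$ is an ${\rm SSIP}^{(\alpha)}(\alpha)$-evolution started from $\gamma$, independent of $\mathbf{f}$, which together with the first step yields the identity in law.

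The mechanism I would use for both the rate $\alpha$ and the independence is the additivity property of squared Bessel processes. The total mass of the leftmost clade $\mathbf{C}$ is a ${\tt BESQ}_m(0)$, which decomposes as the sum of two \emph{independent} processes ${\tt BESQ}_m(-2\alpha)$ and ${\tt BESQ}_0(2\alpha)$; the first is the spindle mass $\mathbf{f}$ and the second is exactly the total mass of pure ${\rm SSIP}^{(\alpha)}(\alpha)$-immigration, i.e.\ of an ${\rm SSIP}^{(\alpha)}(\alpha)$-evolution started from $\emptyset$. At the interval-partition level this additivity is realised in the clade construction by the Poisson point process of descendant clades emitted along $\mathbf{f}$, and its static shadow is precisely the decomposition \eqref{eq:pdip:0-alpha} of ${\tt PDIP}^{(\alpha)}(0)$ into a leftmost block and a scaled ${\tt PDIP}^{(\alpha)}(\alpha)$. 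A convenient one-level consistency check is that the \skewer{} of $\mathbf{C}$ at level $y$ has law $\mu_{m,1/2y}^{(\alpha)}$, with leftmost block $L_{m,1/2y}^{(\alpha)}$ of Laplace transform \eqref{LMBintro} and remainder a ${\tt Gamma}(\alpha,1/2y)$-scaled ${\tt PDIP}^{(\alpha)}(\alpha)$; restricting to the survival regime $y<\zeta(\mathbf{f})$ makes the leftmost block equal $\mathbf{f}(y)$ and leaves exactly the ${\tt Gamma}(\alpha,1/2y)$-scaled ${\tt PDIP}^{(\alpha)}(\alpha)$ that appears as the immigration term of $\kappa_y^{\alpha,\alpha}$ in Definition~\ref{def:kernel:sp}.

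I expect the main obstacle to be precisely this last step: promoting the total-mass additivity and the one-level marginal identity to an equality of the full level-indexed, interval-partition-valued processes, with independence from $\mathbf{f}$ established \emph{jointly} over all levels $y<\zeta(\mathbf{f})$. This requires the excursion-theoretic description of the intensity of descendant clades emitted along $\mathbf{f}$, together with the self-similarity and the branching structure of the clade construction, to show that the descendant structure of a single ${\tt BESQ}(-2\alpha)$ spindle is, as a process, an ${\rm SSIP}^{(\alpha)}(\alpha)$-immigration decoupled from the spindle itself. Once this is in hand, concatenating with the \skewer{}s of the clades of $\gamma$ and invoking the branching property of ${\rm SSIP}^{(\alpha)}(\alpha)$-evolutions completes the identification of $(R^y)$ and hence the proof.
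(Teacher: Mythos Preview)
The paper does not prove this lemma; it is quoted verbatim from \cite{Paper1-1} (their Proposition~3.15), so there is no in-paper argument to compare against. Your outline is nonetheless the right one, and it is in fact the approach of the cited source: realise $(\widetilde\beta^y)$ via the clade construction, so that the leftmost clade is $\clade(\mathbf f,\mathbf N)=\delta(0,\mathbf f)+\mathbf N|_{(0,T_{-\zeta(\mathbf f)}]}$ with $\mathbf f\sim\besq_m(-2\alpha)$; then $Y=\zeta(\mathbf f)$ and, for $y<\zeta(\mathbf f)$, the skewer splits as $\{(0,\mathbf f(y))\}\concat R^y$ with $R^y$ the skewer of the descendant clades of $\mathbf f$ concatenated with the skewers of the $\gamma$-clades.

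Where you flag the ``main obstacle'' --- promoting the one-level checks to a process-level identity with independence from $\mathbf f$ --- the paper already records the precise tool you need: Lemma~\ref{lem:descent}. That lemma says that, conditionally on $\zeta(\mathbf f)=s$, the point measure $\cev{\mathbf F}_m$ of descendant clades along $\mathbf f$ is a Poisson random measure on $[0,s]\times\mathcal N$ with intensity $\mathrm{Leb}\otimes\nu^{(\alpha)}_{\perp\mathrm{cld}}$, i.e.\ exactly the restriction to $[0,s]$ of $\mathbf P^{\alpha,\alpha}_\emptyset$ (since $\theta/\alpha=1$ when $\theta=\alpha$). Because $\mathbf N$ is independent of $\mathbf f$ and $\cev{\mathbf F}_m$ is a measurable function of $\mathbf N$ restricted by $\zeta(\mathbf f)$, this conditional law depends on $\mathbf f$ only through $s$, so $\cev{\mathbf F}_m$ is conditionally independent of the path of $\mathbf f$ given $\zeta(\mathbf f)$. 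Feeding $\cev{\mathbf F}_m$ into the construction of Proposition~\ref{prop:cladeconstr2} and concatenating with the independent $\mathbf N_\gamma\sim\mathbf P^{\alpha,0}_\gamma$ via Proposition~\ref{prop:concat} gives that, conditionally on $\mathbf f$, the process $(R^y,\,0\le y<\zeta(\mathbf f))$ is distributed as an ${\rm SSIP}^{(\alpha)}(\alpha)$-evolution from $\gamma$ restricted to $[0,\zeta(\mathbf f))$ --- which is precisely the conditional law of $(\beta^y,\,0\le y<\zeta(\mathbf f))$ when $(\beta^y)$ is independent of $\mathbf f$. This closes the gap you were worried about; your heuristic $\besq$-additivity and the static decomposition \eqref{eq:pdip:0-alpha} are consistent shadows of this, but the actual engine is Lemma~\ref{lem:descent}.
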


Finally, we recall the following consequence of the form of the semi-groups of ${\rm SSIP}^{(\alpha)}(\theta)$-evolutions.

\begin{proposition}\label{prop:concat} Consider an independent pair consisting of an ${\rm SSIP}^{(\alpha)}(\theta)$-evolution 
  $(\beta_1^y,\,y\ge 0)$ starting from $\beta_1^0\in\mathcal{I}_H$ and an ${\rm SSIP}^{(\alpha)}(0)$-evolution $(\beta_2^y,\,y\ge 0)$ 
  starting from $\beta_2^0\in\mathcal{I}_H$. Then $(\beta_1^y\concat\beta_2^y,\,y\ge 0)$ is an ${\rm SSIP}^{(\alpha)}(\theta)$-evolution 	
  starting from $\beta_1^0\concat\beta_2^0$. 
\end{proposition}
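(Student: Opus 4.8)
\textbf{Proof plan for Proposition \ref{prop:concat}.}
The plan is to verify the claim at the level of one-dimensional marginals and then upgrade to finite-dimensional distributions via the Markov property and path-continuity, exactly in the spirit of the proof of Proposition \ref{prop:sym}. First I would recall that by Definition \ref{def:kernel:sp}, the transition kernel $\kappa_y^{\alpha,\theta}$ applied to a concatenation $\beta_1^0\concat\beta_2^0$ produces the law of
\[
G^y\overline{\beta}_0\concat\Concat_{U\in\beta_1^0}\beta_U^y\concat\Concat_{U\in\beta_2^0}\beta_U^y,
\]
with $G^y\sim{\tt Gamma}(\theta,1/2y)$, $\overline{\beta}_0\sim{\tt PDIP}^{(\alpha)}(\theta)$, and $\beta_U^y\sim\mu_{{\rm Leb}(U),1/2y}^{(\alpha)}$ independent over all blocks $U$; here I use that the two families of per-block laws $\mu_{{\rm Leb}(U),1/2y}^{(\alpha)}$ depend only on the individual block masses and not on which of the two initial partitions the block came from. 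On the other hand, $\kappa_y^{\alpha,\theta}(\beta_1^0,\,\cdot\,)$ is the law of $G^y\overline{\beta}_0\concat\Concat_{U\in\beta_1^0}\beta_U^y$ and $\kappa_y^{\alpha,0}(\beta_2^0,\,\cdot\,)$ is the law of $\Concat_{U\in\beta_2^0}\beta_U^y$ (the $\theta=0$ kernel has no immigration term $G^y\overline{\beta}_0$, since ${\tt Gamma}(0,\cdot)=\delta_0$ and ${\tt PDIP}^{(\alpha)}(0)$ contributes nothing once scaled by $0$). Concatenating these two independent random partitions therefore gives precisely the displayed law, so the time-$y$ marginal of $(\beta_1^y\concat\beta_2^y)$ agrees with $\kappa_y^{\alpha,\theta}(\beta_1^0\concat\beta_2^0,\,\cdot\,)$ for every $y>0$.

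Next I would promote this marginal identity to an identity of finite-dimensional distributions. Since $\beta_1^0\concat\beta_2^0$ was an arbitrary element of $\mathcal{I}_H$ of the special concatenated form — but in fact every interval partition trivially decomposes as a concatenation in many ways, and more importantly the marginal computation above holds for the particular pair we started with — I would argue inductively: conditionally on $(\beta_1^z\concat\beta_2^z,\,z\le y)$, the pair $(\beta_1^y,\beta_2^y)$ is, by the (independent) Markov property of the two constituent evolutions and the fact that $\beta_1^y\concat\beta_2^y$ determines $\beta_1^y$ and $\beta_2^y$ only up to the split point $\|\beta_1^y\|$, distributed as an independent pair of ${\rm SSIP}^{(\alpha)}(\theta)$- and ${\rm SSIP}^{(\alpha)}(0)$-evolutions restarted from $\beta_1^y$ and $\beta_2^y$; applying the one-step marginal identity at the next time increment and iterating over a finite time grid identifies all finite-dimensional distributions with those governed by $\kappa^{\alpha,\theta}$. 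Finally, path-continuity of $(\beta_1^y\concat\beta_2^y,\,y\ge 0)$ in $(\mathcal{I}_H,d_H)$ follows from path-continuity of the two constituents together with the fact that concatenation $\mathcal{I}_H\times\mathcal{I}_H\to\mathcal{I}_H$ is continuous (a correspondence for each factor yields a correspondence for the concatenation whose distortion is controlled by the sum of the two distortions), which completes the identification as an ${\rm SSIP}^{(\alpha)}(\theta)$-evolution.

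The main obstacle is the bookkeeping in the conditioning step: the concatenated process $\beta_1^y\concat\beta_2^y$ does not by itself remember where the split between the two halves lies, so one must be slightly careful that the natural filtration generated by the pair process, rather than by the concatenation alone, is the one with respect to which the Markov property is being applied — and then note that this is harmless because the statement concerns the law of the concatenated process, whose finite-dimensional distributions are pushforwards of those of the pair. Everything else is a routine transcription of the argument already used for Proposition \ref{prop:sym}, using in addition the compatibility of the per-block kernels $\mu_{b,r}^{(\alpha)}$ with concatenation and the vanishing of the $\theta=0$ immigration term.
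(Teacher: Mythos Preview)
The paper does not spell out a proof of this proposition; it simply records it as ``the following consequence of the form of the semi-groups of ${\rm SSIP}^{(\alpha)}(\theta)$-evolutions.'' Your argument is exactly the intended one --- the per-block structure of $\kappa_y^{\alpha,\theta}$ together with the vanishing of the immigration term when $\theta=0$ gives the marginal identity, the Markov property of the pair promotes it to finite-dimensional distributions, and path-continuity finishes --- and your handling of the filtration subtlety (working in the filtration of the pair and pushing forward) is correct.
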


\section{$\mathrm{SSIP}_\dagger^{(\alpha)}(\theta_1,\theta_2)$-evolutions for $\theta_1,\theta_2\ge 0$, and the proof of Theorem \ref{thm:hunt}.}\label{sec:ip}
  
\subsection{Triple-valued $\mathrm{SSIP}_{\!\dagger}^{(\alpha)} (\theta_1, \theta_2)$-evolutions.}
Fix $\alpha\in (0,1)$. 
Let $\cJ:=\big(\cI_H \times  (0,\infty)\times \cI_H\big) \cup \{(\emptyset,0,\emptyset)\} $ and equip $\cJ$ with the metric 
\[
d_{\cJ} (( \beta_1,m, \beta_2), ( \beta'_1, m',\beta'_2)) := d_{H}(\beta_1,\beta'_1)+|m\!-\!m'|+ d_{H}(\beta_2,\beta'_2).
\] The space $(\cJ, d_{\cJ})$ is a Borel subset of a Polish space, since $(\cI_H, d_{H})$ is a Polish space by Lemma~\ref{lem:cI}.  

We define a function $\phi: ~ \cI_H \to \cJ$, as follows. Let $\beta\in \cI_H$. For the purpose of defining $\phi(\beta)$, let $U$ be the longest interval in $\beta$; if the longest interval is not unique, then we take $U$ to be the leftmost longest interval. Then we set
\begin{equation}\label{eq:phi}
\phi(\beta) := ( \beta \cap (0,\inf U), \mathrm{Leb}(U), \beta \cap (\sup U, \|\beta\|) - \sup U ). 
\end{equation}
By convention, $\phi(\emptyset):= (\emptyset,0,\emptyset)$. 

\begin{definition}[Triple-valued $\mathrm{SSIP}_{\dagger}^{(\alpha)}(\theta_1,\theta_2)$-evolution] \label{defn:ipe}
	Let $\theta_1 ,\theta_2 \ge 0$ and $(\beta_1^0,m^0,\beta_2^0)\in \cJ$. 
	We define a \emph{$\cJ$-valued $\mathrm{SSIP}_{\!\dagger}^{(\alpha)}( \theta_1, \theta_2)$-evolution} $(( \beta_1^y,m^y, \beta_2^y), y\ge 0)$ starting from $(\beta_1^0,m^0,\beta_2^0)$ by the following construction. 
	
	Set $T_0 := 0$. 
	For $n\ge 0$, suppose by induction that we have constructed the process for the time interval $[0,T_n]$. 
	\begin{itemize}
		\item If $( \beta_1^{T_{n}},m^{T_{n}},  \beta_2^{T_{n}}) \!=\!(\emptyset, 0,\emptyset)$, then we set 
		$T_{i}:= T_n$ for every $i\ge n+1$, and $( \beta_1^y,m^y, \beta_2^y) := (\emptyset,0, \emptyset)$, $y\ge T_n$.
		\item If $( \beta_1^{T_{n}},m^{T_{n}},  \beta_2^{T_{n}}) \!\ne\! (\emptyset, 0,\emptyset)$, then conditionally on the history, consider, independently, an $\mathrm{SSIP}^{(\alpha)}( \theta_1)$-evolution $\gamma^{(n)}_1$ starting from $\beta_1^{T_n}$,   an  $\mathrm{RSSIP}^{(\alpha)}(\theta_2)$-evolution  $\gamma^{(n)}_2$ starting from $\beta_2^{T_n}$, and $\ff^{(n)}\sim \besq_{m^{T_n}}(-2 \alpha)$. Set
		\[T_{n+1}:= T_n + \zeta(\ff^{(n)}),\qquad
		\left( \beta_1^{T_n+y}, m^{T_n+y}, \beta_2^{T_n+y}\right) := \left( \gamma^{(n)}_1(y) ,\ff^{(n)}(y) ,\gamma^{(n)}_2(y) \right), \qquad 0\le y<\zeta(\ff^{(n)}). 
		\]
		Furthermore, with $\phi$ the function defined in \eqref{eq:phi}, we set
		\[
		\left( \beta_1^{T_{n+1}}, m^{T_{n+1}}, \beta_2^{T_{n+1}}\right)
		:=
		\phi\left(\beta_1^{T_{n+1}-} \concat \beta_2^{T_{n+1}-}\right).	
		\]
	\end{itemize}
	We refer to $T_n$, $n\ge1$, as the \emph{renaissance times} and $T_{\infty}:= \sup_{n\ge 1} T_n \in [0, \infty]$ as the \emph{degeneration time}. If $T_{\infty}< \infty$, then by convention we set  
	$( \beta_1^y, m^y,\beta_2^y) :=( \emptyset,0, \emptyset)$ for all $y\ge  T_{\infty}$. 
\end{definition}

By construction, the process $(\beta^y:=\beta_1^y \concat \{(0,m^y)\}\concat\beta_2^y,\, y\ge 0)$ satisfies the Definition \ref{def:stopSSIP} of an
\emph{$\cI_H$-valued $\mathrm{SSIP}_{\!\dagger}^{(\alpha)}( \theta_1, \theta_2)$-evolution} starting from $\beta^0:=\beta_1^0 \concat \{(0,m^0)\}\concat\beta_2^0$.

The following observation is a direct consequence of the construction. 

\begin{proposition}[Left-right reversal]
	For $\theta_1,\theta_2 \ge 0$, let  $((\beta_1^y, m^y,  \beta_2^y), y\ge 0)$ be a $\cJ$-valued $\mathrm{SSIP}_{\!\dagger}^{(\alpha)}( \theta_1, \theta_2)$-evolution and $(\beta^y, y\ge 0)$ its associated $\cI_H$-valued process. 
	Then  $\big((\mathrm{rev}(\beta_2^y), m^y,  \mathrm{rev}(\beta_1^y)), y\ge 0\big)$ is a $\cJ$-valued $\mathrm{SSIP}_{\!\dagger}^{(\alpha)}( \theta_2, \theta_1)$-evolution, and its associated $\cI_H$-valued process is $(\mathrm{rev} (\beta^y), y\ge 0)$. 
\end{proposition}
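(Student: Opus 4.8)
The plan is to check that reversing the triple and swapping the two immigration parameters turns each step of the construction in Definition~\ref{defn:ipe} into the corresponding step of the construction of an $\mathrm{SSIP}_\dagger^{(\alpha)}(\theta_2,\theta_1)$-evolution, and then to conclude by induction on the renaissance times. Write $\widetilde\beta_1^y:=\mathrm{rev}(\beta_2^y)$, $\widetilde m^y:=m^y$, $\widetilde\beta_2^y:=\mathrm{rev}(\beta_1^y)$. Since $\mathrm{rev}$ fixes $\emptyset$ and is a total-mass-preserving bijection of $\cI_H$ with $\mathrm{rev}(\gamma\concat\gamma')=\mathrm{rev}(\gamma')\concat\mathrm{rev}(\gamma)$ and $\mathrm{rev}(\{(0,m)\})=\{(0,m)\}$, the process $((\widetilde\beta_1^y,\widetilde m^y,\widetilde\beta_2^y),y\ge 0)$ is $\cJ$-valued, starts from $(\mathrm{rev}(\beta_2^0),m^0,\mathrm{rev}(\beta_1^0))$, and satisfies $\widetilde\beta_1^y\concat\{(0,\widetilde m^y)\}\concat\widetilde\beta_2^y=\mathrm{rev}(\beta^y)$; this identifies its associated $\cI_H$-valued process once the first assertion is proved.

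For the first assertion I would induct on $n$, with hypothesis that $T_n$ is also the $n$th renaissance time of $(\widetilde\beta_1,\widetilde m,\widetilde\beta_2)$ and $(\widetilde\beta_1^{T_n},\widetilde m^{T_n},\widetilde\beta_2^{T_n})=(\mathrm{rev}(\beta_2^{T_n}),m^{T_n},\mathrm{rev}(\beta_1^{T_n}))$. The degenerate branch is frozen at the $\mathrm{rev}$-fixed point $(\emptyset,0,\emptyset)$, so assume otherwise. On $[T_n,T_{n+1})$ the original process is built from an independent triple $\gamma_1^{(n)}$ (an $\mathrm{SSIP}^{(\alpha)}(\theta_1)$-evolution from $\beta_1^{T_n}$), $\gamma_2^{(n)}$ (an $\mathrm{RSSIP}^{(\alpha)}(\theta_2)$-evolution from $\beta_2^{T_n}$) and $\ff^{(n)}\sim\besq_{m^{T_n}}(-2\alpha)$. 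By Definition~\ref{defn:rssip} (and since $\mathrm{rev}$ is an involution), $\mathrm{rev}(\gamma_2^{(n)})$ is an $\mathrm{SSIP}^{(\alpha)}(\theta_2)$-evolution from $\mathrm{rev}(\beta_2^{T_n})$ and $\mathrm{rev}(\gamma_1^{(n)})$ is an $\mathrm{RSSIP}^{(\alpha)}(\theta_1)$-evolution from $\mathrm{rev}(\beta_1^{T_n})$; these stay independent of each other and of $\ff^{(n)}\sim\besq_{\widetilde m^{T_n}}(-2\alpha)$, and $\zeta(\ff^{(n)})$ is unchanged. Hence $(\widetilde\beta_1^{T_n+y},\widetilde m^{T_n+y},\widetilde\beta_2^{T_n+y})=(\mathrm{rev}(\gamma_2^{(n)})(y),\ff^{(n)}(y),\mathrm{rev}(\gamma_1^{(n)})(y))$ is exactly a valid block of the $\mathrm{SSIP}_\dagger^{(\alpha)}(\theta_2,\theta_1)$-construction on $[T_n,T_{n+1})$.

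It remains to match the reset at $T_{n+1}$, which reduces to the claim that $\phi$ intertwines $\mathrm{rev}$ with the relabelling: if $\phi(\gamma)=(\gamma_1,m,\gamma_2)$ then $\phi(\mathrm{rev}(\gamma))=(\mathrm{rev}(\gamma_2),m,\mathrm{rev}(\gamma_1))$, applied to $\gamma:=\beta_1^{T_{n+1}-}\concat\beta_2^{T_{n+1}-}=\mathrm{rev}(\widetilde\beta_1^{T_{n+1}-}\concat\widetilde\beta_2^{T_{n+1}-})$. When $\gamma$ has a \emph{unique} longest block $U$, the unique longest block of $\mathrm{rev}(\gamma)$ is the image of $U$, and a short unwinding of \eqref{eq:phi} and of the definition of $\mathrm{rev}$ gives the identity (the left part of $\mathrm{rev}(\gamma)$ is the reversal of the right part of $\gamma$, and vice versa). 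The point needing care is that $\phi$ breaks ties by taking the \emph{leftmost} longest block, which is not preserved by $\mathrm{rev}$, so the identity can fail if the longest block of $\gamma$ is not unique. The main (and only real) obstacle is therefore to argue that this partition almost surely has a unique longest block; I would derive this from the fact that the marginals of $\mathrm{SSIP}^{(\alpha)}(\theta_j)$-evolutions at fixed times have a.s.\ pairwise distinct block masses, together with the independence of $\ff^{(n)}$ from $\gamma_1^{(n)},\gamma_2^{(n)}$ (so that $\beta_1^{T_{n+1}-}$ and $\beta_2^{T_{n+1}-}$ are independent such marginals evaluated at a time independent of them, whence no ties within or across the two pieces). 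Granting this, $\phi(\mathrm{rev}(\gamma))=(\mathrm{rev}(\beta_2^{T_{n+1}}),m^{T_{n+1}},\mathrm{rev}(\beta_1^{T_{n+1}}))$, which closes the induction; $T_\infty$ and the convention beyond it are preserved because $\mathrm{rev}(\emptyset)=\emptyset$.
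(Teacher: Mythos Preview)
Your proposal is correct and follows essentially the same approach as the paper's proof: both verify inductively that the reversed triple satisfies the defining construction of a $\cJ$-valued $\mathrm{SSIP}_\dagger^{(\alpha)}(\theta_2,\theta_1)$-evolution, identify $\mathrm{rev}(\gamma_2^{(n)})$ and $\mathrm{rev}(\gamma_1^{(n)})$ as the required $\mathrm{SSIP}^{(\alpha)}(\theta_2)$- and $\mathrm{RSSIP}^{(\alpha)}(\theta_1)$-evolutions, and handle the reset at $T_{n+1}$ by arguing that the longest block of $\beta^{T_{n+1}-}$ is a.s.\ unique (the paper appeals to diffuseness of the transition kernels in Definition~\ref{def:kernel:sp} together with the independence of $(\zeta(\ff^{(n)}),\gamma_1^{(n)},\gamma_2^{(n)})$, which is exactly your proposed justification). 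Your write-up is in fact a bit more explicit than the paper's about the $\phi$-$\mathrm{rev}$ intertwining and the role of the tie-breaking convention.
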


\begin{proof}
	By the construction in Definition~\ref{defn:ipe}, we have the identity, for every $n\ge 0$ and $y\in [T_n, T_{n+1})$,
	\[
	\left(\mathrm{rev}(\beta_2^y), m^y, \mathrm{rev}(\beta_1^y)\right) = \left( \mathrm{rev}\big(\gamma^{(n)}_2(y\!-\! T_n)\big) ,\ff^{(n)}(y\!-\!T_n) ,\mathrm{rev}\big(\gamma^{(n)}_1(y\!-\!T_n)\big) \right).
	\] 
	Since the distributions that determine block sizes in the transition kernels of Definition \ref{def:kernel:sp} are diffuse and by the 
	independence properties of $(\zeta(\ff^{(n)}),\gamma_1^{(n)},\gamma_2^{(n)})$, the longest interval in 
	$\beta^{T_{n+1}-}\!=\beta_1^{T_{n+1}-}\!\concat\beta_2^{T_{n+1}-}$ is a.s.\@ unique, and therefore
	\[
	\left(\mathrm{rev}( \beta_2^{T_{n+1}}), m^{T_{n+1}}, \mathrm{rev}(\beta_1^{T_{n+1}})\right)
	= \phi\left(\mathrm{rev}(\beta^{T_{n+1}-})\right)= \phi\left(\mathrm{rev}(\beta_2^{T_{n+1}-}) \concat \mathrm{rev}(\beta_1^{T_{n+1}-})\right).	
	\]	
	These observations show that $\big((\mathrm{rev}(\beta_2^y), m^y,  \mathrm{rev}(\beta_1^y)), y\ge 0\big)$ satisfies the definition of a $\cJ$-valued $\mathrm{SSIP}_{\!\dagger}^{(\alpha)}( \theta_2, \theta_1)$-evolution. 
\end{proof}

\subsection{The $\cI_H$-valued process.}
The $\cJ$-valued $\mathrm{SSIP}_{\!\dagger}^{(\alpha)}(\theta_1, \theta_2)$-evolution is only 
of secondary importance to us, as we are more interested in its associated $\cI_H$-valued process. 
However, there are two issues to be addressed.  
First, the definition of the $\cI_H$-valued process a priori depends on the initial choice of the ``middle'' block. Even with the natural choice of the longest block, the size of this block is typically exceeded by other blocks during the evolution. To establish the Markov property of the $\cI_H$-valued processes, we will view them as projections of $\cJ$-valued processes. Indeed, we will show that two $\cI_H$-valued ${\rm SSIP}^{(\alpha)}_\dagger(\theta_1,\theta_2)$-evolutions started from different choices of middle block indeed have the same law. Second, it is possible that the renaissance times $T_n$ accumulate, i.e.\@ $T_{\infty}<\infty$;   
we would like to understand the behaviour near the degeneration time $T_{\infty}$. 
The following two lemmas, whose proofs are postponed to Appendix \ref{sec:appx}, deal with these two problems.

\begin{lemma}~\label{lem:consist2}
	Let $\theta_1,\theta_2 \ge 0$. 
	Let  $((\beta_1^y, m^y,  \beta_2^y),\, y\ge 0)$ and $((\widetilde{\beta}_1^y,\widetilde{m}^y,  \widetilde{\beta}_2^y),\, y\ge 0)$ be two $\cJ$-valued $\mathrm{SSIP}_{\!\dagger}^{(\alpha)}(\theta_1, \theta_2)$-evolutions, with $(\beta^y, y\ge 0)$ and $(\widetilde{\beta}^y,\,y\ge 0)$ being their associated $\cI_H$-valued evolutions. 
	Suppose that $\beta^0= \widetilde{\beta}^0$, then we can couple these two processes such that $(\beta^y,\, y \ge 0) = (\widetilde{\beta}^y,\, y \ge 0)$ almost surely. 
\end{lemma}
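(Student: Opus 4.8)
\textbf{Proof plan for Lemma \ref{lem:consist2}.}

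The plan is to prove the stronger statement that two $\cI_H$-valued $\mathrm{SSIP}_{\!\dagger}^{(\alpha)}(\theta_1,\theta_2)$-evolutions with the same initial state can be coupled to agree almost surely, by induction on the renaissance times of one of them, and showing that at each stage the two $\cJ$-valued evolutions can be coupled to share the same sequence of ``middle-block-free'' information. First I would observe that the $\cI_H$-valued evolution, although constructed via a particular choice of middle block, only ever refers to the pair $(\beta_1^y,\beta_2^y)$ and $m^y$ through the concatenation $\beta^y=\beta_1^y\concat\{(0,m^y)\}\concat\beta_2^y$ on each interval $[T_n,T_{n+1})$; on such an interval the law of $(\beta^y)_{y\in[T_n,T_{n+1})}$, given $\beta^{T_n}$, is that of $\gamma_1^{(n)}(y-T_n)\concat\{(0,\ff^{(n)}(y-T_n))\}\concat\mathrm{rev}(\gamma_2^{(n)}(y-T_n))$ run until $\zeta(\ff^{(n)})$, where the three ingredients depend only on the decomposition $\phi(\beta^{T_n})=(\beta_1^{T_n},m^{T_n},\beta_2^{T_n})$, which is itself a deterministic function of $\beta^{T_n}$. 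So the only thing that could differ between the two constructions is the initial decomposition, i.e.\ the choice of middle block at time $0$.

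The key step is therefore to handle the initial mismatch. Write $\phi(\widetilde\beta^0)=(\widetilde\beta_1^0,\widetilde m^0,\widetilde\beta_2^0)$ for the canonical (longest, leftmost) decomposition used by the second process, and let $(\beta_1^0,m^0,\beta_2^0)$ be the arbitrary admissible decomposition used by the first; both satisfy $\beta_1^0\concat\{(0,m^0)\}\concat\beta_2^0=\beta^0=\widetilde\beta^0=\widetilde\beta_1^0\concat\{(0,\widetilde m^0)\}\concat\widetilde\beta_2^0$. I would show that the $\cI_H$-valued evolution with a non-canonical middle block has the same law as the one with the canonical middle block. For this I would use Proposition~\ref{prop:concat} and Lemma~\ref{lem:split} to ``re-glue'': the block of mass $m^0$ chosen by the first process sits somewhere inside $\widetilde\beta_1^0$ or $\widetilde\beta_2^0$ (or is the canonical one), and running an $\mathrm{SSIP}^{(\alpha)}(\theta_1)$-evolution from $\beta_1^0$ can be decomposed, via Proposition~\ref{prop:concat}, as the concatenation of an $\mathrm{SSIP}^{(\alpha)}(\theta_1)$-evolution from the part of $\beta_1^0$ to the left of the canonical block and an $\mathrm{SSIP}^{(\alpha)}(0)$-evolution from the rest; Lemma~\ref{lem:split} then lets us peel off the canonical block as an independent $\besq(-2\alpha)$ together with an $\mathrm{SSIP}^{(\alpha)}(\alpha)$-evolution. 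Carrying out the analogous manipulation on the right using Definition~\ref{defn:rssip} and the reversal of Proposition~\ref{prop:concat}, one identifies the law of the first $\cI_H$-valued evolution, up to the first time the canonical middle block of the second process vanishes, with that of the second. One then couples the two so that they agree on $[0,\widetilde T_1)$ and, at $\widetilde T_1$, both have the same $\cI_H$-state $\widetilde\beta^{\widetilde T_1}$; induct on $n$, noting that $T_\infty$ and $\widetilde T_\infty$ are then a.s.\ equal and both processes are sent to $\emptyset$ thereafter. A small point to dispatch is that whenever a block size is produced by a transition kernel it is a.s.\ strictly positive and the longest block is a.s.\ unique, by diffuseness of the laws in Definition~\ref{def:kernel:sp}, so the canonical decomposition $\phi$ is well-behaved at all renaissance times.

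The main obstacle I anticipate is the bookkeeping in the re-gluing step: the block of mass $m^0$ selected by the first process can lie arbitrarily far inside $\widetilde\beta_1^0$ or $\widetilde\beta_2^0$, so I need Proposition~\ref{prop:concat} (and its reversal) in a form flexible enough to split an $\mathrm{SSIP}^{(\alpha)}(\theta_1)$- or $\mathrm{RSSIP}^{(\alpha)}(\theta_2)$-evolution at an arbitrary block, produce independent $\mathrm{SSIP}^{(\alpha)}(0)$/$\mathrm{RSSIP}^{(\alpha)}(0)$ pieces on either side, and then apply Lemma~\ref{lem:split} to extract the chosen block as an autonomous $\besq(-2\alpha)$; I should also be careful that the ``first jump time of the leftmost block'' appearing in Lemma~\ref{lem:split} matches up, after reversal where needed, with the lifetime $\zeta(\ff^{(n)})$ of the middle block, so that the two descriptions of the evolution genuinely coincide on a common time interval rather than merely in distribution on disjoint pieces. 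Once the single-interval identification is in place, the inductive extension over all renaissance times and the treatment of $T_\infty$ are routine, using path-continuity and the convention that both processes are absorbed at $\emptyset$.
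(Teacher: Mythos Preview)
Your plan correctly identifies the key tools---Proposition~\ref{prop:concat} and Lemma~\ref{lem:split}---and the re-gluing manipulation you describe is exactly what drives the argument. However, your proposed induction on the renaissance times $\widetilde T_n$ of the canonical process alone has a structural gap. The re-gluing step (extracting the canonical block $\widetilde m^0$ as an autonomous $\besq(-2\alpha)$ via Lemma~\ref{lem:split} and regrouping via Proposition~\ref{prop:concat} and its reversal) only establishes the coupling on $[0,T_1\wedge\widetilde T_1)$, not on all of $[0,\widetilde T_1)$: Lemma~\ref{lem:split} is valid only until the extracted block dies, and on the other side the decomposition of the $\mathrm{RSSIP}^{(\alpha)}(\theta_2)$-piece around the non-canonical block $m^0$ is likewise valid only until $\zeta(\ff^{(0)})=T_1$. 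If $T_1<\widetilde T_1$, the first process selects a new middle block at $T_1$ while the second keeps $\widetilde m^0$, and you must re-glue again from this new pair of decompositions; this can recur arbitrarily often before $\widetilde T_1$. Thus the ``single-interval identification'' you propose as the base case is itself an iterated construction whose termination is not guaranteed, and your sentence ``one identifies the law of the first $\cI_H$-valued evolution, up to the first time the canonical middle block of the second process vanishes, with that of the second'' hides precisely the difficulty.

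The paper handles this by making the induction symmetric: it introduces a quintuple-valued process on $\cJ_2:=\cI_H\times(0,\infty)\times\cI_H\times(0,\infty)\times\cI_H$ that tracks \emph{both} middle blocks simultaneously, and inducts on the joint stopping times $S_n$ at which either tracked block dies. At each $S_n$ the dead block is replaced by the current longest interval, unless the surviving tracked block is already longest, in which case the two derived $\cJ$-valued processes merge and the construction terminates at some finite $N$. Proposition~\ref{prop:concat} and Lemma~\ref{lem:split}, applied at each $S_n$ via the strong Markov property, show that both projections of the $\cJ_2$-process are genuine $\cJ$-valued $\mathrm{SSIP}^{(\alpha)}_\dagger(\theta_1,\theta_2)$-evolutions with the required initial states. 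The remaining case $\{N=\infty,\,S_\infty<\infty\}$ is dispatched via Lemma~\ref{lem:dl}: at least one of the two derived $\cJ$-valued processes has its full sequence of renaissance times among the $S_n$, so its total mass tends to zero along a subsequence, forcing the common $\cI_H$-value to $\emptyset$ and preventing the other process from having any surviving positive block at $S_\infty$. Your remark that ``$T_\infty$ and $\widetilde T_\infty$ are then a.s.\ equal'' is the right instinct but does not by itself address this joint accumulation issue.
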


\begin{lemma}\label{lem:dl}
		Let $\theta_1,\theta_2 \ge 0$. Let $(\beta^y,\, y\ge 0)$ be an $\cI_H$-valued  $\mathrm{SSIP}_{\!\dagger}^{(\alpha)}(\theta_1, \theta_2)$-evolution with renaissance times $T_n$, $n\ge 1$, and degeneration time $T_{\infty}$. 
	If $\mathbb{P}(T_\infty<\infty)>0$, then conditionally on $T_{\infty}<\infty$, 
	the total mass $\|\beta^{T_{n}}\|$ converges almost surely to zero as $n\rightarrow \infty$. 
\end{lemma}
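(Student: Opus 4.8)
\textbf{Proof proposal for Lemma~\ref{lem:dl}.}
The plan is to track the total mass process across renaissance times and show that the sequence $(\|\beta^{T_n}\|)_{n\ge 0}$ is, on the event $\{T_\infty<\infty\}$, forced to zero by the finiteness of $T_\infty$ together with quantitative control on the lifetimes of the middle-block ${\tt BESQ}(-2\alpha)$ excursions. First I would recall from Definition~\ref{def:stopSSIP} (equivalently Definition~\ref{defn:ipe}) that on each interval $[T_n,T_{n+1})$ the middle block evolves as $\ff^{(n)}\sim{\tt BESQ}_{m^{T_n}}(-2\alpha)$ absorbed at $0$, so that $T_{n+1}-T_n=\zeta(\ff^{(n)})=\tau_0(\ff^{(n)})$, and that $m^{T_n}={\rm Leb}(U^{(n)})$ is the mass of the longest block of $\beta^{T_n}$, hence $m^{T_n}\ge\|\beta^{T_n}\|/N$ whenever $\beta^{T_n}$ has at most $N$ blocks --- but more usefully, since $U^{(n)}$ is a \emph{longest} block, we always have $m^{T_n}>0$ and, crucially, a comparison in the opposite direction is not what we need: what we need is a lower bound on $m^{T_n}$ in terms of $\|\beta^{T_n}\|$ that fails only when $\beta^{T_n}$ is very fragmented. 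The clean route is instead to use the scaling property: by the self-similarity in Theorem~\ref{thm:hunt} (or directly from the ${\tt BESQ}$ scaling), $\zeta(\ff^{(n)})\overset{d}{=}m^{T_n}\,\zeta(\ff)$ for a fixed $\ff\sim{\tt BESQ}_1(-2\alpha)$, conditionally on the history, and by \cite[Equation (13)]{GoinYor03} recalled in the excerpt, $\zeta(\ff)\overset{d}{=}1/2G$ with $G\sim{\tt Gamma}(\alpha,1)$.

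The key step is then the following: condition on $T_\infty<\infty$. Since $\sum_{n\ge 0}(T_{n+1}-T_n)=T_\infty<\infty$ a.s., we have $T_{n+1}-T_n\to 0$ a.s. Now I would argue that $\|\beta^{T_n}\|$ cannot stay bounded away from $0$ along a subsequence. Indeed, $\|\beta^{T_n}\|\ge m^{T_n}$, so it suffices to show $m^{T_n}\to 0$; and conditionally on $m^{T_n}=m$, the increment $T_{n+1}-T_n$ has the law of $m/2G_n$ with $G_n\sim{\tt Gamma}(\alpha,1)$ independent of the past. Therefore $\mathbb{P}(T_{n+1}-T_n>\varepsilon\mid m^{T_n})=\mathbb{P}(G_n<m^{T_n}/2\varepsilon)$. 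If $m^{T_n}\not\to 0$, there is $\delta>0$ and a (random) infinite set of indices $n$ with $m^{T_n}>\delta$; along these, $\mathbb{P}(T_{n+1}-T_n>\varepsilon\mid\mathcal{F}_{T_n})\ge\mathbb{P}(G<\delta/2\varepsilon)=:p_{\varepsilon,\delta}>0$, a constant. A conditional Borel--Cantelli argument (using that these events, given the filtration at the renaissance times, have conditional probability bounded below on an infinite index set) then forces infinitely many increments to exceed $\varepsilon$, contradicting $T_\infty<\infty$. Hence $m^{T_n}\to 0$ a.s.\ on $\{T_\infty<\infty\}$, and I must finally upgrade this from the middle block to the whole partition.

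To pass from $m^{T_n}\to 0$ to $\|\beta^{T_n}\|\to 0$, I would use that $m^{T_n}$ is the \emph{longest} block of $\beta^{T_n}$: thus all blocks of $\beta^{T_n}$ have mass at most $m^{T_n}$, but this alone does not bound the total mass. Instead I would exploit the structure of the renaissance mechanism: at time $T_{n+1}-$ the state is $\beta_1^{T_{n+1}-}\concat\beta_2^{T_{n+1}-}$ where $\beta_j^{T_{n+1}-}=\gamma_j^{(n)}(\zeta(\ff^{(n)})-)$ (up to reversal), and each $\gamma_j^{(n)}$ is an ${\rm SSIP}^{(\alpha)}(\theta_j)$-evolution whose total mass is a ${\tt BESQ}(2\theta_j)$ started from $\|\beta_j^{T_n}\|$ run for time $\zeta(\ff^{(n)})$. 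So $\|\beta^{T_{n+1}}\|=\|\beta_1^{T_{n+1}-}\|+\|\beta_2^{T_{n+1}-}\|$, the sum of two squared Bessel processes of dimensions $2\theta_1,2\theta_2$ at time $\zeta(\ff^{(n)})\to 0$; and $\|\beta^{T_n}\|=\|\beta_1^{T_{n-1}\dots}\|+m^{T_n}+\dots$. The cleanest formulation: by Theorem~\ref{thm:hunt} the $\cI_H$-valued process has ${\tt BESQ}^\dagger(2\theta)$ total mass $(\|\beta^y\|,y\ge 0)$, which is path-continuous; on $\{T_\infty<\infty\}$ we have $\beta^y=\emptyset$ for $y\ge T_\infty$ by Definition~\ref{def:stopSSIP}, so by path-continuity of the total mass, $\|\beta^{T_n}\|\to\|\beta^{T_\infty}\|=0$ as $n\to\infty$. \emph{However}, this last shortcut is circular if the continuity at $T_\infty$ is exactly what a later part of the paper derives from this lemma; so I expect the genuinely safe argument is the Borel--Cantelli one above giving $m^{T_n}\to 0$, combined with the observation that $\|\beta^{T_n}\|\le\|\beta^{T_n-}\|$ is controlled by the ${\tt BESQ}(2\theta_j)$ masses over the vanishing interval $[T_{n-1},T_n]$ together with $m^{T_{n-1}}$, and a short supermartingale/maximal-inequality estimate on the ${\tt BESQ}$ pieces.

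\textbf{Main obstacle.} The delicate point is the final upgrade from $m^{T_n}\to 0$ (longest middle block vanishing) to $\|\beta^{T_n}\|\to 0$ (total mass vanishing): a priori the partition could remain macroscopic in total mass while being increasingly fragmented, so the longest block being small does not immediately bound the sum. Resolving this requires combining the vanishing of the inter-renaissance times $\zeta(\ff^{(n)})=T_{n+1}-T_n$ with the fact that, over such a short interval, the total masses $\|\beta_j^{y}\|$ of the two side-evolutions (being continuous ${\tt BESQ}(2\theta_j)$ paths) cannot grow much, so that $\|\beta^{T_n}\|$ is, up to a small fluctuation controlled by a ${\tt BESQ}$ maximal inequality, essentially the sum of a bounded number of quantities each comparable to some $m^{T_k}$ for recent $k$ --- and all of those go to $0$. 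Making this precise, i.e.\ quantifying how $\|\beta^{T_n}\|$ is reconstituted from the previous longest-block mass and a short ${\tt BESQ}$ excursion on each side, is the bulk of the work and is exactly what the appendix proof will need to carry out carefully.
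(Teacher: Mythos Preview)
Your first step, showing $m^{T_n}\to 0$ on $\{T_\infty<\infty\}$ via the ${\tt BESQ}(-2\alpha)$ lifetime law and a conditional Borel--Cantelli argument, is correct and matches what the paper does (phrased there as: a subsequence with $m^{T_n}>\varepsilon$ would contribute an i.i.d.\ sum of lifetimes stochastically bounded below, hence infinite). You are also right that invoking the ${\tt BESQ}^\dagger(2\theta)$ total mass and its continuity at $T_\infty$ would be circular: Theorem~\ref{thm:mass-bis} uses Lemma~\ref{lem:dl}, not the other way round.

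The genuine gap is in your proposed upgrade from $m^{T_n}\to 0$ to $\|\beta^{T_n}\|\to 0$. Your sketch says that over the vanishing interval $[T_n,T_{n+1}]$ the side ${\tt BESQ}(2\theta_j)$ masses ``cannot grow much'', so $\|\beta^{T_n}\|$ is ``essentially a sum of a bounded number of quantities each comparable to some $m^{T_k}$''. But this is not how the total mass is structured: $\|\beta^{T_{n+1}}\|$ equals $\|\beta^{T_n}\|$ evolved as a ${\tt BESQ}(2\theta)$ for time $\Delta_n\to 0$, so $\|\beta^{T_{n+1}}\|\approx\|\beta^{T_n}\|$ plus a small fluctuation. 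That is perfectly consistent with $\|\beta^{T_n}\|$ converging to a \emph{positive} limit, and nothing in your argument rules this out. The recent longest-block masses $m^{T_k}$ do not reconstitute the total mass in any bounded way; the partition may carry macroscopic total mass in an ever more fragmented state, and a maximal inequality on a short ${\tt BESQ}$ increment cannot see that.

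The paper's resolution is substantially different. It builds the entire process from the clade construction (Section~\ref{sec:prel:clades}) and introduces an auxiliary \emph{emigration} process $(\beta_{\rm em}^y,y\ge 0)$ that collects precisely the spindles discarded at each renaissance (the clades attached to the dying middle block). This emigration process is shown to be an ${\rm SSIP}^{(\alpha)}(\alpha)$-evolution on $[0,T_\infty)$ that extends continuously across $T_\infty$. One then has the exact identity
\[
\|\beta_1^y\|+m^y+\|\beta_2^y\|=\big\|\gamma_1^{(0)}(y)\big\|+\big\|\gamma_2^{(0)}(y)\big\|+\big\|\skewer\big(y,\delta(0,\ff^{(0)})+\mathbf{N}_{\rm mid}^{(0)}\big)\big\|-\|\beta_{\rm em}^y\|,
\]
valid for all $y<T_\infty$. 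The crucial observation is that any spindle straddling level $T_\infty$ has mass bounded below by some $\varepsilon>0$ near $T_\infty$; since $m^{T_n}\to 0$, such a spindle cannot belong to the ${\rm SSIP}_\dagger$ process, so it must belong to $\beta_{\rm em}$. Hence as $y\uparrow T_\infty$ the right-hand side tends to $0$, because \emph{every} surviving spindle has been transferred to the emigration process. This spindle-accounting argument is what actually closes the gap you identified; your outline does not contain it or an equivalent mechanism.
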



\begin{theorem}[Total mass of  an $\mathrm{SSIP}^{(\alpha)}_\dagger(\theta_1,\theta_2)$-evolution]\label{thm:mass-bis}
	For $\alpha\in (0,1)$ and $\theta_1,\theta_2 \ge 0$, 
	let $(\beta^y,\,y\ge 0)$ be an $\mathrm{SSIP}^{(\alpha)}_\dagger(\theta_1,\theta_2)$-evolution. 
		Then 
		$(\|\beta^y\|,\,y\ge 0)\sim{\tt BESQ}_{\|\beta^0\|}^\dagger(2\theta)$, where $\theta:=\theta_1+\theta_2-\alpha$.
		In other words, $(\|\beta^y\|,\,y\ge 0)$ is a ${\tt BESQ}_{\|\beta^0\|}(2\theta)$ killed at its first hitting time of zero.
\end{theorem}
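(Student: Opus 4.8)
The plan is to build the total mass process by concatenating the three building blocks of Definition~\ref{defn:ipe} and using the additivity of squared Bessel processes. On each interval $[T_n,T_{n+1})$, the total mass is $\|\beta^{T_n+y}\| = \|\gamma_1^{(n)}(y)\| + \ff^{(n)}(y) + \|\gamma_2^{(n)}(y)\|$. By Proposition~\ref{prop:Hunt+totalmass}, $\|\gamma_1^{(n)}\|\sim{\tt BESQ}(2\theta_1)$ and $\|\gamma_2^{(n)}\|\sim{\tt BESQ}(2\theta_2)$ (the latter since ${\rm rev}$ preserves total mass), while $\ff^{(n)}\sim{\tt BESQ}_{m^{T_n}}(-2\alpha)$; and these three are independent given the history up to $T_n$. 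Since the sum of independent ${\tt BESQ}(\delta_1)$, ${\tt BESQ}(\delta_2)$, ${\tt BESQ}(\delta_3)$ processes is a ${\tt BESQ}(\delta_1+\delta_2+\delta_3)$ process (the standard additivity property, e.g.\ via the SDEs: the sum of the martingale parts is again $2\int\sqrt{\text{sum}}\,dB$ for a suitable Brownian motion, by L\'evy's characterisation, since the processes never simultaneously vanish except at the terminal absorption), we get that on $[T_n,T_{n+1})$ the total mass evolves as a ${\tt BESQ}_{\|\beta^{T_n}\|}(2\theta)$ with $2\theta = 2\theta_1+2\theta_2-2\alpha$. Crucially, $T_{n+1}-T_n = \zeta(\ff^{(n)}) = \tau_0(\ff^{(n)})$ is \emph{not} a stopping time of this aggregate ${\tt BESQ}(2\theta)$; so I must argue that the aggregate process does not hit zero before $T_{n+1}$ and that at $T_{n+1}$ it continues as a fresh ${\tt BESQ}(2\theta)$ on the next interval.

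The key step is therefore to show that the global total mass process $(\|\beta^y\|,y\ge 0)$ is, up to $T_\infty$, a single ${\tt BESQ}_{\|\beta^0\|}(2\theta)$ run until its first hitting time of zero, with $T_\infty$ being precisely that hitting time (on $\{T_\infty<\infty\}$). I would do this by a strong Markov / splicing argument: let $Z$ be a ${\tt BESQ}_{\|\beta^0\|}(2\theta)$; define $S_1 := \tau_0(Z)$ restricted appropriately --- actually cleaner is to go the other way. Construct $Z$ on $[0,T_1)$ as the aggregate above; note $\|\beta^{T_1}\| = \|\beta^{T_1-}\|>0$ a.s.\ by path-continuity of the three building blocks and the fact that $\ff^{(n)}$ reaches $0$ continuously while $\gamma_1^{(n)},\gamma_2^{(n)}$ stay positive near $T_1$ unless they have already hit $\emptyset$ (and if one of them is $\emptyset$ throughout, it simply contributes a ${\tt BESQ}$ absorbed at $0$, which is still consistent with additivity). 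Hence the total mass is continuous and strictly positive on $[0,T_1]$, and by the inductive construction together with the conditional independence given the history, the process $(\|\beta^{T_1+y}\|,y\ge0)$ is again of the same type started from $\|\beta^{T_1}\|>0$. A monotone-class / finite-dimensional-distribution argument then pieces these together: for any $y<T_\infty$, conditioning on the renaissance times, $(\|\beta^{z}\|, z\le y)$ has the law of a ${\tt BESQ}_{\|\beta^0\|}(2\theta)$ path restricted to $[0,y]$, using that a ${\tt BESQ}(2\theta)$ restarted at an arbitrary time at its current (positive) value is again a ${\tt BESQ}(2\theta)$, independent of the past --- i.e.\ the splicing is consistent with a single ${\tt BESQ}(2\theta)$ because the renaissance times are a.s.\ not zeros of the aggregate.

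It remains to handle the behaviour at and after $T_\infty$. By Lemma~\ref{lem:dl}, on $\{T_\infty<\infty\}$ we have $\|\beta^{T_n}\|\to 0$, hence by continuity on each $[T_n,T_{n+1}]$ and a standard argument (the ${\tt BESQ}(2\theta)$ started from $\varepsilon_n\to0$ cannot make a macroscopic excursion on a short time interval --- quantitatively, $\mathbb{E}[\sup_{z\le t}Z_z]$ is controlled, or one uses that the hitting time of $0$ from level $\varepsilon$ has mean of order $\varepsilon$), the total mass converges to $0$ continuously as $y\uparrow T_\infty$; then it is absorbed at $0$ for $y\ge T_\infty$ by construction. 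This identifies the law as ${\tt BESQ}^\dagger_{\|\beta^0\|}(2\theta)$: on $\{\theta\ge1\}$, $T_\infty=\infty$ a.s.\ and there is no killing; on $\{\theta<1\}$, $T_\infty<\infty$ with the correct probability because the aggregate ${\tt BESQ}(2\theta)$ hits $0$ a.s.\ iff $2\theta<2$, and the killed process matches ${\tt BESQ}^\dagger$. The main obstacle I anticipate is precisely the bookkeeping that $T_\infty$ coincides with $\tau_0$ of the aggregate ${\tt BESQ}(2\theta)$ and that nothing ``leaks'' at the accumulation point --- this is where Lemma~\ref{lem:dl} does the essential work, reducing it to the elementary continuity-at-$0$ estimate for ${\tt BESQ}(2\theta)$ started from vanishing initial values.
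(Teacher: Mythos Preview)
Your overall strategy matches the paper's: additivity of $\besq$ on each $[T_n,T_{n+1})$, then piece together and invoke Lemma~\ref{lem:dl} at the accumulation point. However, there is a genuine gap in the splicing step, and one case is not handled.

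The splicing justification is wrong. Your claim that ``a $\besq(2\theta)$ restarted at an arbitrary time at its current (positive) value is again a $\besq(2\theta)$'' is false in general: $T_1=\zeta(\ff^{(0)})$ is \emph{not} a stopping time of the aggregate total-mass process (it depends on the component $\ff^{(0)}$, not just on the sum), so the ordinary strong Markov property of $\besq(2\theta)$ does not apply, and positivity at $T_1$ is irrelevant. What makes the splicing legitimate is Lemma~\ref{lem:besq} (from \cite{PW18}): if $T$ is a stopping time of the \emph{vector} of independent $\besq$ processes, then their sum on $[0,T]$ followed by a fresh independent $\besq(\sum\delta_i)$ from the terminal value is a single $\besq(\sum\delta_i)$. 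The paper implements this by introducing an auxiliary independent $W\sim\besq^\dagger_1(2\theta)$ and defining, for each $i$, a process $Z_i$ equal to $\|\beta^\cdot\|$ on $[0,T_i]$ and to a rescaled $W$ thereafter; a finite induction using Lemma~\ref{lem:besq} gives $Z_i\sim\besq^\dagger_{\|\beta^0\|}(2\theta)$ for every $i$, and one then passes to the limit. Your direct approach, besides mis-citing the Markov property, is also circular: you need the law of $(\|\beta^{T_1+y}\|,y\ge 0)$, which is the very thing being proved.

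Second, you assert that $T_\infty=\infty$ a.s.\ when $\theta\ge 1$ without proof; this is not automatic, since the renaissance times could in principle accumulate even if the aggregate never reaches zero. The paper deduces it from the $Z_n$: on $\{T_\infty<a\}$, Lemma~\ref{lem:dl} forces $\inf_{[0,T_n]}\|\beta^\cdot\|<\delta$ for large $n$, hence $\bP(\inf_{[0,a]}Z_n<\delta)$ is bounded below; but each $Z_n\sim\besq_{\|\beta^0\|}(2\theta)$ with $2\theta\ge 2$ satisfies $\bP(\inf_{[0,a]}Z_n<\delta)\to 0$ as $\delta\downarrow 0$, a contradiction. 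Similarly, your sketch does not actually establish $T_\infty<\infty$ a.s.\ when $\theta<1$; the paper obtains this from $\tau_0(Z_n)-T_n=\|\beta^{T_n}\|\tau_0(W)$, which stochastically dominates $T_\infty$ by $\|\beta^0\|/2G$ with $G\sim\mathtt{Gamma}(1-\theta,1)$.
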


To prove Theorem~\ref{thm:mass-bis}, we recall from \cite{PW18} a generalised additivity property of squared Bessel processes. 
\begin{lemma}[{\cite[Proposition~1]{PW18}}]\label{lem:besq}
	For any $\delta_1,\delta_2\in \bR$, and $b_1,b_2\ge 0$, let  $Z_1\sim \besq_{b_1}(\delta_1)$ and $Z_2\sim \besq_{b_2}(\delta_2)$ 
	be independent. Let $T$ be a stopping time relative to the filtration $(\cF_t,t \ge 0)$ generated by the pair of processes $(Z_1,Z_2)$, with $T\le \zeta(Z_1) \!\wedge\! \zeta(Z_2)$, where the lifetime $\zeta$ is defined as in \eqref{eq:besq-zeta}. Given $\cF_T$, let $Z_3\sim \besq_{Z_1(T)+Z_2(T)}(\delta_1\!+\!\delta_2)$. Then the process $Z$ defined as follows is a $\besq_{b_1+b_2}(\delta_1\!+\!\delta_2)$: 
	\[
	Z(t)=		\begin{cases}
	Z_1(t)+ Z_2(t),& \text{if} ~0\le t \le T,\\
	Z_3(t-T),&\text{if}~  t > T.
	\end{cases}         
	\]
\end{lemma}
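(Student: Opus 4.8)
The plan is to realise the two independent processes as strong solutions of their defining stochastic differential equations, add them, and recognise the sum \emph{up to time $T$} as a strong solution of the $\besq(\delta_1+\delta_2)$-equation driven by a \emph{single} Brownian motion; the appended piece $Z_3$ after $T$ is then handled by concatenating independent solutions across a stopping time and appealing to uniqueness in law. This route is cleaner for general real dimensions than the classical Laplace-functional argument of Shiga--Watanabe, which is tailored to $\delta_1,\delta_2\ge 0$ and does not easily accommodate absorption at $0$.

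First I would write, on $[0,\zeta(Z_i))$ where $Z_i>0$ and hence $\sqrt{|Z_i|}=\sqrt{Z_i}$,
\[
Z_i(t)=b_i+\delta_i t+2\int_0^t\sqrt{Z_i(s)}\,dB_i(s),\qquad i=1,2,
\]
for independent Brownian motions $B_1,B_2$, each $B_i$ being recoverable from $Z_i$ via $dB_i=(dZ_i-\delta_i\,dt)/2\sqrt{Z_i}$ on $\{Z_i>0\}$ and hence adapted to the filtration $(\cF_t)$ generated by the pair. Since $T\le\zeta(Z_1)\wedge\zeta(Z_2)$, and since the lifetimes $\tau_0(Z_1),\tau_0(Z_2)$ have diffuse laws and are independent (so coincide only on a null set), the sum $S_t:=Z_1(t)+Z_2(t)$ stays strictly positive throughout $[0,T]$. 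Setting $M_t:=2\int_0^t\sqrt{Z_1(s)}\,dB_1(s)+2\int_0^t\sqrt{Z_2(s)}\,dB_2(s)$, independence of $B_1,B_2$ gives $d\langle M\rangle_t=4S_t\,dt$, so that
\[
W_t:=\int_0^t\frac{dM_s}{2\sqrt{S_s}},\qquad t\in[0,T],
\]
is a continuous local martingale with $\langle W\rangle_t=t$, hence a Brownian motion on $[0,T]$ by L\'evy's characterisation. By construction $S$ solves $dS_t=(\delta_1+\delta_2)\,dt+2\sqrt{S_t}\,dW_t$ with $S_0=b_1+b_2$ on $[0,T]$, which is exactly the $\besq(\delta_1+\delta_2)$-equation (and $S>0$ there means no absorption occurs before $T$).

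Next I would extend $W$ past $T$ and glue in $Z_3$. Let $W'$ be the Brownian motion driving the given $\besq_{S_T}(\delta_1+\delta_2)$-process $Z_3$, taken conditionally independent of $\cF_T$, and set $W_t:=W_T+W'_{t-T}$ for $t>T$. With respect to the filtration $\cH_t:=\cF_{t\wedge T}\vee\sigma(W'_s:s\le(t-T)^+)$, the time $T$ is a stopping time, $W$ is adapted, $W$ is a Brownian motion on $[0,T]$, and $W-W_T$ after $T$ is an $\cH$-Brownian motion independent of $\cH_T$; the strong Markov property of Brownian motion then shows $W$ is a Brownian motion on $[0,\infty)$. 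The concatenated process $Z$ --- equal to $S$ on $[0,T]$ and to $Z_3(\cdot-T)$ afterwards --- is continuous at $T$ and solves $Z_t=(b_1+b_2)+(\delta_1+\delta_2)t+2\int_0^t\sqrt{|Z_s|}\,dW_s$ from $b_1+b_2$, the absorption convention being respected because $Z_3$ already carries it after $T$ while $S$ needs none before $T$. By the pathwise uniqueness recalled in the text, $Z\sim\besq_{b_1+b_2}(\delta_1+\delta_2)$, as claimed.

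The main obstacle will be the careful handling of the driving Brownian motion across $T$: justifying that $W$, built from the ratio $dM/2\sqrt{S}$ on $[0,T]$ and continued by the independent $W'$ afterwards, is genuinely a Brownian motion --- for which one must verify that $T$ is a stopping time in the filtration carrying $W$ and that $W'$ is independent of the pre-$T$ history --- together with confirming that the event $\{S_t=0\text{ for some }t\le T\}$ is null, so that division by $\sqrt{S_s}$ is legitimate on all of $[0,T]$. Once these measurability and positivity points are secured, the identification of $Z$ as a $\besq_{b_1+b_2}(\delta_1+\delta_2)$ is immediate from the uniqueness of strong solutions of the defining equation.
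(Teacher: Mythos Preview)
The paper does not prove this lemma; it is quoted from \cite[Proposition~1]{PW18} and used as a black box in the proof of Theorem~\ref{thm:mass-bis}. Your SDE approach is the natural one and is essentially how the cited reference proceeds.

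One point needs tightening. Your claim that $S_t=Z_1(t)+Z_2(t)$ stays strictly positive on $[0,T]$ is justified only when at least one $\delta_i\le 0$: then $T\le\zeta(Z_i)=\tau_0(Z_i)$ forces $Z_i>0$ on $[0,T)$, and your diffuseness argument handles the endpoint. When both $\delta_i\in(0,2)$, the constraint $T\le\zeta(Z_1)\wedge\zeta(Z_2)=\infty$ is vacuous and $S$ can hit zero (the independent zero sets of $Z_1$ and $Z_2$ have non-trivial intersection whenever $\delta_1+\delta_2<2$). This does not break the argument, however: the set $\{s\le T:S_s=0\}$ still has Lebesgue measure zero, so the stochastic integral $W_t=\int_0^t(2\sqrt{S_s})^{-1}\,dM_s$ remains well-defined with $\langle W\rangle_t=t$, and since $\delta_1+\delta_2>0$ in this regime there is no absorption to worry about. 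The clean case split is therefore: if $\delta_1+\delta_2\le 0$ then some $\delta_i\le 0$, so $S>0$ on $[0,T]$ a.s.\ and absorption of $Z$ first occurs after $T$, inside $Z_3$; if $\delta_1+\delta_2>0$ then $\besq(\delta_1+\delta_2)$ is not absorbed and the SDE identification goes through regardless of whether $S$ touches zero. With this repair (and the trivial edge cases $b_i=0$, $\delta_i\le 0$ giving $T=0$ handled separately), your proof is complete.
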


\begin{proof}[Proof of Theorem~\ref{thm:mass-bis}]
	Let $W\sim \besq_{1}^\dagger(2 \theta)$ be independent of everything else. 
	Recall the construction described in Definition~\ref{defn:ipe}. 
	With notation therein, define for every $i\ge 1$ a process $Z_i$ by
	\begin{equation}\label{eq:Z}
	Z_i(x) =		\begin{cases}
	\|\beta^x\| ,& \text{if} ~0\le x \le T_i,\\
	\mathbf{1}\{\|\beta^{T_i}\|\!\ne\! 0\} \|\beta^{T_i}\| W\!\left((x\!-\!T_i)/\|\beta^{T_i}\| \right),&\text{if}~  x > T_i.
	\end{cases}         
	\end{equation}
	These processes are constructed on the same (large enough) probability space. 
	We first prove by induction that each $Z_i$ is a $\besq_{\|\beta^0\|}^\dagger(2 \theta)$. 
	Conditionally on $(\|\beta^x\|,\,x\le T_i)$, the process $\big(\mathbf{1}\{\|\beta^{T_i}\|\!\ne\! 0\}\|\beta^{T_i}\| W\!\left((x\!-\!T_i)/\|\beta^{T_i}\| \right),\,x\ge 0\big)$ has distribution $\besq^\dagger_{\|\beta^{T_i}\|}(2 \theta)$, by the scaling property of $\besq^\dagger(2 \theta)$. 
	Note that by Definition \ref{def:kernel:sp} and independence, $\|\beta^{T_i}\|=0$ only happens with positive probability when $\theta_1=\theta_2 =0$, and then there is $T_{j} = T_i$ and $Z_j \equiv Z_i$ for all $j\ge i$; in this case $2\theta=-2\alpha <0$ and $\besq_{0}^\dagger(-2 \alpha)$ and $\besq_0(-2\alpha)$ are the distribution of the constant zero process. 
	
	Let $\gamma_1^{(0)}$ and $\gamma_2^{(0)}$ be as in Definition~\ref{defn:ipe}. By Proposition~\ref{prop:Hunt+totalmass}, the total mass evolutions of $\gamma_1^{(0)}$ and $\gamma_2^{(0)}$ are independent $\besq_{ \|\beta_1^0\|}(2\theta_1)$  and $\besq_{ \|\beta_2^0\|}(2\theta_2)$ respectively, also independent of $\ff^{(0)}\sim \besq_{ \|m^0\|}(-2\alpha)$. 
	Noticing that 
	$\|\beta^x\|=\|\gamma_1^{(0)}(x)\|\!+ \!\mathbf{f}^{(0)}(x)\!+\! \|\gamma_2^{(0)}(x)\|$ for all $x\le T_1$, 
	we deduce from Lemma~\ref{lem:besq} that $Z_1 \sim \besq^\dagger_{\|\beta^0\|}(2 \theta)$, since we have $2\theta_1-2\alpha+2\theta_2= 2\theta$.   
	
	Suppose by induction that for some $i\ge 1$, for each $\mathrm{SSIP}_{\!\dagger}^{(\alpha)}(\theta_1,\theta_2)$-evolution  starting from any state in $\cJ$, its corresponding process $Z'_j$ as in \eqref{eq:Z} is a $\besq^\dagger(2 \theta)$ for each $j\le i$. 
	By the construction in Definition~\ref{defn:ipe}, conditionally on $(  \gamma_1^{(0)},\ff^{(0)}, \gamma_2^{(0)})$, the process 
	\[
	( \widetilde{\beta}_1^y, \widetilde{m}^y, \widetilde{\beta}_2^y):= (\beta_1^{T_1+y},m^{T_1+y}, \beta_2^{T_1+y}), \quad  y\ge 0,
	\]
	is an $\mathrm{SSIP}_{\!\dagger}^{(\alpha)}(\theta_1,\theta_2)$-evolution starting from $( \beta_1^{T_1},m^{T_1}, \beta_2^{T_1})$. 
	Define  
	\[
	\widetilde{Z}_i(x) =		\begin{cases}
	\left\|\beta^{T_i+x} \right\|,& \text{if} ~0\le x \le T_{i+1}-T_1,\\
	\mathbf{1}\left\{\|\beta^{T_{i+1}}\|\!\ne\! 0\right\} \left\|\beta^{T_{i+1}}\right\| W\left((x\!-\!T_{i+1})/\|\beta^{T_{i+1}}\| \right),&\text{if}~  x > T_{i+1}-T_1.
	\end{cases}         
	\]
	Then there is the identity 
	\[
	Z_{i+1}(x) =		\begin{cases}
	\|\beta^x\| ,& \text{if} ~0\le x \le T_i,\\
	\widetilde{Z}_i (x-T_i),&\text{if}~  x > T_i.
	\end{cases}         
	\]

	Given the triple $(\gamma_1^{(0)},  \ff^{(0)},\gamma_2^{(0)})$, by the inductive hypothesis $\widetilde{Z}_i$ has conditional distribution $\besq_{\|\beta^{T_1}\|}^\dagger(2 \theta)$. 
	Consequently, by the $i=1$ case, we have $Z_{i+1}\sim \besq_{\|\beta^0\|}^\dagger(2 \theta)$. This completes the induction step.  
	We conclude that $(Z_n)_{n\ge 1}$ is a sequence of processes, in which  
	each $Z_n$ is a $\besq_{\|\beta^0\|}^\dagger(2 \theta)$, and $(Z_n(y), y\le T_n) = (\|\beta^y\|,y\le T_n)$.

	We now prove the theorem for the case $\theta=\theta_1\!+\!\theta_2\!-\!\alpha < 1$. 
	For each $n\ge 1$, let $\tau_0(Z_n)$ be the  first hitting time of zero by $Z_n$. 
	Then $T_n\le \tau_0(Z_n)$ and $\tau_0(Z_n) -T_n =  \|\beta^{T_n}\| \tau_0(W)$, where $\tau_0(W)$ is the  first hitting time of zero by $W$ and has the law of $1/2 G$ with $G\sim \mathtt{Gamma}(1\!-\!\theta, 1)$. 
	We deduce that the distribution of the degeneration time $T_{\infty}$ is stochastically dominated by $\|\beta^0\|/2G$ and is thus a.s.\ finite. 
	Then it follows from Lemma~\ref{lem:dl} that a.s.\  $\lim_{n\to \infty} \|\beta^{T_{n}}\|=0$.  
	Therefore, $\tau_0(Z_{n})$ converges to $T_{\infty}$ a.s., as $n\to \infty$. 
	We conclude that $(Z_{n}(y\wedge\tau_0(Z_n)),\, y\ge 0)$ converges a.s.\ uniformly to $(\|\beta^{y\wedge T_\infty}\|,\, y\ge 0)$, and the limiting process is $\besq_{\|\beta^0\|}^\dagger(2\theta)$. 
	
	We finally study the case $\theta=\theta_1\!+\!\theta_2\!-\!\alpha \ge 1$. 
	By the connection between $\|\beta\|$ and the processes $(Z_n)_{n\ge 1}$, it suffices to prove that, for every $a>0$, $\bP(T_{\infty} <a) =0$. 
	On the event $\{T_{\infty} <a\}$, Lemma~\ref{lem:dl} leads to  $\lim_{n\to \infty} Z_{n} (T_{n}) = \lim_{n\to \infty} \|\beta^{T_{n}}\|=0$.  
	Therefore, for any  $\delta>0$, we have
	\[
	\{T_\infty <a\}
	\subseteq \bigcup_{n\in \bN} \{Z_{n} (T_{n}) <\delta , T_{\infty} <a\}. 
	\]
	Note that $\{Z_{n} (T_{n}) <\delta , T_{\infty} <a\}=\{\|\beta^{T_n} \| <\delta , T_{\infty} <a\}
	                                                                         \subseteq\{\inf_{y\in [0,T_n]}\|\beta^y\| <\delta, T_{\infty}<a\}=:A_n$.
	The sequence of events $(A_n, n\in \bN)$ is increasing.  
	By monotone convergence, we have
	\[
	\bP\left( \bigcup_{n\in\mathbb{N}} \big\{Z_{n} (T_{n}) \!<\!\delta , T_{\infty} \!<\!a \big\} \right)
	\le \lim_{n\to \infty} \bP(A_n) \le \limsup_{n\to \infty} \bP \left( \inf_{t\in [0,a]}\! Z_n(t) <\delta \right) = \bP \left( \inf_{t\in [0,a]}\! Z_1(t) <\delta \right), 
	\]
	using the fact that each $Z_{n}\sim \besq^\dagger_{\|\beta^0\|}(2\theta)=\besq_{\|\beta^0\|}(2 \theta)$. Since $2 \theta\ge 2$, 
	we have 
	\[
	\lim_{\delta\downarrow 0} \bP \Big( \inf_{t\in [0,a]} Z_1(t) <\delta \Big) =0. 
	\]
	As a result, we have $\bP(T_{\infty} <a) =0$. This completes the proof. 
\end{proof}

We have obtained the following dichotomy in the preceding proof. 
\begin{corollary}\label{prop:dl}
	For $\theta_1,\theta_2 \ge 0$, let $(\beta^y, y\ge 0)$ be an $\cI_H$-valued $\mathrm{SSIP}_{\!\dagger}^{(\alpha)}(\theta_1, \theta_2)$-evolution starting from $\beta^0\ne \emptyset$, with renaissance times $T_n$, $n\ge 0$, and degeneration time $T_{\infty}$. Set $\theta= \theta_1+\theta_2-\alpha$. 
	\begin{enumerate}
		\item If $\theta\ge 1$, then a.s.\ $T_{\infty}=\infty$ and $\beta^y\ne \emptyset$ for every $y\ge 0$.\vspace{0.2cm}  
		\item If $\theta<1$, then a.s.\ $T_{\infty}<\infty$ and $\lim_{y\uparrow T_{\infty}} \|\beta^y\| =0$. 
	\end{enumerate}	
\end{corollary}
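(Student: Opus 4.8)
The plan is to assemble facts already established in the proof of Theorem~\ref{thm:mass-bis}, together with Lemma~\ref{lem:dl}; the only point needing a small extra remark is the upgrade, in the regime $\theta<1$, from the subsequential convergence $\|\beta^{T_n}\|\to 0$ to the genuine left-limit $\lim_{y\uparrow T_\infty}\|\beta^y\|=0$.

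\textbf{Case $\theta<1$.} In the proof of Theorem~\ref{thm:mass-bis} we built, on one probability space, processes $(Z_n)_{n\ge1}$ with $Z_n\sim\besq_{\|\beta^0\|}^\dagger(2\theta)$ and $(Z_n(y),\,y\le T_n)=(\|\beta^y\|,\,y\le T_n)$, satisfying $T_n\le\tau_0(Z_n)$ and $\tau_0(Z_n)-T_n=\|\beta^{T_n}\|\,\tau_0(W)$, where $\tau_0(W)$ has the law of $1/2G$ with $G\sim\mathtt{Gamma}(1-\theta,1)$. Hence $T_\infty$ is stochastically dominated by $\|\beta^0\|/2G$ and is a.s.\ finite, which is the first half of~(2). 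By Lemma~\ref{lem:dl} we then have $\|\beta^{T_n}\|\to 0$ a.s., so $\tau_0(Z_n)\to T_\infty$ a.s.; the a.s.\ uniform convergence $Z_n(\,\cdot\wedge\tau_0(Z_n))\to(\|\beta^{\,\cdot\wedge T_\infty}\|)$ established there exhibits $(\|\beta^{y\wedge T_\infty}\|,\,y\ge 0)$ as a $\besq_{\|\beta^0\|}^\dagger(2\theta)$, which for $\theta<1$ is absorbed at $0$ at time $T_\infty$. Since such a process has a.s.\ continuous paths and $\beta^{T_\infty}=\emptyset$ by convention, path-continuity at $T_\infty$ forces $\lim_{y\uparrow T_\infty}\|\beta^y\|=\|\beta^{T_\infty}\|=0$, completing~(2).

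\textbf{Case $\theta\ge1$.} Here $2\theta\ge 2$, so a $\besq_{\|\beta^0\|}(2\theta)$ started from $\|\beta^0\|>0$ has a.s.\ infinite first hitting time of $0$ and thus stays strictly positive on $[0,\infty)$. In the proof of Theorem~\ref{thm:mass-bis} we showed, using $Z_n\sim\besq_{\|\beta^0\|}(2\theta)$ and Lemma~\ref{lem:dl}, that $\bP(T_\infty<a)=0$ for every $a>0$, hence $T_\infty=\infty$ a.s. On this full-probability event, for each $y\ge 0$ there is $n$ with $y\le T_n$, whence $\|\beta^y\|=Z_n(y)>0$ a.s.; equivalently $\beta^y\ne\emptyset$ for every $y\ge 0$. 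This is~(1).

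The only genuinely substantive inputs — the non-accumulation estimate $\bP(T_\infty<a)=0$ in the transient regime and the control of the total mass along the renaissance times — are exactly Lemma~\ref{lem:dl} and the additivity/scaling computations carried out inside the proof of Theorem~\ref{thm:mass-bis}, so no new obstacle arises; the corollary is just a repackaging of those conclusions.
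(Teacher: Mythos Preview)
Your proposal is correct and mirrors the paper exactly: the paper gives no separate proof of this corollary, stating simply ``We have obtained the following dichotomy in the preceding proof,'' i.e.\ the proof of Theorem~\ref{thm:mass-bis}, and your write-up just makes explicit the pieces of that proof (together with Lemma~\ref{lem:dl}) that yield each item. The only cosmetic point is that in case $\theta\ge 1$ you should phrase the positivity as a single almost-sure event (each $Z_n$ is a $\besq(2\theta)$ that a.s.\ never hits $0$, so on the countable intersection all $Z_n$ stay positive, giving $\|\beta^y\|>0$ simultaneously for every $y<T_\infty=\infty$), rather than fixing $y$ first.
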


\begin{proposition}\label{prop:JvaluedMP}
	For $\theta_1,\theta_2\!\ge\! 0$, a $\cJ$-valued $\mathrm{SSIP}_{\!\dagger}^{(\alpha)}(\theta_1, \theta_2)$-evolution is Borel right Markov on $(\cJ ,d_{\cJ})$, but not Hunt.
\end{proposition}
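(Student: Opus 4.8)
The plan is to verify the hypotheses of Dynkin's criterion for Borel right processes applied to the $\cJ$-valued process $((\beta_1^y,m^y,\beta_2^y),\,y\ge 0)$, and then to exhibit the specific failure of quasi-left-continuity that rules out the Hunt property. The construction in Definition~\ref{defn:ipe} realises the process as a concatenation of independent ``building blocks'' run over the random intervals $[T_n,T_{n+1})$: on each such interval the triple evolves as $(\gamma_1^{(n)},\ff^{(n)},\gamma_2^{(n)})$, where $\gamma_1^{(n)}$ is an $\mathrm{SSIP}^{(\alpha)}(\theta_1)$-evolution, $\gamma_2^{(n)}$ is an $\mathrm{RSSIP}^{(\alpha)}(\theta_2)$-evolution, and $\ff^{(n)}\sim\besq(-2\alpha)$, all independent. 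Each of these three coordinate processes is a path-continuous Hunt (hence Borel right) process on its respective state space by Proposition~\ref{prop:Hunt+totalmass} and by the strong-solution theory for $\besq(-2\alpha)$. The time $\zeta(\ff^{(n)})$ at which we splice is a stopping time for the product filtration, and at that time the process is restarted from the new state $\phi(\beta_1^{T_{n+1}-}\concat\beta_2^{T_{n+1}-})\in\cJ$, which is a measurable function of the terminal configuration. Thus one is in the classical setting of piecing together Markov processes at stopping times: I would invoke the standard results of \cite{Mey75,Bec07} (already cited in the text for exactly this purpose) to conclude that the concatenated $\cJ$-valued process is strong Markov with a Borel right transition semigroup, the key point being that only countably many renaissance times occur before the (a.s.\ well-defined, by Corollary~\ref{prop:dl} and Theorem~\ref{thm:mass-bis}) degeneration time, after which the process sits in the absorbing state $(\emptyset,0,\emptyset)$.

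For the right-process axioms I would check: (i) the state space $(\cJ,d_\cJ)$ is a Borel subset of a Polish space, which is noted right after its definition; (ii) the semigroup maps bounded Borel functions to bounded Borel functions and is a normal Markov semigroup — this follows from the explicit construction, since transition probabilities are built from those of the three independent ingredients and the measurable map $\phi$; (iii) right-continuity of paths in $\cJ$, which holds on each interval $[T_n,T_{n+1})$ because the three coordinate processes have continuous paths, and at $T_{n+1}$ one has right-continuity by construction since the process is restarted there and, on $\{T_\infty<\infty\}$, $\|\beta^{T_n}\|\to 0$ forces convergence of the triple to $(\emptyset,0,\emptyset)$ in $d_\cJ$ by Lemma~\ref{lem:dl}; (iv) the strong Markov property at all stopping times, which is the content of the Meyer–Beck-type piecing-together theorem. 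Together these give that the $\cJ$-valued evolution is a Borel right process.

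The remaining, and more interesting, point is to show it is \emph{not} Hunt, i.e.\ it fails to be quasi-left-continuous: there exist a stopping time and a sequence of stopping times increasing to it along which $X_{T_n}$ does not converge to $X_T$. The natural candidate is $T=T_\infty$ with the approximating sequence $(T_n)_{n\ge1}$ of renaissance times themselves, on the event $\{T_\infty<\infty\}$ (which has positive probability precisely when $\theta=\theta_1+\theta_2-\alpha<1$, and in particular always when $\theta_1=\theta_2=0$, say). Along $T_n\uparrow T_\infty$ we have $X_{T_n}=(\beta_1^{T_n},m^{T_n},\beta_2^{T_n})$, and by the definition of a renaissance time the middle coordinate $m^{T_n}={\rm Leb}(U^{(n)})$ is the mass of the \emph{longest} block of $\beta^{T_n-}$, which does \emph{not} tend to $0$: indeed $\beta^{T_n}\to\emptyset$ in total mass, so the longest block exhausts a non-vanishing fraction of the (vanishing) total mass, but more to the point one must check that $m^{T_n}$ fails to converge at all, or converges to something incompatible with $X_{T_\infty}=(\emptyset,0,\emptyset)$. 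In fact the cleanest argument is: $X_{T_\infty}=(\emptyset,0,\emptyset)$, whereas just before each $T_n$ the $\cI_H$-component $\beta^{T_n-}=\beta_1^{T_n-}\concat\beta_2^{T_n-}$ is nonempty and $\beta^{T_n}$ is its $\phi$-image; since the $\cI_H$-valued process $(\beta^y)$ is itself path-continuous (this is part of Theorem~\ref{thm:hunt}, which the paper establishes) we do have $\beta^{T_n}\to\emptyset$, but the \emph{triple} need not converge because the decomposition point jumps around — concretely one shows $\limsup_n m^{T_n}/\|\beta^{T_n}\|>0$ a.s.\ while $\|\beta^{T_n}\|\to 0$, yet the relevant obstruction to quasi-left-continuity is that the pre-$T_\infty$ values $X_{T_n}$ live on $\cI_H\times(0,\infty)\times\cI_H$ with middle coordinate bounded below by a positive multiple of $\|\beta^{T_n}\|$ relative to the side masses, so there is no continuous extension of the triple across $T_\infty$ consistent with the splicing rule $\phi$. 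The main obstacle is this last step: quantifying that the ``longest-block'' selection $\phi$ genuinely creates discontinuities of the triple at $T_\infty$, rather than the triple happening to converge to $(\emptyset,0,\emptyset)$ anyway. I would handle it by a direct estimate on the $\besq(-2\alpha)$ excursion lengths $\zeta(\ff^{(n)})=T_{n+1}-T_n$ versus the side-evolution masses — using self-similarity to reduce to a single step and the fact that with probability bounded away from $0$ the middle block $m^{T_n}$ is a macroscopic fraction of $\|\beta^{T_n-}\|$, so that the event ``$X_{T_n}$ has middle coordinate comparable to its full norm'' occurs infinitely often, precluding $X_{T_n}\to(\emptyset,0,\emptyset)$ in the product metric $d_\cJ$ along the full sequence — while $\|X_{T_n}\|\to 0$ along a subsequence gives a contradiction with quasi-left-continuity at $T_\infty$.
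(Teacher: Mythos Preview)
Your treatment of the Borel right Markov property is essentially the paper's own argument: both invoke the piecing-together machinery of \cite{Mey75,Bec07} applied to the independent triple $(\gamma_1^{(n)},\ff^{(n)},\gamma_2^{(n)})$ on each interval $[T_n,T_{n+1})$, check that $(\cJ,d_\cJ)$ is a Borel subset of a Polish space, and obtain c\`adl\`ag paths from the path-continuity of the ingredients together with Corollary~\ref{prop:dl}.

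Your argument for ``not Hunt'', however, does not work. You propose $T_n\uparrow T_\infty$ on $\{T_\infty<\infty\}$ as the witnessing sequence, but along this sequence the triple \emph{does} converge to $X_{T_\infty}=(\emptyset,0,\emptyset)$: by Lemma~\ref{lem:dl} we have $\|\beta^{T_n}\|\to 0$, and since $d_H(\gamma,\emptyset)\le\|\gamma\|$ and $m^{T_n}\le\|\beta^{T_n}\|$, this forces $d_\cJ(X_{T_n},(\emptyset,0,\emptyset))\le\|\beta^{T_n}\|\to 0$. So no failure of quasi-left-continuity is exhibited at $T_\infty$. Your attempt to rescue this via ratios $m^{T_n}/\|\beta^{T_n}\|$ is not relevant: quasi-left-continuity is about convergence in the metric $d_\cJ$, not about renormalised quantities. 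A second defect is that your approach is confined to $\theta_1+\theta_2-\alpha<1$, whereas the proposition is stated for all $\theta_1,\theta_2\ge 0$.

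The paper's argument is both simpler and parameter-free: take $\tau_n:=\inf\{y\ge 0:m^y<1/n\}$. Since $\ff^{(0)}$ is continuous with $\ff^{(0)}(y)\to 0$ as $y\uparrow\zeta(\ff^{(0)})=T_1$, these are stopping times with $\tau_n\uparrow T_1$ and $m^{\tau_n}\to 0$. But $m^{T_1}$ is the length of the longest block of $\beta^{T_1-}$, which is strictly positive with positive probability (indeed a.s.\ unless $\theta_1=\theta_2=0$ and both side evolutions have died). Hence $X_{\tau_n}\not\to X_{T_1}$, and quasi-left-continuity fails already at the \emph{first} renaissance time. This works for every $\theta_1,\theta_2\ge 0$, since $T_1=\zeta(\ff^{(0)})<\infty$ a.s.
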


\begin{proof} 
	The $\mathcal{J}$-valued process takes values in $\mathcal{J}$, which is a Borel subset of a Polish space. 
	The c\`adl\`ag path property follows from the path-continuity of the $\mathrm{SSIP}^{(\alpha)}(\theta_1)$- and ${\rm RSSIP}^{(\alpha)}(\theta_2)$-evolutions used in the construction, and from Corollary~\ref{prop:dl}. 
	
	We know that squared Bessel processes, $\mathrm{SSIP}^{(\alpha)}(\theta_1)$- and $\mathrm{RSSIP}^{(\alpha)}(\theta_2)$-evolutions Borel right Markov processes. 	In the construction of a $\cJ$-valued process, we kill a Borel right Markov process with finite lifetime, and at the end of the lifetime, give birth to a new one according to a (deterministic) probability kernel.   
	This entails that a $\cJ$-valued process is itself a Borel right Markov process; see e.g.\ \cite[Th\'eor\`eme~II 3.18]{Bec07}.  
	
	Finally, the $\cJ$-valued process is not Hunt. 
	To see this, we consider the increasing sequence of stopping times $\tau_n:= \inf\{y\ge 0\colon m^y <1/n\}$, $n\ge 1$.  
	Then $\tau_n$ convergences to the first renaissance time $T_1$ as $n\to \infty$. 
	But the $\mathcal{J}$-valued process has a jump at $T_1$ with strictly positive probability; 
	so it is not quasi-left continuous.  
\end{proof}

Recall that Theorem \ref{thm:hunt} has three parts. The first claims that $\cI_H$-valued ${\rm SSIP}^{(\alpha)}_\dagger(\theta_1,\theta_2)$-evolutions are self-similar path-continuous Hunt processes, where self-similarity of an ${\rm SSIP}^{(\alpha)}_\dagger(\theta_1,\theta_2)$-evolution $(\beta^y,y\ge 0)$ means that 
for any $c>0$, the process $(c\odotip \beta^{c^{-1} y}, y\ge 0)$ is an $\mathrm{SSIP}_{\dagger}^{(\alpha)}(\theta_1, \theta_2)$-evolution starting from $c \odotip \beta^{0}$. The second part claims ${\tt BESQ}^\dagger(2\theta)$ total mass processes, where $\theta=\theta_1+\theta_2-\alpha$. The third part claims that the case $\theta_2=\alpha$ reduces to $\mathrm{SSIP}^{(\alpha)}(\theta_1)$-evolutions killed
at their first hitting time of $\emptyset$.

\begin{proof}[Proof of Theorem \ref{thm:hunt}]
	The space $(\mathcal{I}_H,d_H)$ is Polish by Lemma~\ref{lem:cI}. 
	The path-continuity for the $\mathcal{I}_H$-valued process between renaissance times follows from the path-continuity of any $\mathrm{SSIP}^{(\alpha)}(\theta_i)$-evolutions, $i=1,2$. The path-continuity at times $T_n$, $n\ge 1$, holds by construction since
	the parts of $\phi(\beta)$ in \eqref{eq:phi} have concatenation $\beta$. By Corollary~\ref{prop:dl}, the path-continuity at time $T_\infty$ 
	holds almost surely whenever $T_\infty<\infty$. 

	We have proved in Proposition~\ref{prop:JvaluedMP} that the corresponding $\cJ$-valued process is a Borel right Markov process. 
	By Lemma~\ref{lem:consist2}, Dynkin's criterion applies to the $\mathcal{I}_H$-valued process as a projection of the $\mathcal{J}$-valued process, so it is also a Borel right Markov process. The Hunt 
	property then follows from the path-continuity. 
	
		The same scaling property holds for $\besq(- 2\alpha)$, by \cite[A.3]{GoinYor03}, and for $\mathrm{SSIP}^{(\alpha)}(\theta_i)$ evolutions, $i=1,2$, by \cite[Theorem~1.4]{IPPAT} and hence also for $\mathrm{RSSIP}^{(\alpha)}(\theta_2)$. 
		Keeping track of Definition~\ref{defn:ipe}, we easily  
		check the self-similarity for a $\cJ$-valued $\mathrm{SSIP}_{\!\dagger}^{(\alpha)}(\theta_1, \theta_2)$-evolution.  Using~Lemma~\ref{lem:consist2}, we deduce the self-similarity of $\cI_H$-valued processes. 

        The claimed total mass process was established in Theorem \ref{thm:mass-bis}.

	For the final claim, we may assume that the $\mathrm{SSIP}_{\dagger}^{(\alpha)}(\theta_1,\alpha)$-evolution $(\beta^y, y\ge 0)$ is 
	associated with a $\cJ$-valued process $((\beta_1^y, m^y,  \beta_2^y), y\ge 0)$. 
	In the notation of Definition~\ref{defn:ipe}, we have
	\[
	\beta^y = \gamma_1^{(n)} (y) \concat\big\{ \big(0, \ff^{(n)}(y)\big)\big\} \concat \gamma_2^{(n)} (y),	\qquad 0\le y\le  T_{n+1} -T_n,~ n\ge 0. \\
	\]	
	Conditionally on $(\beta_1^{T_n}, m^{T_n}, \beta_2^{T_n})$, the RHS	is an $\mathrm{SSIP}^{(\alpha)}(\theta_1)$-evolution starting from $\beta^{T_n}$, by Lemma \ref{lem:split} and Proposition \ref{prop:concat}. 
	Using this fact and Lemma~\ref{lem:dl}, the remainder of this proof is analogous to the proof of Theorem~\ref{thm:mass-bis}; details are left to the reader. 
\end{proof}

\section{Preliminaries on the clade construction of the two-parameter family ${\rm SSIP}^{(\alpha)}(\theta)$, $\alpha\in(0,1)$, $\theta\ge 0$.}\label{sec:prel:clades}

In Section \ref{sec:ip}, we postponed the proofs of Lemmas \ref{lem:consist2} and \ref{lem:dl} to Appendix \ref{sec:appx}. These proofs, as
well as the developments in Section \ref{sec:large} depend on clade constructions of \cite{Paper1-1,IPPAT} that we recall here.

\subsection{Construction of ${\rm SSIP}^{(\alpha)}(0)$ from marked stable L\'evy processes.}\label{sec:constr}

Let $\cE$ be the space of non-negative c\`adl\`ag excursions away from zero. 
For any $f\in \cE$, let $\zeta (f):= \sup \{ t\ge 0\colon f(t)>0\}$. Fix $\alpha\in (0,1)$.
Pitman and Yor \cite[Section~3]{PitmYor82} construct a sigma-finite excursion measure associated with $\besq(- 2\alpha)$ on (the subspace of continuous excursions in) the space $\cE$, which we scale to a measure $\nu^{(-2\alpha)}_{\mathtt{BESQ}}$ 
such that
\[
\nu^{(-2\alpha)}_{\mathtt{BESQ}} (f\colon \zeta(f) \ge y) = \frac{\alpha}{2^{\alpha} \Gamma(1-\alpha)\Gamma(1+\alpha)}  y^{-1-\alpha}, \qquad y>0,
\]
and under $\nu^{(-2\alpha)}_{\mathtt{BESQ}}$, conditional on $\zeta(f)=y$ for $0<y<\infty$, 
the process $f$ is the squared Bessel bridge with dimension parameter $4+2\alpha$, from $0$ to $0$ over $[0,y]$ (\cite[Section 11.3]{RevuzYor}).    
\cite[Section~3]{PitmYor82} offers several other equivalent descriptions of $\nu^{(-2\alpha)}_{\mathtt{BESQ}}$; see also \cite[Section~2.3]{Paper1-1}.

\begin{figure}
	\centering
	\input{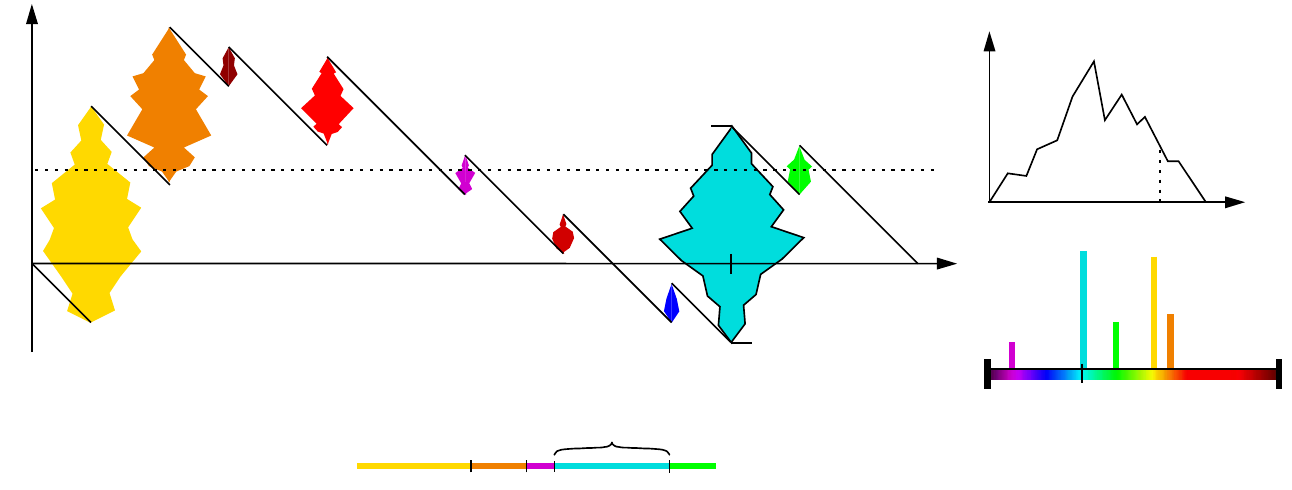_t}
	\caption{A scaffolding $X$ with spindle marks: $(N,X)$, and the spindle $f_t$ at scaffolding time $t$. The skewer $\skewer(y,N,X)$ extracts 
	from each spindle that crosses level $y$ the spindle width at that level and builds an interval partition in which block sizes are spindle widths, 
	placed in left-to-right order without leaving gaps, as if on a skewer. The superskewer $\sskewer(y,V,X)$ is based on a richer point measure 
	$V$ that also yields further marks of allelic types (here a colour spectrum $[0,1]$) and builds a random measure in which atom sizes are spindle widths 
	and atom locations allelic types \cite{FVAT}.\label{fig:scaf-marks}}
\end{figure}

Following \cite{Paper1-1}, let $\fN$ be a Poisson random measure on $[0,\infty)\times \cE$ with intensity measure $\mathrm{Leb} \otimes \nu^{(-2\alpha)}_{\mathtt{BESQ}}$. 
Each atom of $\fN$, which is an excursion function in $\cE$, shall be referred to as a \emph{spindle}, in view of illustrations of $\fN$ as in Figure~\ref{fig:scaf-marks}. 
We pair $\fN$ with a \emph{scaffolding function} $\xi_{\fN}:=(\xi_{\fN}(t), t\ge 0)$ defined by
\begin{equation}\label{eq:scaffolding}
\xi_{\fN}(t):=  \lim_{z\downarrow 0} \left( 
\int_{[0,t]\times \{g\in \cE\colon \zeta(g)>z\}} \zeta(f)  \fN(ds,df) - \frac{(1+\alpha)t}{(2z)^{\alpha}\Gamma(1-\alpha)\Gamma(1+\alpha)}
\right).
\end{equation}
This is a spectrally positive stable L\'evy process of index $1+\alpha$, with L\'evy measure $\nu^{(-2\alpha)}_{\mathtt{BESQ}} (\zeta(f) \in  d y)$ and Laplace exponent $q^{1+\alpha}/2^{\alpha} \Gamma(1+\alpha)$, $q\ge 0$. 

For $x>0$, let $\ff\sim \besq_x(- 2\alpha)$, independent of $\fN$. Then $\ff$ is $\cE$-valued (actually taking values in the subspace of excursions that are continuous after an initial positive jump).   
Write $\mathtt{Clade}_x(\alpha)$ for the law of a \emph{clade of initial mass $x$}, which is a random point measure on $[0,\infty)\times \cE$ defined by
\begin{equation}\label{eq:clade}
\clade (\ff, \fN):= \delta(0,\ff)  + \fN\mid_{(0, T_{-\zeta(\ff)} (\fN)]\times \cE}, 
~\text{where}~ T_{-y} (\fN):= \inf\{t\ge 0 \colon \xi_{\fN}(t)= -y\}. 
\end{equation}
We also write $\mathrm{len}(\clade (\ff, \fN)):= T_{-\zeta(\ff)} (\fN)$ for its length, which is a.s.\@ finite. 
For $\gamma\in \cI_H$, let $(\fN_U, U\in \gamma)$ be a family of independent clades, with each $\fN_U \sim \mathtt{Clade}_{\mathrm{Leb}(U)} (\alpha)$. Then we define $\fN_{\gamma}$, a random point measure on $[0,\infty)\times \cE$, by the concatenation of $(\fN_U, U\in \gamma)$: 
\begin{equation}\label{eq:concatenation-clade}
\fN_{\gamma}:= \Concat_{U\in \gamma} \fN_U
:= \sum_{U\in\gamma} \int \delta (g(U)+t, f) \fN_{U} (dt,df), 
\quad \text{where}~ g(U):= \sum_{V\in\gamma\colon\! \sup V \le \inf U} \mathrm{len} (\fN_{V}). 
\end{equation}
We denote the distribution of $\mathbf{N}_\gamma$ by $\mathbf{P}_\gamma^{\alpha,0}$.

\begin{definition}[Skewer] \label{def:skewer}
	Let $N= \sum_{i\in \mathbb{N}} \delta(t_i,f_i)$ for some $(t_i,f_i)\in[0,T]\times\cE$ and $X\colon[0,T]\rightarrow\mathbb{R}$  
	c\`adl\`ag and such that 
	\[\sum_{t\in[0,T]\colon\!\Delta X(t)> 0} \delta(t, \Delta X(t)) = \sum_{i\in \mathbb{N}} \delta(t_i, \zeta(f_i)).\] 
	The \emph{skewer} of the pair $(N,X)$ at level $y$ is the interval partition
	\begin{equation}
	\skewer(y,N,X) :=     
	\{ (M^y(t-),M^y(t)) \colon M^y(t-)<M^y(t), t\in [0,T] \},
	\end{equation}
	where $M^y(t) = \int_{[0,t]\times\cE} f\big( y- X(s-) \big) N(ds,df)=\sum_{i\in\mathbb{N}\colon\! t_i\in[0,t]}f_i\big(y-X(t_i-)\big)$, $t\in[0,T]$, with the convention that $M^y(0-)=0$. 
	We denote the associated \em skewer process \em by $\skewerbar(N,X):= (\skewer(y,N,X),\, y\ge 0)$. 
\end{definition}

See Figure \ref{fig:scaf-marks} for an illustration of $\skewer(y,N,X)$ in the natural extension of this definition to point measures with finitely many
atoms. 

\begin{proposition}[Theorem 1.8 of \cite{Paper1-2}]\label{prop:cladeconstr1}
	For $\gamma \in \cI_H$, let $\fN_{\gamma}\sim \mathbf{P}_\gamma^{\alpha,0}$ be as in \eqref{eq:concatenation-clade}. 
	Then $\skewerbar(\fN_{\gamma}, \xi_{\fN_{\gamma}})$ is an $\mathrm{SSIP}^{(\alpha)}(0)$-evolution starting from $\gamma$. 
\end{proposition}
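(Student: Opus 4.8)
We outline the strategy; full details are in \cite{Paper1-2}. The plan is to check that $(\skewer(y,\fN_\gamma,\xi_{\fN_\gamma}),\,y\ge 0)$ is a path-continuous Markov process with transition semigroup $(\kappa^{\alpha,0}_y)_{y>0}$ started from $\gamma$, which then identifies it as the $\mathrm{SSIP}^{(\alpha)}(0)$-evolution; the Hunt property and index-$1$ self-similarity follow afterwards from path-continuity and from the joint scaling of $\nu^{(-2\alpha)}_{\mathtt{BESQ}}$ and $\besq(-2\alpha)$. Two points are quickly disposed of. At level $0$ the only contributions come from the initial spindles $\ff_U$ of the constituent clades, and they contribute masses $\mathrm{Leb}(U)$ in left-to-right order, so the process starts from $\gamma$; and path-continuity in $d_H$ follows from the continuity of the $\besq(-2\alpha)$ spindles after their initial jumps, together with a uniform estimate on the total mass and on the number of non-negligible blocks as the level varies, using that $\xi_{\fN_\gamma}$ is stable of index $1+\alpha$. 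Moreover, $\xi_{\fN_\gamma}$ is the concatenation of the scaffoldings of the independent clades $\fN_U$, $U\in\gamma$, each starting and ending at the base level $0$, so $\skewer(y,\fN_\gamma,\xi_{\fN_\gamma})=\Concat_{U\in\gamma}\skewer(y,\fN_U,\xi_{\fN_U})$ for every $y\ge 0$, while $\kappa^{\alpha,0}_y(\beta,\cdot)$ is the law of $\Concat_{U\in\beta}\beta^y_U$ with independent $\beta^y_U\sim\mu^{(\alpha)}_{\mathrm{Leb}(U),1/2y}$, the $\theta=0$ immigration term $G^y\overline{\beta}_0$ being degenerate at $\emptyset$. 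So it suffices to treat a single clade.

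For a single clade $\clade(\ff,\fN)$ of initial mass $b$, the scaffolding $\xi$ starts at $0$, jumps immediately to $\zeta(\ff)$, and then runs as a spectrally positive stable$(1+\alpha)$ process until its first passage to $0$, where $\zeta(\ff)\sim b/2G$ with $G\sim{\tt Gamma}(1+\alpha,1)$. First I would pin down the one-dimensional law at level $y$. The clade fails to reach level $y$ exactly when $\xi$ stays below $y$, which by a first-passage computation for the stable process has probability $e^{-b/2y}$, and then $\skewer(y,\clade(\ff,\fN),\xi)=\emptyset$. On the complement, the spindle whose jump straddles level $y$ at the first up-crossing contributes the leftmost block, whose width has the Laplace transform \eqref{LMBintro} after a computation over the relevant conditioned stable excursion and squared Bessel spindle; and the excursions of $\xi$ strictly above $y$ during the ensuing sojourn form, by the excursion theory of the stable process and the self-similarity of $\nu^{(-2\alpha)}_{\mathtt{BESQ}}$, an independent family whose skewer at the crossing level is a ${\tt Gamma}(\alpha,1/2y)$-scaled ${\tt PDIP}^{(\alpha)}(\alpha)$, by the regenerative interval partition description of \cite{GnedPitm05,PitmWink09}. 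Assembling these pieces reproduces $\mu^{(\alpha)}_{b,1/2y}$.

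The main obstacle is the Markov property. The plan is to decompose $\fN_\gamma$ at level $y$ --- cutting every spindle at its level-$y$ crossing and the scaffolding at level $y$ --- into a part below $y$ and a part above $y$, and to show that, conditionally on $\skewer(y,\fN_\gamma,\xi_{\fN_\gamma})=\beta'$, the part above $y$ is, after the evident relabelling of scaffolding time, distributed as $\fN_{\beta'}\sim\mathbf{P}_{\beta'}^{\alpha,0}$ and conditionally independent of the part below $y$. Since $(\skewer(y+z,\fN_\gamma,\xi_{\fN_\gamma}),\,z\ge 0)$ is a measurable functional of the part above $y$ and, given $\beta'$, agrees in law with $\skewerbar(\fN_{\beta'},\xi_{\fN_{\beta'}})$, this is precisely the Markov property with semigroup $(\kappa^{\alpha,0}_z)$ once the single-clade marginal above is known. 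The two ingredients are the strong Markov property of the stable scaffolding --- applied at the level-$y$ up-crossing structure, so that the excursions of $\xi$ above $y$ form an independent Poissonian family indexed by the blocks of $\beta'$ --- and the Markov property of the squared Bessel excursion measure $\nu^{(-2\alpha)}_{\mathtt{BESQ}}$, under which the part of each straddling spindle beyond its level-$y$ crossing is, given its width there, a fresh $\besq(-2\alpha)$. The hard part, which is the bulk of \cite{Paper1-2}, is to match the resulting object precisely with the concatenation-of-independent-clades description of $\mathbf{P}_{\beta'}^{\alpha,0}$, keeping careful track of the left-to-right bookkeeping of blocks and of the a.s.\ uniqueness of the straddling spindles. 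Together with the marginal computation, this identifies the finite-dimensional distributions and so completes the proof.
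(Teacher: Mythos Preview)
The paper does not prove this proposition at all: it is stated as a direct citation of \cite[Theorem~1.8]{Paper1-2} and no argument is given. Your sketch is a reasonable outline of the strategy actually used in \cite{Paper1-1,Paper1-2}: identify the level-$0$ skewer as $\gamma$, compute the one-dimensional marginals of a single clade to recover $\mu^{(\alpha)}_{b,1/2y}$, and then establish the Markov property via the level-$y$ decomposition of spindles and scaffolding (which is precisely the Markov-like property the present paper recalls as Proposition~\ref{prop:markovlike1}). One small correction: the scaffolding of a single clade does not ``jump immediately to $\zeta(\ff)$ and then run as a spectrally positive stable process until first passage to $0$''; rather, it jumps to $\zeta(\ff)$ and runs until first passage to $-\zeta(\ff)$ as a stable process started at $0$, or equivalently from $\zeta(\ff)$ down to $0$ after the shift---your later description of excursions above level $y$ is consistent with the correct picture, so this is a slip in wording rather than a gap in the argument.
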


Note that the skewer process is indexed by the non-negative levels of the scaffolding function, while the time axis of the scaffolding function induces the left-to-right order within the interval partition. This means we need to be careful when we refer to ``time'', so it will often be clearest if we distinguish ``scaffolding time $t$'' and ``level $y$'', but we will also continue to refer to ``time $y$'' in an ${\rm SSIP}^{(\alpha)}(0)$, which in the context of a clade construction is the same as ``level $y$''.

An $\mathrm{SSIP}^{(\alpha)}(0)$-evolution is also called a \emph{type-1 evolution} in \cite{Paper1-2}, in view of its role in the
construction of the Aldous diffusion \cite{Paper4}, where type-$j$ evolutions have $j$ top masses (leftmost blocks), $j=0,1,2$.   

The Markov property of $\mathrm{SSIP}^{(\alpha)}(0)$-evolutions at time $y>0$ corresponds to a decomposition of $\mathbf{N}_\gamma$
at scaffolding level $y$ in $\xi_{\mathbf{N}_\gamma}$. Indeed, the $\mathrm{SSIP}^{(\alpha)}(0)$-evolution after time $y$ is described
by the the spindles of $\mathbf{N}_\gamma$ that the $\xi_{\mathbf{N}_\gamma}$ places above level $y$, and these spindles naturally split into clades according to
the excursions of $\xi_{\mathbf{N}_\gamma}$ above level $y$, which (for all excursions above almost all levels) start by a jump that corresponds 
to the upper part of a spindle straddling level $y$. 

More precisely, denote by $S_\gamma(t)={\rm Leb}(\{u\le t\colon\xi_{\mathbf{N}_\gamma}(u)\ge y\})$ the amount of time up to scaffolding time 
$t\in[0,{\rm len}(\mathbf{N}_\gamma)]$ that the scaffolding $\xi_{\mathbf{N}_\gamma}$ has spent above level $y$. Then we define the associated point measure
\[N^{\ge y}(\mathbf{N}_\gamma)
    :=\sum_{\text{points } (t,f_t) \text{ of }\mathbf{N}_\gamma}
                        \left(\mathbf{1}\{\xi_{\mathbf{N}_\gamma}(t-)\ge y\}\delta(S_\gamma(t),f_t)
                            +\mathbf{1}\{\xi_{\mathbf{N}_\gamma}(t-)<y<\xi_{\mathbf{N}_\gamma}(t)\}\delta(S_\gamma(t),\hat{f}^y_t)\right),
\]
where $\hat{f}_t^y(s)=f(y-\xi_{\mathbf{N}_\gamma}(t-)+s)$, $s\in[\xi_{\mathbf{N}_\gamma}(t)-y]$, is the part of the spindle $f_t$ above level $y$. We can similarly define $N^{\le y}(\mathbf{N}_\gamma)$ based on (part-)spindles below level $y$ and refer to the $\sigma$-algebra generated by $N^{\le y}(\mathbf{N}_\gamma)$
as the \em history below level $y$\em. Then the point measures $\mathbf{N}_\gamma$ satisfy the following property, which we refer to as 
\em Markov-like property.\em  

\begin{proposition}[Proposition 6.6 of \cite{Paper1-1}]\label{prop:markovlike1} Let $\mathbf{N}_\gamma\sim\mathbf{P}^{\alpha,0}_\gamma$ and $y>0$. Then conditionally given the history below level
$y$, we have $N^{\ge y}(\mathbf{N}_\gamma)\sim\mathbf{P}^{\alpha,0}_{\beta^y}$, where $\beta^y:=\skewer(y,\mathbf{N}_\gamma,\xi_{\mathbf{N}_\gamma})$.
\end{proposition}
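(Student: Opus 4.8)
\emph{Overall strategy.} The plan is to reduce the claim to the case of a single clade and then to decompose that clade's scaffolding at level $y$ using excursion theory, combined with the Markov property of the $\besq(-2\alpha)$-spindles. Throughout, the key heuristic is that the blocks of $\beta^y=\skewer(y,\mathbf{N}_\gamma,\xi_{\mathbf{N}_\gamma})$ are in bijection with the excursions of $\xi_{\mathbf{N}_\gamma}$ above level $y$, and that the spindles carried by the $i$-th such excursion, after re-rooting, form a clade whose initial mass is the $i$-th block mass.

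\emph{Reduction to a single clade.} By \eqref{eq:concatenation-clade}, $\mathbf{N}_\gamma=\Concat_{U\in\gamma}\mathbf{N}_U$ with independent $\mathbf{N}_U\sim\mathtt{Clade}_{\mathrm{Leb}(U)}(\alpha)$, and the level of each spindle is unchanged by this concatenation, since between clades the scaffolding sits at level $0$. Hence $N^{\ge y}(\mathbf{N}_\gamma)=\Concat_{U\in\gamma}N^{\ge y}(\mathbf{N}_U)$, $N^{\le y}(\mathbf{N}_\gamma)=\Concat_{U\in\gamma}N^{\le y}(\mathbf{N}_U)$, and $\beta^y=\Concat_{U\in\gamma}\skewer(y,\mathbf{N}_U,\xi_{\mathbf{N}_U})$, while the shifts relating these concatenations to scaffolding time are measurable with respect to the individual below-$y$ histories. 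Since the clades are independent, $\sigma\big(N^{\le y}(\mathbf{N}_\gamma)\big)=\bigvee_{U\in\gamma}\sigma\big(N^{\le y}(\mathbf{N}_U)\big)$ and the conditioning factorises over $U$. It therefore suffices to prove that for a single clade $\mathbf{N}\sim\mathtt{Clade}_x(\alpha)$, conditionally on its history below level $y$, one has $N^{\ge y}(\mathbf{N})\sim\mathbf{P}^{\alpha,0}_{\skewer(y,\mathbf{N},\xi_{\mathbf{N}})}$; the general case then follows at once from the definition of $\mathbf{P}^{\alpha,0}$ as the law of a concatenation of independent clades.

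\emph{Decomposition of a single clade at level $y$.} Recall from \eqref{eq:clade} that the scaffolding of $\mathbf{N}$ is a spectrally positive stable Lévy process of index $1+\alpha$, run as an excursion above $0$ that is initiated by a jump of size $\zeta(\mathbf{f})$ carrying the initial spindle $\mathbf{f}\sim\besq_x(-2\alpha)$, and absorbed when it returns to $0$. Fix $y>0$; if the clade does not reach level $y$ then $\beta^y=\emptyset$ and $N^{\ge y}(\mathbf{N})$ is the zero measure, matching $\mathbf{P}^{\alpha,0}_\emptyset$, so we may assume it does. Since $\xi_{\mathbf{N}}$ is spectrally positive it creeps downwards and crosses level $y$ upwards only by jumps; each such crossing is effected by a \emph{straddling spindle}, i.e.\ a spindle $f_t$ with $\xi_{\mathbf{N}}(t-)<y<\xi_{\mathbf{N}}(t)$, and conversely each straddling spindle begins one excursion of $\xi_{\mathbf{N}}$ above $y$. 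Inspecting Definition~\ref{def:skewer}, the only spindles contributing to $M^y$ are the straddling ones, the $i$-th straddling spindle contributing the increment $f_{t_i}\big(y-\xi_{\mathbf{N}}(t_i-)\big)$, so the blocks of $\beta^y$ are in order-preserving bijection with the above-$y$ excursions, the $i$-th block having mass $m_i:=f_{t_i}\big(y-\xi_{\mathbf{N}}(t_i-)\big)$. This quantity is the endpoint of the lower part $\check f^y_{t_i}$ of that spindle and is thus measurable with respect to the history below level $y$; in particular $\beta^y$ is measurable with respect to that history, so the conditioning in the statement is well posed.

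\emph{Assembling the conditional law, and the main obstacle.} By the strong Markov property of the stable scaffolding at the successive first-passage times of level $y$ and at the endpoints of the above-$y$ excursions, together with the restriction property of the Poisson random measure $\mathbf{N}$ to the region of scaffolding times lying (weakly) above level $y$, one shows that, conditionally on the history below level $y$, the above-$y$ excursions with their attached spindles are conditionally independent, each being a spectrally positive stable excursion above $0$ started by an independent jump and carrying an independent copy of $\mathbf{N}$; and by the simple Markov property of $\besq(-2\alpha)$ applied within each straddling spindle at its level-$y$ slice, the upper parts $\hat f^y_{t_i}$ are, conditionally on the below-$y$ history, independent $\besq(-2\alpha)$-processes started from $m_i$. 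Comparing with \eqref{eq:clade}, the restriction of $N^{\ge y}(\mathbf{N})$ to the $i$-th above-$y$ excursion is exactly a $\mathtt{Clade}_{m_i}(\alpha)$; these clades are conditionally independent, and their concatenation in the induced left-to-right order has law $\mathbf{P}^{\alpha,0}_{\beta^y}$, as required. The delicate step is the one just invoked: rigorously establishing that $\mathbf{N}$, restricted to the \emph{random} set of scaffolding times above level $y$, is again, conditionally on the history below $y$, a Poisson random measure of intensity $\mathrm{Leb}\otimes\nu^{(-2\alpha)}_{\mathtt{BESQ}}$, independently across the different excursions. This requires choosing the right filtration, checking that the level-crossing times and excursion endpoints are stopping times, controlling the interlacing of the below-$y$ pieces by excursion theory for $\xi_{\mathbf{N}}$, and treating the straddling spindles (and the boundary case of the initial spindle $\mathbf{f}$ itself straddling level $y$) via the Markov property of $\besq(-2\alpha)$ and of its excursion measure $\nu^{(-2\alpha)}_{\mathtt{BESQ}}$; the remaining bookkeeping is routine.
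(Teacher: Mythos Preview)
The paper does not prove this proposition: it is quoted verbatim as Proposition~6.6 of \cite{Paper1-1} and used without argument. So there is no ``paper's own proof'' to compare against here.

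That said, your sketch is along the right lines and matches the strategy used in \cite{Paper1-1}: reduce to a single clade, put the excursions of the scaffolding above level $y$ in bijection with the blocks of $\beta^y$, split each straddling spindle at level $y$ via the Markov property of $\besq(-2\alpha)$ (and the corresponding Markov-like property under the excursion measure $\nu^{(-2\alpha)}_{\mathtt{BESQ}}$), and use the strong Markov property of the stable process together with the restriction property of the underlying Poisson random measure to recover independent $\mathtt{Clade}_{m_i}(\alpha)$'s above level $y$. You also correctly flag the genuine technical work: making precise the filtration in which the level-$y$ crossing times are stopping times, handling the countably many excursions simultaneously, and treating the initial spindle $\mathbf{f}$ when it straddles level $y$. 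These are exactly the points that occupy the full proof in \cite{Paper1-1}; nothing in your outline is wrong, but the ``routine bookkeeping'' you defer is where all the length lies.
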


\subsection{Clade construction of $\mathrm{SSIP}^{(\alpha)}(\theta)$-evolutions with $\theta>0$.}
\label{sec:alphatheta}
Let $\alpha\in (0,1)$ and $\theta>0$. The clade construction of $\mathrm{SSIP}^{(\alpha)}(\theta)$-evolutions is a Poissonian construction,
based on a different type of clade, clades without an initial spindle. To define these clades, first consider again a Poisson random measure $\fN$ on 
$[0,\infty)\times \cE$ with intensity measure $\mathrm{Leb} \otimes \nu^{(-2\alpha)}_{\mathtt{BESQ}}$ and $\xi_{\fN}$ its associated 
scaffolding defined as in \eqref{eq:scaffolding}. 
For $y>0$, let $T^{-y} := \inf\{t\ge 0\colon \xi_{\fN} (t)<-y\}$, and $T^{(-y)-}:= \sup_{z\in(0,y)}T^{-z}$. 
We recall from \cite[Section~2.3]{IPPAT} that there is a sigma-finite measure $\nu^{(\alpha)}_{\mathrm{\bot cld}}$ on a suitable space $(\cN,\Sigma(\cN))$ of counting measures on $[0,\infty)\times \cE$ that can be defined as
\[
\nu^{(\alpha)}_{\mathrm{\bot cld}} (A):= \bE \left[ \sum_{y\in [0,1]} \mathbf{1} \left\{ \fN|^{\leftarrow}_{[T^{(-y)-}, T^{-y})} \in A  \right\} \right], \qquad A\in\Sigma(\cN), 
\]
where $\fN|^{\leftarrow}_{[a, b)}:=  \sum_{(s,f) ~\text{points of}~ \fN\colon\! s\in [a,b)} \delta(s-a, f)$ for every $a<b$. Since each interval
$[T^{(-y)-},T^{-y})$ is the interval of an excursion of $\xi_{\fN}$ above the minimum, the sigma-finite measure 
$\nu^{(\alpha)}_{\mathrm{\bot cld}}$ captures the distribution of the associated point measure of spindles during such an excursion under the It\^o excursion measure of the spectrally positive stable L\'evy process $\xi_{\fN}$ reflected at its running minimum process.
See \cite[Section~2.3]{IPPAT} for more detailed discussion on $\nu^{(\alpha)}_{\mathrm{\bot cld}}$. 

\begin{lemma}[Equations (2.17)--(2.18) in \cite{IPPAT}]\label{lem:descent} Let $\mathbf{N}_x=\delta(0,\mathbf{f})+\mathbf{N}_x^\circ\sim{\tt Clade}_x(\alpha)$ be 
  a clade of initial mass $x$ and write $T^{-y}:=\inf\{t\ge 0\colon\xi_{\mathbf{N}^\circ_x}(t)<-y\}$, $y\in[0,\zeta(\mathbf{f}))$. 
  Then conditionally given $\zeta(\mathbf{f})=s$,
  \[
    \cev{\mathbf{F}}_x:=\sum_{y\in[0,\zeta(\mathbf{f}))\colon\!T^{(-y)-}<T^{-y}}\delta\left(\zeta(\mathbf{f})-y,\mathbf{N}_x^\circ|^{\leftarrow}_{[T^{(-y)-},T^{-y})}\right),
  \]
  is a Poisson random measure on $[0,s]\times\mathcal{N}$ with intensity measure ${\rm Leb}\otimes\nu_{\perp{\rm cld}}^{(\alpha)}$. 
\end{lemma}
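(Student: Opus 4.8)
The plan is to reduce the statement to a single excursion-theoretic fact about the marked stable L\'evy process $(\fN,\xi_{\fN})$ and then transport it through the conditioning on $\{\zeta(\mathbf{f})=s\}$ and the level-reversal $y\mapsto s-y$.

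\emph{First}, I would reduce to a statement about $\fN$ alone. Since $\mathbf{f}\sim\besq_x(-2\alpha)$ is independent of $\fN$ and $\mathbf{N}_x^\circ=\fN|_{(0,T_{-\zeta(\mathbf{f})}(\fN)]\times\cE}$, conditioning on $\{\zeta(\mathbf{f})=s\}$ merely freezes a deterministic level $s>0$ and leaves the law of $\fN$ unchanged. As $\fN$ a.s.\ has no atom at scaffolding time $0$, and $\xi_{\fN}$ (having no negative jumps and unbounded variation) hits each level $-y$ and immediately passes below it, for $y\in[0,s)$ the times $T^{-y}$ and $T^{(-y)-}$ built from $\xi_{\mathbf{N}_x^\circ}$ coincide with those built from $\xi_{\fN}$, the excursion interval $[T^{(-y)-},T^{-y})$ lies in $[0,T_{-s}(\fN))$, and there $\mathbf{N}_x^\circ$ agrees with $\fN$; hence $\mathbf{N}_x^\circ|^{\leftarrow}_{[T^{(-y)-},T^{-y})}=\fN|^{\leftarrow}_{[T^{(-y)-},T^{-y})}$. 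Thus, conditionally on $\{\zeta(\mathbf{f})=s\}$, the point measure $\cev{\mathbf{F}}_x$ is the image under $y\mapsto s-y$ of $\Pi_s:=\sum_{y\in[0,s)\colon T^{(-y)-}<T^{-y}}\delta\big(y,\fN|^{\leftarrow}_{[T^{(-y)-},T^{-y})}\big)$.

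\emph{Second}, the core input: the full point measure $\Pi:=\sum_{y\ge 0\colon T^{(-y)-}<T^{-y}}\delta\big(y,\fN|^{\leftarrow}_{[T^{(-y)-},T^{-y})}\big)$ on $[0,\infty)\times\cN$ is a Poisson random measure with intensity $\mathrm{Leb}\otimes\nu^{(\alpha)}_{\mathrm{\bot cld}}$. I would obtain this from It\^o's excursion theory for the reflected process $\xi_{\fN}-\underline{\xi}_{\fN}$: since $\xi_{\fN}$ is spectrally positive of index $1+\alpha$ and of unbounded variation, $-\underline{\xi}_{\fN}$ is a continuous local time at $0$ whose right-continuous inverse is $y\mapsto T^{-y}$, the excursion intervals of $\xi_{\fN}-\underline{\xi}_{\fN}$ away from $0$ are exactly the $[T^{(-y)-},T^{-y})$ with $T^{(-y)-}<T^{-y}$, and these, indexed by the local-time levels, form a Poisson point process. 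Marking each excursion with the restriction of the spindle field over the corresponding scaffolding-time interval preserves the Poisson property: by the PRM property of $\fN$ the marks are independent across disjoint intervals, and by scaffolding-time stationarity of $(\fN,\xi_{\fN})$ the intensity is of product form $\mathrm{Leb}\otimes\nu$; comparison with the defining formula for $\nu^{(\alpha)}_{\mathrm{\bot cld}}$, which is precisely the mean measure of $\Pi$ on levels $y\in[0,1]$, gives $\nu=\nu^{(\alpha)}_{\mathrm{\bot cld}}$. This is the content of \cite[Section~2.3]{IPPAT}, which I would invoke.

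\emph{Finally}, I would conclude by restriction and reversal: $\Pi_s$ is the restriction of $\Pi$ to the strip $\{y<s\}$, hence a Poisson random measure on $[0,s)\times\cN$ with intensity $\mathrm{Leb}\otimes\nu^{(\alpha)}_{\mathrm{\bot cld}}$, and by independence of $\mathbf{f}$ and $\fN$ this is also its conditional law given $\{\zeta(\mathbf{f})=s\}$; pushing forward by the Lebesgue-preserving bijection $y\mapsto s-y$ of $[0,s)$ onto $(0,s]\subseteq[0,s]$ yields that, conditionally on $\{\zeta(\mathbf{f})=s\}$, $\cev{\mathbf{F}}_x$ is a Poisson random measure on $[0,s]\times\cN$ with intensity $\mathrm{Leb}\otimes\nu^{(\alpha)}_{\mathrm{\bot cld}}$, as claimed. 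The main obstacle is the second step: upgrading It\^o's excursion theorem for the reflected spectrally positive stable process to the \emph{marked} excursions with a genuine product intensity requires carefully combining the excursion decomposition with independence of marks over disjoint scaffolding-time intervals and shift-invariance of $(\fN,\xi_{\fN})$ -- but this is exactly what is already carried out in \cite[Section~2.3]{IPPAT}, so here it should suffice to cite it; the remaining steps (the identification $\xi_{\mathbf{N}_x^\circ}=\xi_{\fN}$ below level $s$, the restriction of a PRM, the deterministic level-reversal, and the routine measure theory behind the conditioning for every $s>0$) are straightforward.
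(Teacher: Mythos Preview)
The paper does not give its own proof of this lemma: it is stated with the attribution ``Equations (2.17)--(2.18) in \cite{IPPAT}'' and is used as a cited input, so there is nothing in the present paper to compare your proof against. Your argument is correct and is exactly the natural one: use independence of $\mathbf{f}$ and $\fN$ to reduce the conditioning on $\{\zeta(\mathbf{f})=s\}$ to a deterministic truncation level, invoke the It\^o excursion decomposition of the spectrally positive stable process reflected at its running infimum (with spindle marks, giving the Poisson random measure with intensity $\mathrm{Leb}\otimes\nu^{(\alpha)}_{\perp\mathrm{cld}}$, which is precisely how $\nu^{(\alpha)}_{\perp\mathrm{cld}}$ is introduced in \cite[Section~2.3]{IPPAT}), restrict to levels in $[0,s)$, and push forward by the Lebesgue-preserving reversal $y\mapsto s-y$. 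This is the standard route and is presumably also how the cited equations in \cite{IPPAT} are obtained.
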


Let $\cev{\fF}$ be a Poisson random measure on $[0,\infty)\times \mathcal{N}$ with intensity measure $(\theta/\alpha)\mathrm{Leb}\otimes  \nu^{(\alpha)}_{\mathrm{\bot cld}}$. We will abbreviate the distribution of $\cev{\fF}$ to $\mathbf{P}^{\alpha,\theta}_\emptyset$. We will
now use the points $(s,N_s)$ of $\cev{\fF}$ to provide immigration at level $s$ that contributes to levels $y\ge s$ via 
$\skewer(y-s,N_s,\xi_{N_s})$.

\begin{proposition}[Proposition 3.4 of \cite{IPPAT}]\label{prop:cladeconstr2} Let $\cev{\fF}\sim\mathbf{P}^{\alpha,\theta}_{\emptyset}$ and 
\begin{equation}\label{eq:cev-construction}
\cev{\beta}^y = \Concat_{\textrm{\rm points }(s, N_s)\textrm{ \rm of }\cev{\fF}\colon\! s\in [0, y]\downarrow} \skewer(y-s, N_s,\xi_{N_s} ), \qquad y\ge 0. 
\end{equation}
By $\downarrow$ we stress that this concatenation is in decreasing order of $s$, setting skewers of $N_s$ with $s$ higher to the left of those with $s$ lower. Then $(\cev{\beta}^y,y\ge 0)$ is an ${\rm SSIP}^{(\alpha)}(\theta)$-evolution starting from $\emptyset$. 
\end{proposition}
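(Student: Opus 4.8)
The plan is to show that $(\cev\beta^y,\,y\ge0)$ is Markov with the transition kernels $\kappa^{\alpha,\theta}_{\cdot}$ of Definition~\ref{def:kernel:sp} and has continuous paths; since \cite{IPPAT} has already established the existence of a path-continuous Hunt process with these kernels, this identifies the construction with the ${\rm SSIP}^{(\alpha)}(\theta)$-evolution from $\emptyset$. That $\cev\beta^0=\emptyset$ is immediate, since a.s.\ $\cev\fF$ has no atom on $\{0\}\times\mathcal{N}$. The core of the argument is a decomposition of the construction \eqref{eq:cev-construction} at a fixed level $y>0$, combined with the Markov-like structure of clades.

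First I would split $\cev\fF$ at level $y$. By the restriction property of Poisson random measures, the points $(s,N_s)$ of $\cev\fF$ with $s>y$, re-indexed as $(s-y,N_s)$, form a Poisson random measure with the same intensity $(\theta/\alpha)\,{\rm Leb}\otimes\nu^{(\alpha)}_{\mathrm{\bot cld}}$, independent of the restriction of $\cev\fF$ to $[0,y]\times\mathcal{N}$; the immigration construction it drives, call it $(\widehat\beta^{\,t},\,t\ge0)$, has the same law as $(\cev\beta^t,\,t\ge0)$ and supplies the ``fresh immigration'' after level $y$. For each remaining point $(s,N_s)$ with $s\le y$, set $\beta_s:=\skewer(y-s,N_s,\xi_{N_s})$ and let $N^{\ge(y-s)}(N_s)$ denote the part of $N_s$ above level $y-s$, re-indexed in scaffolding time as before Proposition~\ref{prop:markovlike1}. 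The analogue of Proposition~\ref{prop:markovlike1} for the excursion measure $\nu^{(\alpha)}_{\mathrm{\bot cld}}$ — which follows from Proposition~\ref{prop:markovlike1}, Lemma~\ref{lem:descent} and the excursion-theoretic description of $\nu^{(\alpha)}_{\mathrm{\bot cld}}$ recalled above — shows that, conditionally on the history of $N_s$ below level $y-s$, in particular on $\beta_s$, one has $N^{\ge(y-s)}(N_s)\sim\mathbf{P}^{\alpha,0}_{\beta_s}$. Writing $\fN^{\,y}:=\Concat_{s\le y\,\downarrow}N^{\ge(y-s)}(N_s)$ and using the skewer-shift identity $\skewer\big((y-s)+t,\,N_s,\,\xi_{N_s}\big)=\skewer\big(t,\,N^{\ge(y-s)}(N_s),\,\xi_{N^{\ge(y-s)}(N_s)}\big)$ together with the fact that the skewer of a concatenation of marked scaffolds is the concatenation of the skewers, I obtain, conditionally on the history below level $y$,
\[
\cev\beta^{\,y+t}\ \overset{d}{=}\ \widehat\beta^{\,t}\ \concat\ \skewer\big(t,\,\fN^{\,y},\,\xi_{\fN^{\,y}}\big),\qquad t\ge0,
\]
where the two pieces are conditionally independent and, since $\cev\beta^y=\Concat_{s\le y\,\downarrow}\beta_s$, one has $\fN^{\,y}\sim\mathbf{P}^{\alpha,0}_{\cev\beta^y}$. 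By Proposition~\ref{prop:cladeconstr1} the second piece is an ${\rm SSIP}^{(\alpha)}(0)$-evolution from $\cev\beta^y$ evaluated at time $t$, hence has law $\kappa^{\alpha,0}_t(\cev\beta^y,\,\cdot\,)$. Thus the conditional law of $\cev\beta^{\,y+t}$ given the history below level $y$ depends only on $\cev\beta^y$, and is the law of an independent concatenation of a draw from the law of $\cev\beta^t$ with a draw from $\kappa^{\alpha,0}_t(\cev\beta^y,\,\cdot\,)$; this is the Markov property.

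It remains to match the semigroup with $\kappa^{\alpha,\theta}_{\cdot}$. Reading off Definition~\ref{def:kernel:sp}, $\kappa^{\alpha,\theta}_t(\beta,\,\cdot\,)$ is precisely the law of an independent concatenation of $G^t\overline{\beta}_0$, which is $\kappa^{\alpha,\theta}_t(\emptyset,\,\cdot\,)$, with $\Concat_{U\in\beta}\beta_U^t$, which is $\kappa^{\alpha,0}_t(\beta,\,\cdot\,)$; so the identification reduces to the single one-dimensional statement that $\cev\beta^t\sim\kappa^{\alpha,\theta}_t(\emptyset,\,\cdot\,)$, i.e.\ that $\cev\beta^t$ has the law of $G\,\overline{\beta}_0$ for independent $G\sim{\tt Gamma}(\theta,1/2t)$ and $\overline{\beta}_0\sim{\tt PDIP}^{(\alpha)}(\theta)$. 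I expect this marginal computation to be the main obstacle, and I would split it in two. (i) The total mass: $\|\cev\beta^y\|$ is the sum, over the Poisson random measure of immigrant clades, of the time-$y$ values of their total-mass processes; via the ${\tt BESQ}(-2\alpha)$-excursion description of the total mass of a single $\nu^{(\alpha)}_{\mathrm{\bot cld}}$-clade skewer (governed by $\nu^{(-2\alpha)}_{\mathtt{BESQ}}$) and the additivity of squared Bessel processes, $(\|\cev\beta^y\|,\,y\ge0)$ is a ${\tt BESQ}_0(2\theta)$, whence $\|\cev\beta^t\|\sim{\tt Gamma}(\theta,1/2t)$. (ii) The shape: given its total mass, $\cev\beta^t$ normalised to unit mass is ${\tt PDIP}^{(\alpha)}(\theta)$ and independent of the total mass — here I would verify the Gnedin--Pitman regenerative (deletion-kernel) characterisation of ${\tt PDIP}^{(\alpha)}(\theta)$ for $\cev\beta^t$, the regenerative structure arising because the extreme immigrant clade reaching the relevant level peels off a ${\tt Beta}$-distributed proportion and leaves, after rescaling, a copy of the same construction — equivalently, one matches with the subordinator representation of Definition~\ref{defn:pdip-alphatheta}. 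Self-similarity of the construction under joint scaling of levels, masses and time reduces (i)--(ii) to a single value of $t$.

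Finally, path-continuity of $(\cev\beta^y,\,y\ge0)$ is verified as in \cite{IPPAT}, using the path-continuity of the constituent ${\rm SSIP}^{(\alpha)}(0)$-evolutions (Proposition~\ref{prop:cladeconstr1}) together with a uniform control on the accumulating small immigrant clades near the left end, whose total contribution to $\cev\beta^y$ vanishes uniformly as $y$ ranges over compacts. With the Markov property, the explicit kernel $\kappa^{\alpha,\theta}_{\cdot}$, and path-continuity established, $(\cev\beta^y,\,y\ge0)$ is identified with the ${\rm SSIP}^{(\alpha)}(\theta)$-evolution starting from $\emptyset$.
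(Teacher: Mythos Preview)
The paper does not contain a proof of this proposition: it is stated in Section~\ref{sec:prel:clades} as a preliminary result and attributed to \cite[Proposition~3.4]{IPPAT}, so there is no in-paper argument to compare against. Your outline is a plausible reconstruction of such a proof and lines up with how the surrounding results in \cite{IPPAT} are organised (Poisson splitting of $\cev\fF$ at level $y$, the Markov-like property of clades above a level, the marginal identification $\cev\beta^y\sim{\tt Gamma}(\theta,1/2y)\cdot{\tt PDIP}^{(\alpha)}(\theta)$, and a separate path-continuity argument). Note in particular that the marginal identification you flag as the main obstacle is precisely \cite[Proposition~3.6]{IPPAT}, recalled here as \eqref{eq:prm-marginal}, and the Markov-like property you need is the content of Proposition~\ref{prop:markovlike2} (which is \cite[Lemma~3.10]{IPPAT}); so your sketch is essentially invoking the same building blocks that \cite{IPPAT} uses, just reassembled.

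One point to tighten: you appeal to a Markov-like property ``for the excursion measure $\nu^{(\alpha)}_{\mathrm{\bot cld}}$'' and say it follows from Proposition~\ref{prop:markovlike1} together with Lemma~\ref{lem:descent}. That is the right idea, but the actual statement you need operates at the level of the full Poisson random measure $\cev\fF$ (conditioning on the history below level $y$ jointly across all immigrant clades), not clade by clade under the $\sigma$-finite measure; this is exactly what Proposition~\ref{prop:markovlike2} gives, and it is cleaner to cite that directly rather than to rederive it from the single-clade version. Similarly, the path-continuity step is genuinely delicate (uniform control of the small recent immigrants near the left end) and is carried out in \cite{IPPAT}; your pointer to it is appropriate, but be aware it is not a one-line consequence of Proposition~\ref{prop:cladeconstr1}.
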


Recall from Proposition \ref{prop:concat} that an ${\rm SSIP}^{(\alpha)}(\theta)$-evolution starting from $\beta^0\in\cI_H$ can be constructed
as $(\beta_1^y\concat\beta_2^y,\,y\ge 0)$ for an independent pair of an ${\rm SSIP}^{(\alpha)}(\theta)$-evolution $(\beta_1^y,\,y\ge 0)$
starting from $\emptyset$ and an ${\rm SSIP}^{(\alpha)}(0)$-evolution $(\beta_2^y,\,y\ge 0)$ starting from $\beta^0$. In particular, we can
combine Propositions \ref{prop:cladeconstr1} and \ref{prop:cladeconstr2} to obtain a clade construction of an ${\rm SSIP}^{(\alpha)}(\theta)$-evolution starting from $\beta^0$, based on an independent pair $(\cev{\fF},\fN_{\beta^0})\sim\mathbf{P}^{\alpha,\theta}_\emptyset\otimes\mathbf{P}^{\alpha,0}_{\beta^0}$.

Finally, let us note the Markov-like property in this context. To this end, we denote by 
\[N^{\ge y}(\cev{\mathbf{F}})=\Concat_{\text{points }(s, N_s)\text{ of }\cev{\fF}\colon\! s\in [0, y]\downarrow}N_s^{\ge y-s}\]
the point measure of spindles that takes into account all immigration at levels $s\le y$ and builds a point measure from the associated (part-)spindles above level $y=s+(y-s)$. We collect the immigration at levels $s>y$ in a point measure $\cev{\mathbf{F}}^{\ge y}:=\cev{\mathbf{F}}|^{\leftarrow}_{(y,\infty)}$ on $[0,\infty)\times\mathcal{N}$. We include the remaining spindles below level $y$, captured
in $N_s^{\le y-s}$, from immigration at levels $s\le y$, in the history below level $y$.

\begin{proposition}[Lemma 3.10 of \cite{IPPAT}]\label{prop:markovlike2} 
  Let $(\cev{\fF},\fN_{\beta^0})\sim\mathbf{P}^{\alpha,\theta}_\emptyset\otimes\mathbf{P}^{\alpha,0}_{\beta^0}$ and $y>0$. Denote by
  $(\beta^y,\,y\ge 0)$ the associated ${\rm SSIP}^{(\alpha)}(\theta)$-evolution. Then conditionally given the history below level $y$, we have 
  $(\cev{\mathbf{F}}^{\ge y},N^{\ge y}(\cev{\mathbf{F}})\concat N^{\ge y}(\mathbf{N}_{\beta^0}))\sim \mathbf{P}^{\alpha,\theta}_\emptyset\otimes\mathbf{P}^{\alpha,0}_{\beta^y}$.
\end{proposition}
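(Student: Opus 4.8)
The plan is to split $\beta^y$ into its immigration part and its initial-block part, to establish the Markov-like property separately for each, and then to recombine, using that, by its very definition \eqref{eq:concatenation-clade}, $\mathbf{P}^{\alpha,0}_{\gamma_1\concat\gamma_2}$ is the law of the independent concatenation of $\mathbf{P}^{\alpha,0}_{\gamma_1}$ and $\mathbf{P}^{\alpha,0}_{\gamma_2}$. Recall from the discussion preceding Proposition~\ref{prop:markovlike2} that, combining Propositions~\ref{prop:cladeconstr1}, \ref{prop:cladeconstr2} and \ref{prop:concat}, the associated ${\rm SSIP}^{(\alpha)}(\theta)$-evolution is $\beta^y=\cev{\beta}^y\concat\beta^y_{\mathrm{i}}$, where $\cev{\beta}^y$ is the immigration contribution \eqref{eq:cev-construction} built from $\cev{\fF}$, and $\beta^y_{\mathrm{i}}:=\skewer(y,\mathbf{N}_{\beta^0},\xi_{\mathbf{N}_{\beta^0}})$ is built from the initial clades, with $\cev{\fF}$ and $\mathbf{N}_{\beta^0}$ independent.

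For the initial-block part I would simply quote Proposition~\ref{prop:markovlike1}: conditionally on the part of $\mathbf{N}_{\beta^0}$ below level $y$, we have $N^{\ge y}(\mathbf{N}_{\beta^0})\sim\mathbf{P}^{\alpha,0}_{\beta^y_{\mathrm{i}}}$. For the immigration above level $y$, the restriction property of Poisson random measures shows that $\cev{\mathbf{F}}^{\ge y}=\cev{\mathbf{F}}|^{\leftarrow}_{(y,\infty)}$ is again a Poisson random measure with intensity $(\theta/\alpha)\mathrm{Leb}\otimes\nu^{(\alpha)}_{\mathrm{\bot cld}}$, so $\cev{\mathbf{F}}^{\ge y}\sim\mathbf{P}^{\alpha,\theta}_\emptyset$, and it is independent of $\cev{\mathbf{F}}|^{\leftarrow}_{[0,y]}$; since the history below level $y$ is a measurable function of $\cev{\mathbf{F}}|^{\leftarrow}_{[0,y]}$ and of $\mathbf{N}_{\beta^0}$, the measure $\cev{\mathbf{F}}^{\ge y}$ is independent of that history.

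The main work concerns the immigration below level $y$, namely $N^{\ge y}(\cev{\mathbf{F}})=\Concat_{(s,N_s)\text{ of }\cev{\fF}\colon\! s\in[0,y]\downarrow}N_s^{\ge y-s}$. The key ingredient is a Markov-like property for the sigma-finite measure $\nu^{(\alpha)}_{\mathrm{\bot cld}}$ itself: under $\nu^{(\alpha)}_{\mathrm{\bot cld}}$, on the event that a point measure $N_s$ reaches relative level $z$, conditionally on its part below level $z$ one has $N^{\ge z}(N_s)\sim\mathbf{P}^{\alpha,0}_{\skewer(z,N_s,\xi_{N_s})}$. I would deduce this from Proposition~\ref{prop:markovlike1} by way of Lemma~\ref{lem:descent}, which realises a Poisson random measure with intensity $\mathrm{Leb}\otimes\nu^{(\alpha)}_{\mathrm{\bot cld}}$ inside the descent of a clade $\mathbf{N}_x\sim\mathtt{Clade}_x(\alpha)$, to which Proposition~\ref{prop:markovlike1} applies after splitting at level $z$. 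Combining this with the Poisson structure of $\cev{\fF}$ over $[0,y]$ --- thinning to those immigrant families that actually reach level $y$, via a standard Campbell/mapping argument --- and recalling from \eqref{eq:cev-construction} that $\cev{\beta}^y$ is precisely the concatenation, in decreasing order of $s$, of the skewers $\skewer(y-s,N_s,\xi_{N_s})$, one matches block-by-block the concatenation defining $\cev{\beta}^y$ with the concatenation structure \eqref{eq:concatenation-clade} of $\mathbf{P}^{\alpha,0}_{\cev{\beta}^y}$. This yields $N^{\ge y}(\cev{\mathbf{F}})\sim\mathbf{P}^{\alpha,0}_{\cev{\beta}^y}$ conditionally on the history below level $y$. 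Finally, given that history the pieces $N^{\ge y}(\cev{\mathbf{F}})$ and $N^{\ge y}(\mathbf{N}_{\beta^0})$ are conditionally independent, being built from the independent inputs $\cev{\fF}$ and $\mathbf{N}_{\beta^0}$, so by \eqref{eq:concatenation-clade} their concatenation has conditional law $\mathbf{P}^{\alpha,0}_{\cev{\beta}^y\concat\beta^y_{\mathrm{i}}}=\mathbf{P}^{\alpha,0}_{\beta^y}$; together with the independence of $\cev{\mathbf{F}}^{\ge y}\sim\mathbf{P}^{\alpha,\theta}_\emptyset$ from the history this gives the asserted product law $\mathbf{P}^{\alpha,\theta}_\emptyset\otimes\mathbf{P}^{\alpha,0}_{\beta^y}$. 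I expect the delicate point to be precisely the Markov-like property for $\nu^{(\alpha)}_{\mathrm{\bot cld}}$ together with the bookkeeping of what ``history below level $y$'' encodes across the Poisson family of immigrant clades: conditioning on it must pin down not only all below-level part-spindles but also which immigration times $s\le y$ yield families reaching level $y$ and the interval partitions these contribute there, so that the resulting conditional law is genuinely $\mathbf{P}^{\alpha,0}$ over the assembled skewer $\cev{\beta}^y$.
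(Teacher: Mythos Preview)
The paper does not give a proof of this proposition: it is stated as a quotation of \cite[Lemma~3.10]{IPPAT}, and the only remark following it is ``For more details, we refer to \cite[Section~3]{IPPAT}.'' So there is no in-paper argument to compare your proposal against.

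That said, your sketch is the natural one and is consistent with the surrounding material. Splitting into the immigration part and the initial-clade part, invoking Proposition~\ref{prop:markovlike1} for the latter, and using the Poisson restriction property for $\cev{\mathbf{F}}^{\ge y}$ are all straightforward. The substantive step, as you correctly identify, is a Markov-like property at a fixed level under $\nu^{(\alpha)}_{\mathrm{\bot cld}}$; your idea to obtain it via Lemma~\ref{lem:descent} (embedding a PRM with intensity $\mathrm{Leb}\otimes\nu^{(\alpha)}_{\mathrm{\bot cld}}$ inside a clade and then applying Proposition~\ref{prop:markovlike1}) is a reasonable route, though in \cite{IPPAT} this is handled directly. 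The one point to be careful with is the bookkeeping you flag at the end: the ``history below level $y$'' must record, for each immigrant clade with $s\le y$, its entire below-level part (including which clades survive to level $y$), so that the conditional independence across clades and the identification of the concatenated skewer $\cev{\beta}^y$ go through cleanly.
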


For more details, we refer to \cite[Section 3]{IPPAT}.

\subsection{Distributional properties of ${\rm SSIP}^{(\alpha)}(\theta)$-evolutions.}

Fix $\theta>0$. Consider an ${\rm SSIP}^{(\alpha)}(\theta)$-evolution $(\cev\beta^y,\,y\ge 0)$ starting from $\emptyset$, constructed as in Proposition \ref{prop:cladeconstr2}. We know from \cite[Proposition~3.6]{IPPAT} that the marginal distributions exhibit pseudo-stationarity for all $y>0$ 
\begin{equation}\label{eq:prm-marginal}
\cev\beta^y\sim \mathtt{Gamma}(\theta, (2y)^{-1})\cdot \pdip^{(\alpha)}(\theta). 
\end{equation}
For future usage, we explore in more detail the components of $\cev\beta^y$. 
By \cite[proof of Proposition~3.6]{IPPAT}, the clades of $\cev{\mathbf{F}}$ that survive to level $y$, for fixed $y>0$, can be listed as $N^y_k$ 
with immigration level $s^y_k$, $k\ge 1$, ordered in increasing order of immigration levels, which accumulate at $y$. This corresponds to the order from right to left in the concatenation of \eqref{eq:cev-construction}. Moreover, the resulting decomposition of $\cev\beta^y$ in \eqref{eq:cev-construction} can be expressed in terms of families of independent identically distributed (i.i.d.) random variables, as follows: 
\begin{equation}\label{eq:ssip-clades}
\Big(\skewer(y-s^y_k, N^y_k, \xi_{N^y_k}),\ \  k\ge 1\Big)
\ed \Big(E_k \prod_{i=1}^k B_i\bar\beta_k,\ \   k\ge 1\Big)
\ed \Big(G(1-B_k) \prod_{i=1}^{k-1} B_i\bar\beta_k,\ \   k\ge 1\Big),  
\end{equation}
where $(\bar\beta_i)_{i\ge 1}$ are i.i.d.\@ $\mathtt{PDIP}^{(\alpha)}( 0)$, $(B_i)_{i\ge 1}$  are i.i.d.\@ $\mathtt{Beta}(\theta- \alpha, 1)$, $(E_i)_{i\ge 1}$ are i.i.d.\@ $\mathtt{Exp}((2y)^{-1})$ and $G\sim\mathtt{Gamma}(\theta, (2y)^{-1}) $, independent of each other.
Fix any $j\ge 1$.  
Let $G'\sim\mathtt{Gamma}(\theta, (2y)^{-1})$ and $\bar{\gamma}\sim \mathtt{PDIP}^{(\alpha)}(\theta)$ be independent, further independent of $(E_k,B_k, \bar\beta_k)_{k\le j}$. 
It follows that 
\begin{align}
&\left( \Concat_{i=\infty}^{j+1} \skewer(y-s^y_i, N^y_i, \xi_{N^y_i}),\quad  \Big(\skewer(y-s^y_k, N^y_k, \xi_{N^y_k})\Big)_{k\le j}\right)\notag\\
&\ed \left( \prod_{i=1}^j B_{i} \Concat_{k=\infty}^{1} E_{j+k} \prod_{i=1}^k B_{j+i}\bar\beta_{j+k}, \quad \left(E_k \prod_{i=1}^k B_i\bar\beta_k \right)_{k\le j}\right)
\ed   \left( G' \prod_{i=1}^j B_{i} \bar{\gamma}, \quad \left(E_k\prod_{i=1}^k B_i\bar\beta_k\right)_{k\le j}\right),\label{eq:ssip-clades-bis}
\end{align}
where the second equality is from the observation that $\Concat_{k=\infty}^{1} E_{j+k} \prod_{i=1}^k B_{j+i}\bar\beta_{j+k}$ is independent of $(E_k,B_k, \beta_k)_{k\le j}$ and has distribution $\mathtt{Gamma}(\theta, (2y)^{-1})\cdot \pdip^{(\alpha)}(\theta)$ by \eqref{eq:ssip-clades} and \eqref{eq:prm-marginal}. 

It will be useful to also describe the distribution of \eqref{eq:ssip-clades} conditionally given $(s^y_k,\,k\ge 1)$. Specifically, we know from 
\eqref{eq:pdip:0-alpha} and \cite[Lemma~3.5]{IPPAT}, that conditionally on $(s^y_k,\,k\ge 1)$, the interval partitions 
$\skewer (y -s^y_k, N^y_k,\xi_{N^y_k})$, $k\ge 1$, are conditionally independent, with 
\begin{equation}\label{eq:conddist}\skewer (y -s^y_k, N^y_k,\xi_{N^y_k})\ed\{(0,H_k^y)\} \concat \gamma_k^y,\qquad k\ge 1,
\end{equation}
where $H^y_k\sim \mathtt{Gamma}(1-\alpha, (2 (y-s^y_k))^{-1})$ and 
$\gamma_k^y\sim \mathtt{Gamma}(\alpha, (2 y-s^y_k)^{-1}) \cdot\pdip^{(\alpha)}(\alpha)$ are independent, $k\ge 1$. In particular, we note that the
dependence on $\theta>0$ is only via the increasing sequence of immigration levels $s^y_k$, $k\ge 1$, of clades surviving to level $y$, whose distribution is given in \cite[proof of Proposition~5.1]{FVAT} as
\begin{equation}\label{eq:immlev} \sum_{k\ge 1}\delta(s^y_k)\quad\mbox{is a Poisson random measure on $[0,y)$ with intensity 
$\theta(y-s)^{-1}ds$.}
\end{equation}

\section{Construction of $\mathrm{SSIP}(\theta_1,\theta_2)$-evolutions for $\theta_1\ge\alpha$, $\theta_2\ge 0$, and the proof of Theorem \ref{thm:pseudo}.}\label{sec:large}

In this section, we make precise the construction of $\mathrm{SSIP}^{(\alpha)}(\theta_1, \theta_2)$-evolutions in the case 
$\theta_1\ge \alpha$. The approach does not depend on the construction of ${\rm SSIP}^{(\alpha)}_\dagger(\theta_1,\theta_2)$-evolutions
nor indeed on any developments in Section \ref{sec:ip}, except when we establish the connections between the two processes in Section \ref{sec:stop}.

\subsection{Definition, Markov property and path-continuity of $\mathrm{SSIP}^{(\alpha)}(\theta_1, \theta_2)$-evolutions with $\theta_1\ge \alpha$.}

Fix $\theta_1\ge\alpha$ and let $\underline{\cev{\fF}}\sim\mathbf{P}^{\alpha,\theta_1-\alpha}_\emptyset$ be a Poisson random measure on $[0,\infty)\times \mathcal{N}$ with intensity $(\theta_1/\alpha\,-1)\mathrm{Leb}\otimes  \nu^{(\alpha)}_{\mathrm{\bot cld}}$. 
Rather than using \eqref{eq:cev-construction}, we modify this construction and define $(\underline{\cev{\beta}}^y, y\ge 0)$ by left-right-reversing each immigrating clade:
\begin{equation}\label{eq:theta-0}
\underline{\cev{\beta}}^y:= \Concat_{\text{points}~ (s,N_s) ~\text{of}~\underline{\cev{\fF}}\colon\! s \in [0,y]\downarrow}  \mathrm{rev}\left(\skewer\big( y -s, N_{s} ,\xi_{N_s}\big)\right) , \qquad y \ge 0.
\end{equation}

\begin{definition}[$\mathrm{SSIP}^{(\alpha)}(\theta_1, \theta_2)$-evolution with $\theta_1\ge \alpha$]~\label{defn:theta-ge}
	For $\theta_1 \ge \alpha$, $\theta_2\ge 0$, and $\gamma \in \cI_H$. Let $(\vecc{\beta}^y ,y\ge 0)$ be an  $\mathrm{RSSIP}^{(\alpha)}(\theta_2)$-evolution starting from $\gamma$, and define $(\underline{\cev{\beta}}^y,y\ge 0)$ as in \eqref{eq:theta-0}, based on  $\underline{\cev{\fF}}\sim\mathbf{P}^{\alpha,\theta_1-\alpha}_\emptyset$.
	Then the $\cI_H$-valued process 
	\[
	\beta^y:= \underline{\cev{\beta}}^y \concat \vecc{\beta}^y, \qquad y\ge 0, 
	\]
	is an \emph{$\mathrm{SSIP}^{(\alpha)}(\theta_1, \theta_2)$-evolution} starting from $\gamma$. 
\end{definition}

\begin{proposition}[Total mass]
	The total mass of an $\mathrm{SSIP}^{(\alpha)}(\theta_1, \theta_2)$-evolution is a $\besq (2\theta)$ with $\theta:= \theta_1 +\theta_2 -\alpha$. 
\end{proposition}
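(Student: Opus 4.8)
The plan is to decompose the total mass $\|\beta^y\|$ according to the two components in Definition~\ref{defn:theta-ge}, namely $\|\beta^y\| = \|\underline{\cev\beta}^y\| + \|\vecc\beta^y\|$, and to show that the first summand is a $\besq(2(\theta_1-\alpha))$ starting from $0$ while the second is an independent $\besq(2\theta_2)$ starting from $\|\gamma\|$; the additivity property of squared Bessel processes (the $\delta_1,\delta_2\ge 0$ special case of Lemma~\ref{lem:besq}, or the classical result of Shiga--Watanabe) then gives that the sum is a $\besq(2(\theta_1-\alpha)+2\theta_2) = \besq(2\theta)$ starting from $\|\gamma\|$. Independence of the two summands is immediate from Definition~\ref{defn:theta-ge}, since $\underline{\cev\beta}$ is built from $\underline{\cev\fF}$ and $\vecc\beta$ is an independent $\mathrm{RSSIP}^{(\alpha)}(\theta_2)$-evolution.

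For the second summand: by Proposition~\ref{prop:Hunt+totalmass}, an $\mathrm{RSSIP}^{(\alpha)}(\theta_2)$-evolution starting from $\gamma$ has total mass process $\besq_{\|\gamma\|}(2\theta_2)$, so nothing further is needed there. For the first summand, observe that left-right-reversal is total-mass preserving, so $\|\underline{\cev\beta}^y\| = \sum_{\text{points }(s,N_s)\text{ of }\underline{\cev\fF}:\,s\in[0,y]}\|\skewer(y-s,N_s,\xi_{N_s})\|$, which is exactly the total mass of the interval partition $\cev\beta^y$ obtained from $\underline{\cev\fF}$ via the \emph{unreversed} construction \eqref{eq:cev-construction}. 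Since $\underline{\cev\fF}\sim\mathbf{P}^{\alpha,\theta_1-\alpha}_\emptyset$, Proposition~\ref{prop:cladeconstr2} identifies $(\cev\beta^y,y\ge 0)$ as an $\mathrm{SSIP}^{(\alpha)}(\theta_1-\alpha)$-evolution starting from $\emptyset$ (valid since $\theta_1-\alpha\ge 0$), and hence by \cite[Theorem~1.4]{IPPAT} (equivalently Proposition~\ref{prop:Hunt+totalmass}) its total mass is a $\besq_0(2(\theta_1-\alpha))$. Note that when $\theta_1=\alpha$ this degenerates to the constant zero process, consistent with $\underline{\cev\fF}$ being the zero measure, and the identity $\|\beta^y\|=\|\vecc\beta^y\|$ with total mass $\besq_{\|\gamma\|}(2\theta_2)=\besq_{\|\gamma\|}(2\theta)$ still holds.

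Assembling these: $\|\beta^y\| = \|\underline{\cev\beta}^y\| + \|\vecc\beta^y\|$ is the sum of an independent $\besq_0(2(\theta_1-\alpha))$ and $\besq_{\|\gamma\|}(2\theta_2)$, hence a $\besq_{\|\gamma\|}(2\theta)$ with $\theta=\theta_1+\theta_2-\alpha$, by additivity of squared Bessel processes with non-negative dimensions. I do not anticipate a serious obstacle here; the only minor point requiring care is that additivity as usually stated concerns processes without absorption, which is fine since both dimensions $2(\theta_1-\alpha)$ and $2\theta_2$ are non-negative and the corresponding squared Bessel processes never hit $0$ from a positive start (or are identically $0$), so lifetimes do not interfere — one may invoke Lemma~\ref{lem:besq} with $T\equiv 0$ (or $T=\infty$) directly.
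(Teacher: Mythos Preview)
Your proof is correct and follows exactly the same approach as the paper: decompose the total mass into the contribution from $\underline{\cev\beta}^y$ (which, since reversal preserves total mass, equals that of an $\mathrm{SSIP}^{(\alpha)}(\theta_1-\alpha)$-evolution from $\emptyset$, hence $\besq_0(2(\theta_1-\alpha))$) and the independent contribution from $\vecc\beta^y$ (a $\besq_{\|\gamma\|}(2\theta_2)$), and conclude by additivity. The only quibble is that invoking Lemma~\ref{lem:besq} with $T\equiv 0$ does not give what you want (it only tells you $Z_3$ is $\besq(2\theta)$, not that $Z_1+Z_2$ is); the correct choice is $T=\infty$, which is admissible since both lifetimes are infinite when the dimensions are non-negative.
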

\begin{proof}
	$(\underline{\cev{\beta}}^y,\,y\ge 0)$ has the same total mass as an $\mathrm{SSIP}^{(\alpha)} (\theta_1-\alpha)$-evolution, which is a $\besq(2(\theta_1-\alpha))$ starting from zero. Then we conclude by the additivity of squared Bessel processes. 
\end{proof}

\begin{proposition}[Self-similarity]
	Let  $(\beta^y,\,y\ge 0)$ be an $\mathrm{SSIP}^{(\alpha)}(\theta_1, \theta_2)$-evolution starting from $\gamma$. Then for every $c>0$, the process 
	$(c\odotip\beta^{y/c},y\ge 0)$ is an $\mathrm{SSIP}^{(\alpha)}(\theta_1, \theta_2)$-evolution starting from $c\odotip \gamma$. 
\end{proposition}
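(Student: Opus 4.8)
The plan is to work directly from the two-component description in Definition~\ref{defn:theta-ge}, $\beta^y=\underline{\cev{\beta}}^y\concat\vecc{\beta}^y$, in which $(\vecc{\beta}^y,y\ge 0)$ is an $\mathrm{RSSIP}^{(\alpha)}(\theta_2)$-evolution from $\gamma$ and $(\underline{\cev{\beta}}^y,y\ge 0)$ is built by \eqref{eq:theta-0} from an independent $\underline{\cev{\fF}}\sim\mathbf{P}^{\alpha,\theta_1-\alpha}_\emptyset$, and to check self-similarity of each piece. First I would record the two elementary identities $c\odotip(\gamma_1\concat\gamma_2)=(c\odotip\gamma_1)\concat(c\odotip\gamma_2)$ and $c\odotip\,{\rm rev}(\gamma)={\rm rev}(c\odotip\gamma)$ for $\gamma,\gamma_1,\gamma_2\in\cI_H$, both immediate from the definitions together with $\|c\odotip\gamma\|=c\|\gamma\|$. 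Since scaling also preserves independence, these give $c\odotip\beta^{y/c}=(c\odotip\underline{\cev{\beta}}^{y/c})\concat(c\odotip\vecc{\beta}^{y/c})$ with the two factors independent, and $c\odotip\beta^0=c\odotip\gamma$ because $\underline{\cev{\beta}}^0=\emptyset$.

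For the right-hand factor, ${\rm rev}(\vecc{\beta}^y)$ is an $\mathrm{SSIP}^{(\alpha)}(\theta_2)$-evolution from ${\rm rev}(\gamma)$, which by \cite[Theorem~1.4]{IPPAT} is self-similar of index $1$; hence $(c\odotip\,{\rm rev}(\vecc{\beta}^{y/c}),y\ge 0)$ is an $\mathrm{SSIP}^{(\alpha)}(\theta_2)$-evolution from $c\odotip\,{\rm rev}(\gamma)={\rm rev}(c\odotip\gamma)$, and reversing back, $(c\odotip\vecc{\beta}^{y/c},y\ge 0)$ is an $\mathrm{RSSIP}^{(\alpha)}(\theta_2)$-evolution from $c\odotip\gamma$ (this is exactly the scaling statement already used for $\mathrm{RSSIP}$ in the proof of Theorem~\ref{thm:hunt}).

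For the left-hand factor I would pass through the clade construction. The un-reversed counterpart \eqref{eq:cev-construction} of \eqref{eq:theta-0} applied to $\underline{\cev{\fF}}$ is an $\mathrm{SSIP}^{(\alpha)}(\theta_1-\alpha)$-evolution from $\emptyset$ (Proposition~\ref{prop:cladeconstr2}), and its index-$1$ self-similarity from \cite[Theorem~1.4]{IPPAT} is realized, at the level of the clade data, by the map $R_c\colon\sum\delta(s,N_s)\mapsto\sum\delta(cs,\rho_c N_s)$, where $\rho_c$ rescales each spindle $f$ to $(cf(z/c),\,z\ge 0)$ and the scaffolding-time axis by $c^{1+\alpha}$. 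The two facts I need are: (i) $R_c$ preserves the law $\mathbf{P}^{\alpha,\theta_1-\alpha}_\emptyset$ --- equivalently $\nu^{(\alpha)}_{\mathrm{\bot cld}}\circ\rho_c^{-1}=c\,\nu^{(\alpha)}_{\mathrm{\bot cld}}$, which against the factor $c^{-1}$ coming from the pushforward of ${\rm Leb}(ds)$ under $s\mapsto cs$ leaves $(\theta_1/\alpha-1){\rm Leb}\otimes\nu^{(\alpha)}_{\mathrm{\bot cld}}$ invariant; and (ii) $c\odotip\skewer(y,N,\xi_N)=\skewer(cy,\rho_c N,\xi_{\rho_c N})$ for $N\in\cN$, $y>0$. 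Granting (i)--(ii), I substitute $s'=cs$ in \eqref{eq:theta-0} and use $c\odotip\,{\rm rev}={\rm rev}\,c\odotip$ to see that $(c\odotip\underline{\cev{\beta}}^{y/c},\,y\ge 0)$ is precisely the process produced by \eqref{eq:theta-0} from $R_c\underline{\cev{\fF}}$; by (i), $R_c\underline{\cev{\fF}}\overset{d}{=}\underline{\cev{\fF}}$, so $(c\odotip\underline{\cev{\beta}}^{y/c},\,y\ge 0)\overset{d}{=}(\underline{\cev{\beta}}^y,\,y\ge 0)$. Combining the two factors with Definition~\ref{defn:theta-ge} identifies $(c\odotip\beta^{y/c},\,y\ge 0)$ as an $\mathrm{SSIP}^{(\alpha)}(\theta_1,\theta_2)$-evolution from $c\odotip\gamma$.

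The substitution in \eqref{eq:theta-0} and the commutation identities are routine; the only genuine content is (i)--(ii), i.e. the scaling behaviour of $\nu^{(\alpha)}_{\mathrm{\bot cld}}$ and of the skewer--scaffolding pair under $\rho_c$. These are exactly the ingredients behind the self-similarity of $\mathrm{SSIP}^{(\alpha)}(\theta)$-evolutions in \cite[Theorem~1.4]{IPPAT} and are present in \cite{Paper1-1,Paper1-2}; if one prefers a self-contained argument, (ii) follows from the $\besq(-2\alpha)$-scaling of spindles (cf.\ \cite[A.3]{GoinYor03}) together with the $(1+\alpha)$-stable scaling of $\xi_{\fN}$ read off from \eqref{eq:scaffolding}, and (i) then follows since the un-reversed construction of Proposition~\ref{prop:cladeconstr2} is already known to be self-similar of index $1$. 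I expect citing or re-deriving (i)--(ii) to be the main --- though not deep --- point; everything else is immediate from the construction.
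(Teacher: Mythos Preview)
Your proposal is correct and follows essentially the same route as the paper: split $\beta^y=\underline{\cev{\beta}}^y\concat\vecc{\beta}^y$, quote the self-similarity of $\mathrm{(R)SSIP}^{(\alpha)}(\theta_2)$ from \cite[Theorem~1.4]{IPPAT} for the right factor, and for the left factor use the clade rescaling map you call $R_c$ together with the skewer identity. Your ingredients (i)--(ii) are exactly what the paper invokes, citing \cite[Lemma~2.11]{IPPAT} for the distributional invariance $R_c\underline{\cev{\fF}}\overset{d}{=}\underline{\cev{\fF}}$ and writing out the skewer identity $\skewer(cy-cs,N'_{cs},\xi_{N'_{cs}})=c\odotip\skewer(y-s,N_s,\xi_{N_s})$ explicitly.
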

\begin{proof}
	Since an $\mathrm{RSSIP}^{(\alpha)}(\theta_2)$-evolution possesses this scaling property \cite[Theorem~1.4(ii)]{IPPAT}, it suffices to prove 
	that 
	\begin{equation}\label{eq:cev-scaling}
	(\underline{\cev{\beta}}^y ,y \ge 0)\ed (c \odotip \underline{\cev{\beta}}^{y/c}, y\ge 0)
	\end{equation} 
	with $(\underline{\cev{\beta}}^y,\,y\ge 0)$ defined as in \eqref{eq:theta-0}. 
	
	The proof of \eqref{eq:cev-scaling} is similar to that of \cite[Theorem~1.4(ii)]{IPPAT}.
	For any point $(s,N_s)$ of $\underline{\cev{\fF}}$, we define a point measure $N'_{cs}$ 
	by replacing each atom $(t, f)$ of $N_{s}$ with $(c^{1+\alpha} t, c f (\cdot/c))$. 
	We know from \cite[Lemma~2.11]{IPPAT} that 
	$ \underline{\cev{\fF}}':=\sum_{\text{points}~ (s,N_s) ~\text{of}~\underline{\cev{\fF}}} \delta(cs, N_{cs}')\ed \underline{\cev{\fF}}$. So 
	$(\underline{\cev{\beta}}^y,\,y\ge 0)$  has the same law as the process
\[
\underline{\cev{\beta}}'^{y} := \Concat_{\text{points}~ (r,N'_r) ~\text{of}~\underline{\cev{\fF}}'\colon\!r \in [0,y]\downarrow}  \mathrm{rev}\left(\skewer\big( y - r, N'_{r} ,\xi_{N'_r}\big)\right) ,\qquad y\ge 0. 
\]
On the other hand, because of the identity
	\[
	\skewer\big(c y -cs, N'_{cs} ,\xi_{N'_{cs}}\big) 
	=  c \cdot \skewer\big( y -s, N_{s} ,\xi_{N_{s}}\big) , \quad s \in [0,y],\ y>0,
	\]
	we have
$
\underline{\cev{\beta}}'^{y} 
= c\odotip\underline{\cev{\beta}}^{y/c}$, $y \ge 0$. 
	As a result, we have \eqref{eq:cev-scaling}, completing the proof. 
\end{proof}

\begin{proposition}[Markov property]\label{prop:Markov-ge}
	Let $(\beta^y,y\ge 0)$ be an $\mathrm{SSIP}^{(\alpha)}(\theta_1, \theta_2)$-evolution starting from $\gamma$. 
	For any $y\ge 0$, conditionally on $(\beta^x, x\le y)$ the process 
	$(\beta^{z+y}, z\ge 0)$ is an $\mathrm{SSIP}^{(\alpha)}(\theta_1, \theta_2)$-evolution
	starting from $\beta^y$. 
\end{proposition}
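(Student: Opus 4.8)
The plan is to establish the Markov property by combining the Markov-like property of the underlying clade construction (Proposition~\ref{prop:markovlike2}) with the already-established Markov property of $\mathrm{RSSIP}^{(\alpha)}(\theta_2)$-evolutions. Recall that in Definition~\ref{defn:theta-ge} we have $\beta^y = \underline{\cev{\beta}}^y \concat \vecc{\beta}^y$, where $(\vecc{\beta}^y,y\ge 0)$ is an independent $\mathrm{RSSIP}^{(\alpha)}(\theta_2)$-evolution and $(\underline{\cev{\beta}}^y,y\ge 0)$ is built from $\underline{\cev{\fF}}\sim\mathbf{P}^{\alpha,\theta_1-\alpha}_\emptyset$ via \eqref{eq:theta-0}. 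First I would fix $y>0$ and decompose the randomness driving $(\underline{\cev{\beta}}^z,z\ge 0)$ according to level $y$: write the ``history below level $y$'' (in the sense of Section~\ref{sec:prel:clades}, generated by the part-spindles below level $y$ of all immigrating clades of $\underline{\cev{\fF}}$, together with the immigration levels), and note that $(\underline{\cev{\beta}}^x,x\le y)$ is measurable with respect to this history.

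The core step is then to show that, conditionally on the history below level $y$, the ``future'' point measure governing $(\underline{\cev{\beta}}^{y+z},z\ge 0)$ has the right conditional law. This requires a mild extension of Proposition~\ref{prop:markovlike2}: that result is stated for a single $\cev{\mathbf{F}}\sim\mathbf{P}^{\alpha,\theta}_\emptyset$ (here with parameter $\theta_1-\alpha$ in place of $\theta$), giving that, conditionally on the history below level $y$, the pair $(\underline{\cev{\mathbf{F}}}^{\ge y}, N^{\ge y}(\underline{\cev{\mathbf{F}}}))$ has law $\mathbf{P}^{\alpha,\theta_1-\alpha}_\emptyset\otimes\mathbf{P}^{\alpha,0}_{\underline{\cev\beta}^y}$. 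One then observes that the future immigration encoded by $\underline{\cev{\mathbf{F}}}^{\ge y}$ (re-indexed to start at level $0$) continues to produce, via left-right-reversed skewers as in \eqref{eq:theta-0}, exactly an $\mathrm{SSIP}^{(\alpha)}(\theta_1-\alpha)$-type immigration contribution, while the spindles already present above level $y$, namely $N^{\ge y}(\underline{\cev{\mathbf{F}}})$, evolve after reversal as an $\mathrm{RSSIP}^{(\alpha)}(0)$-evolution started from $\underline{\cev\beta}^y$ (this is Proposition~\ref{prop:cladeconstr1} after applying $\mathrm{rev}$, using that $\mathrm{rev}$ commutes appropriately with the skewer and concatenation operations in \eqref{eq:theta-0}). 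Concatenating these two pieces and invoking Proposition~\ref{prop:concat} (in its reversed form, via Definition~\ref{defn:rssip}), the shifted process $(\underline{\cev\beta}^{y+z},z\ge 0)$ is, conditionally on the history below level $y$, an $\mathrm{SSIP}^{(\alpha)}(\theta_1-\alpha)$-immigration contribution concatenated with an $\mathrm{RSSIP}^{(\alpha)}(0)$-evolution from $\underline{\cev\beta}^y$ --- that is, precisely what \eqref{eq:theta-0} read at time $y$ produces, so it has the law of $(\underline{\cev\beta}^z,z\ge 0)$ started from $\underline{\cev\beta}^y$.

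Finally I would assemble the two independent halves. Since $\underline{\cev{\fF}}$ and $(\vecc\beta^y,y\ge 0)$ are independent, the conditioning above is compatible with conditioning the $\mathrm{RSSIP}^{(\alpha)}(\theta_2)$-evolution on its own past; by Proposition~\ref{prop:Hunt+totalmass} (which gives the Markov property of $\mathrm{RSSIP}^{(\alpha)}(\theta_2)$-evolutions), conditionally on $(\vecc\beta^x,x\le y)$ the process $(\vecc\beta^{y+z},z\ge 0)$ is an $\mathrm{RSSIP}^{(\alpha)}(\theta_2)$-evolution from $\vecc\beta^y$. Combining, conditionally on $\sigma(\underline{\cev\beta}^x,\vecc\beta^x;x\le y)$ --- a $\sigma$-algebra containing $\sigma(\beta^x,x\le y)$ --- the process $(\beta^{y+z},z\ge 0) = (\underline{\cev\beta}^{y+z}\concat\vecc\beta^{y+z},z\ge 0)$ is the concatenation of two independent pieces with the laws appearing in Definition~\ref{defn:theta-ge}, hence an $\mathrm{SSIP}^{(\alpha)}(\theta_1,\theta_2)$-evolution from $\underline{\cev\beta}^y\concat\vecc\beta^y=\beta^y$. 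One then averages over the additional randomness in the larger $\sigma$-algebra to conclude the Markov property with respect to $\sigma(\beta^x,x\le y)$ itself; a routine but slightly delicate point here is that $\beta^y$ determines the split into $\underline{\cev\beta}^y$ and $\vecc\beta^y$ only after we know it does --- this is most cleanly handled by noting that the claimed transition kernel does not depend on how $\beta^y$ is split, so the tower property gives the result directly.

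I expect the main obstacle to be the bookkeeping in the core step: carefully checking that left-right reversal in \eqref{eq:theta-0} interacts correctly with the decomposition of the clade point measures at level $y$ in Proposition~\ref{prop:markovlike2}, and in particular that the ``already-present'' spindles above level $y$ reverse to an $\mathrm{RSSIP}^{(\alpha)}(0)$-evolution (rather than, say, an $\mathrm{SSIP}^{(\alpha)}(0)$-evolution) started from the correct reversed partition, and that the concatenation order in \eqref{eq:theta-0} (decreasing in $s$) is preserved under the shift. Everything else is a direct application of results recalled in Section~\ref{sec:prel:clades} together with Propositions~\ref{prop:concat} and \ref{prop:Hunt+totalmass}.
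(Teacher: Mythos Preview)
Your proposal is correct and follows essentially the same route as the paper: split the future at level $y$ into fresh immigration $\underline{\cev{\mathbf{F}}}^{\ge y}$ (Poisson property) and surviving spindles $N^{\ge y}(\underline{\cev{\mathbf{F}}})$ (Markov-like property, Proposition~\ref{prop:markovlike2}), identify the latter as an $\mathrm{RSSIP}^{(\alpha)}(0)$-evolution from $\underline{\cev\beta}^y$, and then use Proposition~\ref{prop:concat} to absorb this into the $\mathrm{RSSIP}^{(\alpha)}(\theta_2)$-part. The paper streamlines by first reducing to $\theta_2=0$ and $\gamma=\emptyset$ (so there is no separate $\vecc\beta$-half to track), but the substance is identical; your extra care with the tower property to pass from $\sigma(\underline{\cev\beta}^x,\vecc\beta^x;x\le y)$ down to $\sigma(\beta^x;x\le y)$ is a valid way to handle a point the paper leaves implicit.
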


\begin{proof}
	Because of the Markov property \cite[Theorem~1.4]{IPPAT} of an $\mathrm{SSIP}^{(\alpha)}(\theta_2)$-evolution, 
	it suffices to prove the statement for the case $\theta_2 =0$ and $\gamma= \emptyset$.
	
	Fix $y\ge 0$, we have the decomposition 
	$\beta^{y+z} = \underline{\cev{\beta}}^{y,z} \concat \underline{\vecc{\beta}}^{y,z}, $
	where 
	\[
	\underline{\cev{\beta}}^{y, z}:= \Concat_{\text{points}~ (s,N_s) ~\text{of}~\underline{\cev{\fF}}\colon\! s \in (y,y+z]\downarrow}   \mathrm{rev}\left(\skewer\big(y+z -  s, N_{s },\xi_{N_{s}} \big)\right) , \qquad z \ge 0,
	\]
	and
	\[
	\underline{\vecc{\beta}}^{y,z}:= \Concat_{\text{points}~ (s,N_s) ~\text{of}~\underline{\cev{\fF}}\colon\! s \in [0,y]\downarrow}   \mathrm{rev}\left(\skewer\big(y+z - t,N_s,\xi_{N_s}\big)\right)  , \qquad z \ge 0.
	\]
	It follows from the Markov-like property of $\underline{\cev{\fF}}$ recalled in Proposition \ref{prop:markovlike2} above that the latter process is an  $\mathrm{RSSIP}^{(\alpha)} (0)$-evolution starting from $\beta^y$, 
	whereas the Poisson property shows that the former process has 
	the same law as $(\underline{\cev{\beta}}^z,\,z\ge 0)$ and is independent of $(\underline{\vecc{\beta}}^{y,z},\,z\ge 0)$. This leads to the desired statement. 
\end{proof}

\begin{proposition}[Path-continuity]
An $\mathrm{SSIP}^{(\alpha)}(\theta_1, \theta_2)$-evolution a.s.\ has continuous paths. 
\end{proposition}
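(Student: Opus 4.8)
The plan is to work with the decomposition $\beta^y = \underline{\cev\beta}^y \concat \vecc\beta^y$ of Definition~\ref{defn:theta-ge}. Here $(\vecc\beta^y, y\ge 0)$ is an $\mathrm{RSSIP}^{(\alpha)}(\theta_2)$-evolution, hence path-continuous by Proposition~\ref{prop:Hunt+totalmass}, and concatenation $(\gamma_1,\gamma_2)\mapsto\gamma_1\concat\gamma_2$ satisfies $d_H'(\gamma_1\concat\gamma_2,\gamma_1'\concat\gamma_2')\le d_H'(\gamma_1,\gamma_1')+d_H'(\gamma_2,\gamma_2')$ (an elementary subadditivity of distortions of correspondences; see \cite{Paper1-0}). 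So everything reduces to showing that $(\underline{\cev\beta}^y, y\ge 0)$ of \eqref{eq:theta-0} is path-continuous. I would also use the elementary bounds $d_H'(\gamma,\gamma')\le\|\gamma\|+\|\gamma'\|$ (from the empty correspondence; in particular $d_H'(\gamma,\emptyset)=\|\gamma\|$), the $d_H'$-continuity of total mass $\|\cdot\|$, and the fact that $\mathrm{rev}$ is a $d_H'$-homeomorphism (indeed a $d_H'$-isometry, by reversing the order of the pairs of a correspondence). Since the total mass of $(\underline{\cev\beta}^y)$ is, as noted in the proof of the preceding Total mass proposition, a $\besq_0(2(\theta_1-\alpha))$ and hence continuous with value $0$ at time $0$, continuity at $y=0$ follows at once from $d_H'(\underline{\cev\beta}^y,\emptyset)=\|\underline{\cev\beta}^y\|\to 0$.

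For continuity at a fixed level $y_0>0$ I would fix $\delta\in(0,y_0)$ and split the concatenation \eqref{eq:theta-0} at immigration level $y_0-\delta$, writing $\underline{\cev\beta}^y = R^y\concat O^y$, where $R^y$ (resp.\ $O^y$) is the concatenation, in the order of \eqref{eq:theta-0}, of the reversed clade skewers $\mathrm{rev}\big(\skewer(y-s,N_s,\xi_{N_s})\big)$ over the points $(s,N_s)$ of $\underline{\cev\fF}$ with $s>y_0-\delta$ (resp.\ $s\le y_0-\delta$). By local finiteness of $\nu^{(\alpha)}_{\mathrm{\bot cld}}$ on clades reaching a given height — a standard property recorded in \cite{Paper1-1,IPPAT} — only finitely many clades with $s\le y_0-\delta$ ever survive to level $y_0-\delta/2$; collecting their indices in a finite set $\mathcal{C}=\mathcal{C}_\delta$, all remaining clades with $s\le y_0-\delta$ have died before level $y_0-\delta/2$. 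Hence, for $y$ in the window $(y_0-\delta/2,\,y_0+\delta/2)$, the partition $O^y$ is a fixed finite concatenation of the processes $y\mapsto\mathrm{rev}\big(\skewer(y-s,N_s,\xi_{N_s})\big)$, $(s,N_s)$ indexed by $\mathcal{C}$. Each such process is $d_H'$-continuous — path-continuity of a single clade's skewer process (\cite{Paper1-1,IPPAT}) composed with the homeomorphism $\mathrm{rev}$ — so $y\mapsto O^y$, and therefore $\|R^y\|=\|\underline{\cev\beta}^y\|-\|O^y\|$, is continuous in $y$ throughout the window.

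Combining, for $y$ in the window,
\[
d_H'\big(\underline{\cev\beta}^{y_0},\underline{\cev\beta}^y\big)\le d_H'(R^{y_0},R^y)+d_H'(O^{y_0},O^y)\le \|R^{y_0}\|+\|R^y\|+d_H'(O^{y_0},O^y),
\]
whence $\limsup_{y\to y_0}d_H'(\underline{\cev\beta}^{y_0},\underline{\cev\beta}^y)\le 2\|R^{y_0}\|$. It remains to send $\delta\downarrow 0$: the sets $\mathcal{C}_\delta$ increase and exhaust all clades that contribute a nonempty skewer at level $y_0$ (a clade born at level $s_0\in(0,y_0)$ and alive at level $y_0$ reaches height $\ge y_0-s_0>0$, so lies in $\mathcal{C}_\delta$ as soon as $\delta<y_0-s_0$), so by monotone convergence $\|O^{y_0}\|\uparrow\|\underline{\cev\beta}^{y_0}\|<\infty$ and $\|R^{y_0}\|=\|\underline{\cev\beta}^{y_0}\|-\|O^{y_0}\|\downarrow 0$ almost surely. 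This gives $\lim_{y\to y_0}d_H'(\underline{\cev\beta}^{y_0},\underline{\cev\beta}^y)=0$ and completes the proof.

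The step I expect to be the main obstacle is the uniform (in $y$ near $y_0$) control of the ``young-clade'' remainder $R^y$: a priori a jump of $\underline{\cev\beta}$ could be produced by infinitely many tiny clades accumulating at the immigration frontier. The point is that this control comes essentially for free once the total mass has been identified as a continuous $\besq$, since then $\|R^y\|=\|\underline{\cev\beta}^y\|-\|O^y\|$ is visibly continuous; the only genuinely technical ingredient that remains is the local finiteness of the clade family, which is already available from \cite{Paper1-1,IPPAT}. Finally, the per-clade left-right reversal in \eqref{eq:theta-0} is harmless throughout, since $\mathrm{rev}$ is a $d_H'$-isometry and concatenation is $d_H'$-contractive, so all the bounds above are insensitive both to reversing and to reordering the clades; compared with the path-continuity argument for $\mathrm{SSIP}^{(\alpha)}(\theta)$-evolutions in \cite{IPPAT}, the present argument is in fact lighter.
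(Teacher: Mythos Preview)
Your proof is correct and follows essentially the same approach as the paper: both reduce to path-continuity of the reversed-clade process $(\underline{\cev\beta}^y,\,y\ge 0)$, for which the paper simply refers to the argument of \cite[proof of Proposition~3.2]{IPPAT}, while you have spelled out that young/old-clade splitting argument in full. The only difference is level of detail, not method.
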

\begin{proof}
It follows from the same arguments as in \cite[proof of Proposition~3.2]{IPPAT} that $(\underline{\cev{\beta}}^y,\,y\ge 0)$ defined as in \eqref{eq:theta-0} a.s.\ has continuous paths in $(\cI_H, d_{H})$. 
Combining this fact with the path-continuity of an $\mathrm{RSSIP}^{(\alpha)}(\theta_2)$-evolution, we deduce the claim. 
\end{proof}

\subsection{Identification of the two-parameter family of $\mathrm{SSIP}^{(\alpha)}(\theta)$-evolutions, $\alpha\in(0,1)$, $\theta\ge 0$.}

\begin{proposition}\label{prop:1-ge}
	An $\mathrm{SSIP}^{(\alpha)}(\theta_1,\alpha)$-evolution starting from $\gamma$ 
	is an $\mathrm{SSIP}^{(\alpha)}(\theta_1)$-evolution starting from $\gamma$. 
\end{proposition}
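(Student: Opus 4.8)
The plan is to show that the clade-based construction in Definition~\ref{defn:theta-ge}, in the special case $\theta_2=\alpha$, produces exactly the Poissonian clade construction of an $\mathrm{SSIP}^{(\alpha)}(\theta_1)$-evolution recalled in Proposition~\ref{prop:cladeconstr2} (combined with Proposition~\ref{prop:concat} to absorb the initial state $\gamma$). By Proposition~\ref{prop:concat} applied to the $\theta=\theta_1$ semigroup, and by the Markov property of $\mathrm{SSIP}^{(\alpha)}(\theta_1,\alpha)$-evolutions (Proposition~\ref{prop:Markov-ge}), it suffices to treat $\gamma=\emptyset$: indeed an $\mathrm{SSIP}^{(\alpha)}(\theta_1,\alpha)$-evolution from $\gamma$ is, by Definition~\ref{defn:theta-ge}, the concatenation of an $\mathrm{SSIP}^{(\alpha)}(\theta_1,\alpha)$-evolution from $\emptyset$ with an $\mathrm{RSSIP}^{(\alpha)}(\alpha)$-evolution from $\gamma$, the latter being an $\mathrm{SSIP}^{(\alpha)}(\alpha)$-evolution from $\gamma$ by Corollary~\ref{cor:sym}; and an $\mathrm{SSIP}^{(\alpha)}(\theta_1)$-evolution from $\gamma$ is the concatenation of one from $\emptyset$ with an $\mathrm{SSIP}^{(\alpha)}(0)$-evolution from $\gamma$. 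So the remaining $\mathrm{SSIP}^{(\alpha)}(0)$- versus $\mathrm{SSIP}^{(\alpha)}(\alpha)$-evolution discrepancy on the right of the concatenation is exactly handled by Lemma~\ref{lem:split}, Proposition~\ref{prop:concat} and Lemma~\ref{lem:rev-pdip} once the $\emptyset$-case is established. I would state this reduction carefully but spend the bulk of the argument on $\gamma=\emptyset$.

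For $\gamma=\emptyset$, the process from Definition~\ref{defn:theta-ge} is $\beta^y=\underline{\cev\beta}^y\concat\vecc\beta^y$, where $\underline{\cev\beta}^y$ is built from $\underline{\cev\fF}\sim\mathbf{P}^{\alpha,\theta_1-\alpha}_\emptyset$ by left-right-reversing each immigrating clade's skewer, and $\vecc\beta^y$ is an $\mathrm{RSSIP}^{(\alpha)}(\alpha)$-evolution from $\emptyset$, i.e.\ ${\rm rev}$ of an $\mathrm{SSIP}^{(\alpha)}(\alpha)$-evolution $\cev\beta^y$ from $\emptyset$. The target is $\mathbf{P}^{\alpha,\theta_1}_\emptyset$, whose surviving clades at level $y$, in decreasing order of immigration level, are listed in \eqref{eq:ssip-clades}. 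The key computation is to match the two descriptions level by level. On the $\mathrm{SSIP}^{(\alpha)}(\theta_1)$ side, by \eqref{eq:immlev} the immigration levels $s^y_k$ of surviving clades form a Poisson random measure on $[0,y)$ with intensity $\theta_1(y-s)^{-1}ds$, and conditionally on these the skewers are independent with law \eqref{eq:conddist}, i.e.\ $\{(0,H^y_k)\}\concat\gamma^y_k$ with $H^y_k\sim\mathtt{Gamma}(1-\alpha,(2(y-s^y_k))^{-1})$ and $\gamma^y_k\sim\mathtt{Gamma}(\alpha,(2(y-s^y_k))^{-1})\cdot\pdip^{(\alpha)}(\alpha)$. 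I would split that Poisson random measure by the independent thinning/superposition that writes intensity $\theta_1(y-s)^{-1}ds=(\theta_1-\alpha)(y-s)^{-1}ds+\alpha(y-s)^{-1}ds$: the first piece is exactly the surviving-clade immigration levels of $\underline{\cev\fF}\sim\mathbf{P}^{\alpha,\theta_1-\alpha}_\emptyset$, and the corresponding conditional skewers $\{(0,H)\}\concat\gamma$ have precisely the \eqref{eq:conddist}-form, which by \eqref{eq:pdip:0-alpha} is a $\mathtt{Gamma}(1,(2(y-s))^{-1})\cdot\pdip^{(\alpha)}(0)$ up to the Beta split — matching the (unreversed) skewers that feed into $\underline{\cev\beta}^y$; reversal of these is harmless in distribution because $\pdip^{(\alpha)}(\alpha)$ is reversal-invariant (Lemma~\ref{lem:rev-pdip}) and reversal commutes with the scaling and with prepending a block, so the reversed family has the same joint law. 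The second piece, $\alpha(y-s)^{-1}ds$, is the surviving-clade immigration of an $\mathrm{SSIP}^{(\alpha)}(\alpha)$-evolution from $\emptyset$; its contribution, collected to the left of everything, is exactly $\cev\beta^y$, and after reversal of the whole configuration it sits on the right, giving $\vecc\beta^y$. Assembling: the leftmost (in $\underline{\cev\beta}^y$) corresponds to the highest immigration level, and the decreasing-order concatenation together with the reversal bookkeeping reproduces the decreasing-order concatenation of \eqref{eq:cev-construction} for $\mathbf{P}^{\alpha,\theta_1}_\emptyset$. I would phrase this as an identity of point measures on $[0,\infty)\times\mathcal{N}$ up to the within-clade reversal, invoking \cite[Lemma~2.11]{IPPAT}-type measurability and the Poisson superposition theorem, so that the skewer processes agree in law as processes, not just at fixed $y$; the Markov property (Proposition~\ref{prop:Markov-ge} and \cite[Theorem~1.4]{IPPAT}) then upgrades equality of one-dimensional marginals started from every $\gamma$ to equality of finite-dimensional distributions, and path-continuity of both sides finishes the identification of laws on path space.

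The main obstacle I anticipate is the reversal bookkeeping: in Definition~\ref{defn:theta-ge} each clade's skewer is individually left-right-reversed before the clades are concatenated in decreasing order of immigration level, whereas in the plain $\mathrm{SSIP}^{(\alpha)}(\theta_1)$ construction the clades' skewers are concatenated (unreversed) in decreasing order of immigration level. These do not obviously agree, so the argument must really use that the relevant skewer distribution \eqref{eq:conddist} is, conditionally on immigration levels, of the reversal-stable form $\{(0,H)\}\concat(\text{scaled }\pdip^{(\alpha)}(\alpha))$ — it is \emph{not} reversal-invariant as a whole, but after peeling off the deterministic "middle/leftmost block" structure the remaining $\pdip^{(\alpha)}(\alpha)$ piece is (Lemma~\ref{lem:rev-pdip}), and one has to check that prepending the $\mathtt{Gamma}(1-\alpha,\cdot)$ block versus appending it, across the whole decreasing-order family, produces matching joint laws once the $\alpha(y-s)^{-1}ds$ part is peeled off to the far side. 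Making this precise — essentially showing the two constructions differ only by a measure-preserving "reflect each clade, then reverse the global order" bijection that is absorbed by the symmetry of $\pdip^{(\alpha)}(\alpha)$ and the exchangeability of the Poisson superposition — is the technical heart; everything else (total mass via additivity, self-similarity, the $\gamma\neq\emptyset$ reduction) is routine given the tools already assembled.
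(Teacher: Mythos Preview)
Your overall architecture matches the paper's: Poisson thinning of the $\theta_1$-intensity clade process into a $(\theta_1-\alpha)$-piece and an $\alpha$-piece, identification of one-dimensional marginals at fixed $y$, and then Markov properties plus path-continuity to conclude. The reduction to $\gamma=\emptyset$ is not how the paper organises things (it keeps an independent $\mathrm{SSIP}^{(\alpha)}(0)$-evolution $\hat\beta^y$ from $\gamma$ in the picture throughout), but your reduction is valid via Corollary~\ref{cor:sym} and Proposition~\ref{prop:concat}.

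However, there is a genuine gap at the heart of the argument. You write that ``reversal commutes with \ldots\ prepending a block, so the reversed family has the same joint law''---this is false, and you correctly retract it in your obstacle paragraph. But you do not then supply the correct mechanism. In the $\mathrm{SSIP}^{(\alpha)}(\theta_1)$-construction the $\alpha$-colour clades are \emph{interleaved} with the $(\theta_1-\alpha)$-colour clades according to their immigration levels; they are not ``collected to the left of everything'' nor moved to one side by any global reversal. So your proposed matching of the $\alpha$-piece with $\vecc\beta^y$ does not go through as stated.

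What the paper does (Lemma~\ref{lem:mark-correspondence} and the discussion before it) is a regrouping with an index shift. Write the red (i.e.\ $(\theta_1-\alpha)$-colour) surviving clades at level $y$ as $\{(0,H_k^y)\}\concat\gamma_k^y$ with immigration levels $s_k^y$, and let $\mu_k^y$ collect the blue (i.e.\ $\alpha$-colour) contributions with immigration in $[s_{k-1}^y,s_k^y)$. The key distributional identity is that, conditionally on $(s_k^y)_{k\ge 1}$, the interval partition $\gamma_k^y\concat\mu_k^y$ has law $\mathtt{Gamma}(\alpha,(2(y-s_{k-1}^y))^{-1})\cdot\pdip^{(\alpha)}(\alpha)$: one observes that $\gamma_k^y$ itself has the same conditional law as the blue contribution from immigration in $[s_k^y,y]$, so $\gamma_k^y\concat\mu_k^y$ has the law of the blue contribution from $[s_{k-1}^y,y]$, which is an $\mathrm{SSIP}^{(\alpha)}(\alpha)$ marginal at age $y-s_{k-1}^y$. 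This is what makes $(H_k^y,\gamma_k^y\concat\mu_k^y)$ match $(\widetilde H_k^y,\widetilde\gamma_{k-1}^y)$---note the shift from $k$ to $k-1$, with $\widetilde\gamma_0^y$ absorbing the $\mathrm{RSSIP}^{(\alpha)}(\alpha)$-piece $\vecc\beta^y$. Your ``peel off the $\alpha$-part to the far side'' and ``reflect each clade, then reverse the global order'' do not capture this; the blue clades stay interleaved and get absorbed locally into the adjacent red clade's tail via this index-shifted identity. Once you have that, the rest of your outline (finite $k$, let $k\to\infty$, Markov, continuity) is exactly how the paper finishes.
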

In preparation of proving this statement, we consider a Poisson random measure $\cev{\fF}^{(\mathrm{r})}$ on $[0,\infty)\times \mathcal{N}$ with intensity $(\theta_1/\alpha-1)\mathrm{Leb}\otimes  \nu^{(\alpha)}_{\mathrm{\bot cld}}$, whose clades we label as \emph{red}, and an independent Poisson random measure $\cev{\fF}^{(\mathrm{b})}$ on $[0,\infty)\times \mathcal{N}$ with intensity $\mathrm{Leb}\otimes  \nu^{(\alpha)}_{\mathrm{\bot cld}}$, whose clades we label as \emph{blue}.  
Let 
\[
\cev\beta^z = \Concat_{(s,N_s) ~\text{points of}~\cev{\fF}^{(\mathrm{r})}+\cev{\fF}^{(\mathrm{b})}\colon\! s\in [0, z]\downarrow} \skewer(z-s, N_s,\xi_{N_s} ), \qquad z\ge 0. 
\]
Then $(\cev\beta^z, z\ge 0)$ is an $\mathrm{SSIP}^{(\alpha)} (\theta_1)$-evolution, by Proposition \ref{prop:cladeconstr2}. We seek to compare this process  with an $\mathrm{SSIP}^{(\alpha)} (\theta_1, \alpha)$-evolution; to this end, let us explore this two-colour model in more detail. 

Fix any $y\ge 0$. The distribution of the immigration levels $(s_k^y,\,k\ge 1)$ of \emph{red} clades in $\cev{\fF}^{(\mathrm{r})}$ surviving to level $y$ is given in \eqref{eq:immlev}, with $\theta=\theta_1-\alpha$. Writing $N_k^y$ for the red clade immigrating at level $s_k^y$, we also read from \eqref{eq:conddist} the conditional distribution given $(s_k^y,\,k\ge 1)$ of  
the interval partition $\skewer (y -s^y_k, N^y_k)\ed\{(0,H_k^y)\} \concat \gamma_k^y$. 

Given the immigration levels $(s^y_k,\,k\ge 1)$ of the red clades and $s^y_0:= 0$, let $\mu^y_k$ be the contribution at level $y$ of the \emph{blue} clades that are immigrating at levels in the interval $[s^y_{k-1}, s^y_{k})$,  i.e.\
\[
\mu^y_k:= \Concat_{(s, N_s) ~\text{points of}~\cev{\fF}^{(\mathrm{b})}\colon\! s\in [s^y_{k-1}, s^y_{k})\downarrow} \skewer(y-s, N_s,\xi_{N_s} ), \quad k\ge 1. 
\] 
Note that $\mu^y_k$ may be empty. Then we have a decomposition of $\cev\beta^y$, as illustrated in Figure~\ref{fig:MarkClades}:  
\begin{equation}\label{eq:gammamudecomp}
\cev\beta^y = \Concat_{k=\infty}^1 \Big( \{(0,H_k^y)\} \concat \gamma_k^y \concat \mu_k^y \Big).
\end{equation}
\begin{figure}[t]
	\centering
\begin{picture}(300,190)
		 \put(0,0){\includegraphics[width=0.45\linewidth]{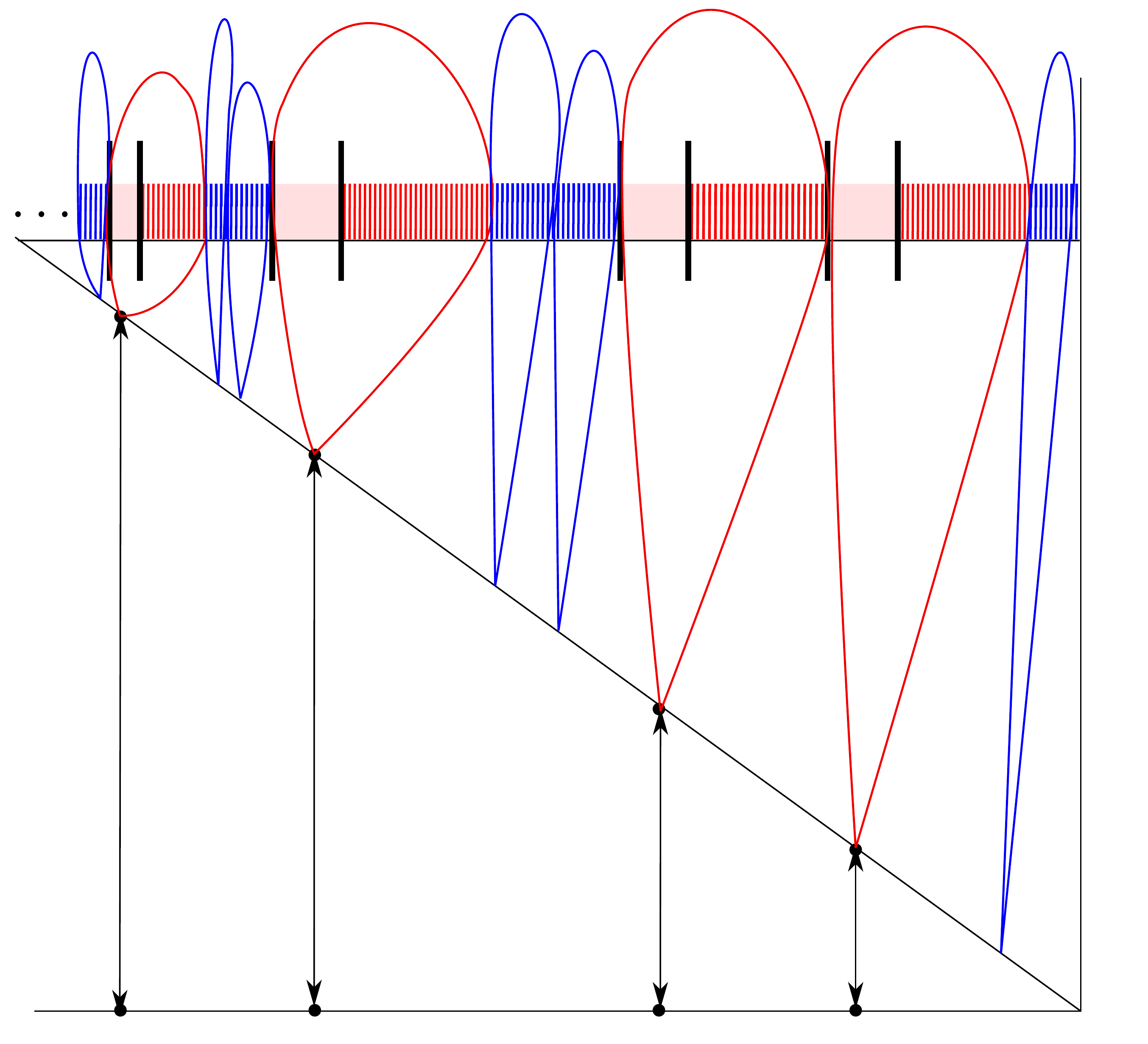}}
\put(208,150){$y$}
\put(27,167){$\gamma^y_4$}
\put(40,167){$\mu^y_4$}
\put(73,167){$\gamma^y_3$}
\put(100,167){$\mu^y_3$}
\put(133,167){$\gamma^y_2$}
\put(145,180){$\mu^y_2=\emptyset$}
\put(178,167){$\gamma^y_1$}
\put(195,167){$\mu^y_1$}
\put(150,135){$(0, H_1^y)$}
\put(112,135){$(0, H_2^y)$}
\put(47,135){$(0, H_3^y)$}
\put(152,20){$s_1^y$}
\put(112,33){$s_2^y$}
\put(48,45){$s_3^y$}
\end{picture}
	\caption{We illustrate the value of an $\mathrm{SSIP}^{(\alpha)}(\theta_1)$ at level $y>0$. 
		The contribution of each surviving red clade contains a leftmost block of mass $H_i^y$ and the remaining (red-shaded) part $\gamma_i^y$. To the right of each surviving red clade, there is a finite number (possibly zero) of blue clades that form $\mu_i^y$. Non-surviving clades are omitted.}
	\label{fig:MarkClades}
\end{figure}
Given $(s_k^y,\, k\ge 1)$, these $H_k^y$, $\gamma_k^y$, $\mu_k^y$, $k\ge 1$, are conditionally independent.  
To identify the conditional distribution of $\gamma_k^y\concat \mu_k^y$ given $(s^y_k,\,k\ge 1)$, note that display
\eqref{eq:prm-marginal} yields that
 \[
 \Concat_{(s, N_s) ~\text{points of}~\cev{\fF}^{(\mathrm{b})}\colon\! s\in [s^y_{k}, y]\downarrow} \skewer(y-s, N_s,\xi_{N_s} )\sim\mathtt{Gamma}\big(\alpha, (2 (y-s^y_k))^{-1}\big) \cdot\pdip^{(\alpha)}(\alpha),
 \] 
which coincides with the conditional distribution of $\gamma_k^y$ given in \eqref{eq:conddist}, and this interval partition is conditionally independent of $\mu^y_k$. As a result, given $(s^y_k,\,k\ge 1)$ and writing $s^y_0= 0$, the interval partitions $\gamma^y_k\concat \mu^y_k$, $k\ge 1$, are conditionally independent, and using \eqref{eq:prm-marginal} again, we obtain
 \[
\gamma^y_k\concat \mu^y_k\ed\Concat_{(s, N_s) ~\text{points of}~\cev{\fF}^{(\mathrm{b})}\colon\! s\in [s^y_{k-1}, y]\downarrow} \skewer(y-s, N_s,\xi_{N_s} )\sim\mathtt{Gamma}\big(\alpha, (2 (y-s^y_{k-1}))^{-1}\big) \cdot\pdip^{(\alpha)} (\alpha).\]

Next, define an $\mathrm{SSIP}^{(\alpha)} (\theta_1, \alpha)$-evolution by 
$\widetilde{\beta}^y = \underline{\cev{\beta}}^y \concat \vecc{\beta}^y$, $y\ge 0$, where $(\underline{\cev{\beta}},\,y\ge 0)$ is given by \eqref{eq:theta-0} and $(\vecc{\beta}^y ,y\ge 0)$ is an  $\mathrm{RSSIP}^{(\alpha)}(\alpha)$-evolution starting from $\emptyset$. 
Since \eqref{eq:theta-0} is only left-right-reversing within surviving clades, the distribution of the increasing sequence of immigration levels is again given by
\eqref{eq:immlev}, with $\theta=\theta_1-\alpha$, now based on $\underline{\cev{\mathbf{F}}}$. We denote these by 
$(\widetilde{s}_k^y,\,k\ge 1)$ and also write $\widetilde{s}_0^y:=0$. We further read from \eqref{eq:conddist} and \eqref{eq:theta-0} that 
\begin{equation}\label{eq:tildebeta}
(\underline{\cev{\beta}}^y ,\, \vecc{\beta}^y)\ed \left( \Concat_{k=\infty}^1 \widetilde{\gamma}_k^y \concat \{(0,\widetilde{H}_k^y)\} ,\, \widetilde{\gamma}_0^y\right) ,
\end{equation}
where, given $(\widetilde{s}_k^y,\,k\ge 1)$, we have conditionally independent $\widetilde{H}^y_k\sim \mathtt{Gamma}(1-\alpha, (2 (y-\widetilde{s}^y_k))^{-1})$, $k\ge 1$, and 
$\widetilde{\gamma}_k^y\sim \mathtt{Gamma}(\alpha, (2 (y-\widetilde{s}^y_k))^{-1}) \cdot\pdip^{(\alpha)}( \alpha)$, $k\ge 0$. See Figure~\ref{fig:NewConstruction}. 
\begin{figure}[t]
	\centering
\begin{picture}(300,190)
		 \put(0,0){\includegraphics[width=0.45\linewidth]{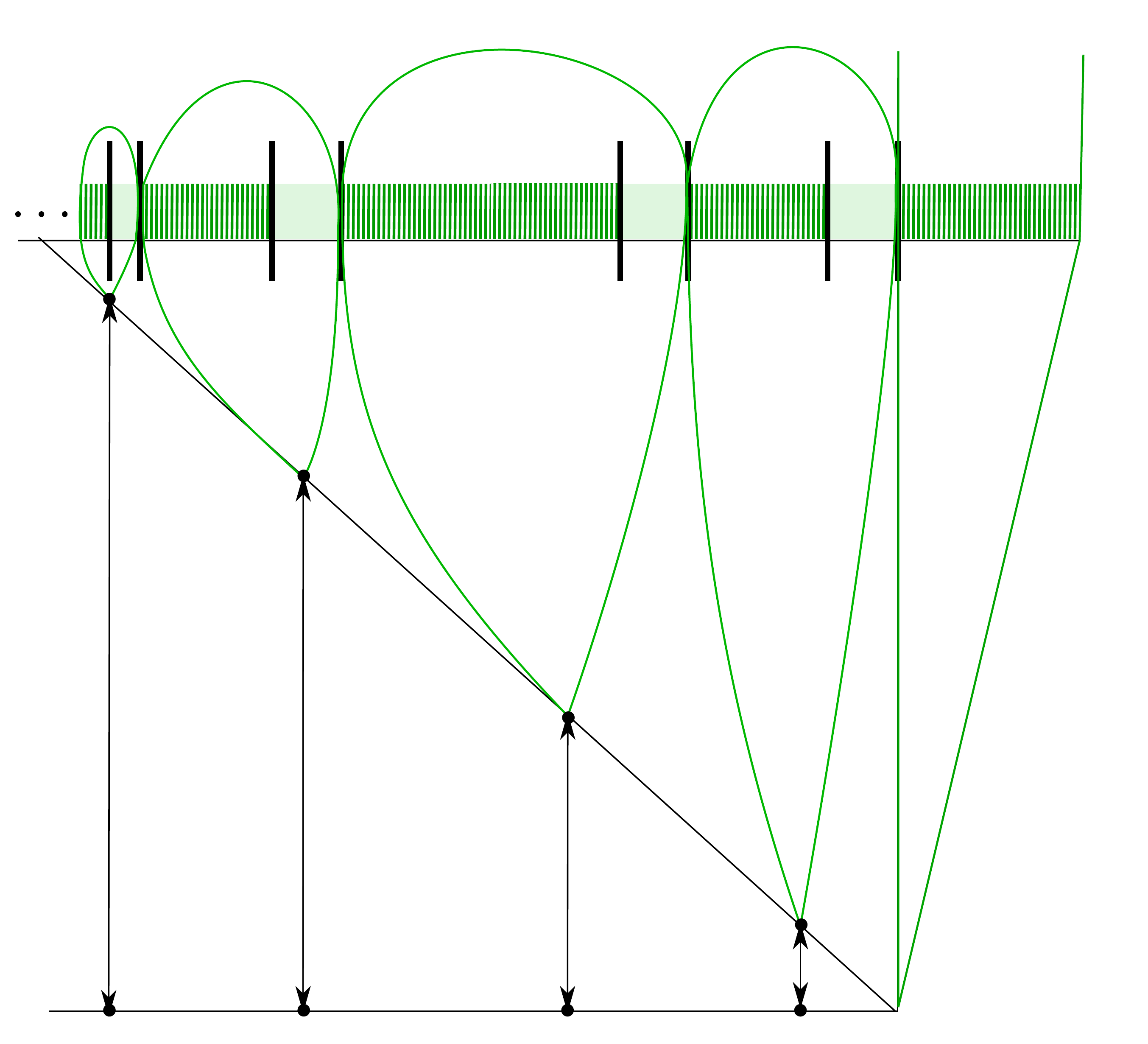}}
\put(208,150){$y$}
\put(35,167){$\widetilde{\gamma}^y_3$}
\put(83,167){$\widetilde{\gamma}^y_2$}
\put(135,167){$\widetilde{\gamma}^y_1$}
\put(178,167){$\widetilde{\gamma}^y_0$}
\put(150,135){$(0, \widetilde{H}_1^y)$}
\put(112,135){$(0, \widetilde{H}_2^y)$}
\put(47,135){$(0, \widetilde{H}_3^y)$}
\put(140,10){$\widetilde{s}_1^y$}
\put(98,30){$\widetilde{s}_2^y$}
\put(48,45){$\widetilde{s}_3^y$}
\end{picture}
	\caption{The value of an $\mathrm{SSIP}^{(\alpha)}(\theta_1,\alpha)$ at level $y> 0$. 
		The surviving clades of $\protect\cev{\beta}_{\theta_1-\alpha}$ to level $y$ are coloured in green, with each one composed by a rightmost interval and the remaining part (green-shaded).}
	\label{fig:NewConstruction}
\end{figure}

Summarizing, we have the following statement. 

\begin{lemma}\label{lem:mark-correspondence}
	Fix any $y\ge 0$. With notation as above, we have $(s^y_k,\,k\ge 1) \ed (\widetilde{s}^y_k,\,k\ge 1)$.
	Moreover, the conditional distribution of $\big((H_k^y,  \gamma_k^y \concat \mu_k^y),\,k\ge 1\big)$ given $(s^y_k,\,k\ge 1)$ is the same as 
	the conditional distribution of $\big((\widetilde{H}_k^y,  \widetilde{\gamma}_{k-1}^y),\,k\ge 1\big)$ given $(\widetilde{s}^y_k,\,k\ge 1)$. 
\end{lemma}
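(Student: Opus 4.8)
The plan is to read off, on each side, the conditional law of the relevant family given the immigration levels, and then to observe that the two conditional laws coincide once one matches the levels and shifts the index on the ``remainder'' coordinate. All the ingredients have essentially been assembled in the paragraphs leading up to the statement, so the proof is a matter of organising them and being careful about the index shift and the convention $s^y_0=\widetilde s^y_0:=0$.

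First I would record the identity of the level sequences, $(s^y_k,\,k\ge1)\ed(\widetilde s^y_k,\,k\ge1)$. In the two-colour model, $(s^y_k,\,k\ge1)$ is the increasing sequence of immigration levels of the clades of $\cev{\fF}^{(\mathrm{r})}$, of intensity $(\theta_1/\alpha-1)\mathrm{Leb}\otimes\nu^{(\alpha)}_{\mathrm{\bot cld}}$, surviving to level $y$; by \eqref{eq:immlev} applied with $\theta=\theta_1-\alpha$ this is a Poisson random measure on $[0,y)$ with intensity $(\theta_1-\alpha)(y-s)^{-1}\,ds$. On the other side, $(\widetilde s^y_k,\,k\ge1)$ is the corresponding surviving-immigration-level sequence of $\underline{\cev{\fF}}\sim\mathbf{P}^{\alpha,\theta_1-\alpha}_\emptyset$; since the within-clade left-right reversal in \eqref{eq:theta-0} changes neither the set of clades surviving to level $y$ nor their immigration levels, \eqref{eq:immlev} again yields a Poisson random measure on $[0,y)$ with the same intensity, whence the claimed identity in distribution.

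Next I would compare the two conditional families. On the two-colour side, \eqref{eq:gammamudecomp} together with \eqref{eq:conddist} and \eqref{eq:prm-marginal} (the computation carried out just before the statement) shows that, conditionally on $(s^y_k,\,k\ge1)$, the pairs $(H^y_k,\,\gamma^y_k\concat\mu^y_k)$, $k\ge1$, are mutually independent, each with independent coordinates, and $H^y_k\sim\mathtt{Gamma}(1-\alpha,(2(y-s^y_k))^{-1})$, $\gamma^y_k\concat\mu^y_k\sim\mathtt{Gamma}(\alpha,(2(y-s^y_{k-1}))^{-1})\cdot\pdip^{(\alpha)}(\alpha)$, with the convention $s^y_0:=0$. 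On the $\mathrm{SSIP}^{(\alpha)}(\theta_1,\alpha)$ side, \eqref{eq:tildebeta} and the discussion following it show that, conditionally on $(\widetilde s^y_k,\,k\ge1)$, the pairs $(\widetilde H^y_k,\,\widetilde\gamma^y_{k-1})$, $k\ge1$, are mutually independent, each with independent coordinates, and $\widetilde H^y_k\sim\mathtt{Gamma}(1-\alpha,(2(y-\widetilde s^y_k))^{-1})$, while $\widetilde\gamma^y_{k-1}\sim\mathtt{Gamma}(\alpha,(2(y-\widetilde s^y_{k-1}))^{-1})\cdot\pdip^{(\alpha)}(\alpha)$; here the $k=1$ term $\widetilde\gamma^y_0$ is the level-$y$ value of the $\mathrm{RSSIP}^{(\alpha)}(\alpha)$-evolution from $\emptyset$, which by \eqref{eq:prm-marginal} and Lemma~\ref{lem:rev-pdip} has exactly this law with $\widetilde s^y_0:=0$ and is independent of $(\widetilde s^y_k,\,k\ge1)$. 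Since the two displays of marginals are formally identical under $s^y_\ell\leftrightarrow\widetilde s^y_\ell$, combining this with the first step gives the lemma.

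I do not expect a genuine obstacle here: there is no estimate to make, and the content is purely the conditional-independence bookkeeping. The one thing that must be gotten right is the index shift — the ``remainder plus adjacent blue clades'' $\gamma^y_k\concat\mu^y_k$ of the $k$-th surviving red clade is governed by the \emph{previous} immigration level $s^y_{k-1}$, so it has to be paired with $\widetilde\gamma^y_{k-1}$, and the $k=1$ boundary term is absorbed into the $\mathrm{RSSIP}^{(\alpha)}(\alpha)$-from-$\emptyset$ component $\widetilde\gamma^y_0$ via the convention $s^y_0=\widetilde s^y_0=0$.
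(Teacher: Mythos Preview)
Your proposal is correct and follows essentially the same approach as the paper: the paper derives all the distributional ingredients in the paragraphs immediately preceding the lemma and then states the lemma as a summary (``Summarizing, we have the following statement''), and your write-up simply organises those same ingredients with the index shift made explicit. The only minor addition in your version is the explicit invocation of Lemma~\ref{lem:rev-pdip} for the $\widetilde\gamma^y_0$ and reversed-clade terms, which the paper leaves implicit.
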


\begin{proof}[Proof of Proposition \ref{prop:1-ge}]
	For fixed $y\ge 0$, using Lemma~\ref{lem:mark-correspondence} and its notation, we have the identity in law 
	\begin{equation*}
	\left(\Concat_{i=k}^1 \Big( \{(0,H_i^y)\} \concat \gamma_i^y \concat \mu_i^y \Big) \right) \concat \hat{\beta}^y
	\ed \{(0,\widetilde{H}_k^y)\} \concat \left( \Concat_{i={k-1}}^1 \Big( \widetilde{\gamma}_i^y \concat \{(0,\widetilde{H}_i^y)\} \Big) \right) \concat \widetilde{\gamma}_0^y \concat \hat{\beta}^y, \qquad  k\ge 1, 
	\end{equation*}
where $(\hat{\beta}^z,z\ge 0)$ is an $\mathrm{SSIP}^{(\alpha)}(0)$-evolution starting from $\gamma$, independent of everything else. 
In the limit $k\rightarrow\infty$, the LHS has the law at time $y$ of an $\mathrm{SSIP}^{(\alpha)}(\theta_1)$-evolution starting from $\gamma$. 
For the RHS, since it follows from Corollary~\ref{cor:sym} that $\widetilde{\gamma}_0^y \concat \hat{\beta}^y$ has the law of a $\mathrm{RSSIP}^{(\alpha)}(\alpha)$-evolution at time $y$ starting from $\gamma$, the RHS has, in the limit $k\rightarrow\infty$, the law of an $\mathrm{SSIP}^{(\alpha)}(\theta_1, \alpha)$-evolution at time $y$, starting from $\gamma$. 
So we have identified the one-dimensional marginals. 
It follows from this observation above and the Markov properties of both processes that they have the same finite-dimensional distributions. The claim follows from the path-continuity. 
\end{proof}

As a consequence of Proposition \ref{prop:1-ge}, we can address the apparent lack of left-right-symmetry in Definition \ref{defn:theta-ge}.

\begin{proposition}
	For $\theta_1,\theta_2\ge \alpha$ and $\gamma\in\mathcal{I}_H$, consider three independent processes, an 
	${\rm SSIP}^{(\alpha)}(\alpha)$-evolution $(\widehat{\beta}^y\,y\ge 0)$ starting from $\gamma$, and 
	${\rm SSIP}^{(\alpha)}(\theta_j,0)$-evolutions $(\beta_j^y\,y\ge 0)$, $j=1,2$, starting from $\emptyset$. Then 
	$\beta^y:=\beta_1^y\concat\widehat{\beta}^y\concat{\rm rev}(\beta_2^y)$, $y\ge 0$ is an 
	$\mathrm{SSIP}^{(\alpha)}(\theta_1, \theta_2)$-evolution starting from $\gamma$. 
		
	In particular, the left-right reversal of an $\mathrm{SSIP}^{(\alpha)}(\theta_1,\theta_2)$-evolution is an $\mathrm{SSIP}^{(\alpha)}(\theta_2, \theta_1)$-evolution 
	starting from the left-right-reversed initial state. 
\end{proposition}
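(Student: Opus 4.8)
The plan is to unfold both sides into the building blocks of Definition~\ref{defn:theta-ge} and match them, using Proposition~\ref{prop:1-ge} and elementary properties of the reversal map. For $\vartheta\ge 0$ write $(\underline{\cev{\beta}}_\vartheta^y,\,y\ge 0)$ for the process defined by \eqref{eq:theta-0} from a Poisson random measure $\underline{\cev{\mathbf{F}}}\sim\mathbf{P}^{\alpha,\vartheta}_\emptyset$. First I would record the two elementary facts underpinning everything: (i) $\mathrm{rev}(\mu\concat\nu)=\mathrm{rev}(\nu)\concat\mathrm{rev}(\mu)$ for $\mu,\nu\in\cI_H$, with the evident countable analogue obtained by reversing the index order of the concatenation; and (ii) an $\mathrm{SSIP}^{(\alpha)}(\vartheta+\alpha,0)$-evolution started from $\emptyset$ is exactly $(\underline{\cev{\beta}}_\vartheta^y,\,y\ge 0)$, since an $\mathrm{RSSIP}^{(\alpha)}(0)$-evolution started from $\emptyset$ is the reversal of an $\mathrm{SSIP}^{(\alpha)}(0)$-evolution started from $\emptyset$ and hence identically $\emptyset$, so that Definition~\ref{defn:theta-ge} reduces to $\beta^y=\underline{\cev{\beta}}_\vartheta^y\concat\emptyset$. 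In particular $(\beta_1^y)$ has the law of $(\underline{\cev{\beta}}_{\theta_1-\alpha}^y)$ and $(\beta_2^y)$ that of $(\underline{\cev{\beta}}_{\theta_2-\alpha}^y)$.

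Next I would peel apart an $\mathrm{SSIP}^{(\alpha)}(\theta_1,\theta_2)$-evolution from $\gamma$. By Definition~\ref{defn:theta-ge} it equals $\underline{\cev{\beta}}_{\theta_1-\alpha}^y\concat\vecc{\beta}^y$, where $(\vecc{\beta}^y)$ is an independent $\mathrm{RSSIP}^{(\alpha)}(\theta_2)$-evolution from $\gamma$, i.e.\ $(\mathrm{rev}(\delta^y))$ for $(\delta^y)$ an $\mathrm{SSIP}^{(\alpha)}(\theta_2)$-evolution from $\mathrm{rev}(\gamma)$. Since $\theta_2\ge\alpha$, Proposition~\ref{prop:1-ge} identifies $(\delta^y)$ with an $\mathrm{SSIP}^{(\alpha)}(\theta_2,\alpha)$-evolution from $\mathrm{rev}(\gamma)$, which Definition~\ref{defn:theta-ge} writes as a concatenation $\mu^y\concat\nu^y$ of two independent processes, with $(\mu^y)$ distributed as $(\underline{\cev{\beta}}_{\theta_2-\alpha}^y)$ and $(\nu^y)$ an $\mathrm{RSSIP}^{(\alpha)}(\alpha)$-evolution from $\mathrm{rev}(\gamma)$; by Definition~\ref{defn:rssip}, $(\mathrm{rev}(\nu^y))$ is then an $\mathrm{SSIP}^{(\alpha)}(\alpha)$-evolution from $\gamma$. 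Applying (i), $\vecc{\beta}^y=\mathrm{rev}(\delta^y)=\mathrm{rev}(\nu^y)\concat\mathrm{rev}(\mu^y)$, and by (ii) its two independent factors are distributed as $\widehat{\beta}^y$ and $\mathrm{rev}(\beta_2^y)$. Reinstating the independent left factor $\underline{\cev{\beta}}_{\theta_1-\alpha}^y\overset{d}{=}\beta_1^y$ gives $\beta^y\overset{d}{=}\beta_1^y\concat\widehat{\beta}^y\concat\mathrm{rev}(\beta_2^y)$ as $\cI_H$-valued processes, the three factors being jointly independent. All of this only chains distributional identities of whole processes and applies the homeomorphism $\mathrm{rev}$ pathwise, so no separate finite-dimensional argument is needed.

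For the ``in particular'' claim I would reverse this three-block description: by (i), $\mathrm{rev}(\beta^y)=\beta_2^y\concat\mathrm{rev}(\widehat{\beta}^y)\concat\mathrm{rev}(\beta_1^y)$, and Proposition~\ref{prop:sym} makes $(\mathrm{rev}(\widehat{\beta}^y))$ an $\mathrm{SSIP}^{(\alpha)}(\alpha)$-evolution from $\mathrm{rev}(\gamma)$, still independent of the other two factors. Since $\theta_1,\theta_2\ge\alpha$, the first part of the proposition applies with the roles of $\theta_1$ and $\theta_2$ exchanged and $\gamma$ replaced by $\mathrm{rev}(\gamma)$, and its three-block description is precisely this right-hand side; hence $(\mathrm{rev}(\beta^y))$ is an $\mathrm{SSIP}^{(\alpha)}(\theta_2,\theta_1)$-evolution from $\mathrm{rev}(\gamma)$.

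The only delicate point --- the nearest thing to an obstacle --- is keeping the nested reversals and their initial states straight (which occurrences act on $\gamma$, which on $\mathrm{rev}(\gamma)$, and the fact that $\mathrm{rev}\circ\mathrm{rev}=\mathrm{id}$), and checking that each invocation of Definition~\ref{defn:theta-ge} genuinely splits the process into independent pieces so that the final three factors are jointly independent. Once (i) and (ii) are in place, the remainder is routine bookkeeping.
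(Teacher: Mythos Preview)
Your proof is correct and follows essentially the same approach as the paper: both arguments unfold Definition~\ref{defn:theta-ge}, invoke Proposition~\ref{prop:1-ge} to rewrite the $\mathrm{RSSIP}^{(\alpha)}(\theta_2)$ factor as (the reversal of) an $\mathrm{SSIP}^{(\alpha)}(\theta_2,\alpha)$-evolution, and then use Proposition~\ref{prop:sym} to handle the middle $\mathrm{SSIP}^{(\alpha)}(\alpha)$ piece, with the ``in particular'' claim obtained identically by reversing the three-block representation. The only cosmetic difference is that you run the first part by decomposing an $\mathrm{SSIP}^{(\alpha)}(\theta_1,\theta_2)$-evolution into the three blocks, whereas the paper assembles the three blocks and checks they satisfy Definition~\ref{defn:theta-ge}.
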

\begin{proof}
	First note that $(\beta_1^y,\,y\ge 0)$ has the same distribution as $(\underline{\cev{\beta}}^y,\,y\ge 0)$ in \eqref{eq:theta-0}, by 
	Definition \ref{defn:theta-ge}. Hence, we need to show that $(\widehat{\beta}^y\concat{\rm rev}(\beta_2^y)\,y\ge 0)$ is an
	${\rm RSSIP}^{(\alpha)}(\theta_2)$-evolution starting from $\gamma$, which we defined as the left-right-reversal of 
	${\rm SSIP}^{(\alpha)}(\theta_2)$ starting from ${\rm rev}(\gamma)$. By Proposition \ref{prop:1-ge}, it suffices to show that
	$(\beta_2^y\concat{\rm rev}(\widehat{\beta}^y),\,y\ge 0)$ is an ${\rm SSIP}^{(\alpha)}(\theta_2,\alpha)$-evolution starting from 
	${\rm rev}(\gamma)$. This follows straight from Definition \ref{defn:theta-ge} and Proposition \ref{prop:sym}.
	The final claim follows from the representation in the first part and Proposition \ref{prop:sym} since 
	${\rm rev}(\beta^y)=\beta_2^y\concat{\rm rev}(\widehat{\beta}^y)\concat{\rm rev}(\beta_1^y)$.
\end{proof}

We end this section by deriving two decompositions of Poisson--Dirichlet interval partitions from the correspondence in Lemma~\ref{lem:mark-correspondence}. They have a similar flavour to \cite[Corollary~8]{PitmWink09}, but are different. 
\begin{corollary}
	For $\theta> \alpha$ and $\rho>0$, let $(\bar\beta_n)_{n\ge 1}$ be i.i.d.\@ $\mathtt{PDIP}^{(\alpha)}( 0)$, $(B_n)_{n\ge 1}$ be   i.i.d.\@ $\mathtt{Beta}(\theta- \alpha, 1)$, $(E_n)_{n\ge 1}$ be   i.i.d.\@ $\mathtt{Exp}(\rho)$, and $\gamma\sim \mathtt{Gamma} (\alpha, \rho)\cdot\pdip^{(\alpha)} (\alpha)$. 
	Then we have the identity
	\begin{equation}\label{eq:cor1-mark}
	\left( \Concat_{n=\infty}^1 \Big(E_n \prod_{i=1}^n B_i\Big) \odotip \mathrm{rev}(\bar\beta_n) \right) \concat \gamma  
	\ed \mathtt{Gamma} (\theta, \rho)\cdot\pdip^{(\alpha)}( \theta).
	\end{equation}
\end{corollary}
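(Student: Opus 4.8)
The plan is to recognise both sides of \eqref{eq:cor1-mark} as the level-$y$ marginal of one and the same process -- an $\mathrm{SSIP}^{(\alpha)}(\theta,\alpha)$-evolution started from $\emptyset$, evaluated at level $y:=1/(2\rho)$. On the one hand, by Proposition~\ref{prop:1-ge} such a process is an $\mathrm{SSIP}^{(\alpha)}(\theta)$-evolution started from $\emptyset$, so by \eqref{eq:prm-marginal} its value at level $y$ has law $\mathtt{Gamma}(\theta,(2y)^{-1})\cdot\pdip^{(\alpha)}(\theta)=\mathtt{Gamma}(\theta,\rho)\cdot\pdip^{(\alpha)}(\theta)$, which is the right-hand side of \eqref{eq:cor1-mark}. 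On the other hand, realising this process as $\widetilde\beta^y=\underline{\cev{\beta}}^y\concat\vecc{\beta}^y$ with $\underline{\cev{\beta}}$ as in \eqref{eq:theta-0} built from $\underline{\cev{\fF}}\sim\mathbf{P}^{\alpha,\theta-\alpha}_\emptyset$ and $\vecc{\beta}$ an independent $\mathrm{RSSIP}^{(\alpha)}(\alpha)$-evolution from $\emptyset$, I will show that the explicit decomposition \eqref{eq:tildebeta}--\eqref{eq:conddist} of $\widetilde\beta^y$ coincides in law with the left-hand side of \eqref{eq:cor1-mark}. Combining the two yields the corollary; the case of general $\rho>0$ is immediate since $y=1/(2\rho)$ ranges over all of $(0,\infty)$.

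For the second step, recall from \eqref{eq:tildebeta} and \eqref{eq:conddist} that, with $\widetilde s_0^y:=0$,
\[
\widetilde\beta^y\ \ed\ \left(\Concat_{k=\infty}^1 \widetilde\gamma_k^y\concat\{(0,\widetilde H_k^y)\}\right)\concat\widetilde\gamma_0^y,
\]
where, conditionally on the increasing sequence $(\widetilde s_k^y)_{k\ge1}$ of immigration levels of clades of $\underline{\cev{\fF}}$ surviving to level $y$, the partitions $\widetilde\gamma_k^y\sim\mathtt{Gamma}(\alpha,c_k)\cdot\pdip^{(\alpha)}(\alpha)$, $k\ge0$, and the masses $\widetilde H_k^y\sim\mathtt{Gamma}(1-\alpha,c_k)$, $k\ge1$, are independent, with $c_k:=(2(y-\widetilde s_k^y))^{-1}$. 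Two substitutions then finish the job. First, by \eqref{eq:immlev} applied with parameter $\theta-\alpha$, the point measure $\sum_k\delta(\widetilde s_k^y)$ is a Poisson random measure on $[0,y)$ with intensity $(\theta-\alpha)(y-s)^{-1}ds$; mapping $s\mapsto(y-s)/y$ and using the scale-invariance of $r^{-1}dr$, its decreasing enumeration is the residual-allocation sequence $(\prod_{i=1}^k B_i)_{k\ge1}$ with $(B_i)_{i\ge1}$ i.i.d.\ $\mathtt{Beta}(\theta-\alpha,1)$, so that $y-\widetilde s_k^y\ed y\prod_{i=1}^k B_i$ jointly in $k$ and $c_k=\rho/\prod_{i=1}^k B_i$. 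Second, for each $k\ge1$, Beta--Gamma algebra gives $(\|\widetilde\gamma_k^y\|,\widetilde H_k^y)\ed(B'S_k,(1-B')S_k)$ with $S_k\sim\mathtt{Exp}(c_k)$ and $B'\sim\mathtt{Beta}(\alpha,1-\alpha)$ independent, and independent moreover of the normalised partition $\widetilde\gamma_k^y/\|\widetilde\gamma_k^y\|\sim\pdip^{(\alpha)}(\alpha)$; combining this with \eqref{eq:pdip:0-alpha} and Lemma~\ref{lem:rev-pdip} yields $\widetilde\gamma_k^y\concat\{(0,\widetilde H_k^y)\}\ed S_k\odotip\mathrm{rev}(\bar\beta_k)$ for independent $\bar\beta_k\sim\pdip^{(\alpha)}(0)$ and $S_k$, where I write $S_k=E_k\prod_{i=1}^k B_i$ with $E_k:=S_k/\prod_{i=1}^k B_i\sim\mathtt{Exp}(\rho)$. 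Finally $\widetilde\gamma_0^y\sim\mathtt{Gamma}(\alpha,(2y)^{-1})\cdot\pdip^{(\alpha)}(\alpha)=\mathtt{Gamma}(\alpha,\rho)\cdot\pdip^{(\alpha)}(\alpha)$, the law of $\gamma$.

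It remains to assemble the pieces. All the conditional laws above, given $(\widetilde s_k^y)_{k\ge1}$ -- equivalently given $(B_i)_{i\ge1}$ -- fail to depend on $(B_i)$, so the families $(B_i)_{i\ge1}$, $(E_k)_{k\ge1}$, $(\bar\beta_k)_{k\ge1}$ and $\gamma:=\widetilde\gamma_0^y$ are jointly independent with exactly the marginals prescribed in the corollary; substituting back into the displayed decomposition of $\widetilde\beta^y$ gives $\widetilde\beta^y\ed\big(\Concat_{n=\infty}^1 (E_n\prod_{i=1}^n B_i)\odotip\mathrm{rev}(\bar\beta_n)\big)\concat\gamma$, the left-hand side of \eqref{eq:cor1-mark}. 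Together with the first paragraph this proves \eqref{eq:cor1-mark}.

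The steps requiring genuine care are the residual-allocation representation of the surviving-clade immigration levels from \eqref{eq:immlev} (a standard consequence of the scale-invariance of $r^{-1}dr$, already implicit in the derivation of \eqref{eq:ssip-clades}) and the bookkeeping that upgrades ``conditionally independent given $(\widetilde s_k^y)$'' to the unconditional independence claimed in the corollary; neither is deep, and no analytic difficulty arises because path-continuity and Markov properties play no role in this fixed-level distributional identity.
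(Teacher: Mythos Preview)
Your proposal is correct and follows essentially the same route as the paper: identify both sides of \eqref{eq:cor1-mark} as the level $y=1/(2\rho)$ marginal of an $\mathrm{SSIP}^{(\alpha)}(\theta,\alpha)$-evolution from $\emptyset$, using Proposition~\ref{prop:1-ge} with \eqref{eq:prm-marginal} for the right-hand side and the decomposition \eqref{eq:tildebeta} for the left-hand side. The only difference is that where the paper invokes \eqref{eq:ssip-clades} and \eqref{eq:pdip:0-alpha} directly to obtain $\widetilde\gamma_n^y\concat\{(0,\widetilde H_n^y)\}\ed (E_n\prod_{i=1}^n B_i)\odotip\mathrm{rev}(\bar\beta_n)$, you unpack this step by re-deriving the residual-allocation representation from \eqref{eq:immlev} and the $\mathrm{rev}(\pdip^{(\alpha)}(0))$ structure from Beta--Gamma algebra together with \eqref{eq:pdip:0-alpha} and Lemma~\ref{lem:rev-pdip}; this is simply a more explicit version of what \eqref{eq:ssip-clades} already packages.
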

\begin{proof}
	Consider 	$\widetilde{\beta}^{y}\ed\Concat_{k={\infty}}^1 \left( \widetilde{\gamma}_k^y \concat \{(0,\widetilde{H}_k^y)\} \right)\concat \widetilde{\gamma}_0^y$ given in \eqref{eq:tildebeta}, i.e.\ a decomposition of an $\mathrm{SSIP}^{(\alpha)}(\theta, \alpha)$-evolution at level $y$. 
	With $y= 1/2 \rho$, by \eqref{eq:ssip-clades} and \eqref{eq:pdip:0-alpha} we have
	\[
	\left(\widetilde{\gamma}_n^{1/2 \rho} \concat \left\{(0,\widetilde{H}_n^{1/2 \rho})\right\} , \ \  n\ge 1 \right)
	\ed \left(\left(E_n \prod_{i=1}^n B_i\right) \odotip \mathrm{rev}(\bar\beta_n), \ \  n\ge 1 \right). 
	\]
	Then $\widetilde{\beta}^{1/2 \rho}$ can be written as the LHS of 
	\eqref{eq:cor1-mark}. 
	On the other hand, $\widetilde{\beta}^{1/2 \rho}\sim \mathtt{Gamma} (\theta, \rho)\cdot\pdip^{(\alpha)}( \theta)$ by Proposition~\ref{prop:1-ge} and \eqref{eq:prm-marginal}. 
\end{proof}

\begin{corollary}
	For $\theta>\alpha$ and $\rho>0$, let $(\bar\beta_n)_{n\ge 1}$ be i.i.d.\@ $\mathtt{PDIP}^{(\alpha)}( 0)$, $(B_n)_{n\ge 1}$  i.i.d.\@ $\mathtt{Beta}(\theta, 1)$, $(E_n)_{n\ge 1}$  i.i.d.\@ $\mathtt{Exp}(\rho)$, $G\sim \mathtt{Gamma}(\alpha,\rho)$, $\bar\gamma\sim \pdip^{(\alpha)} (\alpha)$, and $K$ have geometric distribution on $\bN$ with success probability $1-\alpha/\theta$. 
	Then we have the identity
	\[
	\Big(G \prod_{i=1}^K B_i\Big) \odotip \bar\gamma  \concat \left( \Concat_{n=K-1}^1 \Big(E_n \prod_{i=1}^n B_i\Big) \odotip \mathrm{rev}(\bar\beta_n) \right)
	\ed \mathtt{Gamma} (\alpha, \rho)\cdot\pdip^{(\alpha)} (\alpha).
	\]
\end{corollary}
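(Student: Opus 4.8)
The plan is to take $y=1/2\rho$ and to recognise the right-hand side $\mathtt{Gamma}(\alpha,\rho)\cdot\pdip^{(\alpha)}(\alpha)$ as the level-$y$ marginal of an $\mathrm{SSIP}^{(\alpha)}(\alpha)$-evolution from $\emptyset$, which by \eqref{eq:prm-marginal} it is, and then to read the claimed decomposition off the two-colour clade model from the proof of Proposition~\ref{prop:1-ge}. That model superposes independent Poisson random measures of \emph{red} clades (intensity $(\theta/\alpha-1)\mathrm{Leb}\otimes\nu^{(\alpha)}_{\mathrm{\bot cld}}$) and \emph{blue} clades (intensity $\mathrm{Leb}\otimes\nu^{(\alpha)}_{\mathrm{\bot cld}}$), so that the superposition is an $\mathrm{SSIP}^{(\alpha)}(\theta)$-evolution while the blue clades alone form an $\mathrm{SSIP}^{(\alpha)}(\alpha)$-evolution. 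Fix $y$ and list the clades (of either colour) surviving to level $y$ in increasing order of immigration level, $C_1,C_2,\dots$; by \eqref{eq:immlev} the immigration levels $s^y_k$ are the points of a Poisson random measure of intensity $\theta(y-s)^{-1}ds$ on $[0,y)$, so after the substitution $u=1-s/y$ they come from the intensity $\theta u^{-1}du$ on $(0,1]$ and hence $1-s^y_k/y=\prod_{i=1}^kB_i$ for i.i.d.\ $B_i\sim\mathtt{Beta}(\theta,1)$. Colouring the $C_k$ independently red with probability $(\theta-\alpha)/\theta$ and blue with probability $\alpha/\theta$, the index $K$ of the first \emph{red} clade is geometric on $\bN$ with success probability $1-\alpha/\theta$, precisely as in the statement, and $C_1,\dots,C_{K-1}$ are exactly the blue clades lying below the first red clade $C_K$.

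In the notation of \eqref{eq:gammamudecomp}, taken with $s^y_1$ equal to the immigration level of $C_K$ and $s^y_0=0$, the rightmost block-group of the $\mathrm{SSIP}^{(\alpha)}(\theta)$-evolution at level $y$ is $\gamma^y_1\concat\mu^y_1$, where $\gamma^y_1$ is the non-leading part of $C_K$'s skewer and $\mu^y_1$ is the concatenation, in decreasing order of immigration level, of the level-$y$ skewers of $C_{K-1},\dots,C_1$; by the discussion preceding Lemma~\ref{lem:mark-correspondence} this block-group has law $\mathtt{Gamma}(\alpha,(2y)^{-1})\cdot\pdip^{(\alpha)}(\alpha)$, i.e.\ the right-hand side with $\rho=(2y)^{-1}$. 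By \eqref{eq:conddist} together with \eqref{eq:pdip:0-alpha}, conditionally on the immigration levels a clade skewer is $\mathtt{Exp}((2(y-s))^{-1})\cdot\pdip^{(\alpha)}(0)$, whose non-leading part is $\mathtt{Gamma}(\alpha,(2(y-s))^{-1})\cdot\pdip^{(\alpha)}(\alpha)$, for $s$ the immigration level. Substituting $1-s^y_k/y=\prod_{i=1}^kB_i$ and the scaling relations (relating $\mathtt{Gamma}(\alpha,(2(y-s))^{-1})$ to $(1-s/y)\cdot\mathtt{Gamma}(\alpha,(2y)^{-1})$, and similarly for $\mathtt{Exp}$), I would obtain that $\gamma^y_1\concat\mu^y_1$ has the law of
\[
\Big(G \prod_{i=1}^K B_i\Big) \odotip \bar\gamma  \concat \left( \Concat_{n=K-1}^1 \Big(E_n \prod_{i=1}^n B_i\Big) \odotip \bar\beta_n \right)
\]
for independent $G\sim\mathtt{Gamma}(\alpha,\rho)$, $E_n\sim\mathtt{Exp}(\rho)$, $\bar\gamma\sim\pdip^{(\alpha)}(\alpha)$ and $\bar\beta_n\sim\pdip^{(\alpha)}(0)$.

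It remains to reconcile this with the displayed identity in the statement, which carries $\mathrm{rev}(\bar\beta_n)$ in place of $\bar\beta_n$. The cleanest route is to replay the argument inside the reversed-clade construction \eqref{eq:theta-0} rather than the two-colour model, using the correspondence of Lemma~\ref{lem:mark-correspondence} to pass between the two — this is exactly the mechanism that produced $\mathrm{rev}(\bar\beta_n)$ in the proof of the preceding corollary, where reversing a clade sends its leading block to the right — and then to absorb the remaining discrepancy in orientation by the reversal invariance of $\mathtt{Gamma}(\alpha,\rho)\cdot\pdip^{(\alpha)}(\alpha)$, i.e.\ Lemma~\ref{lem:rev-pdip}. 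I expect this orientation bookkeeping to be the main obstacle: one must keep straight the $\downarrow$-ordering of immigration levels inside $\Concat$, the placement of each clade's leading block before versus after reversal, and the role of Lemma~\ref{lem:mark-correspondence} in matching the un-reversed two-colour picture with the reversed-clade picture. The remaining ingredients — the product representation of $1-s^y_k/y$, and (should one wish to line the result up with the $\mathtt{Beta}(\theta-\alpha,1)$ of the previous corollary) the one-line moment check that $\prod_{i=1}^KB_i\sim\mathtt{Beta}(\theta-\alpha,1)$ for geometric $K$ — are routine.
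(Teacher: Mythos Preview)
Your approach is the paper's approach. Both work in the two-colour model at level $y=1/2\rho$, look at the interval partition $\gamma_1^y\concat\mu_1^y$ to the right of the leftmost block of the first red clade, identify $K$ as geometric with success probability $1-\alpha/\theta$ from Poisson thinning, obtain the left-hand representation from \eqref{eq:ssip-clades} and \eqref{eq:pdip:0-alpha}, and obtain the right-hand side from Lemma~\ref{lem:mark-correspondence}, which gives $\gamma_1^y\concat\mu_1^y\ed\widetilde\gamma_0^y\sim\mathtt{Gamma}(\alpha,\rho)\cdot\pdip^{(\alpha)}(\alpha)$. The paper's proof cites exactly these ingredients and nothing more; your extra verification of the product form $1-s^y_k/y=\prod_{i=1}^kB_i$ is implicit in the paper's citation of \eqref{eq:ssip-clades}.

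On the $\mathrm{rev}(\bar\beta_n)$: your unease is well placed. The paper's own argument, read literally, also delivers the unreversed form, since $\mu_1^y$ is a concatenation of unreversed $\pdip^{(\alpha)}(0)$ skewers. The reversal in the displayed statement looks like a carry-over from the preceding corollary, where it arises genuinely from the reversed-clade construction \eqref{eq:theta-0}. Your proposed detour through that construction and Lemma~\ref{lem:rev-pdip} does not actually reproduce the stated orientation: reversing $\gamma_1^y\concat\mu_1^y$ sends the $\bar\gamma$ piece to the right and flips the index order of the blue pieces, which is not the displayed form. Do not chase this; record the unreversed identity, which is what the argument proves.
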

\begin{proof}
	Using the decomposition of $\cev\beta^{1/2 \rho}$ given in \eqref{eq:gammamudecomp} and Lemma~\ref{lem:mark-correspondence}, and notation therein, we look at the interval partition 
	to the right of the rightmost red interval, i.e.\@ $\gamma_{1}^{1/2 \rho}\concat \mu_{1}^{1/2 \rho}$. 
	The Poisson property shows that the first (from the right) red clade is the $K$-th one among all  clades of $\cev{\fF}^{(\mathrm{r})}+ \cev{\fF}^{(\mathrm{b})}$ surviving to level $1/2 \rho$. 
	Using \eqref{eq:ssip-clades} and \eqref{eq:pdip:0-alpha}, we have the representation of the LHS. By Lemma~\ref{lem:mark-correspondence}, $\gamma_{1}^{1/2 \rho}\concat \mu_{1}^{1/2 \rho}$ has the same law as 
	$\widetilde{\gamma}_0^{1/2\rho}\sim\mathtt{Gamma} (\alpha, \rho)\cdot\pdip^{(\alpha)} (\alpha)$.  
\end{proof}

\subsection{Pseudo-stationarity of ${\rm SSIP}^{(\alpha)}(\theta_1,\theta_2)$-evolutions, and the proof of Theorem \ref{thm:pseudo}.}

Recall from the introduction that we call a distribution $\mu$ on $\mathcal{I}_H$ \em pseudo-stationary \em for an interval partition evolution if starting the evolution from an independently scaled multiple of a $\mu$-distributed interval partition, the marginal distributions at all positive times
have the same form. In other words, the evolution only changes the distribution of the total mass, but keeps the distribution of the interval partition normalised to unit total mass invariant. Let us first study ${\rm SSIP}^{(\alpha)}(\theta_1,\theta_2)$-evolutions starting from 
$\emptyset$. 

\begin{proposition}\label{prop:largetheta-empty}
	For $\theta_1\ge \alpha$ and $\theta_2\ge 0$, 
	let $(\beta^y, y\ge 0)$ be an $\mathrm{SSIP}^{(\alpha)}(\theta_1, \theta_2)$-evolution starting from $\emptyset$. 
	Then at any fixed level $y\ge 0$ we have
	\[
	\beta^y \ed  V G_1^y \bar\beta_1 \concat \{(0, V G_0^y)\}\concat G_2^y\bar\beta_2 ,
	\]
	where  $(V,\bar\beta_1,\bar\beta_2, G_1^y,G_0^y, G_2^y)$ is a family of independent random variables with $V\sim \mathtt{Beta}(\theta_1-\alpha, 1)$, $\bar\beta_1\sim \mathtt{PDIP}^{(\alpha)}( \theta_1)$, $\mathrm{rev}(\bar\beta_2)\sim \mathtt{PDIP}^{(\alpha)}( \theta_2)$, $G_1^y \sim \mathtt{Gamma}(\theta_1, 1/2 y)$, $G_0^y \sim \mathtt{Gamma}(1-\alpha, 1/2 y)$, and $G_2^y\sim \mathtt{Gamma}(\theta_2, 1/2 y)$. 
	By convention $V=0$ when $\theta_1=\alpha$ and $G_2^y=0$ when $\theta_2=0$. 	
	In other words, 
	\[
	\beta^y \ed  G_3^y \odotip \Big( V' \odotip \bar\beta_1  \concat \{(0, 1- V')\}\Big)\concat G_2^y\bar\beta_2 ,
	\]
	where $G_3^y \sim \mathtt{Gamma}(\theta_1-\alpha, 1/2 y) $ and $V' \sim \mathtt{Beta} (\theta_1, 1-\alpha)$ are independent, further independent of $(\bar\beta_1, \bar\beta_{2}, G_2^y)$. 
\end{proposition}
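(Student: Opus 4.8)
\medskip
\noindent\textbf{Proof plan.} The plan is to use the decomposition $\beta^y=\underline{\cev{\beta}}^y\concat\vecc{\beta}^y$ of Definition~\ref{defn:theta-ge}, whose two pieces are independent, and to identify the laws of $\vecc{\beta}^y$ and $\underline{\cev{\beta}}^y$ separately. The piece $\vecc{\beta}^y$ is the level-$y$ value of an $\mathrm{RSSIP}^{(\alpha)}(\theta_2)$-evolution from $\emptyset$, i.e.\ the $\mathrm{rev}$-image of the level-$y$ value of an $\mathrm{SSIP}^{(\alpha)}(\theta_2)$-evolution from $\emptyset$; by \eqref{eq:prm-marginal} (for $\theta_2>0$; for $\theta_2=0$ this process is identically $\emptyset$) and since the scaling map commutes with $\mathrm{rev}$, we get $\vecc{\beta}^y\ed G_2^y\bar\beta_2$ with $G_2^y\sim\mathtt{Gamma}(\theta_2,1/2y)$ and $\mathrm{rev}(\bar\beta_2)\sim\mathtt{PDIP}^{(\alpha)}(\theta_2)$ independent, as required. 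So everything reduces to proving $\underline{\cev{\beta}}^y\ed VG_1^y\bar\beta_1\concat\{(0,VG_0^y)\}$ with $V,G_1^y,\bar\beta_1,G_0^y$ mutually independent and of the stated laws; the two ``in other words'' reformulations then follow by Beta--Gamma algebra, factoring the scalar $V(G_1^y+G_0^y)\sim\mathtt{Gamma}(\theta_1-\alpha,1/2y)$ out of $\underline{\cev{\beta}}^y$ and setting $V':=G_1^y/(G_1^y+G_0^y)\sim\mathtt{Beta}(\theta_1,1-\alpha)$, which is independent of that scalar and of $\bar\beta_1$.

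For $\underline{\cev{\beta}}^y$ the case $\theta_1=\alpha$ is trivial, since then $\underline{\cev{\fF}}$ has intensity $0$ and $\underline{\cev{\beta}}^y=\emptyset$, matching the convention $V=0$. For $\theta_1>\alpha$ the idea is to peel off the rightmost block. By \eqref{eq:theta-0} and \eqref{eq:conddist}, $\underline{\cev{\beta}}^y$ is the concatenation, in decreasing order of immigration level, of the reversed skewers $\mathrm{rev}(\skewer(y-s,N_s,\xi_{N_s}))$ of the clades of $\underline{\cev{\fF}}$ surviving to level $y$; the rightmost term corresponds to the smallest surviving immigration level $\underline{s}$, and by \eqref{eq:immlev} with immigration rate $\theta_1-\alpha$ one computes $\mathbb{P}(\underline{s}>t)=((y-t)/y)^{\theta_1-\alpha}$, so $V:=1-\underline{s}/y\sim\mathtt{Beta}(\theta_1-\alpha,1)$. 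Conditionally on $\underline{s}=s$, \eqref{eq:conddist} says the level-$y$ skewer of that clade equals $\{(0,H)\}\concat\gamma_1$ with independent $H\sim\mathtt{Gamma}(1-\alpha,(2(y-s))^{-1})$ and $\gamma_1\sim\mathtt{Gamma}(\alpha,(2(y-s))^{-1})\cdot\mathtt{PDIP}^{(\alpha)}(\alpha)$, so its reversal contributes $\mathrm{rev}(\gamma_1)\concat\{(0,H)\}$; and by the restriction property of the Poisson random measure $\underline{\cev{\fF}}$ (and its Markov-like property, Proposition~\ref{prop:markovlike2}) the clades immigrating strictly above level $s$ contribute, to the left of $\mathrm{rev}(\gamma_1)$, the level-$(y-s)$ value $\eta^{y-s}$ of an independent copy $(\eta^z)_{z\ge0}$ of $(\underline{\cev{\beta}}^z)_{z\ge0}$. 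Hence, given $\underline{s}=s$,
\[
\underline{\cev{\beta}}^y=\eta^{y-s}\concat\mathrm{rev}(\gamma_1)\concat\{(0,H)\},
\]
the three pieces being conditionally independent.

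It remains to identify the conditional law of the ``rest'' $\eta^{y-s}\concat\mathrm{rev}(\gamma_1)$. An $\mathrm{SSIP}^{(\alpha)}(\theta_1,\alpha)$-evolution from $\emptyset$ is an $\mathrm{SSIP}^{(\alpha)}(\theta_1)$-evolution from $\emptyset$ by Proposition~\ref{prop:1-ge}, hence has level-$z$ marginal $\mathtt{Gamma}(\theta_1,(2z)^{-1})\cdot\mathtt{PDIP}^{(\alpha)}(\theta_1)$ by \eqref{eq:prm-marginal}; decomposing it via Definition~\ref{defn:theta-ge} and using Lemma~\ref{lem:rev-pdip} for its $\mathrm{RSSIP}^{(\alpha)}(\alpha)$-part (this is exactly identity~\eqref{eq:cor1-mark}) gives, for every $z>0$,
\[
\underline{\cev{\beta}}^{z}\concat\gamma'\ed\mathtt{Gamma}(\theta_1,(2z)^{-1})\cdot\mathtt{PDIP}^{(\alpha)}(\theta_1)\quad\text{with }\gamma'\sim\mathtt{Gamma}(\alpha,(2z)^{-1})\cdot\mathtt{PDIP}^{(\alpha)}(\alpha)\text{ independent.}
\]
Taking $z=y-s$ and observing that $\mathrm{rev}(\gamma_1)\ed\gamma_1\ed\gamma'$ by Lemma~\ref{lem:rev-pdip}, with $\mathrm{rev}(\gamma_1)$ conditionally independent of $\eta^{y-s}$, we conclude that the ``rest'' has conditional law $\mathtt{Gamma}(\theta_1,(2(y-s))^{-1})\cdot\mathtt{PDIP}^{(\alpha)}(\theta_1)$, independent of $H$. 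Writing $2(y-s)=2yV$ and replacing the conditionally-$\mathtt{Gamma}$ scales by $V$ times fresh $G_1^y\sim\mathtt{Gamma}(\theta_1,1/2y)$ and $G_0^y\sim\mathtt{Gamma}(1-\alpha,1/2y)$ variables that are independent of $V$, this yields $\underline{\cev{\beta}}^y\ed VG_1^y\bar\beta_1\concat\{(0,VG_0^y)\}$ with $V,G_1^y,\bar\beta_1,G_0^y$ mutually independent, and concatenation with $\vecc{\beta}^y$ finishes the proof.

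The step I expect to be the main obstacle is the conditioning bookkeeping in the second paragraph: one must argue carefully that, given $\{\underline{s}=s\}$, the clade immigrating at level $s$ (conditioned to survive to level $y$, with no survivor below $s$) and the configuration of clades strictly above level $s$ are genuinely independent, the latter distributed as a fresh copy of $\underline{\cev{\fF}}$ shifted to start at $s$, so that its reversed-skewer construction at level $y$ is an independent copy of $\underline{\cev{\beta}}$ at level $y-s$. This is a restriction/Markov-like statement for $\underline{\cev{\fF}}$ that should follow from Proposition~\ref{prop:markovlike2} together with \eqref{eq:immlev} and \eqref{eq:conddist}, but it needs to be stated with care. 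Everything else is Beta--Gamma algebra and the already-established identities.
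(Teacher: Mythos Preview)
Your proof is correct, and the conditioning worry you flag is not a genuine obstacle: for a Poisson random measure, conditioning on the location of its minimum leaves the restriction strictly above that level an unconditioned copy, and the clade at the minimum (conditioned to survive to level $y$) is independent of that restriction with skewer law given by \eqref{eq:conddist}. This is standard and does not need the full strength of Proposition~\ref{prop:markovlike2}.

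Your route differs from the paper's in how the law of $V$ and of the ``rest'' $\underline{\cev{\beta}}^y\setminus\{(0,H)\}$ are obtained. The paper invokes the two-colour correspondence of Lemma~\ref{lem:mark-correspondence} directly: it embeds $\underline{\cev{\beta}}^y$ into an $\mathrm{SSIP}^{(\alpha)}(\theta_1)$-marginal via red/blue clades, identifies the index $K$ of the first red clade among all survivors as geometric with success probability $1-\alpha/\theta_1$, writes $V=\prod_{i=1}^K B_i$ with $B_i\sim\mathtt{Beta}(\theta_1,1)$, and checks $V\sim\mathtt{Beta}(\theta_1-\alpha,1)$ by a moment computation; the rest is then read off from \eqref{eq:ssip-clades-bis}. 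You instead read $V=1-\underline{s}/y\sim\mathtt{Beta}(\theta_1-\alpha,1)$ straight from the Poisson intensity \eqref{eq:immlev}, and identify the rest as $\eta^{y-s}\concat\mathrm{rev}(\gamma_1)$, whose law you get by combining Definition~\ref{defn:theta-ge}, Corollary~\ref{cor:sym}, \eqref{eq:prm-marginal}, and Proposition~\ref{prop:1-ge}. Your computation of $V$ is cleaner (no moment identity needed), but you are still using the two-colour machinery, just packaged inside the earlier Proposition~\ref{prop:1-ge}; the paper's argument is more self-contained within this single proof. Both are valid and there is no circularity, since Proposition~\ref{prop:1-ge} and identity \eqref{eq:cor1-mark} are established before this proposition.
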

\begin{proof} 
	We write $\beta^y= \underline{\cev{\beta}}^y \concat \vecc{\beta}^y, y\ge 0$ as in Definition \ref{defn:theta-ge}. It is known from \cite[Proposition~3.6]{IPPAT} that $ \mathrm{rev}(\vecc{\beta}^y)\sim \mathtt{Gamma}(\theta_2, 1/2 y)\cdot \mathtt{PDIP}^{(\alpha)}( \theta_2)$, i.e.\ $\vecc{\beta}^y\ed  G_2^y\bar\beta_2$. 
	
	Using the two-colour correspondence described in Lemma~\ref{lem:mark-correspondence} and its notation, we have 
	$\underline{\cev{\beta}}^y \ed \gamma^y \concat \{(0, A^y)\}$, 
	where $\gamma^y := \Concat_{i=\infty}^2 \left(\{(0,H_i^y)\} \concat \gamma_i^y\concat \mu_i^y \right)$ is the concatenation of all interval partitions to the left of the rightmost red clade, and $A^y:= H_1^y$ is the mass of the leftmost block of the rightmost red clade. 
	Let us enumerate all clades in $\cev{\fF}^{(\mathrm{r})}+\cev{\fF}^{(\mathrm{b})}$ surviving to level $y$ from right to left and denote by $K$ the index of the first red clade (i.e.\@ $\mu_1^y$ is the concatenation of contributions from $K-1$ blue clades). Then $K$ clearly has a geometric distribution with success probability $1-\alpha/\theta_1$. 

	Applying \eqref{eq:ssip-clades} and \eqref{eq:ssip-clades-bis} to the $\mathtt{SSIP}^{(\alpha)}(\theta_1)$-evolution associated with $\cev{\fF}^{(\mathrm{r})}+\cev{\fF}^{(\mathrm{b})}$ and using \eqref{eq:pdip:0-alpha}, we deduce that,  conditionally on $\{K=k\}$, we have
	$(\gamma^y, A^y)\ed \left( G^y_1 \big(\prod_{i=1}^k B_i \big)\odotip \bar\beta_1,~  G^y_0 \prod_{i=1}^k B_i\right)$, where $G^y_1\sim \mathtt{Gamma}(\theta_1, 1/2 y)$, $G^y_0\sim \mathtt{Gamma}(1-\alpha, 1/2 y)$, $\bar\beta_1\sim \mathtt{PDIP}^{(\alpha)}( \theta_1)$, and $(B_i)_{i\ge 1}$ is an i.i.d.\@ sequence of $\mathtt{Beta}(\theta_1, 1)$; they are all independent. 
	So we complete the proof of the first statement by checking that $V:=\prod_{i=1}^K B_i \sim \mathtt{Beta}(\theta_1-\alpha, 1)$, which follows from the calculation of moments: 
	for every $r\in \bN$, we have 
	\[
	\bE \left[\left(\prod_{i=1}^K B_i\right)^r\right] 
	= \sum_{k=1}^{\infty} \left(\frac{\theta_1}{\theta_1+r} \right)^k \frac{\theta_1-\alpha}{\theta_1}\left(\frac{\alpha}{\theta_1}\right)^k
	= \frac{\theta_1-\alpha}{\theta_1 -\alpha +r}.
	\] 	
	Since $(G_1^y V, G_0^y V)\ed (V' (G_1^y+G_0^y) V, (1-V')(G_1^y+G_0^y) V )\ed (V'G_3^y, (1-V')G_3^y )$, the second statement follows from the first one. 
\end{proof}

\begin{corollary}
		For $\theta\ge \alpha$ and $\rho>0$, consider independent $G_1\sim \mathtt{Gamma} (\theta -\alpha, \rho)$, $B\sim \mathtt{Beta} (\theta, 1-\alpha)$, $G_2\sim \mathtt{Gamma} (\alpha, \rho)$, $\bar{\gamma}_1\sim \pdip^{(\alpha)}( \theta)$,  and $\bar{\gamma}_2\sim  \pdip^{(\alpha)} (\alpha)$,  
	Then we have 
	\[
	G_1 \big(B \odotip \bar{\gamma}_1  \concat \{(0, 1- B)\}\big) \concat G_2  \bar{\gamma}_2  
	\ed \mathtt{Gamma} (\theta, \rho)\cdot\pdip^{(\alpha)}( \theta).
	\]
\end{corollary}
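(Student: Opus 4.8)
The plan is to read this off directly from Proposition~\ref{prop:largetheta-empty} applied with $\theta_1=\theta$ and $\theta_2=\alpha$, combined with the identification in Proposition~\ref{prop:1-ge} and the marginal formula \eqref{eq:prm-marginal}. First I would fix an arbitrary $y>0$ and set $\rho:=1/2y$. Let $(\beta^z,\,z\ge 0)$ be an $\mathrm{SSIP}^{(\alpha)}(\theta,\alpha)$-evolution starting from $\emptyset$. The second displayed identity in Proposition~\ref{prop:largetheta-empty}, specialised to $\theta_1=\theta$, $\theta_2=\alpha$, gives
\[
\beta^y\ed G_3^y\odotip\big(V'\odotip\bar\beta_1\concat\{(0,1-V')\}\big)\concat G_2^y\bar\beta_2,
\]
with $G_3^y\sim\mathtt{Gamma}(\theta-\alpha,\rho)$, $V'\sim\mathtt{Beta}(\theta,1-\alpha)$, $\bar\beta_1\sim\mathtt{PDIP}^{(\alpha)}(\theta)$, $G_2^y\sim\mathtt{Gamma}(\alpha,\rho)$ and $\mathrm{rev}(\bar\beta_2)\sim\mathtt{PDIP}^{(\alpha)}(\alpha)$, all independent.

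Next I would invoke Lemma~\ref{lem:rev-pdip}: since $\mathtt{PDIP}^{(\alpha)}(\alpha)$ is invariant under left–right reversal, we may replace $\bar\beta_2$ by $\bar\gamma_2\sim\mathtt{PDIP}^{(\alpha)}(\alpha)$ and relabel $(G_3^y,V',\bar\beta_1,G_2^y)$ as $(G_1,B,\bar\gamma_1,G_2)$; this makes the right-hand side above literally the left-hand side of the corollary (recall $\odotip$ applied to a scalar is scalar multiplication, so $G_1(\,\cdot\,)=G_1\odotip(\,\cdot\,)$). On the other hand, by Proposition~\ref{prop:1-ge} the process $(\beta^z,\,z\ge 0)$ is an $\mathrm{SSIP}^{(\alpha)}(\theta)$-evolution starting from $\emptyset$, so by \eqref{eq:prm-marginal} its marginal at level $y$ is $\mathtt{Gamma}(\theta,1/2y)\cdot\mathtt{PDIP}^{(\alpha)}(\theta)=\mathtt{Gamma}(\theta,\rho)\cdot\mathtt{PDIP}^{(\alpha)}(\theta)$, which is the right-hand side of the corollary. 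Comparing the two descriptions of the law of $\beta^y$ yields the claim.

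I do not anticipate a genuine obstacle here: this is a direct corollary of two results already proved in the section, the only point requiring (trivial) care being the reversal-invariance of $\mathtt{PDIP}^{(\alpha)}(\alpha)$ used to match conventions, and the degenerate boundary case $\theta=\alpha$, in which $G_1\sim\mathtt{Gamma}(0,\rho)$ is a.s.\ $0$ (equivalently $V=0$ in the convention of Proposition~\ref{prop:largetheta-empty}), so that both sides reduce to $\mathtt{Gamma}(\alpha,\rho)\cdot\mathtt{PDIP}^{(\alpha)}(\alpha)$ and the identity is immediate.
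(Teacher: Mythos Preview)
Your proposal is correct and follows essentially the same approach as the paper: apply Proposition~\ref{prop:largetheta-empty} with $\theta_1=\theta$, $\theta_2=\alpha$, $y=1/(2\rho)$, then identify the marginal via Proposition~\ref{prop:1-ge} and \eqref{eq:prm-marginal}. Your added remarks on the reversal invariance of $\mathtt{PDIP}^{(\alpha)}(\alpha)$ and the degenerate case $\theta=\alpha$ are helpful clarifications that the paper leaves implicit.
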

\begin{proof}
     This follows from the marginals of Proposition~\ref{prop:largetheta-empty} with $\theta_1=\theta$, $\theta_2=\alpha$ and $y =1/2\rho$, 
     and from the marginals of an ${\rm SSIP}^{(\alpha)}(\theta)$-evolution recalled in \eqref{eq:prm-marginal}, noting that they must be
     equal by Proposition \ref{prop:1-ge}.
\end{proof}

By using very similar arguments as in \cite[proof of Proposition~3.15, Theorem 1.4(iv)]{IPPAT}, respectively, we deduce the following two consequences of Proposition~\ref{prop:largetheta-empty}.

\begin{lemma}\label{prop:largetheta-gamma}
	For $\theta_1\ge \alpha$, $\theta_2\ge 0$ and $\rho>0$, let $(V,\bar\gamma_1, G_1,\bar\gamma_2, G_2)$ be an independent quintuple with $V\sim \mathtt{Beta}(\theta_1, 1-\alpha)$, $\bar\gamma_1\sim \mathtt{PDIP}^{(\alpha)}( \theta_1)$, $G_1\sim \mathtt{Gamma}(\theta_1 -\alpha, \rho)$, $G_2\sim \mathtt{Gamma}(\theta_2, \rho)$ and $\mathrm{rev}(\bar\gamma_2) \sim \mathtt{PDIP}^{(\alpha)}( \theta_2)$. 
	Let $(\beta^y, y\ge 0)$ be an $\mathrm{SSIP}^{(\alpha)}(\theta_1, \theta_2)$-evolution starting from 
	\[
	\gamma:= \Big( G_1 \odotip \big( V \odotip \bar\gamma_1 \concat \{(0,1-V)\}\big) \Big)\concat G_2 \bar\gamma_2.
	\] 
	Then at any $y\ge 0$, the interval partition $\beta^y$ has the same distribution as 
	$(2 y \rho +1)\odotip \gamma$. 
\end{lemma}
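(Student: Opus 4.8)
The plan is to realise the random initial state $\gamma$ as the time-$\tfrac{1}{2\rho}$ value of an $\mathrm{SSIP}^{(\alpha)}(\theta_1,\theta_2)$-evolution started from $\emptyset$, then to run it on for a further time $y$ using the Markov property, and finally to read off the resulting one-dimensional marginal from Proposition~\ref{prop:largetheta-empty}.

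Concretely, I would let $(\widetilde\beta^z,\,z\ge 0)$ be an $\mathrm{SSIP}^{(\alpha)}(\theta_1,\theta_2)$-evolution starting from $\emptyset$ and apply the second representation in Proposition~\ref{prop:largetheta-empty} at level $y_0:=\tfrac{1}{2\rho}$. Since $1/2y_0=\rho$, the parameters there match those defining $\gamma$ term by term ($\mathtt{Gamma}(\theta_1-\alpha,\rho)$ for the leading scale, $\mathtt{Beta}(\theta_1,1-\alpha)$ for the inner factor, $\mathtt{Gamma}(\theta_2,\rho)$ for the right-hand scale, and the appropriate $\mathtt{PDIP}^{(\alpha)}(\theta_j)$-laws), so $\widetilde\beta^{y_0}\ed\gamma$. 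By the Markov property of Proposition~\ref{prop:Markov-ge}, conditionally on $(\widetilde\beta^z,\,z\le y_0)$ the shifted process $(\widetilde\beta^{y_0+z},\,z\ge 0)$ is an $\mathrm{SSIP}^{(\alpha)}(\theta_1,\theta_2)$-evolution starting from $\widetilde\beta^{y_0}$; averaging over $\widetilde\beta^{y_0}\ed\gamma$ then identifies the time-$y$ marginal $\beta^y$ of the evolution started from $\gamma$ with $\widetilde\beta^{y_0+y}$, that is, $\beta^y\ed\widetilde\beta^{y_0+y}$.

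It then remains to compute $\widetilde\beta^{y_0+y}$. Applying Proposition~\ref{prop:largetheta-empty} once more at level $y_1:=y_0+y=\tfrac{1}{2\rho}+y$, and using $1/2y_1=\rho/(2y\rho+1)$, gives a representation of $\widetilde\beta^{y_1}$ which is precisely the law of $(2y\rho+1)\odotip\gamma$: scaling $\gamma$ by the deterministic factor $2y\rho+1$ turns $\mathtt{Gamma}(\theta_1-\alpha,\rho)$ and $\mathtt{Gamma}(\theta_2,\rho)$ into $\mathtt{Gamma}(\theta_1-\alpha,\rho/(2y\rho+1))$ and $\mathtt{Gamma}(\theta_2,\rho/(2y\rho+1))$ while leaving the scale-invariant $\mathtt{Beta}(\theta_1,1-\alpha)$ factor and the $\mathtt{PDIP}^{(\alpha)}$-shaped components unchanged. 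Chaining the two displays yields $\beta^y\ed(2y\rho+1)\odotip\gamma$.

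I expect the only genuinely delicate point to be the passage from ``evolution started from the random state $\gamma$'' to ``evolution started from $\emptyset$ and restarted at time $y_0$''. This is the standard fact that the law of a Markov process started from a random initial condition is the mixture of the laws started from deterministic states, which here follows from Proposition~\ref{prop:Markov-ge} together with the measurability in the starting state of the explicit construction in Definition~\ref{defn:theta-ge}; the remaining work is just bookkeeping of $\mathtt{Gamma}$ rate parameters and matching the conventions $1/2y$ versus $\rho$.
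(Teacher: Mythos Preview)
Your proposal is correct and is precisely the argument the paper has in mind: the paper does not spell out a proof but states that the lemma is deduced ``as a consequence of Proposition~\ref{prop:largetheta-empty}'' by arguments parallel to \cite[proof of Proposition~3.15]{IPPAT}, which amounts exactly to realising $\gamma$ as $\widetilde\beta^{1/2\rho}$ for an evolution from $\emptyset$, invoking the Markov property of Proposition~\ref{prop:Markov-ge}, and applying Proposition~\ref{prop:largetheta-empty} again at level $\tfrac{1}{2\rho}+y$. Your bookkeeping of the Gamma rates via $1/2y_1=\rho/(2y\rho+1)$ is the correct way to identify the result with $(2y\rho+1)\odotip\gamma$.
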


\pagebreak[2]

\begin{proposition}\label{prop:pseudo}
	For $\theta_1\ge \alpha$, $\theta_2\ge 0$ with $\theta:= \theta_1+\theta_2-\alpha$, let $(Z, B, B^\prime,\bar\gamma_1,\bar\gamma_2)$ be an independent quintuple with $Z\sim \besq (2 \theta ) $, $B\sim{\tt Beta}(\theta_1-\alpha,\theta_2)$, $B^\prime\sim{\tt Beta}(1-\alpha,\theta_1)$, $\bar\gamma_1\sim \mathtt{PDIP}^{(\alpha)}( \theta_1)$, and $\mathrm{rev}(\bar\gamma_2)\sim  \mathtt{PDIP}^{(\alpha)}( \theta_2)$. 
	
	Let $(\beta^y, y\ge 0)$ be an $\mathrm{SSIP}^{(\alpha)}(\theta_1, \theta_2)$-evolution starting from 
	$Z(0)\odotip \gamma$, where 
	\[
	\gamma:= B(1-B^\prime) \odotip \bar\gamma_1 \concat \{(0,BB^\prime)\} \concat (1-B) \odotip \bar\gamma_2.
	\] 
	Then for each $y\ge 0$, the interval partition $\beta^y$ has the same distribution as $Z(y)\odotip \gamma$. 
\end{proposition}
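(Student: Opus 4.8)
The plan is to bootstrap from the $\mathtt{Gamma}$-randomised pseudo-stationarity already obtained in Lemma~\ref{prop:largetheta-gamma} together with the $\emptyset$-started case in Proposition~\ref{prop:largetheta-empty}. Since $\|\gamma\|=B(1-B')+BB'+(1-B)=1$ almost surely, and since the assertion concerns only the one-dimensional marginal at each $y$, I would first condition on $Z(0)=m$: because $(B,B',\bar\gamma_1,\bar\gamma_2)$ is independent of $Z$ and $\beta$ depends on $Z$ only through the initial scaling $Z(0)$, it then suffices to show that for every $m\ge 0$ an $\mathrm{SSIP}^{(\alpha)}(\theta_1,\theta_2)$-evolution $(\beta^y)$ started from $m\odotip\gamma$, with $\gamma$ carrying the stated law and independent of the driving randomness, satisfies $\beta^y\ed Z_m(y)\odotip\gamma$ for each $y$, where $Z_m\sim\besq_m(2\theta)$ is independent of $\gamma$ (here $\theta=\theta_1+\theta_2-\alpha\ge 0$, and one may assume $\theta>0$; the degenerate case $\theta_1=\alpha,\,\theta_2=0$ is treated directly from \cite[Theorem~1.4(iv)]{IPPAT} via $\mathrm{rev}$, as there the evolution is an $\mathrm{RSSIP}^{(\alpha)}(0)$-evolution). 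The case $m=0$ is exactly Proposition~\ref{prop:largetheta-empty}, once its explicit decomposition is reorganised by the Beta--Gamma algebra: its total mass $G_3^y+G_2^y$ is $\mathtt{Gamma}(\theta,1/2y)$, the ratio $G_3^y/(G_3^y+G_2^y)$ is $\mathtt{Beta}(\theta_1-\alpha,\theta_2)$ and independent of that total mass, and $1-V'\sim\mathtt{Beta}(1-\alpha,\theta_1)$, so the partition there is $\mathtt{Gamma}(\theta,1/2y)\odotip\gamma$ with the Gamma factor independent of $\gamma$, which is $Z_0(y)\odotip\gamma$ in law since $Z_0\sim\besq_0(2\theta)$ has $Z_0(y)\sim\mathtt{Gamma}(\theta,1/2y)$.

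Second, I would recast Lemma~\ref{prop:largetheta-gamma} in the same spirit. Its starting partition has total mass $G_1+G_2\sim\mathtt{Gamma}(\theta,\rho)$; the Beta--Gamma algebra gives $G_1/(G_1+G_2)\sim\mathtt{Beta}(\theta_1-\alpha,\theta_2)$ independent of $G_1+G_2$, and $1-V\sim\mathtt{Beta}(1-\alpha,\theta_1)$, so that this starting partition equals $M\odotip\gamma$ in law with $M\sim\mathtt{Gamma}(\theta,\rho)$ independent of $\gamma$. Thus Lemma~\ref{prop:largetheta-gamma} asserts that the $M\odotip\gamma$-started evolution has $\beta^y\ed (2y\rho+1)M\odotip\gamma\ed \mathtt{Gamma}(\theta,\rho/(2y\rho+1))\odotip\gamma$, with the last Gamma factor independent of $\gamma$, for every $\rho>0$.

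Finally comes the de-mixing step. Fix $y>0$ and write $Q_m$ for the law of $\beta^y$ when $(\beta^y)$ starts from $m\odotip\gamma$. Conditioning the $M\odotip\gamma$-started evolution on $M=m$ turns the previous display into $\int_{(0,\infty)}Q_m\,\mathtt{Gamma}(\theta,\rho)(dm)=\mathrm{Law}\big(\mathtt{Gamma}(\theta,\rho/(2y\rho+1))\odotip\gamma\big)$. Separately, mixing $\besq_m(2\theta)$ over $m\sim\mathtt{Gamma}(\theta,\rho)$ produces a $\besq(2\theta)$ with $\mathtt{Gamma}(\theta,\rho)$ entrance law, whose time-$y$ value is $\mathtt{Gamma}(\theta,\rho/(2y\rho+1))$-distributed (a one-line Laplace-transform computation) and independent of $\gamma$, so $\int_{(0,\infty)}\mathrm{Law}(Z_m(y)\odotip\gamma)\,\mathtt{Gamma}(\theta,\rho)(dm)$ equals the same measure. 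Hence, for every bounded measurable $F$ on $\cI_H$, the bounded function $m\mapsto\int F\,dQ_m-\bE[F(Z_m(y)\odotip\gamma)]$ multiplied by $m^{\theta-1}$ has vanishing Laplace transform on $(0,\infty)$; since $\theta>0$ this forces $Q_m=\mathrm{Law}(Z_m(y)\odotip\gamma)$ for Lebesgue-a.e.\ $m$, and then for every $m>0$ by continuity in $m$ (self-similarity gives $\beta^y\ed m\odotip\widehat\beta^{y/m}$ for $(\widehat\beta^z)$ started from $\gamma$, while $Z_m(y)\ed mZ_1(y/m)$). With the case $m=0$ this identifies the required marginal law.

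The step I expect to be the main obstacle is precisely this de-mixing: Lemma~\ref{prop:largetheta-gamma} only controls a one-parameter $\mathtt{Gamma}$-randomised family of initial masses, and to reach an arbitrary deterministic initial mass one must invert the mixture, which I plan to do through injectivity of the Laplace transform on $(0,\infty)$ (using $\theta>0$) followed by a continuity-in-the-initial-mass argument supplied by self-similarity and BESQ scaling. The parameter matching via the Beta--Gamma algebra (identifying $B$, $B'$ and the independent $\mathtt{Gamma}(\theta,\cdot)$ total-mass factor inside both Lemma~\ref{prop:largetheta-gamma} and Proposition~\ref{prop:largetheta-empty}) is routine bookkeeping, but it is what makes the two mixtures line up, so I would set it out carefully.
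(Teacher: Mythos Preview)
Your proposal is correct and follows essentially the same route that the paper intends: the paper does not write out a proof but simply points to \cite[Theorem~1.4(iv)]{IPPAT}, whose argument is exactly the Gamma-mixture/Laplace-transform disintegration you describe, bootstrapping from the $\emptyset$-started marginals (here Proposition~\ref{prop:largetheta-empty}) and the Gamma-randomised pseudo-stationarity (here Lemma~\ref{prop:largetheta-gamma}) to deterministic initial mass via uniqueness of Laplace transforms. Your Beta--Gamma bookkeeping identifying the starting law in Lemma~\ref{prop:largetheta-gamma} with $M\odotip\gamma$ for $M\sim\mathtt{Gamma}(\theta,\rho)$ independent of $\gamma$ is precisely the step that makes the two references line up, and your continuity-in-$m$ upgrade via self-similarity is the standard way to pass from a.e.\ $m$ to all $m>0$.
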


\begin{proof}[Proof of Theorem \ref{thm:pseudo}] By definition of pseudo-stationarity, Proposition \ref{prop:pseudo} is just a reformulation of 
  Theorem \ref{thm:pseudo}.
\end{proof}

\subsection{Identification of stopped ${\rm SSIP}^{(\alpha)}(\theta_1,\theta_2)$-evolutions as ${\rm SSIP}^{(\alpha)}_\dagger(\theta_1,\theta_2)$-evolutions.}\label{sec:stop}

We finally show that the two approaches to define a three-parameter family of interval partition evolutions with left and right immigration
lead to the same processes (when stopped upon first reaching $\emptyset$).

\begin{proposition}\label{prop:2-ge}
	An $\mathrm{SSIP}^{(\alpha)}(\theta_1,\theta_2)$-evolution starting from $\gamma$ and stopped when first hitting $\emptyset$, 
	is an $\mathrm{SSIP}^{(\alpha)}_\dagger(\theta_1,\theta_2)$-evolution starting from $\gamma$. 
\end{proposition}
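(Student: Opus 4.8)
The plan is to run the clade constructions on both sides, match the first ``epoch'' (the stretch up to the first renaissance time), iterate by a restart at that renaissance time, and let the common total‑mass process and path‑continuity force the behaviour at the degeneration time. After dealing with $\gamma=\emptyset$ (both processes sit in $\emptyset$: the stopped $\mathrm{SSIP}^{(\alpha)}(\theta_1,\theta_2)$‑evolution is stopped at time $0$), fix $\gamma\ne\emptyset$ with $\phi(\gamma)=(\gamma_1,m,\gamma_2)$, so $\gamma=\gamma_1\concat\{(0,m)\}\concat\gamma_2$ with $\{(0,m)\}$ the leftmost longest block.

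First I would identify the first epoch. Take an $\mathrm{SSIP}^{(\alpha)}(\theta_1,\theta_2)$‑evolution from $\gamma$ in the form $\beta^y=\underline{\cev\beta}^y\concat\vecc\beta^y$ of Definition~\ref{defn:theta-ge}, so $\underline{\cev\beta}$ is the $(\theta_1-\alpha)$‑immigration part and $\vecc\beta=\mathrm{rev}(\delta)$ with $\delta\sim\mathrm{SSIP}^{(\alpha)}(\theta_2)$ from $\mathrm{rev}(\gamma)=\mathrm{rev}(\gamma_2)\concat\{(0,m)\}\concat\mathrm{rev}(\gamma_1)$. By Proposition~\ref{prop:concat}, $\delta=\delta_L\concat\delta_R$ with independent $\delta_L\sim\mathrm{SSIP}^{(\alpha)}(\theta_2)$ from $\mathrm{rev}(\gamma_2)$ and $\delta_R\sim\mathrm{SSIP}^{(\alpha)}(0)$ from $\{(0,m)\}\concat\mathrm{rev}(\gamma_1)$; by Lemma~\ref{lem:split}, up to the first jump time $Y$ of its leftmost block, $\delta_R^y=\{(0,\mathbf f(y))\}\concat\epsilon^y$ with independent $\mathbf f\sim\besq_m(-2\alpha)$ (so $Y=\zeta(\mathbf f)$) and $\epsilon\sim\mathrm{SSIP}^{(\alpha)}(\alpha)$ from $\mathrm{rev}(\gamma_1)$. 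Reversing and regrouping, for $y<\zeta(\mathbf f)$,
\[
\beta^y=\big(\underline{\cev\beta}^y\concat\mathrm{rev}(\epsilon^y)\big)\concat\{(0,\mathbf f(y))\}\concat\mathrm{rev}(\delta_L^y),
\]
where $\underline{\cev\beta}\concat\mathrm{rev}(\epsilon)$ is, by Definition~\ref{defn:theta-ge}, an $\mathrm{SSIP}^{(\alpha)}(\theta_1,\alpha)$‑evolution from $\gamma_1$, hence by Proposition~\ref{prop:1-ge} an $\mathrm{SSIP}^{(\alpha)}(\theta_1)$‑evolution from $\gamma_1$, while $\mathrm{rev}(\delta_L)$ is an $\mathrm{RSSIP}^{(\alpha)}(\theta_2)$‑evolution from $\gamma_2$; tracking independence ($\underline{\cev\beta}\perp(\epsilon,\mathbf f,\delta_L)$, $\delta_L\perp(\mathbf f,\epsilon)$, $\mathbf f\perp\epsilon$) shows that the three displayed blocks are built from independent ingredients. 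By path‑continuity $\beta^{\zeta(\mathbf f)}=G_1^{\zeta(\mathbf f)}\concat G_2^{\zeta(\mathbf f)}$ with $G_1=\underline{\cev\beta}\concat\mathrm{rev}(\epsilon)$, $G_2=\mathrm{rev}(\delta_L)$. This is exactly the law of $(\beta^y,\ y\le T_1)$ together with $\beta^{T_1}$ for an $\mathrm{SSIP}^{(\alpha)}_\dagger(\theta_1,\theta_2)$‑evolution from $\gamma$, with $T_1=\zeta(\mathbf f)$, in the notation of Definition~\ref{defn:ipe}.

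The main obstacle is the restart at $\zeta(\mathbf f)$: I must show that, conditionally on $(\beta^x,\ x\le\zeta(\mathbf f))$, the process $(\beta^{\zeta(\mathbf f)+z},\ z\ge0)$ is again an $\mathrm{SSIP}^{(\alpha)}(\theta_1,\theta_2)$‑evolution from $\beta^{\zeta(\mathbf f)}$. The delicate point is that $\zeta(\mathbf f)$ is a stopping time for the various ingredients in different ways: it is a genuine stopping time in the filtration of $\delta_R$ (its leftmost‑block jump time), so $\delta_R$ may be restarted there using that $\mathrm{SSIP}^{(\alpha)}(0)$‑evolutions are Hunt (Proposition~\ref{prop:Hunt+totalmass}, \cite[Theorem~1.4]{IPPAT}); whereas it is independent of the immigration measure $\underline{\cev{\mathbf F}}$ and of $\delta_L$, so these are restarted at a fixed level by conditioning on $\zeta(\mathbf f)$ and invoking the (reversed) Markov‑like property behind Proposition~\ref{prop:Markov-ge} and the simple Markov property of $\mathrm{SSIP}^{(\alpha)}(\theta_2)$‑evolutions. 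Reassembling the restarted pieces with Proposition~\ref{prop:concat} and $\mathrm{rev}(A\concat B)=\mathrm{rev}(B)\concat\mathrm{rev}(A)$, one checks that the restart produces fresh $(\theta_1-\alpha)$‑immigration, concatenated with an $\mathrm{RSSIP}^{(\alpha)}(0)$‑continuation of $\underline{\cev\beta}^{\zeta(\mathbf f)}$, concatenated with an $\mathrm{RSSIP}^{(\alpha)}(\theta_2)$‑continuation of $\vecc\beta^{\zeta(\mathbf f)}$, and that this equals the Definition~\ref{defn:theta-ge} recipe applied to $\beta^{\zeta(\mathbf f)}=\underline{\cev\beta}^{\zeta(\mathbf f)}\concat\vecc\beta^{\zeta(\mathbf f)}$. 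The bookkeeping of the reversals and of which $\sigma$‑field sees $\zeta(\mathbf f)$ as a stopping time is the most error‑prone part.

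Given the restart, an induction over $n$ couples the $\mathrm{SSIP}^{(\alpha)}(\theta_1,\theta_2)$‑evolution from $\gamma$ with an $\mathrm{SSIP}^{(\alpha)}_\dagger(\theta_1,\theta_2)$‑evolution from $\gamma$ so that they agree on $[0,T_n)$ for each $n$, hence on $[0,T_\infty)$. Finally, by Theorem~\ref{thm:mass-bis} and the total‑mass statement for $\mathrm{SSIP}^{(\alpha)}(\theta_1,\theta_2)$‑evolutions, both total‑mass processes are $\besq_{\|\gamma\|}(2\theta)$ up to $T_\infty$ with $\theta=\theta_1+\theta_2-\alpha$; when $\theta\ge1$ then $T_\infty=\infty$ and nothing more is needed, and when $\theta<1$ both evolutions reach $\emptyset$ continuously at the a.s.\ finite time $T_\infty$ (Corollary~\ref{prop:dl} and path‑continuity), after which the $\mathrm{SSIP}_\dagger$‑evolution is absorbed at $\emptyset$ by definition and the $\mathrm{SSIP}^{(\alpha)}(\theta_1,\theta_2)$‑evolution is absorbed at $\emptyset$ because it is stopped at its first hitting time of $\emptyset$. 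This yields an a.s.\ equality and proves the proposition. (Alternatively, in the style of the proofs of Propositions~\ref{prop:sym} and~\ref{prop:1-ge}: having matched the one‑dimensional marginals from every $\gamma$, conclude from both processes being Markov — for the stopped $\mathrm{SSIP}^{(\alpha)}(\theta_1,\theta_2)$‑evolution because killing at the hitting time of $\emptyset$ preserves the Markov property — and path‑continuous.)
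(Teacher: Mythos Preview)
Your proposal is correct and follows essentially the same approach as the paper: match the first epoch by decomposing via Proposition~\ref{prop:concat}, Lemma~\ref{lem:split} and Proposition~\ref{prop:1-ge}, then iterate using the Markov properties of both processes. The only cosmetic difference is direction---the paper starts from the $\mathrm{SSIP}_\dagger$ side (rewriting $\gamma_1^{(0)}$ via Proposition~\ref{prop:1-ge} as $\underline{\cev\beta}_1\concat\vecc\beta_1$ and then absorbing $\vecc\beta_1\concat\{(0,\mathbf f^{(0)})\}\concat\gamma_2^{(0)}$ into an $\mathrm{RSSIP}^{(\alpha)}(\theta_2)$-evolution), whereas you start from the Definition~\ref{defn:theta-ge} side and unpack in the opposite order; the key lemmas and the induction are identical. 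Your treatment of the restart at $\zeta(\mathbf f)$ and of the behaviour at $T_\infty$ is more explicit than the paper's, which simply cites Theorem~\ref{thm:hunt} and Proposition~\ref{prop:Markov-ge} and leaves the details to the reader.
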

\begin{proof} Recall the construction of an ${\rm SSIP}^{(\alpha)}_\dagger(\theta_1,\theta_2)$-evolution $(\beta^y,\,y\ge 0)$ in Definition 
  \ref{def:stopSSIP}. By Proposition \ref{prop:1-ge}, $\gamma_1^{(0)}$ has the same distribution as the process
  \[
  \underline{\cev{\beta}}^y_1\concat\vecc{\beta}^y_1,\qquad y\ge 0,
  \]
  where $(\underline{\cev{\beta}}_1^y,\,y\ge 0)$ is as in \eqref{eq:theta-0} and $(\vecc{\beta}_1^y,\,y\ge 0)$ is an ${\rm SSIP}^{(\alpha)}(\alpha)$-evolution. Then we can write
  \[
  \beta^y=\underline{\cev{\beta}}^y_1\concat\vecc{\beta}^y_+,\quad\mbox{where }
  \vecc{\beta}^y_+=\vecc{\beta}^y_1\concat\{(0,\mathbf{f}^{(0)}(y))\}\concat\gamma_2^{(0)}(y),\quad 0\le y\le\zeta(\mathbf{f}^{(0)}),
  \]
  is an ${\rm RSSIP}^{(\alpha)}(\theta_2)$-evolution, by Lemma \ref{lem:split} and Proposition \ref{prop:concat}. Comparing with 
  Definition \ref{defn:theta-ge}, the process $(\beta^y,\,0\le y\le\zeta(\mathbf{f}^{(0)}))$ can be viewed as an 
  ${\rm SSIP}^{(\alpha)}(\theta_1,\theta_2)$-evolution stopped at the lifetime of the block starting from the middle interval
  $(\|\beta_1^0\|,\|\beta_1^0\|+m^0)$. Because of the Markov properties of both processes, Theorem \ref{thm:hunt} and Proposition
  \ref{prop:Markov-ge}, we can continue using these arguments to complete the proof inductively. 
\end{proof}

\begin{appendix}
\section{Proofs of Lemmas~\ref{lem:consist2} and \ref{lem:dl}}\label{sec:appx}

\begin{proof}[Proof of Lemma \ref{lem:dl}] Let $((\beta_1^y,m^y,\beta_2^y),\,y\ge 0)$ be a 
  $\mathcal{J}$-valued ${\rm SSIP}_\dagger^{(\alpha)}(\theta_1,\theta_2)$-evolution as defined in Definition \ref{defn:ipe}. 
  Since total mass evolves continuously between and across any finite number of renaissance times, the total mass reaches zero continuously
  on any event $\{T_n=T_\infty<\infty\}$, $n\ge 0$. Hence, it suffices to show that, on the event $\{T_n\uparrow T_\infty<\infty\}$, 
  the total mass tends to zero along the sequence $(T_n,\,n\ge 0)$. 
  
Recall from \eqref{eq:concatenation-clade} the concatenation of clades $\fN_\gamma= \Concat_{U\in \gamma}\fN_U$ and from Definition~\ref{def:skewer} notation $\skewerbar(\fN_{\gamma},\xi_{\fN_{\gamma}})$, which we here abbreviate as $\skewerbar(\fN_\gamma)$. 
 We use the notation of Definition \ref{defn:ipe}, consider $\mathbf{f}^{(0)}\sim{\tt BESQ}_{m^0}(-2\alpha)$ and 
  independent clade constructions 
  \[\gamma_1^{(0)}=\cev{\beta}_1^{(0)}\concat\skewerbar\bigg(\Concat_{U\in\beta_1^0}\mathbf{N}_U^{(0)}\bigg)
    \quad \text{and}\quad
     \gamma_2^{(0)}={\rm rev}\left(\cev{\beta}_2^{(0)}\!\concat\skewerbar\bigg(\Concat_{U\in\mathrm{rev}(\beta_2^0)}\mathbf{N}_U^{(0)}\bigg)\right),
  \]
  in the sense of \eqref{eq:concatenation-clade} and where $\cev{\beta}_i^{(0)}$ is built from point measures $\cev{\mathbf{F}}_i^{(0)}$ of clades as 
  in \eqref{eq:cev-construction}, with intensities $\theta_i$, $i=1,2$. Our strategy is to use these clades, as well as an auxiliary clade
  $\delta(0,\ff^{(0)})+\mathbf{N}^{(0)}_{\rm mid}:=\clade (\ff^{(0)},\mathbf{N})$ associated with $\ff^{(0)}$, to enhance Definition \ref{defn:ipe} and construct from 
  these clades the entire process $((\beta_1^y,m^y,\beta_2^y),\,y\ge 0)$, as well as a process $(\beta_{\rm em}^y,y\ge 0)$ that is an  
  ${\rm SSIP}^{(\alpha)}(\alpha)$-evolution during $[0,T_\infty)$ and, on $\{T_\infty<\infty\}$, proceeds continuously across $T_\infty$, 
  as an ${\rm SSIP}^{(\alpha)}(0)$-evolution. Indeed, while the blocks of $(\beta_1^y,m^y,\beta_2^y)$ can then be thought of as a subset of 
  the blocks of $\gamma^{(0)}_i(y)$, $i=1,2$, and $\mathbf{f}^{(0)}(y)$, we will make sure that $\beta_{\rm em}^y$ will contain precisely
  the remaining blocks (``emigration''), and the corresponding relationship of the associated total mass processes will yield the claimed asymptotics.
  
  Specifically, suppose by induction that we have constructed the processes for the time interval $[0,T_n]$ for some $n\ge 0$ and are given 
  families $(\mathbf{N}_U^{(n)},U\in\beta_i^{T_n})$, $i=1,2$, and point measures $\cev{\mathbf{F}}_i^{(n)}$, $i=1,2$, of clades, as well as another
  clade $\delta(0,\ff^{(n)})+\mathbf{N}^{(n)}_{\rm mid}$. Furthermore, suppose that, conditionally given the history up to level $T_n$, in the 
  sense of \cite[(3.8) and (3.10)]{IPPAT} and as recalled less formally in Section \ref{sec:prel:clades}, these clades and point measures are independent and so that 
   \begin{equation}\label{eq:gamma-n}
\gamma_1^{(n)}=\cev{\beta}_1^{(n)}\!\concat\skewerbar\bigg(\Concat_{U\in\beta_1^{T_n}}\!\mathbf{N}_U^{(n)}\bigg)
     \quad \text{and}\quad 
     \gamma_2^{(n)}={\rm rev}\left(\cev{\beta}_2^{(n)}\concat\skewerbar\bigg(\Concat_{U\in \mathrm{rev}(\beta_2^{T_n})}\mathbf{N}_U^{(n)}\bigg)\right),
  \end{equation}
  and $\mathbf{f}^{(n)}$ have joint conditional distributions given $((\beta_1^y,m^y,\beta_2^y),\,0\le y\le T_n)$ as in Definition \ref{defn:ipe}.
  Then  
  \[T_{n+1}:= T_n + \zeta(\ff^{(n)}),\qquad
		( \beta_1^{y}, m^{y}, \beta_2^{y}) := \left( \gamma^{(n)}_1(y-T_n) ,\ff^{(n)}(y-T_n) ,\gamma^{(n)}_2(y-T_n) \right), \quad T_n\le y<T_{n+1}, 
		\]
  and $( \beta_1^{T_{n+1}}, m^{T_{n+1}}, \beta_2^{T_{n+1}}):= \phi(\beta_1^{T_{n+1}-} \concat \beta_2^{T_{n+1}-})$ extends the construction of the $\mathcal{J}$-valued process to $[0,T_{n+1}]$ as in Definition \ref{defn:ipe}. 
  To proceed with the induction, we note that $\zeta(\ff^{(n)})$ is independent of the other clades, conditionally given the history up to level $T_n$, so we can apply the Markov-like properties at level $\zeta(\ff^{(n)})$, which we recalled from \cite{Paper1-1} and \cite{IPPAT} in Propositions \ref{prop:markovlike1} and \ref{prop:markovlike2}. Specifically, we obtain point measures of spindles that, via \eqref{eq:concatenation-clade}, can be decomposed into clades and then grouped as $(\mathbf{N}_U^{(n+1)},U\in\beta_i^{T_{n+1}})$, $i=1,2$, and we also obtain point measures $\cev{\mathbf{F}}_i^{(n+1)}$, $i=1,2$, of clades, as well as another clade $\delta(0,\ff^{(n+1)})+\mathbf{N}^{(n+1)}_{\rm mid}$ associated
  with the longest interval of length $m^{T_{n+1}}$, all conditionally independent given the history up to level $T_{n+1}$. Inductively, this completes the construction of Definition \ref{defn:ipe} on $[0,T_\infty)$.
  
  Now set $\beta_{\rm em}^0:=0$ and suppose further that we enter the induction step also with a clade $\delta(0,\mathbf{f}^{(n)})+\mathbf{N}^{(n)}_{\rm mid}$ and an independent process
  \begin{equation}\label{eq:betaemmpartial}
    \beta_{\rm em}^y=\sum_{j=0}^{n-1}\skewer\left(y-T_j,\mathbf{N}_{\rm mid}^{(j)},\zeta(\ff^{(j)})+\xi_{\mathbf{N}_{\rm mid}^{(j)}}\right),
    \quad y\ge 0,
  \end{equation}
  that is an ${\rm SSIP}^{(\alpha)}(\alpha)$-evolution on $[0,T_n]$ continued as an ${\rm SSIP}^{(\alpha)}(0)$-evolution on $[T_n,\infty)$.
  By Lemma \ref{lem:split}, the process $\skewer\left(y,\mathbf{N}_{\rm mid}^{(n)},\zeta(\ff^{(n)})+\xi_{\mathbf{N}_{\rm mid}^{(n)}}\right)$,
  $y\ge 0$, evolves as an ${\rm SSIP}^{(\alpha)}(\alpha)$-evolution on $[0,\zeta(\ff^{(n)})]$. By the Markov-like property at level 
  $\zeta(\ff^{(n)})$, it continues as an ${\rm SSIP}^{(\alpha)}(0)$-evolution. Then the strong Markov property 
  \cite[Proposition 3.14]{Paper1-2} of ${\rm SSIP}^{(\alpha)}(\alpha)$-evolutions yields \eqref{eq:betaemmpartial} with $n$ replaced by $n+1$.
  Inductively, the statement holds for all $n\ge 0$, and by Poisson random measure arguments based on Lemma \ref{lem:descent} and Proposition \ref{prop:cladeconstr2}, this also entails the 
  corresponding statement with $n=\infty$, and in particular, the left limit at $T_\infty$ extends this continuously
  to an ${\rm SSIP}^{(\alpha)}(\alpha)$ on $[0,T_\infty]$. 
  
  By construction, the blocks of $(\beta_1^y,m^y,\beta_2^y)$ are all taken from the skewer at level $y$ of clades that were used in the 
  construction of $\gamma_1^{(0)}$ and $\gamma_2^{(0)}$, and from $\mathbf{f}^{(0)}$. Specifically, this holds explicitly for 
  $0\le y<T_1$. For $T_n\le y<T_{n+1}$, $n\ge 1$, we take skewers at level $y-T_n$ of clades above level $T_n$, which were obtained from 
  the original clades by repeatedly applying Markov-like properties at levels $\zeta(\ff^{(j)})$, $0\le j\le n-1$, and these levels add up to $T_n$. We remark that only the order, not the size of blocks, is affected by the reversals in \eqref{eq:gamma-n}. 
  
  This construction captures at each step all clades above the next level for use either in 
  $((\beta_1^y,m^y,\beta_2^y),\,0\le y<T_\infty)$ or, via $\mathbf{N}^{(n)}_{\rm mid}$, $n\ge 0$, for use in 
  $(\beta_{\rm em}^y,\,0\le y<T_\infty)$. In particular, for all $0\le y<T_\infty$,
  \begin{equation}\label{eq:totalmassdiff}
     \|\beta_1^y\|+m^y+\|\beta_2^y\|
     =\left\|\gamma_1^{(0)}(y)\right\|+\left\|\gamma_2^{(0)}(y)\right\|+\left\|\skewer\left(y,\delta(0,\ff^{(0)})+\mathbf{N}^{(0)}_{\rm mid}\right)\right\|-\|\beta_{\rm em}^y\|.
  \end{equation}
  On the other hand, the size of the longest interval of $((\beta_1^y\concat\{(0,m^y)\}\concat\beta_2^y),\,0\le y<T_\infty)$ tends to zero along times $(T_n,\,n\ge 0)$, when on the event 
  $\{T_n\uparrow T_\infty<\infty\}$: indeed, any subsequence of longest intervals of lengths exceeding $\varepsilon>0$
  would contribute lifetimes that are stochastically bounded below by the lifetimes of an independent sequence of 
  ${\tt BESQ}_\varepsilon(-2\alpha)$, and such lifetimes would have an infinite sum almost surely.

  In the clade construction of $\gamma_1^{(0)}$ and $\gamma_2^{(0)}$, the mass evolution of each block is represented by a spindle in a clade. In our
  construction, each spindle that starts strictly below level $T_\infty$ is either used for $((\beta_1^y,m^y,\beta_2^y),\,0\le y<T_\infty)$ or
  for $(\beta_{\rm em}^y,\,y\ge 0)$. But on $\{T_\infty<\infty\}$, each spindle that straddles level $T_\infty$ must have a positive mass at level 
  $T_\infty$, exceeding some $\varepsilon>0$ on an interval around $T_\infty$. Hence it cannot be included in 
  $((\beta_1^y,m^y,\beta_2^y),\,0\le y<T_\infty)$. But then the RHS of \eqref{eq:totalmassdiff} tends to 0 as $y\uparrow T_\infty$, and this 
  completes the proof.
\end{proof}

\begin{proof}[Proof of Lemma \ref{lem:consist2}] Let $\cJ_2:=\cI_H\times(0,\infty)\times\cI_H\times(0,\infty)\times\cI_H$ and 
  $(\beta_0,m_0,\beta_1,m_1,\beta_2)\in\cJ_2$. We want to couple two ${\rm SSIP}^{(\alpha)}_\dagger(\theta_1,\theta_2)$ 
  starting respectively from $(\beta_0,m_0,\beta_1\concat\{(0,m_1)\}\concat\beta_2)$ and 
  $(\beta_0\concat\{(0,m_0)\}\concat\beta_1,m_1,\beta_2)$ such that the associated $\cI_H$-valued processes coincide. 
  
  Viewing $\cJ$-valued processes as $\cI_H$-valued processes split around a marked block, we now construct a $\cJ_2$-valued process that
  captures two marked blocks. Specifically, we construct $((\beta_0^y,m_0^y,\beta_1^y,m_1^y,\beta_2^y),\,0\le y<S_N)$ starting from 
  $(\beta_0,m_0,\beta_1,m_1,\beta_2)$ at time $S_0:=0$ by the following inductive steps, mimicking Definition \ref{defn:ipe}. Suppose
  that we have constructed the process on $[0,S_n]$ for some $n\ge 0$ and some 
  $(\beta_0^{S_n},m_0^{S_n},\beta_1^{S_n},m_1^{S_n},\beta_2^{S_n})\in\mathcal{J}_2$.
  \begin{itemize}
    \item Conditionally on the history, consider, independently, an ${\rm SSIP}^{(\alpha)}(\theta_1)$-evolution 
      $\gamma_0^{(n)}$ starting from $\beta_0^{S_n}$, an ${\rm SSIP}^{(\alpha)}(\alpha)={\rm RSSIP}^{(\alpha)}(\alpha)$-evolution 
      $\gamma_1^{(n)}$ starting from $\beta_1^{S_n}$, an ${\rm RSSIP}^{(\alpha)}(\theta_2)$-evolution $\gamma_2^{(n)}$ starting 
      from $\beta_2^{S_n}$, and $\ff^{(n)}_i\sim{\tt BESQ}_{m_i^{S_n}}(-2\alpha)$, $i=0,1$. Let 
      $\Delta_n:=\min\{\zeta(\ff_0^{(n)}),\zeta(\ff_1^{(n)})\}$ and $S_{n+1}:=S_n+\Delta_n$, and define
      \[\left(\beta_0^{S_n+y},m_0^{S_n+y},\beta_1^{S_n+y},m_1^{S_n+y},\beta_2^{S_n+y}\right):=
         \left(\gamma_0^{(n)}(y),\ff_0^{(n)}(y),\gamma_1^{(n)}(y),\ff_1^{(n)}(y),\gamma_2^{(n)}(y)\right),\quad 0\le y<\Delta(S_n).\]
    \item If $\Delta_n=\zeta(\ff_i^{(n)})$ for some $i=0,1$, and $\ff_{1-i}^{(n)}(\Delta_n)$ exceeds the length of the longest interval in 
      $\gamma_j^{(n)}(\Delta_n)$ for all $j=0,1,2$, let $N=n+1$. The construction is complete.
    \item Otherwise, identify the longest interval and split the associated $\gamma_j^{(n)}(\Delta_n)$ around this interval. This results in a total
      of four interval partitions and two blocks. In the natural order, two of these interval partitions are adjacent. Concatenate these two and 
      collect the now five components as $\big(\beta_0^{S_{n+1}},m_0^{S_{n+1}},\beta_1^{S_{n+1}},m_1^{S_{n+1}},\beta_2^{S_{n+1}}\big)$.
  \end{itemize}
  Note that (in general) we may have $N\in\mathbb{N}\cup\{\infty\}$. On the event $\{N<\infty\}$, we further continue the evolution as a 
  $\cJ$-valued process starting from the terminal value of the $\cJ_2$-valued process, with adjacent interval partitions concatenated. 
  
  By concatenation properties of ${\rm SSIP}^{(\alpha)}(\theta_1)$- and ${\rm RSSIP}^{(\alpha)}(\theta_2)$-evolutions (Proposition \ref{prop:concat} and Lemma \ref{lem:split}) and by the 
  strong Markov property of these processes applied at the stopping times $S_n$, $n\ge 1$, we obtain two coupled ${\rm SSIP}^{(\alpha)}_\dagger(\theta_1,\theta_2)$-evolutions, which induce the
  same $\mathcal{I}_H$-valued process, as required. 
Indeed, the construction of these two processes is clearly complete on $\{N<\infty\}$
  and on $\{N=\infty,S_\infty=\infty\}$ with $S_{\infty}= \lim_{n\to \infty}S_n$. This suffices if the event $\{N=\infty,S_\infty<\infty\}$ has zero probability. Otherwise, on 
  $\{N=\infty,S_\infty<\infty\}$ the construction of at least one process is complete and by Lemma \ref{lem:dl}, the total mass tends to
  zero along a subsequence of $(S_n,\, n\ge 0)$, and hence the other construction cannot remain unfinished with blocks of positive size at 
  $S_\infty$. 
\end{proof}

\end{appendix}

%
%

\section*{Acknowledgements}
QS was partially supported by SNSF grant P2ZHP2\_171955.


\bibliographystyle{abbrv}
\bibliography{AldousDiffusion4}       


\end{document}